\date{\vspace{-3ex}}
\theoremstyle{plain}
\newtheorem{theorem}{\protect\theoremname}
  \theoremstyle{plain}
  \newtheorem{conjecture}{\protect\conjecturename}
  \theoremstyle{definition}
  \newtheorem{problem}{\protect\problemname}
  \theoremstyle{remark}
  \newtheorem*{acknowledgement*}{\protect\acknowledgementname}
  \theoremstyle{plain}
  \newtheorem{lemma}{\protect\lemmaname}
  \theoremstyle{plain}
  \newtheorem{proposition}{\protect\propositionname}
  \theoremstyle{remark}
  \theoremstyle{plain}
  \newtheorem{corollary}{\protect\corollaryname}
\DeclareMathOperator{\ldim}{\underline{dim}}
\DeclareMathOperator{\edim}{dim_e}
\DeclareMathOperator{\bdim}{dim_\textrm{B}}
\DeclareMathOperator{\ubdim}{\overline{dim}_\textrm{B}}
\DeclareMathOperator{\uedim}{\overline{dim}_e}
\DeclareMathOperator{\ledim}{\underline{dim}_e}
\DeclareMathOperator{\id}{id}
\DeclareMathOperator{\supp}{supp}
\DeclareMathOperator*{\essinf}{essinf}
\DeclareMathOperator{\sgn}{sgn}
\newcommand{\bigdot}{\mbox{\LARGE{$.$}}}
\def\blfootnote{\xdef\@thefnmark{}\@footnotetext}
  \providecommand{\acknowledgementname}{Acknowledgement}
  \providecommand{\conjecturename}{Conjecture}
  \providecommand{\corollaryname}{Corollary}
  \providecommand{\lemmaname}{Lemma}
  \providecommand{\problemname}{Problem}
  \providecommand{\propositionname}{Proposition}
  \providecommand{\remarkname}{Remark}
\providecommand{\theoremname}{Theorem}
\begin{document}

\title{Some problems on the boundary of fractal geometry and additive combinatorics}

\author{Michael Hochman}

\maketitle
\begin{abstract}
This paper is an exposition, with some new applications, of our results
from \cite{Hochman2014,Hochman2015} on the growth of entropy of convolutions.
We explain the main result on $\mathbb{R}$, and derive, via a linearization
argument, an analogous result for the action of the affine group on
$\mathbb{R}$. We also develop versions of the results for entropy
dimension and Hausdorff dimension. The method is applied to two problems
on the border of fractal geometry and additive combinatorics. First,
we consider attractors $X$ of compact families $\Phi$ of similarities
of $\mathbb{R}$. We conjecture that if $\Phi$ is uncountable and
$X$ is not a singleton (equivalently, $\Phi$ is not contained in
a 1-parameter semigroup) then $\dim X=1$. We show that this would
follow from the classical overlaps conjecture for self-similar sets,
and unconditionally we show that if $X$ is not a point and $\dim\Phi>0$
then $\dim X=1$. Second, we study a problem due to Shmerkin and Keleti,
who have asked how small a set $\emptyset\neq Y\subseteq\mathbb{R}$
can be if at every point it contains a scaled copy of the middle-third
Cantor set $K$. Such a set must have dimension at least $\dim K$
and we show that its dimension is at least $\dim K+\delta$ for some
constant $\delta>0$. 
\end{abstract}

\section{Introduction}

\subsection{Attractors of infinite compact families of similarities}

Let $G$ denote the group of similarities (equivalently, affine maps)
of the line and $\mathcal{S}\subseteq G$ the semigroup of contracting
similarities. Given a family $\Phi\subseteq\mathcal{S}$, a set $X\subseteq\mathbb{R}$
is called the attractor of $\Phi$ if it a compact non-empty set satisfying
\begin{equation}
X=\bigcup_{\varphi\in\Phi}\varphi(X).\label{eq:Hutchison}
\end{equation}
We then say that $\Phi$ generates $X$. 

Hutchinson's theorem \cite{Hutchinson1981} tells us that when $\Phi$
is finite, an attractor exists, and is unique. In this case $X$ is
said to be self-similar. These sets represent the simplest ``fractal''
sets, and their small-scale geometry has been extensively studied
over the years. It is also natural to ask what happens if we allow
$\Phi$ to be infinite. This opens the door to various ``pathologies'',
including the possibility that no attractor exists at all (i.e. the
only compact set satisfying (\ref{eq:Hutchison}) is the empty set),
and there is an extensive literature devoted to the case that $\Phi$
is countable, primarily concerning the case when $\Phi$ is unbounded
(see e.g. \cite{MauldinUrbanski1996}). But there is another generalization,
perhaps closer in spirit to the finite case, in which one takes $\Phi\subseteq\mathcal{S}$
to be compact. As in the finite case, existence and uniqueness of
the attractor is proved by showing that the map $Y\mapsto\bigcup_{\varphi\in\Phi}\varphi(Y)$
is a contraction on the space of compact subsets of $\mathbb{R}$,
endowed with the Hausdorff metric, and hence has a unique fixed point.\footnote{If we work in a complete metric space and general contractions, taking
the topology of uniform convergence of maps on compact sets, then
existence and uniqueness of the attractor is still true if we assume
that all $\varphi\in\Phi$ contract by at least some $0<r<1$. Without
uniformity, existence can fail already for a single map.}

We are interested in the dimension of the attractors of compact families
$\Phi$. For finite $\Phi$ this problem has a long history, which
we shall not recount here, but we note that even in this case our
understanding is still incomplete (see below and \cite{Hochman2014}).
However, there is some reason to believe that when $\Phi$ is compact
and uncountable the situation may be, in a sense, simpler.
\begin{conjecture}
\label{conj:main}If $X\subseteq\mathbb{R}$ is the attractor of an
uncountable compact family of contracting similarities, then either
$X$ is a single point, or $\dim X=1$.
\end{conjecture}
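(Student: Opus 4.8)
The plan is to reduce Conjecture~\ref{conj:main} to the existence of good finite sub-IFS's of $\Phi$ and then invoke the self-similar form of the entropy-growth theorem.

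\emph{Step 1: passage to finite subsystems.} Write $\varphi(x)=r_{\varphi}x+t_{\varphi}$ for $\varphi\in\mathcal{S}$. For a finite $\Phi_{0}\subseteq\Phi$ set $F_{0}(Y)=\bigcup_{\varphi\in\Phi_{0}}\varphi(Y)$ and $F(Y)=\bigcup_{\varphi\in\Phi}\varphi(Y)$. Since $\Phi_{0}\subseteq\Phi$ and $F_{0}$ is monotone, $F_{0}^{n}(X)\subseteq F(X)=X$ for every $n$; by Hutchinson's theorem $F_{0}^{n}(X)$ converges in the Hausdorff metric to the attractor $X_{\Phi_{0}}$ of $\Phi_{0}$, so $X_{\Phi_{0}}\subseteq X$ and $\dim X\ge\dim X_{\Phi_{0}}$. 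It thus suffices to produce, for each $\varepsilon>0$, a finite $\Phi_{0}\subseteq\Phi$ with $\dim X_{\Phi_{0}}>1-\varepsilon$.

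\emph{Step 2: large similarity dimension.} Compactness of $\Phi\subseteq\mathcal{S}$ gives $0<c_{0}\le|r_{\varphi}|\le C_{0}<1$ for all $\varphi\in\Phi$. The similarity dimension $s(\Phi_{0})$ — the solution of $\sum_{\varphi\in\Phi_{0}}|r_{\varphi}|^{s}=1$ — satisfies $s(\Phi_{0})\ge\log|\Phi_{0}|/\log(1/c_{0})$ and strictly increases whenever a new map is adjoined. As $\Phi$ is uncountable, hence infinite, we may choose a finite $\Phi_{0}\subseteq\Phi$ with $s(\Phi_{0})>1$.

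\emph{Step 3: from similarity dimension to dimension.} By the self-similar-set form of the main theorem — the Hausdorff-dimension version of \cite{Hochman2014} — a finite IFS $\Phi_{0}$ on $\mathbb{R}$ with $s(\Phi_{0})\ge1$ has $\dim X_{\Phi_{0}}=1$ unless its level-$n$ cylinder maps cluster super-exponentially, i.e.\ the minimal gap $\Delta_{n}(\Phi_{0})$ between distinct level-$n$ cylinders tends to $0$ faster than every exponential rate. The overlaps conjecture predicts that such clustering forces an exact coincidence $\varphi_{I}=\varphi_{J}$ between distinct equal-length words $I,J$. Granting it, the conjecture follows once we choose $\Phi_{0}$ with $s(\Phi_{0})>1$ and without exact coincidences: the coincidence loci form a countable union of proper algebraic subvarieties of the parameter spaces $\mathcal{S}^{k}$, and since $\Phi$ is uncountable one expects a generic finite subset to avoid all of them while Step~2 still delivers $s>1$. (Alternatively one can work directly on $X$ with a non-atomic $P$-stationary measure $\mu$ on $X$: $P$-stationarity says $\mu$ is fixed under affine convolution by $P$, and the affine inverse theorem then forces either $\dim\mu=1$, whence $\dim X=1$, or super-exponential clustering of the random cylinder measures $\varphi_{i_{1}}\!\cdots\varphi_{i_{n}}.\mu$.)

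\emph{The main obstacle.} To make Step~3 unconditional one must show that an uncountable compact $\Phi$ for which $X$ is not a single point (equivalently, $\Phi$ does not fix a common point) always contains a finite subsystem that is simultaneously of similarity dimension $>1$ and free of super-exponential cylinder clustering. When $\dim\Phi>0$ this is achievable — positive dimension supplies enough well-separated maps to force $\Delta_{n}$ to decay only exponentially, which is the unconditional theorem quoted in the introduction. But a dimension-zero uncountable $\Phi$ could a priori have \emph{every} large finite subsystem suffer super-exponential cylinder clustering while still exhibiting no exact coincidence, and ruling this out is precisely an instance of the overlaps conjecture. I expect this to be the hard point — one would have to either establish the coincidence/clustering avoidance inside $\Phi$ directly, or bound $\dim X$ from below by exploiting the ``infinite mixing'' of the whole family without ever descending to a finite subsystem — and it is the reason the statement is posed here as a conjecture rather than a theorem.
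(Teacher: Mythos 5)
You have correctly recognized that the statement is a conjecture: the paper does not prove it, but proves exactly the two things your sketch gestures at, namely the implication from the overlaps conjecture (Theorem \ref{thm:implication-btwn-conjectures}) and the unconditional result under $\dim\Phi>0$ (Theorem \ref{thm:main}). Your Steps 1--2 and the appeal to Conjecture \ref{conj:overlaps} follow the same route as the paper's Theorem \ref{thm:implication-btwn-conjectures}. The genuine gap is in the step you dismiss with ``one expects a generic finite subset to avoid all of them'': producing a \emph{free} finite subsystem of large similarity dimension is not a genericity statement inside $\Phi$, and in fact it can fail outright for subsets of $\Phi$ itself. The paper gives the explicit obstruction: if every map in $\Phi$ has the same contraction $a$ with, say, $a^{2}-a-1=0$, then the relation $z_{1}z_{2}z_{2}=z_{2}z_{1}z_{1}$ holds for \emph{every} pair of maps, so no two elements of $\Phi$ generate a free semigroup, even though $\Phi$ may be uncountable; your ``coincidence loci are a countable union of proper subvarieties'' picture breaks down because the uncountable set $\Phi$ can sit entirely inside such a locus (a coset of a one-parameter subgroup). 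The paper's remedy, and the actual mathematical content behind the conditional implication, is a Tits-like alternative for semigroups (Proposition \ref{prop:large-free-sets}): if $\Phi$ is uncountable and not contained in a virtually abelian subgroup (which is guaranteed when the attractor is not a point and the maps contract), then some fixed power $\Phi^{k}$ contains an infinite free set; one then takes $\lceil 1/r_{0}^{2k}\rceil$ of these free maps to force $s\geq 1$ and applies Conjecture \ref{conj:overlaps}. Note that the free maps live in $\Phi^{k}$, not in $\Phi$, and that proving Proposition \ref{prop:large-free-sets} occupies all of Section \ref{sec:Conjecture-implies-conjecture} (the curve class $\mathcal{C}$, the analysis of relations, and the case analysis over cosets); this is the missing idea in your proposal, not a routine cardinality argument.

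Two smaller points. First, the conditional step should be run with the freeness form of the overlaps conjecture exactly as stated in Conjecture \ref{conj:overlaps}; your reformulation via super-exponential clustering of cylinders mixes in the unconditional theorem of \cite{Hochman2014} and is not what the reduction needs. Second, your description of the unconditional $\dim\Phi>0$ result as coming from ``enough well-separated maps to force $\Delta_{n}$ to decay only exponentially'' does not match the paper's argument: Theorem \ref{thm:main} is proved by putting a Frostman measure $\nu$ of positive dimension on $\Phi$, taking the $\nu$-stationary measure $\mu$, proving that $\mu$ has exact entropy dimension and is entropy porous, and then applying the entropy-growth theorem for the $G$-action (Theorems \ref{thm:inverse-theorem-for-action} and \ref{thm:entropy-dimension-growth}) to contradict $\mu=\nu\bigdot\mu$ with $\edim\mu<1$; no finite subsystem or separation condition is extracted. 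Your parenthetical alternative via stationary measures is closer in spirit, but as written neither branch of your Step 3 closes the argument, and the conjecture itself of course remains open.
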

Some evidence for the conjecture is the fact that it is implied by
another well-known conjecture about the dimension of self-similar
sets. Recall that given a finite set $\Phi\subseteq\mathcal{S}$,
the similarity dimension $s=s(\Phi)$ of $\Phi$ (and, by convention,
of its attractor) is the solution to
\[
\sum_{\varphi\in\Phi}\left\Vert \varphi\right\Vert ^{s}=1,
\]
where $\left\Vert \varphi\right\Vert $ denotes the unsigned contraction,
or optimal Lipschitz constant, of $\varphi$. The similarity dimension
is an upper bound on the dimension of the attractor $X$, and we always
have $\dim X\leq1$ (because $X\subseteq\mathbb{R}$) so there is
the upper bound
\begin{equation}
\dim X\leq\min\{1,s(\Phi)\}.\label{eq:trivial-dim-bound}
\end{equation}
The inequality can be strict, but it is believed that this can happen
only for algebraic reasons. Specifically, let us say that $\Phi\subseteq G$
is free if the elements of $\Phi$ freely generate a free semigroup,
that is, if $\varphi_{1},\ldots,\varphi_{m},\psi_{1},\ldots,\psi_{n}\in\Phi$
then $\varphi_{1}\cdot\ldots\cdot\varphi_{m}=\psi_{1}\cdot\ldots\cdot\psi_{n}$
implies $m=n$ and $\varphi_{i}=\psi_{i}$ for all $i$.
\begin{conjecture}
[e.g. \cite{Simon1996}]\label{conj:overlaps}If $\Phi\subseteq\mathcal{S}$
is finite and free then its attractor $X$ has dimension $\dim X=\min\{1,s(\Phi)\}$.\end{conjecture}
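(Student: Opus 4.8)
The plan is to deduce the conjecture from the behaviour of entropy under convolution, which is the central topic of this paper, applied to a self-similar \emph{measure} on $X$ rather than to $X$ itself. First I would fix $\Phi=\{\varphi_{1},\dots,\varphi_{k}\}$, $s=s(\Phi)$, choose an appropriate probability vector $(p_{i})$ on $\{1,\dots,k\}$, and let $\mu$ be the self-similar measure with $\mu=\sum_{i}p_{i}\,\varphi_{i}\mu$; it is standard that $(p_{i})$ can be chosen so that $\dim\mu$ is as close to $\min\{1,s\}$ as we wish (one takes $p_{i}=\left\Vert \varphi_{i}\right\Vert ^{s}$ when $s\le1$, giving equality). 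Since $\dim\mu\le\dim X\le\min\{1,s\}$ by (\ref{eq:trivial-dim-bound}), it suffices to prove $\dim\mu=\min\{1,s\}$. Iterating the fixed-point relation gives $\mu=\sum_{|w|=n}p_{w}\,\varphi_{w}\mu$ for all $n$, with $\varphi_{w}=\varphi_{w_{1}}\cdots\varphi_{w_{n}}$ and $p_{w}=\prod_{i}p_{w_{i}}$; equivalently $\mu=\int\varphi\mu\,d\theta^{\ast n}(\varphi)$ where $\theta=\sum_{i}p_{i}\delta_{\varphi_{i}}$ is a measure on the affine group. Hence, up to the bounded distortion incurred by replacing fixed-length words with stopping-time cylinders of scale $\approx2^{-m}$, the measure $\mu$ is a convolution-type average of a rescaled copy of $\mu$ over the random affine map $\varphi_{w}$, and in particular over the random translation $\varphi_{w}(0)$, whose law is $\nu_{n}=\sum_{|w|=n}p_{w}\,\delta_{\varphi_{w}(0)}$.

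The next step is to apply the inverse theorem for the growth of entropy under convolution (in the affine group, via the linearization discussed above; see \cite{Hochman2014,Hochman2015}). On one hand $\tfrac{1}{m}H(\mu,\mathcal{D}_{m})\to\dim\mu$ by exact dimensionality of self-similar measures; on the other, the ``input'' entropy supplied by $\nu_{n}$ is the random-walk entropy of $\theta$, which with our choice of weights is $\approx m\min\{1,s\}$ at resolution $2^{-m}$. If $\dim\mu<\min\{1,s\}$ these are incompatible unless a definite proportion of the entropy is \emph{lost} in the convolution at a positive fraction of the scales $1\le k\le m$; by the inverse theorem this forces the scale-$k$ components of $\nu_{n}$ to be nearly atomic for most $k$, and unwinding this forces two distinct generation-$n$ cylinders to be extremely close. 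Concretely, writing $\Delta_{n}=\min\{|\varphi_{u}(0)-\varphi_{v}(0)|:u\neq v,\ |u|=|v|=n\}$, a dimension drop forces $-\tfrac{1}{n}\log\Delta_{n}\to\infty$, i.e. the cylinders approach one another super-exponentially fast. Contrapositively: if $\Delta_{n}\ge c^{n}$ for some $c>0$ and infinitely many $n$ (``exponential separation''), then $\dim\mu=\min\{1,s\}$ and Conjecture \ref{conj:overlaps} holds for $\Phi$.

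Thus the entire content of the conjecture reduces to the implication: freeness of $\Phi$ precludes super-exponential approximation between distinct cylinders of the same generation. I expect this to be the main obstacle, and it is: freeness yields only the qualitative statement $\varphi_{u}\neq\varphi_{v}$, hence $\Delta_{n}>0$, for all $n$ and all $u\neq v$ of equal length, with no quantitative lower bound on $\Delta_{n}$. When the contraction ratios and translation parts of the $\varphi_{i}$ are algebraic the obstacle vanishes: each nonzero $\varphi_{u}(0)-\varphi_{v}(0)$ is then an algebraic number of degree and logarithmic height $O(n)$, so a Liouville/height estimate gives $\Delta_{n}\ge c^{n}$, exponential separation holds, and the conjecture follows in the algebraic case. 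In general, however, there is no known mechanism forcing a free (transcendental) family to be exponentially separated — whether a dimension drop can occur with no exact overlaps is precisely the open question — and the hard part, bridging the gap between ``free'' and ``exponentially separated'', at present appears to be as difficult as the conjecture itself.
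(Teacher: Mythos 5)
The statement you are attempting to prove is labelled a \emph{Conjecture} in the paper and attributed to Simon \cite{Simon1996}; the paper offers no proof of it and in fact uses it only as a hypothesis (Theorem \ref{thm:implication-btwn-conjectures} proves ``Conjecture \ref{conj:overlaps} implies Conjecture \ref{conj:main}'', not the conjecture itself). So there is no ``paper's own proof'' to compare against, and no proof should be expected of you.

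That said, your write-up is a faithful sketch of what \emph{is} known: you correctly reduce the problem, via the entropy-growth/inverse theorem machinery of \cite{Hochman2014}, to the question of whether a dimension drop $\dim X<\min\{1,s(\Phi)\}$ forces the distances $\Delta_n=\min\{|\varphi_u(0)-\varphi_v(0)|:u\neq v,\ |u|=|v|=n\}$ to decay super-exponentially; you correctly conclude that \emph{exponential separation} ($\Delta_n\geq c^n$ along a subsequence) yields the conjecture; and you correctly observe that in the algebraic case this follows from a height/Liouville bound, recovering the theorem of \cite{Hochman2014} for algebraic parameters. One small inaccuracy: you state that the weights $p_i=\left\Vert\varphi_i\right\Vert^s$ ``can be chosen so that $\dim\mu$ is as close to $\min\{1,s\}$ as we wish, giving equality'' --- what you can arrange a priori is that the \emph{similarity dimension} of $(\Phi,p)$ equals $\min\{1,s\}$; that $\dim\mu$ agrees with it is the conclusion, not a choice. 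This is easily repaired and does not affect the structure of your argument.

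The genuine gap is exactly the one you identify in your final paragraph, and it is not a gap you can close: freeness gives only $\Delta_n>0$ for all $n$, with no quantitative lower bound, and no argument is currently known that upgrades ``free'' (or even ``no exact overlaps'') to ``exponentially separated'' outside the algebraic setting. This is precisely the open problem, equivalent in difficulty to the conjecture itself. In short: your proposal is a correct and well-informed reduction to an open problem, not a proof, and you are right to flag it as such. No such proof appears in the paper, because none is known.
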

\begin{theorem}
\label{thm:implication-btwn-conjectures}Conjecture \ref{conj:overlaps}
implies Conjecture \ref{conj:main}.
\end{theorem}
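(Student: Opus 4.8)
I will prove the implication by a three–step reduction, writing $\langle\Phi\rangle\subseteq\mathcal{S}$ for the sub-semigroup generated by $\Phi$ under composition.

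\emph{Step 1: reduction to sub-self-similar sets.} Assume $X$ is not a single point. Then the maps of $\Phi$ have no common fixed point, since a common fixed point $c$ would make $\{c\}$ a solution of \eqref{eq:Hutchison}, forcing $X=\{c\}$ by uniqueness of the attractor. Next, $w(X)\subseteq X$ for every $w\in\langle\Phi\rangle$ (apply the relation \eqref{eq:Hutchison} repeatedly), so for any finite $\Psi\subseteq\langle\Phi\rangle$ the Hutchinson operator $Y\mapsto\bigcup_{w\in\Psi}w(Y)$ sends the compact set $X$ into itself; being a contraction on compact sets it has a unique fixed point $X_\Psi$ (the attractor of $\Psi$) obtained as the decreasing intersection of its iterates on $X$, whence $X_\Psi\subseteq X$. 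If $\Psi$ is moreover finite and free, Conjecture~\ref{conj:overlaps} gives $\dim X_\Psi=\min\{1,s(\Psi)\}$, and therefore $\dim X\ge\min\{1,s(\Psi)\}$. Since $\dim X\le 1$ trivially, it suffices to establish the following.
\begin{center}\emph{For every $\varepsilon>0$ there is a finite free $\Psi\subseteq\langle\Phi\rangle$ with $s(\Psi)>1-\varepsilon$.}\end{center}

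\emph{Step 2: building $\Psi$ as a well-separated packing.} By compactness of $\Phi$ one can fix a compact interval $J$ and restrict attention to words $w\in\langle\Phi\rangle$ that are long enough that $w(J)\subseteq\operatorname{int}J$; such a "cylinder" $w(J)$ is a subinterval of $J$ of length $\lVert w\rVert\,|J|$, so any finite family $\Psi$ of words for which the $\{w(J)\}_{w\in\Psi}$ are pairwise disjoint satisfies the open set condition with open set $\operatorname{int}J$, hence is free, and has $\sum_{w\in\Psi}\lVert w\rVert\le 1$. Since refining (composing) makes all cylinders short without changing anything essential, $s(\Psi)>1-\varepsilon$ will hold as soon as $\sum_{w\in\Psi}\lVert w\rVert$ is close enough to $1$. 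Thus the target reduces to a \emph{filling lemma}: for every subinterval $I\subseteq J$ and every $\eta>0$ there is a finite set of words $w$ with the $w(J)\subseteq I$ pairwise disjoint and $\sum_w|w(J)|\ge(1-\eta)|I|$. Granting this, exhaust $J$ greedily — fill most of $J$, then fill most of each remaining gap, iterate — and after truncating collect the cylinders produced into $\Psi$.

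\emph{Step 3: the filling lemma — the main obstacle.} This is the only point where uncountability of $\Phi$ is used, and it is genuinely essential: for a \emph{finite} $\Phi$, say the two maps generating the middle-third Cantor set, the cylinders of $\langle\Phi\rangle$ all lie in a set of dimension $<1$ and can never pack an interval with efficiency near $1$, so the italicized claim of Step~1 fails. The point is that composing maps drawn from an uncountable family produces a cylinder collection whose set of contraction ratios — equivalently, passing to logarithms, an iterated sumset of the ratio set of $\Phi$ — and whose translation parts become, at suitable scales, rich enough to tile any interval up to arbitrarily small error; the quantitative content of "rich enough" is exactly what the growth-of-entropy results of \cite{Hochman2014,Hochman2015} for the affine action on $\mathbb{R}$ supply. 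The delicate case, which I expect to be the hard part, is the degenerate one in which $\Phi$ is uncountable but, for instance, has only finitely many distinct fixed points and a Hausdorff-dimension-zero set of ratios: here no explicit choice of cylinders is available, and one must run the entropy-growth argument on the ratio set while simultaneously controlling where the cylinders land in $\mathbb{R}$, exploiting the existence of at least two distinct fixed points both to spread the cylinders across $J$ and to recurse into the leftover gaps. With the filling lemma in hand, Steps~1–2 conclude the proof: letting $\varepsilon\to0$ gives $\dim X=1$, which is Conjecture~\ref{conj:main}.
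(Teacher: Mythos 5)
Your Step~1 is the right reduction and coincides with the paper's starting point: find a finite free $\Psi$ in some power of $\Phi$ with similarity dimension close to $1$, then invoke Conjecture~\ref{conj:overlaps}. But Steps~2--3 aim at a geometric target that is both far stronger than necessary and, as you openly concede, unproved. Your ``filling lemma'' is not established; you flag it as ``the main obstacle'' and ``the hard part'' and gesture at the entropy-growth machinery of \cite{Hochman2014,Hochman2015} as the missing ingredient, but those results (Theorem~\ref{thm:inverse-theorem-for-action} and its relatives) concern growth of entropy of convolutions under porosity assumptions; they do not produce families of pairwise disjoint cylinders packing an interval with efficiency tending to $1$, and in the paper they are only used for the \emph{unconditional} Theorem~\ref{thm:main}, not for the implication between the two conjectures. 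The implication is actually proved by a purely algebraic Tits-type alternative (Proposition~\ref{prop:large-free-sets}): uncountability plus not being contained in a virtually abelian subgroup forces some fixed power $\Phi^{k}$ to contain an \emph{infinite} free set. Compactness of $\Phi$ gives a uniform lower bound $\left\Vert\varphi\right\Vert\ge r_{0}^{2k}$ on contractions in $\Phi^{k}$, so taking $\ell\ge r_{0}^{-2k}$ free generators already yields $\sum_{\varphi\in\Phi_0}\left\Vert\varphi\right\Vert^{s}\ge\ell r_{0}^{2k}\ge1$ for all $s\le1$, hence $s(\Phi_0)\ge1$ and $\dim X\ge\dim X_{0}=1$. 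No separation or packing of cylinders is involved at any point.

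The deeper issue is that your Step~2 replaces freeness (the hypothesis of Conjecture~\ref{conj:overlaps}) with the open set condition, a vastly stronger requirement. Freeness is an algebraic condition, detected by comparing finitely many words, and it holds generically: the paper shows (Corollary~\ref{cor:reltaions-with-free-parameters}) that the $\gamma$ satisfying a nontrivial relation over a finite free set form a countable union of curves in a one-parameter family, so uncountability is exactly enough to dodge them. By contrast, demanding disjoint $w(J)$ with $\sum|w(J)|\ge(1-\eta)|J|$ forces the iterated sumsets of the ratio and translation data of $\Phi$ to fill intervals with near-unit efficiency, which is essentially as strong as the conclusion $\dim X=1$ you are after; in the degenerate case you single out --- $\Phi$ uncountable but contained in a coset of the translation group with very sparse translation set --- there is no visible way to do this, yet Proposition~\ref{prop:free-set-in-monocontracting-family} still produces an infinite free set there by a transcendence-degree argument, sidestepping geometry entirely. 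So the gap is not a single missing lemma but a misidentification of the mechanism: the result is algebraic, not metric, and the packing route will not close.
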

The proof of this implication relies on algebraic considerations,
namely, that for any ``large'' enough $\Phi\subseteq G$ there are
infinite free subsets of $\Phi$ (or of $\Phi^{k}$ for some $k$).
This result is in the same spirit as the classical Tits alternative,
which asserts that if a subgroup of a linear group is not virtually
solvable, then it contains free subgroups. Of course, we are working
in the affine group of the line, which is itself solvable, and so
cannot contain free subgroups at all; but this does not preclude the
existence of free semigroups, and this is what we need. Once we have
a large free semigroup, Conjecture \ref{conj:overlaps} ensures that
the attractor has large dimension. For a precise statements and proof
see Section \ref{sec:Conjecture-implies-conjecture}.

We are not able to prove conjecture \ref{conj:main}, but we give
the following result in its direction, where uncountability is replaced
by positive dimension:
\begin{theorem}
\label{thm:main}Let $X\subseteq\mathbb{R}$ be the attractor of a
compact family $\Phi\subseteq S$. If $\dim\Phi>0$, then either $X$
is a point, or $\dim X=1$.
\end{theorem}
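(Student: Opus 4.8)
The plan is to argue by contradiction: assume $X$ is not a point but $\alpha:=\dim X<1$, while $\dim\Phi>0$. I will manufacture a probability measure $\mu$ supported on $X$ that is stationary for a positive-dimensional, non-degenerate random walk $\nu$ on $\mathcal{S}$ — that is, $\mu=\nu.\mu:=\int\varphi_*\mu\,d\nu(\varphi)$ — and then invoke the Hausdorff-dimension version of the affine entropy-growth theorem developed in the earlier sections: zero entropy growth together with $\dim\nu>0$ and non-degeneracy of $\nu$ forces $\dim\mu=1$, contradicting $\dim\mu\le\dim X<1$.

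First the reductions. Since $\Phi$ is compact in $\mathcal{S}$, the contraction ratios lie in a fixed $[r_0,r]\subset(0,1)$, every $\varphi\in\Phi$ has a unique fixed point $x_\varphi$, and for each $k$ the set $X$ is also the attractor of $\Phi^{k}$ (more generally $g(X)\subseteq X$ for any composition $g$ of elements of $\Phi$). Because $X$ is not a point the fixed points $x_\varphi$ cannot all agree — otherwise $\sup_{y\in X}|y-x_0|\le r\sup_{y\in X}|y-x_0|$ would force $X=\{x_0\}$ — so $\Phi$ contains $\varphi_1,\varphi_2$ with $x_{\varphi_1}\ne x_{\varphi_2}$. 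By Frostman's lemma fix $\rho$ on $\Phi$ with a positive Frostman exponent. I then take $\nu=\rho*\sigma$, a convolution in $G$, where $\sigma$ is the uniform measure on a finite set built from $\varphi_1,\varphi_2$ and a few of their powers; $\nu$ is supported on a compact subset of $\mathcal{S}$ obtained from $\Phi$ by composition, it inherits a positive Frostman exponent from $\rho$ (being an average of bi-Lipschitz images of $\rho$), and — this is the point of the choice of $\sigma$ — $\supp\nu$ is not contained in any coset of a one-parameter subgroup of $G$. The last assertion is an elementary computation: the one-parameter subgroups of $G$ are the translation subgroup and the point stabilizers $H_{p}$, so their cosets are either sets of similarities with a prescribed contraction factor or sets $\{\psi:\psi(p)=q\}$, and the presence in $\supp\nu$ of $\psi\varphi_i$ alongside $\psi\varphi_i^{2}$ (for a single $\psi\in\supp\rho$) excludes both — e.g.\ $\psi\varphi_i(p)=\psi\varphi_i^{2}(p)$ forces $p=x_{\varphi_i}$, impossible for $i=1$ and $i=2$ at once. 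Finally, with $\pi$ the coding map of the compact family $\supp\nu$, put $\mu=\pi_*(\nu^{\mathbb{N}})$; then $\supp\mu\subseteq X$ and $\mu=\nu.\mu$, and since $\supp\nu$ lies in no point stabilizer the stationarity equation shows $\mu$ has no atoms (a maximal-mass atom would be permuted, hence fixed, by $\nu$-a.e.\ similarity, forcing $\supp\nu\subseteq H_{p}$).

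Now the quoted theorem finishes it: $\nu.\mu=\mu$ has zero entropy growth, $\dim\nu>0$, and $\supp\nu$ lies in no one-parameter coset, so $\dim(\nu.\mu)>\dim\mu$ unless $\dim\mu=1$; since $\dim\mu\le\dim X=\alpha<1$ this is absurd, hence $\dim X=1$. The one configuration not caught by the construction above — where $\Phi$ genuinely sits in a one-parameter coset, and so does every $\Phi^{k}$ — is when all contraction ratios of $\Phi$ equal a common $c$; then $X=cX+B$ with $\dim B=\dim\Phi>0$, and the identical contradiction follows instead from the $\mathbb{R}$-version of the entropy-growth theorem applied to the stationary $\mu$, which now satisfies $\mu=(\text{scaling by }c)_*\mu*\nu_B$.

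The step I expect to require the most care is the construction of $\sigma$ together with the accompanying case analysis. One must choose the finite set supporting $\sigma$ so that simultaneously $\supp\nu$ escapes \emph{every} coset of \emph{every} one-parameter subgroup (so that the affine theorem, not just a statement about one distinguished subgroup, applies), $\nu$ keeps a positive Frostman exponent, and the induced $\mu$ is non-degenerate; and one must isolate and dispatch the borderline cases — all contraction ratios equal, or $\Phi$ inside a coset that is not a subgroup — verifying that in each of them either $X$ is a point or one of the two entropy-growth theorems still applies (the second case reducing, after replacing $\Phi$ by $\Phi^{2}$, to the generic one). None of this is deep, but getting the bookkeeping and the case division right is the real content of the argument; the entropy machinery, once fed the correct $\nu$ and $\mu$, does the rest.
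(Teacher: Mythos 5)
Your overall scheme --- pick a Frostman measure $\nu$ on $\Phi$, form its stationary measure $\mu$ with $\nu\bigdot\mu=\mu$, and derive a contradiction from an entropy-growth theorem for the $G$-action --- is the same as the paper's, but the proposal invokes a theorem that the paper does not contain, and omits the hypothesis that actually makes the paper's theorem fire.

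The entropy-growth results for the action (Theorems \ref{thm:inverse-theorem-for-action}, \ref{thm:entropy-dimension-growth} and \ref{thm:hausdorff-dimension-growth}) do \emph{not} take ``$\supp\nu$ avoids every coset of a one-parameter subgroup'' as a hypothesis. Their hypothesis on the $\mathbb{R}$-side measure is that it is non-atomic and $(1-\varepsilon)$-\emph{entropy porous}; the degeneration you are worried about --- components $\nu_{\varphi,i}$ sitting near $\ker C_{(\varphi,x)}$ --- is neutralized inside the proof of Theorem \ref{thm:inverse-theorem-for-action}, via Lemmas \ref{lem:Lipschitz-constant-away-from-diagonal} and \ref{lem:entropy-of-image}: the kernels at distinct points $x\neq x'$ are transverse, so for a non-atomic $\mu$ the pushforward $C_{(\varphi,x)}\nu_{\varphi,i}$ retains a fixed fraction of the entropy for $\mu$-most $x$. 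Consequently the entire $\sigma$-construction, the case analysis on cosets, the ``all contraction ratios equal'' branch, and the auxiliary ``$\mathbb{R}$-version'' argument at the end are not needed, and the statement ``$\dim\nu>0$ and $\supp\nu$ in no one-parameter coset $\Longrightarrow\dim(\nu\bigdot\mu)>\dim\mu$ unless $\dim\mu=1$'' is simply not a theorem proved in this paper (nor is being in a coset of a point-stabilizer the same as having constant contraction ratio, so even the reduction in your side case is off).

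The genuine obstacle, which the proposal skips entirely, is establishing that the stationary measure $\mu$ is $\edim\mu$-entropy porous. This is the content of Proposition \ref{prop:mu-has-essentially-bounded-component-entropy}, and it is where the real work lies: one decomposes $\mu=\mathbb{E}(\varphi_1\cdots\varphi_{\tau_i}\mu)$ via an optional-stopping argument, shows that most dyadic cells are nearly covered by single scaled copies of $\mu$, and deduces that typical components have scale-$m$ entropy close to $\edim\mu<1$. Without this, neither Theorem \ref{thm:inverse-theorem-for-action} nor Theorem \ref{thm:entropy-dimension-growth} applies, and the contradiction never materializes. (One also needs $\edim\mu$ to exist, Proposition \ref{prop:entropy-dimension-exists}, again via the cylinder decomposition.) The paper then works with entropy dimension, deducing $\edim\mu=1\Rightarrow\bdim X=1\Rightarrow\dim X=1$ via the box-equals-Hausdorff dimension property of attractors; using the Hausdorff version (Theorem \ref{thm:hausdorff-dimension-growth}) directly, as you propose, would require the stronger pointwise porosity ``along $(n_i)$ at $\eta$-a.e.\ $x$,'' which has not been verified for stationary measures either.

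In short: delete the $\sigma$-engineering and the coset case analysis (they address a non-existent hypothesis), and supply the entropy-porosity proof for $\mu$, which is the substantive missing ingredient.
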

We say a little about the proof later in this introduction, but before
moving on, let us mention a related and intriguing variant of the
conjecture, where uncountability is downgraded to ordinary infinity:
\begin{problem}
If $\Phi\subseteq\mathcal{S}$ is an infinite compact family of similarities,
and its attractor $X$ is not a single point, is $\dim X=1$?
\end{problem}
Of course if this were true, it would imply Conjecture \ref{conj:main}.
But this problem has the advantage that one can restrict it to algebraic
$\Phi$, that is, families $\Phi\subseteq\mathcal{S}$ all of whose
coordinates in the standard parametrization lie in a common algebraic
field. Note that Conjecture \ref{conj:overlaps} was shown in \cite{Hochman2014}
to hold under a similar assumption, and the methods of that paper
reduce the problem above to one about the random-walk entropy of families
of similarities of bounded contraction in a given algebraic group.
It seems possible that either a proof or counter-example can be found
in this setup.

There are other strengthenings of Conjecture \ref{conj:main}: for
example, instead of $\dim X=1$ one may ask if $X$ has positive Lebesgue
measure, or even non-empty interior. These problems are quite natural,
but seem out of reach of current methods.

\subsection{Sets containing many copies of the Cantor set}

Our second subject concerns the following problem. Let $K$ denote
the middle-$1/3$ Cantor set, translated so that it is symmetric around
the origin.
\begin{problem}
\label{Prob:Keleti-1}Given $s>0$, how large must a set $Y$ be if
there is a compact $C\subseteq\mathbb{R}$ with $\dim C=s$ and $Y$
contains a scaled copy of $K$ centered at each $c\in C$? In particular,
for $C=\mathbb{R}$, must we have $\dim Y=\min\{1,\dim C+\dim K\}$?
\end{problem}
I first learned of this problem from P. Shmerkin and T. Keleti. It
is related to problems on maximal operators on fractal sets, studied
by \L{}aba and Pramanik \cite{LabaPramanik2011}. It also is a relative
of the Furstenberg ``$\alpha$-set'' problem. 
\begin{theorem}
\label{thm:centered-Cantor-sets}Let $C\subseteq\mathbb{R}$ be compact
and of positive dimension. If $Y\subseteq\mathbb{R}$ contains a scaled
copy of $K$ centered at $c$ for every $c\in C$, then $\dim Y>\dim K+\delta$
where $\delta>0$ depends only on $\dim C$ and $\dim K$.
\end{theorem}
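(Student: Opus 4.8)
The plan is to reduce the statement to the entropy-growth estimates for the affine action of $G$ on $\mathbb{R}$ announced in the introduction (the linearized form of the results of \cite{Hochman2014,Hochman2015}), applied to a Frostman measure on $C$ together with the Cantor--Lebesgue measure $\nu$ on $K$. First I would normalize: since $K=-K$, a scaled copy of $K$ centered at $c$ is $g_c(K)$ for the similarity $g_c(x)=c+t_c x$ with $t_c>0$. Decomposing $C$ according to the dyadic size of $t_c$ and the location of $c$ and passing to a positive-dimensional compact piece, I may assume the $g_c$ lie in a fixed compact $\Phi\subseteq\mathcal{S}$ with contraction ratios in some $[R^{-1},R]$ and, by measurable selection, that $c\mapsto g_c$ is Borel. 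Fix a measure $\mu_0$ on $C$ with $\mu_0(B(x,r))\le r^{s}$ for $s$ close to $\dim C$, and let $\mu$ be its pushforward to $\Phi\subseteq G$; projecting to the translation coordinate gives $\dim\mu\ge\dim\mu_0\ge s>0$. Then $\mu.\nu$, the image of $\mu\times\nu$ under $(g,x)\mapsto gx$, is supported on $Y$, so it suffices to show $\dim(\mu.\nu)\ge\dim K+\delta$ with $\delta=\delta(\dim C,\dim K)>0$.

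Since each $g_c$ is bi-Lipschitz we have $\dim(\mu.\nu)\ge\dim\nu=\dim K$ automatically (the dimension of an average of measures is at least the essential infimum of their dimensions). To get the gap I would run the inverse theorem contrapositively. Suppose $\dim(\mu.\nu)<\dim K+\delta$. Passing to dyadic scales this gives $H_n(\mu.\nu)\le H_n(\nu)+\delta' n$ for infinitely many $n$, with $\delta'$ controlled by $\delta$, while $H_n(\nu)=n\dim K+O(1)$ and $\dim K<1$. The entropy-growth theorem for the affine action then forces, for all but an $\varepsilon(\delta')$-fraction of scales $i\le n$, a dichotomy: either $\mu$ is $(\varepsilon,i)$-atomic (negligible local entropy at resolution $2^{-i}$) or $\nu$ is $(\varepsilon,i)$-uniform (near-maximal local entropy there), with $\varepsilon\to 0$ as $\delta\to 0$.

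The decisive point is that the second alternative is essentially never available for $\nu$: the Cantor--Lebesgue measure is uniformly non-saturated, i.e. there is $c_0=c_0(\dim K)<1$ bounding the local entropy of $\nu$ in every scale window of bounded length, so for $\varepsilon$ small the set of scales at which $\nu$ is $(\varepsilon,i)$-uniform is empty. Hence $\mu$ must be $(\varepsilon,i)$-atomic for a $(1-\varepsilon)$-fraction of all scales $i\le n$, which forces $\tfrac1n H_n(\mu)\le\eta(\varepsilon)$ with $\eta(\varepsilon)\to0$. Choosing $\delta$, and hence $\varepsilon$, small enough --- depending only on $\dim K$ (through $c_0$) and on $\dim C$ --- that $\eta(\varepsilon)<\dim C$, we contradict $\dim\mu\ge s>0$. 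This yields the required $\delta=\delta(\dim C,\dim K)$, and therefore $\dim Y\ge\dim(\mu.\nu)>\dim K+\delta$.

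I expect two points to need real care. First, the genuinely affine nature of the problem: if all $t_c$ are equal then $\Phi$ sits in the one-parameter translation subgroup and $Y\supseteq C+K$, so the affine statement degenerates to the $\mathbb{R}$-convolution inverse theorem applied to $\mu_0*\nu$; in general one must check that the positive-dimensionality of $\mu$ surviving the pigeonholing is of the kind the affine entropy-growth statement actually uses, and that its compactness and bounded-ratio hypotheses hold --- this is exactly where the linearization argument of the introduction enters. Second, and more delicate, is upgrading ``linear entropy growth at a density-one set of scales'' into a lower bound on the Hausdorff dimension of $\supp(\mu.\nu)$ with $\delta$ independent of $\mu$: here I would use that $\nu$ is exactly $\dim K$-dimensional and self-similar to regularize $\mu.\nu$ (or pass to a suitable component measure) so that its lower local dimension, not merely its box counting, exceeds $\dim K+\delta$. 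Making the whole chain of $\varepsilon$'s and $\delta$'s depend only on $\dim C$ and $\dim K$ is the main bookkeeping obstacle.
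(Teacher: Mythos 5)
Your high-level plan is right, and both your reduction and your use of porosity match the paper's strategy in spirit; but there is a genuine gap in the step that converts a Hausdorff-dimension hypothesis into an entropy statement, and it is much more than ``bookkeeping''.

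On the reduction: the paper takes a more economical route that avoids measurable selection. It defines $\Phi\subseteq\mathcal{S}$ to be the set of \emph{all} similarities $\varphi_{r,c}(x)=rx+c$ with $c\in C$ and $\varphi_{r,c}(K)\subseteq Y$, notes that the map $\varphi_{r,c}\mapsto c$ is Lipschitz from $\Phi$ onto $C$, so $\dim\Phi\geq\dim C>0$, and then invokes Theorem~\ref{thm:convolution-porous-sets}. Your pushforward of a Frostman measure through a selection $c\mapsto g_c$ lands in essentially the same place, at the cost of proving a selection theorem that is not actually needed.

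The real problem is the contrapositive. You write: ``Suppose $\dim(\mu.\nu)<\dim K+\delta$. Passing to dyadic scales this gives $H_n(\mu.\nu)\leq H_n(\nu)+\delta'n$ for infinitely many $n$.'' This implication fails. Hausdorff dimension of a measure is the essential infimum of pointwise (local) dimensions, whereas $\frac{1}{n}H(\cdot,\mathcal{D}_n)$ is an average over the whole measure; a measure may have arbitrarily small Hausdorff dimension and still satisfy $\frac1n H(\cdot,\mathcal{D}_n)\to 1$. So the inverse theorem for entropy of convolutions cannot be triggered by a Hausdorff-dimension hypothesis on $\mu.\nu$ in the way you outline. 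You do gesture at the fix (``pass to a suitable component measure ... so that its lower local dimension, not merely its box counting, exceeds $\dim K+\delta$''), but that is exactly where the paper does serious work, not bookkeeping: one passes from the global, entropy-averaged inverse theorem to a pointwise inverse theorem via the \emph{local entropy averages} method. Concretely, the paper develops (i) a local-entropy-averages lower bound on pointwise dimension along a power-growth sequence of scales (Lemma~\ref{lem:local-entropy-averages}); (ii) its linearized form for the $G$-action (Lemma~\ref{lem:linearized-local-entropy-averages}); (iii) a pointwise version of pushing entropy from $G$ to $\mathbb{R}$ with a Borel--Cantelli argument to handle all but finitely many scales (Lemma~\ref{lem:pushing-entropy-from-G-to-R-pointwise}); and (iv) a pointwise notion of entropy porosity stable under $\ll$ (Lemma~\ref{lem:pointwise-porosity-passes-to-ac-measures}); all of this feeds into Theorem~\ref{thm:hausdorff-dimension-growth}, which is the version that directly gives $\ldim(\theta.\eta)>\ldim\eta+\delta$. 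Your sketch, as written, would only give a box/entropy-dimension gap. If you want to pass through component measures instead, you then face the further obstruction that a restriction of $\mu.\nu$ to a positive-measure set is not of the form $\mu'.\nu'$, so the inverse theorem does not apply to it; the local entropy averages technique is precisely what circumvents this.

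A secondary point: Theorem~\ref{thm:convolution-porous-sets} requires $\dim\Phi>c$ where $c$ is the porosity constant of the target set, and you only know $\dim\Phi\geq\dim C$, which could be smaller than the natural porosity constant of $K$. This is harmless because a $c$-porous set is $c'$-porous for every $c'<c$, so one applies the theorem with $c'=\min\{c_K,\dim C/2\}$; but it should be said, and it is exactly why the final $\delta$ depends on both $\dim C$ and $\dim K$.
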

There is nothing special about the middle-third Cantor set; our argument
works when $K$ is any porous set. We note that A. M\'ath\'e recently
observed that, using a projection theorem due to Bourgain, one can
deduce that $\dim Y\geq\dim C/2$ \cite[Corollary 3.5]{Keleti2016}.
This gives better bounds than the theorem above in some cases, though
never when $\dim K>1/2$. Also worth noting is that for general sets
$K$, the last part of the problem (for the case $C=\mathbb{R}$)
has a negative answer, as shown by recent examples by A. M\'ath\'e
\cite[Theorem 3.2]{Keleti2016}. But for self-similar sets such as the
middle-$1/3$ Cantor set the question remains open and little seems
to be known. For a discussion of the history and related results see
\cite{Keleti2016}.

\subsection{\label{sub:The-role-of-additive-combinatorics}The role of additive
combinatorics}

Both of the problems above involve analysis of ``product'' sets,
where the product operation is the action of $G$ on $\mathbb{R}$.
Specifically, let $\varphi\bigdot x$ denote the image of $x\in\mathbb{R}$ under $\varphi\in G$, and for $X\subseteq\mathbb{R}$
and $\Phi\subseteq G$ denote 
\[
\Phi\bigdot X=\{\varphi\bigdot x\,:\,x\in X\,,\,\varphi\in\Phi\}
\]
A large part of this paper is devoted to studying how the ``size''
of $\Phi\bigdot X$ is related to the ``sizes'' and structure of $\Phi$ and $X$. This subject belongs to the field of additive
combinatorics, but we will not go into its history here. Rather, in
the coming paragraphs we outline, in an informal way, the main ideas
that we will encounter in the formal development later on. We emphasize
that the discussion below is heuristic and contains several half-truths,
which will be corrected later.

The leading principle is that $\Phi\bigdot X$ should be substantially
larger than $X$, unless there is some compatibility between the structure
of $X$ and $\Phi$. To explain the phenomenon we begin with the analogous
problem for sums of sets in the line, and choose Hausdorff dimension
as our measure of size. Thus, suppose that $\emptyset\neq X,Y\subseteq\mathbb{R}$
and consider their sum 
\[
X+Y=\{x+y\,:\,x\in X\,,\,y\in Y\}.
\]
It is clear that $\dim(Y+X)\geq\dim X$, since $Y+X$ contains a translate
of $X$. Equality of the dimensions can occur in two trivial ways:
(a) if the dimension of $X$ is maximal (that is, $\dim X=1$), or
(b) if the dimension of $Y$ is minimal (that is, $\dim Y=0$). Besides
the trivial cases there are many other non-trivial examples in which
$\dim(Y+X)=\dim X$ occurs, see e.g. \cite{ErdosVolkmann1966}.
However, when this happens, it turns out that the lack of dimension
growth can be explained by the approximate occurrence of (a) and (b)
for ``typical'' small ``pieces'' of the sets. To make this a little
more precise, define a scale-$r$ piece of $X$ to be a set of the
form $X\cap B_{r}(x)$ for some $x\in X$. The statement is then that,
if $\dim(Y+X)=\dim X$, then, roughly speaking, for typical scales
$0<r<1$, either (a) holds approximately for typical scale-$r$ pieces
of $X$, or (b) holds approximately for typical scale-$r$ pieces
of $Y$. The precise version of this, which is stated for measures
rather than sets, was proved in \cite{Hochman2014}; we state a variant
of it in Theorem \ref{thm:entropy-growth-under-convolution} below,
and use it as a black box. We remark that closely related results
appear in the work of Bourgain, e.g. \cite{Bourgain2003}.

Returning now to the action of $G$ on $\mathbb{R}$, suppose that
$\emptyset\neq X\subseteq\mathbb{R}$ and $\emptyset\neq\Phi\subseteq G$.
Then we again always have $\dim(\Phi\bigdot X)\geq\dim X$, and equality
can be explained by the same global reasons (a) and (b) above. But
there is also a third possibility, namely, (c) that $X$ is a point
and $\Phi$ is contained in the group of similarities fixing that
point. As with sumsets, $\dim(\Phi\bigdot X)=\dim X$ can also occur
in other ways, but it again turns out that if this happens then
the trivial explanations still apply to typical ``pieces'' of the
sets; thus at typical scales $0<r<1$, either (a) holds approximately
for typical scale-$r$ pieces of $X$, or (b) holds approximately
for typical scale-$r$ pieces of $\Phi$, or (c) holds approximately
for typical pairs of scale-$r$ pieces of $X$ and $\Phi$. 

But possibility (c) does not in reality occur unless $X$ is extremely 
degenerate. For suppose in the situation above that (c) holds at
some scale $r$. Then for typical pairs $\varphi\in\Phi$ and $x\in X$
we would have that $\Phi\cap B_{r}(\varphi)$ is approximately contained
in the stabilizer of $x$. But the $G$-stabilizers of different $x\in X$
are transverse (as submanifolds of $G$) so, assuming $X$ is infinite
(or otherwise large so as to ensure that no single point in it is
``typical''), by ranging over the possible values of $x$, we would
find that $\Phi\cap B_{r}(\varphi)$ is approximately contained in
the intersection of many transverse manifolds, hence is approximately
a point; and we are in case (b) again. In summary, if $X$ is large
enough, case (c) can be deleted from the list, leaving only (a) and
(b).

We shall prove the statements in the last two paragraph (in their
correct, measure formulation) in Section \ref{sec:Linearization-and-entropy-growth-for-action}.
But we note here that they are derived from the aforementioned result about sumsets, using a linearization
argument. To give some idea of how this works, let $f:G\times\mathbb{R}\rightarrow\mathbb{R}$
denote the action map $f(\varphi,x)=\varphi\bigdot x$, so that $\Phi\bigdot X=f(\Phi\times X)$.
Consider small pieces $X'=X\cap B_{r}(x)$ and $\Phi'=\Phi\cap B_{r}(\varphi)$
of $X,\Phi$, respectively. Then $\Phi'\bigdot X'=f(\Phi'\times X')$
is a subset of $\Phi\bigdot X$, and one can show that if we assume
that $\dim\Phi\bigdot X=\dim X$, then typical choices of $X'\bigdot\Phi'$
also satisfy $\dim\Phi'\bigdot X'=\dim X'$, approximately. But, since
$f$ is differentiable, for small $r$, the map $f$ is very close
to linear on the small ball $B_{r}(\varphi)\times B_{r}(x)$, and
hence $f(\Phi'\times X')$ is very close to the sumset $(\frac{\partial d}{d\varphi}f_{\varphi,x})\Phi'+(\frac{\partial}{\partial x}f_{\varphi,x})X'$
(here the subscript is the point at which the derivative is evaluated, and $\frac{\partial d}{d\varphi}f_{\varphi,x}$ is a $1\times 2$-matrix, so the sum is a sum of sets in the line). To this sum we can apply our results on sumsets. We remark that
the first term $\frac{\partial d}{d\varphi}f_{\varphi,x}(\Phi')$
in the sum may be substantially smaller than $\Phi'$, but if this
is the case it is because $\Phi'$ is essentially contained in the
kernel of $\frac{\partial d}{d\varphi}f_{\varphi,x}$, and this corresponds
to the case (c) above.

All this can be used to prove that, under mild assumptions, $\Phi\bigdot X$
is substantially larger than $X$. To be concrete, for $0<c<1$ let us say that
$X$ is $c$-porous if every interval $I\subseteq\mathbb{R}$
contains a sub-interval $J\subseteq I\setminus X$ of length $|J|=c|I|$.
We then have
\begin{theorem}
\label{thm:convolution-porous-sets}For any $0<c<1$ there exists
a $\delta=\delta(c)>0$ such that for any $c$-porous set
$X\subseteq\mathbb{R}$ of positive dimension, and any $\Phi\subseteq G$
with $\dim\Phi>c$, we have
\[
\dim\Phi\bigdot X\geq\dim X+\delta.
\]

\end{theorem}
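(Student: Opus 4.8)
The plan is to proceed by contradiction, converting a hypothetical failure of dimension growth into an input for the measure-theoretic entropy-growth theorem for the action of $G$ on $\mathbb{R}$ (the one obtained by linearization from Theorem~\ref{thm:entropy-growth-under-convolution} in Section~\ref{sec:Linearization-and-entropy-growth-for-action}), and then to use porosity to annihilate the one alternative in that theorem which could otherwise block the growth. Write $\alpha=\dim X$; since a $c$-porous set has $\alpha<1$, there is content to the statement only when $\alpha>0$, which is assumed. Fix a small parameter $\delta>0$ to be pinned down at the end, and suppose toward a contradiction that $\dim(\Phi\bigdot X)<\alpha+\delta$. Choose a compact subset of $X$ carrying a Frostman measure $\mu$ of exponent $\beta\in(\alpha-\delta,\alpha]$, i.e.\ $\mu(B_r(x))\le r^{\beta}$; and, using $\dim\Phi>c$, choose a compact $\Phi_0\subseteq\Phi$ with $\dim\Phi_0>c$ on which $\|\varphi\|$ stays in a fixed compact subinterval of $(0,\infty)$ (this costs nothing in dimension), carrying a Frostman measure $\nu$ of some exponent $c'\in(c,\dim\Phi_0)$. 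The push-forward $\theta$ of $\nu\times\mu$ under the action map is then supported on $\Phi\bigdot X$ and, since every $\varphi\in\Phi_0$ distorts ball radii by a bounded factor, is itself Frostman of exponent $\beta$; hence $\dim\theta\ge\beta>\alpha-\delta$ while $\dim\theta\le\dim(\Phi\bigdot X)<\alpha+\delta$, so the convolution increases the dimension over $\mu$ by less than $2\delta$.

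Next I would feed $\mu,\nu,\theta$ into the Hausdorff-dimension form of the action entropy-growth theorem. Because the growth of $\theta$ over $\mu$ is $O(\delta)$, and because $\mu$ is non-atomic of positive dimension — which is exactly what is needed to discard the degenerate ``stabilizer'' alternative described in Section~\ref{sub:The-role-of-additive-combinatorics} — the theorem supplies a threshold $\epsilon=\epsilon(\delta)$, with $\epsilon(\delta)\to 0$ as $\delta\to 0$, such that for all but $\epsilon N$ of the scales $n\le N$, and for all but an $\epsilon$-fraction of the relevant level-$n$ components at those scales, one of the following holds: (a)~the rescaled level-$n$ component of $\mu$ has normalized entropy $>1-\epsilon$; or (b)~the rescaled level-$n$ component of $\nu$ has normalized entropy $<\epsilon$.

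The porosity of $X$ is used to knock out (a) at every scale. A level-$n$ dyadic interval $I$ is an interval, so $c$-porosity gives a subinterval $J\subseteq I\setminus X$ of length $c|I|$, disjoint from $\supp\mu$; rescaling $I$ to the unit interval, the corresponding component of $\mu$ avoids an interval of length $c$, hence at least $c2^{m}-2$ of the $2^{m}$ dyadic subcells of generation $m$ are null, and the component has normalized generation-$m$ entropy at most $1-\eta_0(c)$ with $\eta_0(c)=\tfrac{1}{m}\log_2\tfrac{1}{1-c/2}>0$, once the base scale $m$ has been fixed large in terms of $c$. So as soon as $\epsilon<\eta_0(c)$, alternative (a) is vacuous — for every component at every scale, not merely typical ones. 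Therefore (b) must hold at all but $\epsilon N$ scales and for all but an $\epsilon$-fraction of components there, and averaging the normalized level-$n$ component entropy of $\nu$ over components and over $n\le N$ yields at most $3\epsilon+o(1)$. On the other hand this average equals $\tfrac{1}{mN}H(\nu,\mathcal{D}_{mN})+o(1)$, and the $c'$-Frostman bound gives $\nu(D)\le 2^{-c'\ell}$ for a dyadic cell $D$ of generation $\ell$, so $H(\nu,\mathcal{D}_{mN})\ge c'mN$ and the average is at least $c'>c$. Choosing $\delta$ small enough that $\epsilon(\delta)<\min\{\eta_0(c),\,c/4\}$, and then $N$ large, forces $c<c'\le 3\epsilon+o(1)<c$, the desired contradiction; hence $\dim(\Phi\bigdot X)\ge\alpha+\delta$ with $\delta=\delta(c)$ governed by the quantitative rate $\epsilon(\delta)$ and these two constraints on $\epsilon$.

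The hard part lies entirely upstream: it is the action version of the entropy-growth theorem itself, together with the transversality argument on $G$-stabilizers that eliminates the degenerate alternative — both carried out in Section~\ref{sec:Linearization-and-entropy-growth-for-action}. Within the present deduction the only genuinely load-bearing step is the porosity-to-entropy-deficit estimate above, which is routine; the subtlest bookkeeping is to transfer the Hausdorff-dimension hypotheses faithfully into the ``entropy at all scales'' framework — that is, to select $\mu$ and $\nu$ (Frostman measures here, or suitable exact-dimensional measures) well enough that ``no dimension growth'' really does activate the dichotomy and that the dichotomy's conclusions about components can be read back off the chosen measures.
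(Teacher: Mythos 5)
Your overall plan is sound, and the two genuinely load-bearing ingredients you identify are exactly right: (I) $c$-porosity of $X$ makes every $\mu\in\mathcal{P}(X)$ $(1-c')$-entropy porous for some $c'=c'(c)$ — at every point and along every sequence of scales — because each dyadic cell contains a sub-cell proportional to it that misses $\supp\mu$; and (II) Frostman's lemma gives $\nu\in\mathcal{P}(\Phi)$ with $\ldim\nu>c$. These two facts, plus the Hausdorff-dimension form of the entropy-growth theorem for the action (Theorem \ref{thm:hausdorff-dimension-growth}), finish the proof. That is precisely the paper's argument, which is a direct two-step application: pick a Frostman $\mu$ on $X$ with $\ldim\mu>\dim X-\delta/2$, observe it is $(1-c')$-entropy porous pointwise along every sequence, take $\varepsilon=\min\{c',c\}$, get $\delta=\delta(\varepsilon)$ from Theorem \ref{thm:hausdorff-dimension-growth}, and conclude $\ldim\nu\bigdot\mu>\ldim\mu+\delta>\dim X+\delta/2$; since $\nu\bigdot\mu$ is supported on $\Phi\bigdot X$, you are done.

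Where your proposal differs — and where it acquires an unnecessary wrinkle — is in the contradiction framing. You posit that ``the theorem supplies a threshold $\epsilon=\epsilon(\delta)$ with $\epsilon(\delta)\to 0$'' and then extract a per-scale, per-component dichotomy (``for all but $\epsilon N$ scales, either (a) the component of $\mu$ has near-full entropy or (b) the component of $\nu$ has near-zero entropy''). No theorem in the paper is stated in that form: the quantifiers in Theorems \ref{thm:inverse-theorem-for-action} and \ref{thm:hausdorff-dimension-growth} run $\varepsilon\mapsto\delta$, not the reverse, and their contrapositive yields only a coarse disjunction about the \emph{whole} measures ($\mu$ not entropy-porous, or $\ldim\nu\le\varepsilon$), not a scale-by-scale alternative. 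To get the per-scale dichotomy you would essentially have to re-derive the inner workings of the growth theorem (the linearization, multiscale entropy formulas, and local entropy averages), which is heavier than just checking the theorem's hypotheses and applying it. Also, the auxiliary measure $\theta=f_*(\nu\times\mu)$ that you introduce is simply $\nu\bigdot\mu$; estimating its Frostman exponent is correct but redundant once you go the direct route. None of this makes the argument wrong in spirit — it can be completed once you recognize the dichotomy must be extracted from the proof of the growth theorem rather than cited — but it is noticeably more work than necessary, and as written the step ``the theorem supplies $\epsilon(\delta)$'' does not correspond to any statement you can actually invoke. I would advise recasting the argument as a direct verification of the hypotheses of Theorem \ref{thm:hausdorff-dimension-growth}.
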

Unlike the previous discussion this theorem is true as stated, see
Section \ref{sub:Proof-of-porousity-theorem}. Nevertheless let us
explain how it follows from our heuristic discussion. Suppose that
$\dim Y\bigdot X=\dim X$; then for typical scales $r$, either (a)
applies to typical scale-$r$ pieces of $X$, or (b) applies to typical
scale-$r$ pieces of $\Phi$. Suppose that $X$ is porous; then no
scale-$r$ piece $X\cap B_{r}(x)$ of $X$ can be close to a set of full dimension,
since porosity means that it contains a hole proportional in size
to $r$, and at every smaller scale. This rules out (a), so the remaining possibility is that
$Y\cap B_{r}(y)$ is approximately zero dimensional for typical $y\in Y$.
But one can show that if this is true for typical pieces of $Y$ at
typical scales, then it is true globally, i.e. $\dim Y=0$, as desired. 

To conclude this section let us explain how Theorem \ref{thm:convolution-porous-sets}
is related to Theorems \ref{thm:main} and \ref{thm:centered-Cantor-sets}.
In the first of these, the assumption is that $X=\Phi\bigdot X$ and
$\dim\Phi>0$. If $\dim X<1$ implied that  $X$ were porous, then
the theorem above would imply $\dim X>\dim X+\delta$, which is the
desired a contradiction. In our setting $X$ need not
be porous (and a-posteriori cannot be), but by working with suitable
measures on $X$ we will be able to apply an analog of Theorem \ref{thm:convolution-porous-sets},
which gives the result. 

To see the connection with Theorem \ref{thm:centered-Cantor-sets}, 
suppose $Y\subseteq\mathbb{R}$ contains a scaled copy of $K$ centered
at every point in a set $C\subseteq\mathbb{R}$. Assume that $\dim C>0$. 
Let $\Phi$ denote the set of similarities $\varphi_{r,c}(x)=rx+c$
for which $c\in C$ and $\varphi(K)\subseteq Y$, so that $\Phi\bigdot K\subseteq Y$. Since $Y$ is closed,
also $\Phi$ is closed, and by assumption for every $c\in C$ there
exists at least one $0<r<1$ such that $\varphi_{r,c}\in\Phi$. The
map $\varphi_{r,c}\mapsto c$ is a Lipschitz map taking $\Phi$ onto 
$C$, so $\dim\Phi\geq\dim C>0$. 
Finally,  $K$ is porous, so by Theorem \ref{thm:convolution-porous-sets}
 $\dim Y\geq \dim(\Phi\bigdot K) > \dim K+\delta$ for some $\delta>0$, as claimed.

\subsection{Organization of the paper}

In Section \ref{sec:Dyadic-partitions,-components-and-entropy} we
set up some notation, defining dyadic partitions on $\mathbb{R}$
and $G$, and discussing Shannon entropy and its properties. In Section
\ref{sec:Entropy-growth-for-euclidean-convolutions} we define component
measures and their distribution, and formulate the theorem on dimension
growth of convolutions of measures on $\mathbb{R}$. In Section \ref{sec:Linearization-and-entropy-growth-for-action}
we give the linearization argument which leads to the analogous growth
theorem for convolutions $\nu\bigdot\mu$ of $\nu\in\mathcal{P}(G)$
and $\mu\in\mathcal{P}(\mathbb{R})$. In Section \ref{sec:Proof-of-main-Theorem}
we prove Theorem \ref{thm:main-for-measures}. In Section \ref{sec:Growth-of-Hausdorff-dim}
we develop results for the Hausdorff dimension of convolutions, proving
Theorem \ref{thm:convolution-porous-sets} (and in so doing, completing the proof of Theorem \ref{thm:centered-Cantor-sets}). In Section \ref{sec:Conjecture-implies-conjecture}
we prove the implication between Conjecture \ref{conj:overlaps} and
Conjecture \ref{conj:main}. Finally, in Section \ref{sec:Extensions-and-open-problems}
we discuss another variant of Conjecture \ref{conj:main} in the non-linear
setting.

\subsubsection*{Acknowledgement}
I am grateful to Boris Solomyak for useful discussions, and to Ariel
Rapaport and the anonymous referee for a careful reading and for many
comments on a preliminary version of the paper. Part of the work on this paper was conducted during the 2016 program ``Dimension and Dynamics'' at ICERM. This research was supported by ERC grant 306494.

\section{\label{sec:Dyadic-partitions,-components-and-entropy}Measures, dyadic
partitions, components and entropy}

We begin with some background on entropy which will be used in our
analysis of convolutions.

\subsection{Probability measures}

For a measurable space $X$ we write $\mathcal{P}(X)$ for the space of
probability measures on $X$. We always take the Borel structure when the underlying space is metric. Given a measurable map $f:X\rightarrow Y$
between measurable spaces and $\mu\in\mathcal{P}(X)$ let $f\mu\in\mathcal{P}(Y)$
denote the push-forward measure, $\nu=\mu\circ f^{-1}$. For a probability
measure $\mu$ and set $A$ with $\mu(A)>0$ we write
\begin{equation}
\mu_{A}=\frac{1}{\mu(A)}\mu|_{A}\label{eq:conditional-measure}
\end{equation}
for the conditional measure on $A$.

\subsection{\label{sub:Dyadic-partitions-and-entropy}Dyadic partitions }

The level-$n$ dyadic partitions $\mathcal{D}_{n}$ of $\mathbb{R}$
is given by 
\[
\mathcal{D}_{n}=\{[\frac{k}{2^{n}},\frac{k+1}{2^{n}})\,:\,k\in\mathbb{Z}\},
\]
and the level-$n$ dyadic partition of $\mathbb{R}^{d}$ by 
\[
\mathcal{D}_{n}^{d}=\{I_{1}\times\ldots\times I_{d}\,:\,I_{i}\in\mathcal{D}_{n}\}.
\]
The superscript is often suppressed. 

We parametrize $G$ as $\mathbb{R}^{2}$, identifying $(s,t)\in\mathbb{R}^{2}$
with $x\mapsto e^{s}x+t$, and define a metric on $G$ by pulling
back the Euclidean metric on $\mathbb{R}^{2}$. The importance of
this choice of parametrization\footnote{We use another parametrizations in Section \ref{sec:Conjecture-implies-conjecture},
but only there.} is that if $\varphi,\psi\in G$ and $d(\varphi,\psi)<C$ then the
translation parts of $\varphi,\psi$ differ by an additive constant
$C'$, and their contractions by a multiplicative constant $C''$,
with $C',C''$ depending only on $C$. Note that, locally, this metric is equivalent (in fact diffeomorphic) to any Riemmanian metric on $G$ so the notion of dimension in $G$ is not affected by this choice of parametrization.

We equip $G$ with the dyadic partition $\mathcal{D}_{n}^{G}=\mathcal{D}_{n}^{2}$
induced from $\mathbb{R}^{2}$. 

When $t$ is not an integer, we write $\mathcal{D}_{t}=\mathcal{D}_{[t]}$
and $\mathcal{D}_{t}^{G}=\mathcal{D}_{[t]}^{G}$.

\subsection{\label{sub:Entropy}Entropy}

The Shannon entropy of a probability measure $\mu$ with respect to
a finite or countable partition $\mathcal{E}$ is defined by
\[
H(\mu,\mathcal{E})=-\sum_{E\in\mathcal{E}}\mu(E)\log\mu(E),
\]
The logarithm is in base $2$ and by convention $0\log0=0$. This
quantity is non-negative and we always have
\begin{equation}
H(\mu,\mathcal{E})\leq\log\#\{E\in\mathcal{E}\,:\,\mu(E)>0\}.\label{eq:entropy-counting-bound}
\end{equation}
The conditional entropy with respect to another countable partition
$\mathcal{F}$ is
\begin{equation}
H(\mu,\mathcal{E}|\mathcal{F})=\sum_{F\in\mathcal{F}}\mu(F)\cdot H(\mu_{F},\mathcal{E}),\label{eq:conditional-entropy-as-average}
\end{equation}
where $\mu_{F}$ is the conditional measure on $F$ (see (\ref{eq:conditional-measure})),
which is undefined when $\mu(F)=0$ but in that case its weight in the
sum is zero and it is ignored. Writing $\mathcal{E}\lor\mathcal{F}=\{E\cap F\,:\,E\in\mathcal{E}\,,\,F\in\mathcal{F}\}$
for the smallest common refinement of $\mathcal{E},\mathcal{F}$,
it is a basic identity that
\[
H(\mu,\mathcal{E}\lor\mathcal{F})=H(\mu,\mathcal{E}|\mathcal{F})+H(\mu,\mathcal{F}).
\]
Note that when $\mathcal{E}$ refines $\mathcal{F}$ (i.e. when every
atom of $\mathcal{E}$ is a subset of an atom of $\mathcal{F}$) we
have
\[
H(\mu,\mathcal{E}|\mathcal{F})=H(\mu,\mathcal{E})-H(\mu,F).
\]
In general, we always have
\[
H(\mu,\mathcal{E}|\mathcal{F})\leq H(\mu,\mathcal{E}),
\]
hence
\[
H(\mu,\mathcal{E}\lor\mathcal{F})\leq H(\mu,\mathcal{E})+H(\mu,\mathcal{F}).
\]

Entropy is concave, and almost convex: If with $\mu_{1},\mu_{2}$
probability measures and $\nu=\alpha\mu_{1}+(1-\alpha)\mu_{2}$
for some $0\leq\alpha\leq1$, then 
\[
\alpha H(\mu_{1},\mathcal{E})+(1-\alpha)H(\mu_{2},\mathcal{E})\leq H(\nu,\mathcal{E})\leq\alpha H(\mu_{1},\mathcal{E})+(1-\alpha)H(\mu_{2},\mathcal{E})+H(\alpha),
\]
where $H(\alpha)=-\alpha\log\alpha-(1-\alpha)\log(1-\alpha)$. The
same holds when all entropies above are conditional on a partition
$\mathcal{F}$. More generally if $\mu=\mu^{\omega}$ is a random
measure ($\omega$ denoting the point in the sample space), then\footnote{We require that $\omega\rightarrow\mu^{\omega}\in\mathcal{P}(X)$
be measurable in the sense that $\omega\mapsto\int fd\mu^{\omega}$
is measurable for all bounded measurable $f:X\rightarrow\mathbb{R}$,
and the expectation $\mathbb{E}(\mu)$ is understood the probability
measure $\nu$ determined by $\nu(A)=\mathbb{E}(\mu(A))$ for
all measurable $A$, or equivalently, $\int fd\nu=\mathbb{E}(\int fd\mu)$
for bounded measurable $f$.}
\[
H(\mathbb{E}(\mu),\mathcal{E})\geq\mathbb{E}\left(H(\mu,\mathcal{E})\right),
\]
and similarly for conditional entropies.

\subsection{\label{sub:Expansion-and-re-scaling}\label{sub:Behavior-of-entropy-under-similarities}Translation,
scaling and their effect on entropy}

Define the translation map $T_{u}:\mathbb{R}\rightarrow\mathbb{R}$
by 
\[
T_{u}(x)=x+u,
\]
and the scaling map $S_{t}:\mathbb{R}\rightarrow\mathbb{R}$ by 
\[
S_{t}x=2^{t}x.
\]
Note our choice of parametrization, and that $S_{s+t}=S_{s}S_{t}$.

It is clear that if $k\in\mathbb{Z}$ then 
\[
H(S_{k}\mu,\mathcal{D}_{n-k})=H(\mu,\mathcal{D}_{n}),
\]
because $S_{k}$ maps the atoms of $\mathcal{D}_{n}$ to the atoms
of $\mathcal{D}_{n-k}$. When $t$ is not a power of $2$ the same
relation holds, but with an error term:
\begin{equation}
H(S_{t}\mu,\mathcal{D}_{n-t})=H(\mu,\mathcal{D}_{n})+O(1).\label{eq:entropy-under-scaling}
\end{equation}
Translation affects entropy in a similar way: if $u=m/2^{n}$ for
$m,n\in\mathbb{Z}$ then 
\[
H(T_{u}\mu,\mathcal{D}_{n})=H(\mu,\mathcal{D}_{n}),
\]
and for general $u\in\mathbb{R}$,
\begin{equation}
H(T_{u}\mu,\mathcal{D}_{n})=H(\mu,\mathcal{D}_{n})+O(1).\label{eq:entropy-under-translation}
\end{equation}
Combining all this we find that if $\varphi$ is a similarity and
$\left\Vert \varphi\right\Vert $ is its unsigned contraction constant (it optimal Lipschitz constant), then 
\begin{equation}
H(\varphi\mu,\mathcal{D}_{n-\log\left\Vert \varphi\right\Vert })=H(\mu,\mathcal{D}_{n})+O(1).\label{eq:entorpy-under-similarity}
\end{equation}

If $\mu$ is supported on a set of diameter $O(1)$, then by (\ref{eq:entropy-counting-bound}),
$H(\mu,\mathcal{D}_{1})=O(1)$. It follows from the above that if
$\mu$ is supported on a set of diameter $2^{-(n+c)}$. Then
\begin{equation}
H(\mu,\mathcal{D}_{n})=O_{c}(1),\label{eq:entorpy-combinatorial-bound}
\end{equation}
and in particular, for $m>n$,
\begin{equation}
H(\mu,\mathcal{D}_{m}|\mathcal{D}_{n})=H(\mu,\mathcal{D}_{m})-O_{c}(1).\label{eq:conditional-entropy-combinatorial-bound}
\end{equation}

Finally, although entropy is not quite continuous under small perturbations
of the measure, it almost is. Specifically, let $\mu\in\mathcal{P}(\mathbb{R})$. If a function $f$ satisfies $c^{-1}d(x,y)\leq d(f(x),f(y))\leq cd(x,y)\leq c$, then
then
\begin{equation}
H(\varphi\mu,\mathcal{D}_{n})=H(\mu,\mathcal{D}_{n})+O(\log c).\label{eq:entropy-under-bi-lip-maps}
\end{equation}
and if $f,g:\mathbb{R}\rightarrow\mathbb{R}$ are $2^{-n}$-close
to in the sup-distance (i.e. $|f(x)-g(x)|<2^{-n}$ for all $x$),
then
\begin{equation}
|H(f\mu,\mathcal{D}_{n})-H(g\mu,\mathcal{D}_{n})|=O(1).\label{eq:entroupy-distortion-bound}
\end{equation}

\section{\label{sec:Entropy-growth-for-euclidean-convolutions}Entropy growth
for Euclidean convolutions}

The convolution $\nu*\mu$ of $\nu,\mu\in\mathcal{P}(\mathbb{R})$
is the push-forward of $\nu\times\mu$ by the map $(x,y)\mapsto x+y$.
In this section we state a result from \cite{Hochman2014} saying
that convolution increases entropy, except when some special structure
is present.  The statement is in terms of the multi-scale structure
of the measures, and we first develop the language necessary for describing
it.

\subsection{\label{sub:Component-measures}Component measures }

For $x\in\mathbb{R^{d}}$ let $\mathcal{D}_{n}(x)=\mathcal{D}_{n}^{d}(x)$
denote the unique element of $\mathcal{D}_{n}^{d}$ containing it,
and for a measure $\mu\in\mathcal{P}(\mathbb{R}^{d})$ define the
level-$n$ component of $\mu$ at $x$ to be the conditional measure
on $\mathcal{D}_{n}(x)$: 
\[
\mu_{x,n}=\mu_{\mathcal{D}_{n}(x)}=\frac{1}{\mu(\mathcal{D}_{n}(x))}\mu|_{\mathcal{D}_{n}(x)}.
\]
This is defined for $\mu$-a.e.\ $x$. 

We define components of a measure $\nu\in\mathcal{P}(G)$ in the
same way, using the dyadic partitions $\mathcal{D}_{n}^{G}$, so $\nu_{g,n}=\frac{1}{\nu(\mathcal{D}_{n}^{G}(g))}\nu|_{\mathcal{D}_{n}^{G}(g)}$.

\subsection{\label{sub:Random-component-measures}Random component measures }

We often view $\mu_{x,n}$ as a random variable, with $n$ chosen
uniformly within some specified range, and $x$ chosen according to
$\mu$, independently of $n$. This is the intention whenever $\mu_{x,n}$
appears in an expression $\mathbb{P}(\ldots)$ or $\mathbb{E}(\ldots)$. 

An equivalent way of generating $\mu_{x,i}$ is to choose $i\in\{0,\ldots,n\}$
uniformly, and independently choose $I\in\mathcal{D}_{i}$ with probability
$\mu(I)$. Then the random measure $\mu_{I}$ has the same distribution
as $\mu_{x,i}$, and if we further then choose $x\in I$ using the
measure $\mu_{I}$, then the distribution of $\mu_{x,i}$ generated
in this way agrees with the previous procedure.

For example, if $\mathcal{U}$ is a set of measures then $\mathbb{P}_{0\leq i\leq n}(\mu_{x,i}\in\mathcal{U})$
is the probability that $\mu_{x,i}\in\mathcal{U}$ when $i\in\{0,\ldots,n\}$
is chosen uniformly, and $x$ is independently chosen according to
$\mu$.

Similarly, $\mathbb{E}_{i=n}(H(\mu_{x,i},\mathcal{D}_{i+m}))$ denotes
the expected entropy of a component at level $n$ (note that we took
$i=n$, so the level is deterministic), measured at scale $n+m$,
and by (\ref{eq:conditional-entropy-as-average}), 
\[
H(\mu,\mathcal{D}_{n+m}|\mathcal{D}_{n})=\mathbb{E}_{i=n}\left(H(\mu_{x,i},\mathcal{D}_{n+m})\right).
\]
As another example, we have the trivial identity
\[
\mu=\mathbb{E}_{i=n}(\mu_{x,i}).
\]

We view components of measures on $G$ as random variables in the
same way as above and adopt the same notational conventions. 

Our notation defines $x$ and $i$ implicitly as random variables.
For example we could write
\[
\mathbb{P}_{0\leq i\leq n}(H(\mu_{x,i},\mathcal{D}_{i+1})=1\mbox{ and }i\geq n_{0})
\]
for the probability that a random component has full entropy at one
scale finer, and the scale is at least $n_{0}$.

When several random components are involved, they are assumed to be
chosen independently unless otherwise specified. Thus the distribution
of $\nu_{g,i}\times\mu_{x,i}$ is obtained by choosing $i$ first
and then choosing $g$ and $x$ independently according to $\nu$
and $\mu$, respectively. Note the resulting random measure has the
same distribution as $(\nu\times\mu)_{(g,x),i}$.

The distribution on components has the convenient property that it
is almost invariant under repeated sampling, i.e. choosing components
of components. More precisely, for a probability measure $\mu\in\mathcal{P}(\mathbb{R})$
and $m,n\in\mathbb{N}$, let $\mathbb{P}_{n}^{\mu}$ denote the distribution
of components $\mu_{x,i}$, $0\leq i\leq n$, as defined above; and
let $\mathbb{Q}_{n,m}^{\mu}$ denote the distribution on components
obtained by first choosing a random component $\mu_{x,i}$, $0\leq i\leq n$,
as above, and then, conditionally on $\nu=\mu_{x,i}$, choosing
a random component $\nu_{y,j}$, $i\leq j\leq i+m$ in the usual
way (note that $\nu_{y,j}=\mu_{y,j}$ is indeed a component of
$\mu$). 
\begin{lemma}
\label{lem:distribution-of-components-of-components}Given $\mu\in\mathcal{P}(\mathbb{R})$
and $m,n\in\mathbb{N}$, the total variation distance between $\mathbb{P}_{n}^{\mu}$
and $\mathbb{Q}_{n,m}^{\mu}$ satisfies 
\[
\left\Vert \mathbb{P}_{n}^{\mu}-\mathbb{Q}_{n,m}^{\mu}\right\Vert =O(\frac{m}{n}).
\]
In particular let $\mathcal{A}_{i},\mathcal{B}_{i}\subseteq\mathcal{P}([0,1)^{d})$,
write $\alpha=\mathbb{P}_{0\leq i\leq n}(\mu_{x,y}\in\mathcal{A}_{i})$,
and suppose that $\nu\in\mathcal{A}_{i}$ implies $\mathbb{P}_{i\leq j\leq i+m}(\nu_{x,j}\in\mathcal{B}_{j})\geq\beta$.
Then 
\[
\mathbb{P}_{0\leq i\leq n}(\mu_{x,i}\in\mathcal{B}_{i})>\alpha\beta-O(\frac{m}{n}).
\]

\end{lemma}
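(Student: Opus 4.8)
The plan is to prove the total variation bound first, and then derive the ``in particular'' statement as an easy consequence.

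For the main estimate, I would set up both $\mathbb{P}_n^\mu$ and $\mathbb{Q}_{n,m}^\mu$ as distributions on the space of measures, each indexed implicitly by a scale. Under $\mathbb{P}_n^\mu$, we pick $i$ uniformly in $\{0,\dots,n\}$ and output $\mu_{x,i}$ with $x\sim\mu$; equivalently pick $i$ uniformly and then $I\in\mathcal{D}_i$ with probability $\mu(I)$, outputting $\mu_I$. Under $\mathbb{Q}_{n,m}^\mu$ we first pick $i$ uniformly in $\{0,\dots,n\}$, pick $I\in\mathcal{D}_i$ with probability $\mu(I)$, then pick $j$ uniformly in $\{i,\dots,i+m\}$ and $J\in\mathcal{D}_j$, $J\subseteq I$, with probability $\mu_I(J)=\mu(J)/\mu(I)$, outputting $\mu_J$. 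The key observation is that the output measure is always a component $\mu_J$ for some dyadic cube $J$, and the distribution is entirely determined by the induced distribution on the \emph{level} $j$ together with the rule ``given level $j$, pick $J\in\mathcal{D}_j$ with probability $\mu(J)$.'' Indeed, conditioned on the output level being $j$, both procedures choose $J\in\mathcal{D}_j$ with probability exactly $\mu(J)$: for $\mathbb{P}_n^\mu$ this is immediate, and for $\mathbb{Q}_{n,m}^\mu$ it follows because summing over the intermediate cube $I\supseteq J$ gives $\sum_{I\in\mathcal{D}_i,\,I\supseteq J}\mu(I)\cdot\mu(J)/\mu(I)=\mu(J)$ (there is only one such $I$ for each admissible $i$). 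Hence the total variation distance between the two distributions on measures is at most the total variation distance between the two induced distributions on the output level $j\in\{0,\dots,n+m\}$.

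So the problem reduces to a one-dimensional computation: compare the law of $i$ (uniform on $\{0,\dots,n\}$) with the law of $j$, where $j$ is obtained by picking $i$ uniform on $\{0,\dots,n\}$ and then $j$ uniform on $\{i,\dots,i+m\}$. The latter is a convolution of the uniform law on $\{0,\dots,n\}$ with the uniform law on $\{0,\dots,m\}$, divided appropriately; its density on $\{0,\dots,n+m\}$ equals $\frac{1}{n+1}$ in the ``bulk'' $m\le j\le n$ and tapers linearly near the two ends over windows of length $m$. A direct estimate shows the $\ell^1$ difference between this and the uniform law on $\{0,\dots,n\}$ is $O(m/n)$: the two laws agree up to $O(1/n)$ pointwise on the bulk and differ by total mass $O(m/n)$ on the two end-windows of size $m$. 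This gives $\|\mathbb{P}_n^\mu-\mathbb{Q}_{n,m}^\mu\|=O(m/n)$.

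For the ``in particular'' clause: by definition $\mathbb{Q}_{n,m}^\mu(\mu_{x,i}\in\mathcal{B}_i)$ — meaning the probability that the final component lands in $\mathcal{B}$ at its own level — is at least $\alpha\beta$, since with probability $\alpha$ the first component lies in some $\mathcal{A}_i$, and conditionally on that the second component lands in $\mathcal{B}_j$ with probability at least $\beta$. Then the total variation bound transfers this to $\mathbb{P}_n^\mu$: $\mathbb{P}_{0\le i\le n}(\mu_{x,i}\in\mathcal{B}_i)\ge\mathbb{Q}_{n,m}^\mu(\cdots)-\|\mathbb{P}_n^\mu-\mathbb{Q}_{n,m}^\mu\|\ge\alpha\beta-O(m/n)$, as claimed. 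I expect the main (though still routine) obstacle to be bookkeeping the reduction to the level distribution cleanly — in particular making sure the ``given level $j$, pick $J$ with probability $\mu(J)$'' identity genuinely holds for $\mathbb{Q}_{n,m}^\mu$ including edge effects when $i+m$ exceeds $n$ — and then the elementary but slightly fiddly $\ell^1$ estimate for the convolution of two discrete uniform laws.
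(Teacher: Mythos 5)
Your proof is correct. The paper itself does not give a proof of this lemma --- it says only ``These are essentially applications of the law of total probability, for details see \cite{HochmanSolomyak2016}'' --- so there is no in-paper argument to compare against. Your write-up is a precise version of exactly this idea.

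The key structural observation you make, which is worth highlighting, is that both $\mathbb{P}_{n}^{\mu}$ and $\mathbb{Q}_{n,m}^{\mu}$ have the \emph{same} conditional law given the output level $j$ (namely, $J\in\mathcal{D}_j$ with probability $\mu(J)$), so the total variation distance on the joint $(j,J)$-space collapses to the total variation distance between the two marginal laws of $j$. One small caution in the wording: you say ``the total variation distance between the two distributions on measures,'' but for the ``in particular'' clause you need the bound on the \emph{joint} distribution of (measure, level), since the event $\{\mu_{x,i}\in\mathcal{B}_i\}$ depends on the level $i$ and not just on the measure $\mu_{x,i}$. Your own subsequent sentence (``meaning the probability that the final component lands in $\mathcal{B}$ at its own level'') shows you understand this; it is just that the single phrase ``distributions on measures'' slightly undersells what you have actually proved. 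Since the conditionals coincide, the TV distance on $(j,J)$ and the TV distance on $j$ are equal, so the bound you derive indeed applies to the joint law and the deduction of the ``in particular'' clause is sound. The explicit convolution computation (uniform on $\{0,\ldots,n\}$ convolved with uniform on $\{0,\ldots,m\}$, with the two triangular tails contributing $\ell^1$ mass $m/(n+1)$ in total) gives the sharp value $\|\mathbb{P}_n^\mu-\mathbb{Q}_{n,m}^\mu\|=\frac{m}{2(n+1)}$, which is consistent with the stated $O(m/n)$.
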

These are essentially applications of the law of total probability,
for details see \cite{HochmanSolomyak2016}.

\subsection{\label{sub:Multiscale-formulas-for-entropy}Multiscale formulas for
entropy }

Let us call $\frac{1}{n}H(\mu,\mathcal{D}_{n})$ the scale-$n$ entropy
of $\mu$. A simple but very useful property of scale-$n$ entropy
of a measure is that when $m\ll n$ it is roughly equal to the average
of the scale-$m$ entropies of its components, and for convolutions
a related bound can be given. The proofs can be found in \cite[Section 3.2]{Hochman2014}.

\begin{lemma}
\label{lem:multiscale-formula-for-entropy}For compactly supported
$\mu\in\mathcal{P}(\mathbb{R})$ or $\mu\in\mathcal{P}(G)$, for
every $m,n\in\mathbb{N}$, 
\begin{eqnarray*}
\frac{1}{n}H(\mu,\mathcal{D}_{n}) & = & \mathbb{E}_{1\leq i\leq n}\left(\frac{1}{m}H(\mu_{x,i},\mathcal{D}_{i+m})\right)+O(\frac{m}{n}).
\end{eqnarray*}
The error term depends only on the diameter of the support of $\mu$.
\end{lemma}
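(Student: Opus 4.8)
The plan is to telescope the entropy $H(\mu,\mathcal{D}_n)$ through a chain of dyadic scales spaced $m$ apart, and then recognize each increment as an average of component entropies. Concretely, write $n = qm + r$ with $0 \le r < m$, and use the chain rule together with the refinement relation to expand
\[
H(\mu,\mathcal{D}_n) = \sum_{k=0}^{q-1} H(\mu,\mathcal{D}_{(k+1)m}\mid\mathcal{D}_{km}) + H(\mu,\mathcal{D}_n\mid\mathcal{D}_{qm}).
\]
The last term is at most $H(\mu,\mathcal{D}_n\mid\mathcal{D}_{qm}) \le H(\mu,\mathcal{D}_n) $ trivially, but more usefully it is $O(m)$ up to a constant depending only on the diameter of the support: each atom of $\mathcal{D}_{qm}$ meeting $\supp\mu$ contributes a conditional measure supported on a set of diameter $O(2^{-qm})$, and by \eqref{eq:entorpy-combinatorial-bound}--\eqref{eq:conditional-entropy-combinatorial-bound} its entropy at scale $n = qm + r$ is $O(r) = O(m)$ plus an additive constant; averaging keeps this $O(m)$. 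So this tail term is negligible after dividing by $n$, contributing $O(m/n)$.

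Next I would rewrite each bulk term. By \eqref{eq:conditional-entropy-as-average},
\[
H(\mu,\mathcal{D}_{(k+1)m}\mid\mathcal{D}_{km}) = \mathbb{E}_{i=km}\bigl(H(\mu_{x,i},\mathcal{D}_{i+m})\bigr),
\]
so that
\[
\frac{1}{n}H(\mu,\mathcal{D}_n) = \frac{1}{n}\sum_{k=0}^{q-1}\mathbb{E}_{i=km}\bigl(H(\mu_{x,i},\mathcal{D}_{i+m})\bigr) + O\!\Bigl(\frac{m}{n}\Bigr).
\]
The sum on the right is over the arithmetic progression $i \in \{0, m, 2m, \dots, (q-1)m\}$, whereas the statement of the lemma averages over \emph{all} $i \in \{1,\dots,n\}$. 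The remaining task is therefore to show that averaging $\frac{1}{m}H(\mu_{x,i},\mathcal{D}_{i+m})$ over this sparse progression agrees, up to $O(m/n)$, with averaging it over all of $\{1,\dots,n\}$. This follows because the quantity $\frac{1}{m}H(\mu_{x,i},\mathcal{D}_{i+m})$, as a function of $i$, changes slowly: shifting the base scale by $1$ changes $H(\mu,\mathcal{D}_{i+1+m}\mid\mathcal{D}_{i+1})$ relative to $H(\mu,\mathcal{D}_{i+m}\mid\mathcal{D}_i)$ by at most $O(1)$ (each is an entropy increment across $m$ scales, and inserting or deleting one scale at either end costs $\le \log$ of a bounded number of dyadic children, i.e.\ $O(1)$). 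Hence the average over any window of $m$ consecutive values of $i$ differs from the single value at the left endpoint by $O(1)$, and since $\frac{1}{m}H$ carries a factor $1/m$, the discrepancy between the progression-average and the full average is $O(1/m)\cdot$(number of full blocks)$/q + O(m/n)$, which is absorbed into $O(m/n)$.

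The main obstacle is the bookkeeping in this last comparison: one must control the replacement of the progression $\{0,m,\dots,(q-1)m\}$ by $\{1,\dots,n\}$ without a spurious additive constant surviving after division by $n$. The clean way is to note that $\frac1n \sum_{i=1}^n \frac1m H(\mu_{x,i},\mathcal D_{i+m})$ can itself be expanded by the same telescoping identity started at each residue class mod $m$, averaged over the $m$ starting points $0,1,\dots,m-1$; each such expansion gives $\frac1n H(\mu,\mathcal D_n)$ up to $O(m/n)$ by the argument above, so their average does too. This sidesteps any delicate slow-variation estimate and reduces everything to $m$ applications of the chain rule plus the support-diameter bound \eqref{eq:entorpy-combinatorial-bound}. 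I expect the whole argument to be short once this symmetrization is set up; it is essentially the proof in \cite[Section 3.2]{Hochman2014}, which I would cite for the omitted details.
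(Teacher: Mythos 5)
Your final argument is correct and is essentially the standard proof (the one referenced in \cite[Section~3.2]{Hochman2014}, which the paper cites in lieu of a proof): telescope the entropy via the chain rule starting from each residue class $j\in\{0,\dots,m-1\}$, average over $j$ to turn the $m$ sparse progressions into a sum over all $i\in\{1,\dots,n\}$, and bound the boundary terms by the support-diameter estimate \eqref{eq:entorpy-combinatorial-bound}; dividing by $n$ then yields $O(m/n)$.

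One small correction: the intermediate ``slow variation'' argument you sketch (comparing a block average to the value at its left endpoint using $O(1)$ per-step increments) does not actually deliver $O(m/n)$. The per-step increment of $H(\mu,\mathcal{D}_{i+m}\mid\mathcal{D}_i)$ is $O(1)$, so over a block of $m$ steps the values drift by up to $O(m)$, which after dividing by $m$ is $O(1)$, not $o(1)$; this would give an error of $O(1/m)$ rather than $O(m/n)$, which is not the bound the lemma asserts. You correctly sense this (``the main obstacle is the bookkeeping'') and pivot to the residue-class symmetrization, which avoids the issue entirely because it produces the full sum $\sum_{i}(a_{i+m}-a_i)$ exactly (where $a_i=H(\mu,\mathcal{D}_i)$), with only endpoint corrections $\sum_{i=n}^{n+m-1}a_i - \sum_{i=0}^{m-1}a_i = m\,a_n + O(m^2)$ absorbed by \eqref{eq:entorpy-combinatorial-bound}. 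You could delete the slow-variation paragraph with no loss.
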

For convolutions in $\mathbb{R}$ we have a lower bound:
\begin{lemma}
\textup{\label{lem:multiscale-formula-for-entropy-of-additive-convolution}For
compactly supported $\mu,\nu\in\mathcal{P}(\mathbb{R})$, for
every $m,n\in\mathbb{N}$,
\begin{eqnarray*}
\frac{1}{n}H(\nu*\mu,\mathcal{D}_{n}) & \geq & \mathbb{E}_{1\leq i\leq n}\left(\frac{1}{m}H(\nu_{y,i}*\mu_{x,i},\mathcal{D}_{i+m})\right)-O(\frac{1}{m}+\frac{m}{n}).
\end{eqnarray*}
}The error term depends only the diameter of the supports of $\nu,\mu$.
\end{lemma}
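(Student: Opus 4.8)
The plan is to prove Lemma~\ref{lem:multiscale-formula-for-entropy-of-additive-convolution} by combining the multiscale decomposition of entropy (Lemma~\ref{lem:multiscale-formula-for-entropy}) for $\nu*\mu$ with the key observation that within a dyadic cell the convolution \emph{localizes}: if $y\in J$ and $x\in I$ for dyadic cells $I,J\in\mathcal{D}_i$, then $x+y$ lies in one of boundedly many cells of $\mathcal{D}_i$, and more to the point, the restriction $(\nu*\mu)|_{\mathcal{D}_i(x+y)}$ is, up to bounded distortion and normalization, dominated by a piece of $\nu_{y,i}*\mu_{x,i}$ (appropriately translated). Concretely, I would write $\nu=\mathbb{E}_{i=n'}(\nu_{y,i})$ and $\mu=\mathbb{E}_{i=n'}(\mu_{x,i})$ for a level $n'$ to be chosen, expand the convolution bilinearly as $\nu*\mu=\mathbb{E}_{i=n'}\mathbb{E}_{i=n'}(\nu_{y,i}*\mu_{x,i})$ over independent choices, and use concavity of entropy (the random-measure inequality $H(\mathbb{E}(\eta),\mathcal{E})\geq\mathbb{E}(H(\eta,\mathcal{E}))$ stated in Section~\ref{sub:Entropy}) to push the entropy inside the expectation.

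The first step is to apply Lemma~\ref{lem:multiscale-formula-for-entropy} to $\nu*\mu$, obtaining
\[
\frac{1}{n}H(\nu*\mu,\mathcal{D}_n)=\mathbb{E}_{1\leq i\leq n}\left(\frac{1}{m}H((\nu*\mu)_{x,i},\mathcal{D}_{i+m})\right)+O\!\left(\frac{m}{n}\right).
\]
So it suffices to lower-bound $H((\nu*\mu)_{x,i},\mathcal{D}_{i+m})$, for a typical level-$i$ component of the convolution, by $H(\nu_{y,i}*\mu_{x,i},\mathcal{D}_{i+m})$ on average, up to an additive $O(1)$ error (which contributes $O(1/m)$ after dividing by $m$). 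This is where the localization enters: a level-$i$ component of $\nu*\mu$ is the conditional measure on a single cell $\mathcal{D}_i(z)$, and I would express it as a convex combination of measures of the form $T_u(\nu_{y,i}*\mu_{x,i})$ restricted to that cell, where the sum ranges over the boundedly many pairs $(I,J)\in\mathcal{D}_i\times\mathcal{D}_i$ with $I+J$ meeting $\mathcal{D}_i(z)$; concavity of entropy and the fact that restriction/translation/renormalization change entropy by only $O(1)$ (equations~(\ref{eq:entropy-under-translation}) and~(\ref{eq:entorpy-combinatorial-bound})) then give the component-wise bound. Averaging over $i$ and $z$, and noting that the induced distribution on the pairs of components $(\nu_{y,i},\mu_{x,i})$ is, up to the usual $O(m/n)$ total-variation error (Lemma~\ref{lem:distribution-of-components-of-components} / the resampling principle), the same as choosing $i$ uniformly and then $y,x$ independently, yields the claimed inequality.

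The main obstacle is the bookkeeping in that localization step: one must check that when we condition $\nu*\mu$ on a single dyadic cell of level $i$, the resulting measure really is controlled by genuine \emph{components} $\nu_{y,i},\mu_{x,i}$ (as opposed to conditionals on cells of some nearby but different level), and that the weights in the convex combination are exactly the product weights $\nu(J)\mu(I)$ so that the averaging reproduces the correct product distribution on pairs of components. The boundary effect --- a cell $I+J$ of a sum of two level-$i$ cells need not be a single level-$i$ cell but spans at most two --- is handled by the counting bound~(\ref{eq:entropy-counting-bound}) and costs only $O(1)$ in entropy, hence $O(1/m)$ overall; similarly the mismatch between the level at which we decompose $\nu,\mu$ and the level $i$ at which we take components of $\nu*\mu$ is absorbed into the $O(m/n)$ term via the resampling lemma. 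Since all of this is carried out in~\cite{Hochman2014}, I would present it as an assembly of these standard entropy manipulations rather than reprove each estimate.
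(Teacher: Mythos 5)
Your plan is essentially the standard one (the paper defers the proof to \cite[Section 3.2]{Hochman2014}, and the argument there runs along these same lines): at each level $i$, decompose $\nu*\mu=\mathbb{E}\left(\nu_{y,i}*\mu_{x,i}\right)$, use concavity of entropy, and exploit the small support of each $\nu_{y,i}*\mu_{x,i}$ to pass from conditional entropy back to unconditional entropy. Two points in your write-up are, however, imprecise in ways worth flagging.

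The claim that ``restriction/translation/renormalization change entropy by only $O(1)$'' is not true for a single restricted piece, and the ``component-wise bound'' you describe does not hold pointwise. For a fixed cell $D\in\mathcal{D}_i$, the component $(\nu*\mu)_D$ is indeed a convex combination of normalized restrictions $(\nu_J*\mu_I)_D$ over cells $I,J\in\mathcal{D}_i$, but the weight on the pair $(I,J)$ is $\nu(J)\mu(I)(\nu_J*\mu_I)(D)/(\nu*\mu)(D)$ rather than the product weight $\nu(J)\mu(I)$, and a single term $H((\nu_J*\mu_I)_D,\mathcal{D}_{i+m})$ may be far below $H(\nu_J*\mu_I,\mathcal{D}_{i+m})$ if $D$ carries small $\nu_J*\mu_I$-mass. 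What saves you is precisely the average over $D$: summing over $D$ with weights $(\nu*\mu)(D)$, the inner sum for fixed $(I,J)$ reassembles into the conditional entropy $H(\nu_J*\mu_I,\mathcal{D}_{i+m}|\mathcal{D}_i)$, which by (\ref{eq:conditional-entropy-combinatorial-bound}) equals $H(\nu_J*\mu_I,\mathcal{D}_{i+m})-O(1)$ because $\nu_J*\mu_I$ sits in an $O(2^{-i})$-diameter set. The $O(1)$ loss is therefore a fact about the averaged conditional entropy, not about individual restrictions. The cleanest route skips components of $\nu*\mu$ entirely: by concavity of conditional entropy and (\ref{eq:conditional-entropy-combinatorial-bound}),
\begin{align*}
H(\nu*\mu,\mathcal{D}_{i+m}|\mathcal{D}_i) &\geq \mathbb{E}\left(H(\nu_{y,i}*\mu_{x,i},\mathcal{D}_{i+m}|\mathcal{D}_i)\right)\\
&= \mathbb{E}\left(H(\nu_{y,i}*\mu_{x,i},\mathcal{D}_{i+m})\right)-O(1),
\end{align*}
and then the elementary bound $\frac{1}{nm}\sum_{i=1}^{n}H(\nu*\mu,\mathcal{D}_{i+m}|\mathcal{D}_i)\leq\frac{1}{n}H(\nu*\mu,\mathcal{D}_n)+O\bigl(\frac{m}{n}\bigr)$ finishes the job. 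This is the same computation you are sketching, with the bookkeeping made explicit; note also that applying Lemma \ref{lem:multiscale-formula-for-entropy} to $\nu*\mu$ as your first step is equivalent, since $\mathbb{E}_z\left(H((\nu*\mu)_{z,i},\mathcal{D}_{i+m})\right)=H(\nu*\mu,\mathcal{D}_{i+m}|\mathcal{D}_i)$.

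Finally, Lemma \ref{lem:distribution-of-components-of-components} plays no role here: once the level $i$ is fixed, the identity $\nu*\mu=\sum_{I,J}\nu(J)\mu(I)\,\nu_J*\mu_I$ already carries exactly the product distribution on pairs of components, with no resampling approximation needed. That lemma is about the distribution of components of components, which does not arise in this argument. The $O(m/n)$ term comes entirely from boundary effects in the telescoping sum together with $H(\cdot,\mathcal{D}_1)=O(1)$ for compactly supported measures.
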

In the expectations above, the random variables $\mu_{x,i}$ and
$\nu_{y,i}$ are independent components of level $i$.

Before we state the analogous formula for convolutions $\nu\bigdot\mu$
where $\nu\in\mathcal{P}(\mathcal{S})$ and $\mu\in\mathcal{P}(\mathbb{R})$,
we first explain how contraction enters the formula. When $\varphi\in\mathcal{S}$
acts on a measure $\mu\in\mathcal{P}(\mathbb{R})$, it contracts
$\mu$ by $\left\Vert \varphi\right\Vert $. By (\ref{eq:entorpy-under-similarity})
this implies that for any $i$, 
\begin{eqnarray*}
H(\mu,\mathcal{D}_{i}) & = & H(\varphi\mu,\mathcal{D}_{i-\log\left\Vert \varphi\right\Vert })+O(1)
\end{eqnarray*}
(note that $\log\left\Vert \varphi\right\Vert <0$ when $\varphi$
is a contraction). Thus if $\nu$ is a measure supported on a small
neighborhood of $\varphi$, then the entropy of $\nu\bigdot\mu$
should be measured at a resolution adjusted by $\log\left\Vert \varphi\right\Vert $-scales
relative to the resolution at which we consider $\mu$. The analog
of Lemma \ref{lem:multiscale-formula-for-entropy-of-additive-convolution}
now has the following form (see also \cite[Lemma 5.7]{Hochman2015}):
\begin{lemma}
\label{lem:multiscale-formula-for-entropy-of-action-convolution}For
compactly supported $\mu\in\mathcal{P}(\mathbb{R})$ and $\nu\in\mathcal{P}(\mathcal{S})$,
for every $\varphi_{0}\in\supp\nu$ and for every $m,n$,
\begin{eqnarray*}
  \frac{1}{n}H(\nu\bigdot\mu,\mathcal{D}_{n-\log\left\Vert \varphi_{0}\right\Vert })& \;\geq\; & \mathbb{E}_{1\leq i\leq n}(\frac{1}{m}H(\nu_{\varphi,i}\bigdot\mu_{x,i},\mathcal{D}_{i-\log\left\Vert \varphi_{0}\right\Vert +m}))\\
  & & \;-\;O(\frac{1}{m}+\frac{m}{n}).
\end{eqnarray*}
The error term depends only the diameter of the supports of $\mu,\nu$.
\end{lemma}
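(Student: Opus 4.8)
I would establish this by following the proof of Lemma~\ref{lem:multiscale-formula-for-entropy-of-additive-convolution}, carrying the scale shift $\ell:=\log\|\varphi_0\|$ through the argument. Let $f\colon\mathcal{S}\times\mathbb{R}\to\mathbb{R}$, $f(\varphi,x)=\varphi\bigdot x$, be the action map, so that $\nu\bigdot\mu=f(\nu\times\mu)$ and $H(\nu\bigdot\mu,\mathcal{E})=H(\nu\times\mu,f^{-1}\mathcal{E})$ for every partition $\mathcal{E}$ of $\mathbb{R}$. Since $\supp\nu$ is a compact subset of $\mathcal{S}$, there are $0<r_0\le r_1<1$ with $\|\varphi\|\in[r_0,r_1]$ for all $\varphi\in\supp\nu$; in particular $\|\varphi\|$ and $\|\varphi_0\|$ are comparable up to a constant depending only on $\supp\nu$, so $\ell=O(1)$ and replacing $\log\|\varphi\|$ by $\ell$ in any index costs only $O(1)$ scales.

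The geometric fact driving the argument is that for $J\in\mathcal{D}_i^G$ and $I\in\mathcal{D}_i$ the set $f(J\times I)=\bigcup_{\psi\in J}\psi(I)$ is an interval of length $O(2^{-i})$. Indeed, writing elements of $J$ in the parametrization of Section~\ref{sub:Dyadic-partitions-and-entropy} as $(s,t)$ ranging over a box of side $O(2^{-i})$, and $u\in I$, the value $\psi(u)=e^{s}u+t$ varies by $O(\|\varphi_0\|\,2^{-i})$ through the contraction part and by $O(2^{-i})$ through the translation part; since $\|\varphi_0\|<1$ this totals $O(2^{-i})$. Hence $f(J\times I)$ meets at most $O(2^{-\ell})=O_{\supp\nu}(1)$ atoms of $\mathcal{D}_{i-\ell}$, and the bound is uniform precisely because $\|\varphi_0\|\ge r_0>0$.

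I would then derive the key inequality, for all $m,n$ and $1\le i\le n$,
\[
H(\nu\bigdot\mu,\mathcal{D}_{i-\ell+m}\,|\,\mathcal{D}_{i-\ell})\;\ge\;\mathbb{E}\bigl(H(\nu_{\varphi,i}\bigdot\mu_{x,i},\mathcal{D}_{i-\ell+m})\bigr)-O(1).
\]
Passing through $f^{-1}$, the left side equals $H(\nu\times\mu,f^{-1}\mathcal{D}_{i-\ell+m}\,|\,f^{-1}\mathcal{D}_{i-\ell})$, which, since conditioning on a finer partition only decreases conditional entropy, is at least $H(\nu\times\mu,f^{-1}\mathcal{D}_{i-\ell+m}\,|\,f^{-1}\mathcal{D}_{i-\ell}\vee\mathcal{D}^G_i\vee\mathcal{D}_i)$; by the geometric fact, refining $\mathcal{D}^G_i\vee\mathcal{D}_i$ by $f^{-1}\mathcal{D}_{i-\ell}$ splits each atom into only $O(1)$ pieces, so this in turn is at least $H(\nu\times\mu,f^{-1}\mathcal{D}_{i-\ell+m}\,|\,\mathcal{D}^G_i\vee\mathcal{D}_i)-O(1)$. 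Finally $(\nu\times\mu)_{J\times I}=\nu_J\times\mu_I$ and $H(\nu_J\times\mu_I,f^{-1}\mathcal{D}_{i-\ell+m})=H(\nu_J\bigdot\mu_I,\mathcal{D}_{i-\ell+m})$, so the conditional entropy over $\mathcal{D}^G_i\vee\mathcal{D}_i$ equals $\sum_{J,I}\nu(J)\mu(I)\,H(\nu_J\bigdot\mu_I,\mathcal{D}_{i-\ell+m})$, which is the stated expectation by independence of the components $\nu_{\varphi,i},\mu_{x,i}$.

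To conclude, I would apply Lemma~\ref{lem:multiscale-formula-for-entropy} to the compactly supported measure $\nu\bigdot\mu$ at the shifted scale; since $\ell=O(1)$, the $O(1)$ shifts in the level and in the factor $1/n$ are absorbed into $O(m/n)$, giving
\[
\tfrac1n H(\nu\bigdot\mu,\mathcal{D}_{n-\ell})\;=\;\mathbb{E}_{1\le i\le n}\Bigl(\tfrac1m H\bigl((\nu\bigdot\mu)_{z,i-\ell},\mathcal{D}_{i-\ell+m}\bigr)\Bigr)+O(m/n).
\]
The expectation over $z$ inside is $H(\nu\bigdot\mu,\mathcal{D}_{i-\ell+m}\,|\,\mathcal{D}_{i-\ell})$, so substituting the key inequality and averaging over $i$ yields the desired bound with total error $O(\tfrac1m+\tfrac mn)$. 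The only genuine content beyond this bookkeeping is the geometric estimate of the second paragraph, and I expect the care lies there: it is the compactness of $\supp\nu$ in $\mathcal{S}$ --- equivalently, $\|\varphi_0\|$ being bounded away from $0$ and $1$ --- that makes ``$f(J\times I)$ meets $O(1)$ atoms of $\mathcal{D}_{i-\ell}$'' hold with a constant depending only on the supports, as the stated error term requires.
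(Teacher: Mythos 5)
Your approach mirrors the proof of Lemma~\ref{lem:multiscale-formula-for-entropy-of-additive-convolution}: pull entropies of the image measure back through $f^{-1}$, condition on the product component partition $\mathcal{D}_i^G\vee\mathcal{D}_i$, bound the cost of the extra refinement by $f^{-1}\mathcal{D}_{i-\ell}$ via a diameter estimate on $f(J\times I)$, and assemble with the multiscale formula. The paper gives no proof of this lemma and cites \cite[Lemma 5.7]{Hochman2015}, so there is no in-text argument to check yours against; the pullback identity, the identification $(\nu\times\mu)_{J\times I}=\nu_J\times\mu_I$, and the conditional-entropy manipulations are all standard and correct.

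The one real gap is how you handle the shift $\ell=\log\|\varphi_0\|$. You assert $\ell=O(1)$, but $\ell$ is bounded only once $\supp\nu$ is pinned to a fixed compact set; a small ball of similarities far into the contracting part of $\mathcal{S}$ has small diameter and $\ell$ arbitrarily negative, so $\ell$ is \emph{not} controlled by the diameter of $\supp\nu$. The same issue hides in your counting bound: $f(J\times I)$ does have diameter $\Theta(2^{-i})$ (the $t$-coordinate of $J$ dominates, as you observe), but the atoms of $\mathcal{D}_{i-\ell}$ have length $\|\varphi_0\|\,2^{-i}$, so $f(J\times I)$ meets about $2^{-\ell}=1/\|\varphi_0\|$ of them; the per-step loss $H(\nu\times\mu,f^{-1}\mathcal{D}_{i-\ell}\,|\,\mathcal{D}_i^G\vee\mathcal{D}_i)$ is therefore $O(-\ell)$, not $O(1)$, and after dividing by $m$ this contributes $O(-\ell/m)$ to the error. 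This term is genuinely present: taking $\nu$ Lebesgue on a unit box far in the contracting region and $\mu$ Lebesgue on $[0,1]$, the left side is roughly $1-\ell/n$ while the right side is roughly $1-\ell/m$, forcing the error to absorb at least $\approx(-\ell)/m$. So your argument establishes the inequality with constants depending on $\supp\nu$ through $r_0=\min_{\supp\nu}\|\varphi\|$ rather than merely on the diameters of the supports --- which is in fact all that the paper uses, since the remark after the lemma drops the shift under exactly that hypothesis --- but a proof of the literal statement cannot simply invoke ``$\ell=O(1)$'', and you should track separately the $(-\ell)$-dependent term that the scale shift is designed to absorb.
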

In our application of this inequality, the support of $\nu$ will
lie in a fixed compact set, and we can drop the scale-shift of $\log\left\Vert \varphi_{0}\right\Vert $
and absorb the change in the error term; that is we can replace $\mathcal{D}_{n-\log\left\Vert \varphi_{0}\right\Vert }$
by $\mathcal{D}_{n}$ and $\mathcal{D}_{i-\log\left\Vert \varphi_{0}\right\Vert +m}$
by $\mathcal{D}_{i+m}$.

\subsection{\label{sub:Essentially-bounded-component-component-entropy}Entropy
porosity }

For a general measure, the entropy of components may vary almost arbitrarily
from scale to scale and within a fixed scale. The following definition
imposes some degree of regularity, specifically, it prevents too many
components from being too uniform. 

Let $\mu\in\mathcal{P}(\mathbb{R})$. We say that $\mu$ is \emph{$(h,\delta,m)$-entropy
porous}\footnote{Entropy porosity in the sense above is
weaker than porosity, since it allows the measure to be fully supported on small
balls (i.e. there do not need to be holds in its support). But the upper bound on the entropy of components means that
most components are far away from being uniform at a slightly finer
scale.}\emph{ from scale $n_{1}$ to $n_{2}$ }if 
\begin{equation}
\mathbb{P}_{n_{1}\leq i\leq n_{2}}\left(\frac{1}{m}H(\mu_{x,i},\mathcal{D}_{i+m})\leq h+\delta\right)>1-\delta.\label{eq:bounded-component-entropy}
\end{equation}
We say that it is $h$-entropy porous if for every $\delta>0$, $m>m(\delta)$
and $n>n(\delta,m)$ the measure is $(h,\delta,m)$-entropy porous
from scale $0$ to $n$. 

Note that if $\mu$ is $(h,\delta,m)$-entropy porous from scale
$0$ to $n$ then by Lemma \ref{lem:multiscale-formula-for-entropy} we
have $H(\mu,\mathcal{D}_{n})/n\leq h+2\delta+O(m/n)$. 

We will use the fact that entropy porosity passes to components. More
precisely,
\begin{lemma}
\label{lem:bounded-component-entropy-passes-to-components}Let $0<\delta<1$,
$m,k\in\mathbb{N}$ and $n>n(\delta,k)$. If $\mu\in\mathcal{P}(\mathbb{R})$
is $(h,\delta^{2}/2,m)$-entropy porous from scale $0$ to $n$, then
\begin{equation}
\mathbb{P}_{0\leq i\leq n}\left(\begin{array}{c}
\mu_{x,i}\mbox{ is }(h,\delta,m)\mbox{-entropy }\\
\mbox{porous from scale }i\mbox{ to }i+k
\end{array}\right)>1-\delta.\label{eq:bounded-component-entropy-of-components}
\end{equation}
\end{lemma}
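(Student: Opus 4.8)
The plan is to push the hypothesis down one level using Lemma~\ref{lem:distribution-of-components-of-components} (near-invariance of the component distribution under resampling) and then turn the resulting bound on an average into a bound on an exceptional set via Markov's inequality; the factor $\delta^{2}/2$ in the hypothesis is calibrated precisely so that this last step yields an exceptional probability $<\delta$.

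First I would fix terminology. Call a component $\mu_{x,i}$ \emph{bad} if $\frac{1}{m}H(\mu_{x,i},\mathcal{D}_{i+m})>h+\delta$, and \emph{very bad} if $\frac{1}{m}H(\mu_{x,i},\mathcal{D}_{i+m})>h+\delta^{2}/2$. Since $0<\delta<1$ we have $\delta^{2}/2<\delta$, so every bad component is very bad, and the hypothesis (unwinding the definition of $(h,\delta^{2}/2,m)$-entropy porosity from scale $0$ to $n$) gives
\[
\mathbb{P}_{0\le i\le n}\bigl(\mu_{x,i}\text{ is bad}\bigr)\le\mathbb{P}_{0\le i\le n}\bigl(\mu_{x,i}\text{ is very bad}\bigr)<\frac{\delta^{2}}{2}.
\]
Next, set $W(\mu_{x,i})=\mathbb{P}_{i\le j\le i+k}\bigl((\mu_{x,i})_{y,j}\text{ is bad}\bigr)$, where, as usual, $y$ is drawn from $\mu_{x,i}$; note that $(\mu_{x,i})_{y,j}=\mu_{y,j}$ because $j\ge i$. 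Unwinding the definition of entropy porosity once more, $\mu_{x,i}$ is $(h,\delta,m)$-entropy porous from scale $i$ to $i+k$ if and only if $W(\mu_{x,i})<\delta$, so it suffices to prove $\mathbb{P}_{0\le i\le n}\bigl(W(\mu_{x,i})\ge\delta\bigr)<\delta$.

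The key computation is that $\mathbb{E}_{0\le i\le n}\bigl(W(\mu_{x,i})\bigr)$ equals, by construction, the $\mathbb{Q}_{n,k}^{\mu}$-probability that a resampled component (first pick $\mu_{x,i}$ with $0\le i\le n$, then pick one of its components $\mu_{y,j}$ with $i\le j\le i+k$) is bad. By Lemma~\ref{lem:distribution-of-components-of-components}, $\left\Vert \mathbb{Q}_{n,k}^{\mu}-\mathbb{P}_{n}^{\mu}\right\Vert =O(k/n)$, so this probability exceeds $\mathbb{P}_{0\le i\le n}(\mu_{x,i}\text{ is bad})$ by at most $O(k/n)$; combined with the bound from the previous paragraph,
\[
\mathbb{E}_{0\le i\le n}\bigl(W(\mu_{x,i})\bigr)<\frac{\delta^{2}}{2}+O\Bigl(\frac{k}{n}\Bigr)<\delta^{2}
\]
once $n>n(\delta,k)$ is large enough that the error term is below $\delta^{2}/2$. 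Markov's inequality applied to the nonnegative quantity $W(\mu_{x,i})$ then gives $\mathbb{P}_{0\le i\le n}\bigl(W(\mu_{x,i})\ge\delta\bigr)\le\delta^{-1}\mathbb{E}\bigl(W(\mu_{x,i})\bigr)<\delta$, as required. I do not expect a genuine obstacle: the whole argument is Lemma~\ref{lem:distribution-of-components-of-components} plus Markov, and the only points to watch are the two distinct roles of $\delta$ (entropy gap versus failure probability) and the harmless $O(k)$ "overshoot" of levels in the range $n<j\le n+k$, which contributes only $O(k/n)$ to the average and is already absorbed in the error term of Lemma~\ref{lem:distribution-of-components-of-components}.
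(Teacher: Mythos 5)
Your proof is correct and follows essentially the same route as the paper's: both reduce to Lemma~\ref{lem:distribution-of-components-of-components}. The only cosmetic difference is that you invoke the total variation bound and then apply Markov's inequality to $\mathbb{E}[W]$, whereas the paper uses the pre-packaged ``In particular'' consequence of that lemma to reach the same contradiction; the two are logically interchangeable.
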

\begin{proof}
By assumption, 
\begin{equation}
\mathbb{P}_{0\leq i\leq n}\left(\frac{1}{m}H(\mu_{x,i},\mathcal{D}_{i+m})\leq h+\frac{\delta^{2}}{2}\right)>1-\frac{\delta^{2}}{2}.\label{eq:10}
\end{equation}
Let $\mathcal{B}_{i}\subseteq\mathcal{P}(\mathbb{R})$ denote the
set of measures $\nu$ with $\frac{1}{m}H(\nu,\mathcal{D}_{i+m})>h+\delta$,
and $\mathcal{A}_{i}\subseteq\mathcal{P}(\mathbb{R})$ the set of
$\nu$ such that $\mathbb{P}_{i\leq j\leq i+k}(\nu_{x,j}\in\mathcal{B}_{j})>\delta$.
It suffices for us to show that $\mathbb{P}_{0\leq i\leq n}(\mu_{x,i}\in\mathcal{A}_{i})\leq2\delta/3$.
Indeed, if we had $\mathbb{P}_{0\leq i\leq n}(\mu_{x,i}\in\mathcal{A}_{i})>2\delta/3$,
then Lemma \ref{lem:distribution-of-components-of-components} would
imply $\mathbb{P}_{0\leq i\leq n}(\mu_{x,i}\in\mathcal{B}_{i})\geq2\delta^{2}/3-O(k/n)$,
which, assuming as we may that $n$ large relative to $k,\delta$,
contradicts (\ref{eq:10}).
\end{proof}

\subsection{\label{sub:Entropy-growth-under-conv-Euclidean}Entropy growth under
convolution: Euclidean case}

Recall that $\nu*\mu$ denotes the convolution of measures $\nu,\mu$
on $\mathbb{R}$. The entropy of a convolution is generally at least
as large as each of the convolved measures, although due to the discretization
involved there may be a small loss:
\begin{lemma}
\label{lem:entropy-monotonicity-underconvolution}For every $\mu,\nu\in\mathcal{P}(\mathbb{R})$,
\[
\frac{1}{n}H(\nu*\mu,\mathcal{D}_{n})\geq\frac{1}{n}H(\mu,\mathcal{D}_{n})-O(\frac{1}{n}).
\]
\end{lemma}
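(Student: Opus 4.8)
The plan is to deduce the bound from the fact that $\nu*\mu$ is an average of translates of $\mu$, together with the almost-convexity of entropy. Write $\nu*\mu=\int T_y\mu\,d\nu(y)$, where $T_y$ is the translation map from Section \ref{sub:Expansion-and-re-scaling}. Equivalently, $\nu*\mu=\mathbb{E}(T_Y\mu)$ where $Y$ has law $\nu$, so $\nu*\mu$ is the expectation of the random measure $T_Y\mu$. By the concavity (Jensen) half of the inequalities recalled in Section \ref{sub:Entropy}, $H(\nu*\mu,\mathcal{D}_n)=H(\mathbb{E}(T_Y\mu),\mathcal{D}_n)\geq\mathbb{E}\,H(T_Y\mu,\mathcal{D}_n)$.

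Next I would control each $H(T_y\mu,\mathcal{D}_n)$ from below by $H(\mu,\mathcal{D}_n)$ up to an additive $O(1)$. This is exactly \eqref{eq:entropy-under-translation}: translation changes the level-$n$ dyadic entropy by at most an additive constant, uniformly in $y$ and $n$ (the constant coming from the fact that a translated dyadic cube meets at most two dyadic cubes in each coordinate, so $T_y\mathcal{D}_n$ and $\mathcal{D}_n$ each refine the other up to a bounded-index common coarsening, and the basic entropy inequalities in Section \ref{sub:Entropy} then give the two-sided $O(1)$ bound). Combining, $H(\nu*\mu,\mathcal{D}_n)\geq\mathbb{E}\,H(T_Y\mu,\mathcal{D}_n)\geq H(\mu,\mathcal{D}_n)-O(1)$, and dividing by $n$ yields the claim.

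One technical point to be careful about: \eqref{eq:entropy-under-translation} as stated is for a fixed $u\in\mathbb{R}$, and here I am integrating over $y\sim\nu$, so I should note that the implied constant in \eqref{eq:entropy-under-translation} is absolute (it does not depend on $u$ at all — only on the geometry of how a translated dyadic partition sits inside the original), hence the bound $H(T_y\mu,\mathcal{D}_n)\geq H(\mu,\mathcal{D}_n)-O(1)$ holds with the same $O(1)$ for every $y$, and survives taking expectation over $y$. There is no compactness hypothesis needed on $\nu$ or $\mu$ for this argument, consistent with the statement of the lemma.

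I do not expect a serious obstacle here; the only thing requiring a line of justification is the uniform-in-$u$ nature of the translation estimate, which is where I would focus the written proof. Everything else is a direct application of the concavity of entropy and of the elementary properties collected in Section \ref{sub:Entropy} and Section \ref{sub:Expansion-and-re-scaling}.
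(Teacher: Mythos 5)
Your proof is correct and is essentially identical to the paper's: both write $\nu*\mu=\mathbb{E}(T_Y\mu)$, apply concavity of entropy, and invoke \eqref{eq:entropy-under-translation}. Your added remark on the uniformity in $u$ of the $O(1)$ in \eqref{eq:entropy-under-translation} is a sound observation that the paper leaves implicit.
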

\begin{proof}
Let $X$ be a random variable with distribution $\nu$. Then 
\begin{eqnarray*}
\nu*\mu & = & \mathbb{E}(\delta_{X}*\mu)\\
 & = & \mathbb{E}(T_{X}\mu).
\end{eqnarray*}
By concavity of entropy and (\ref{eq:entropy-under-translation}),
\begin{eqnarray*}
H(\nu*\mu,\mathcal{D}_{n}) & \geq & \mathbb{E}\left(H(T_{X}\mu,\mathcal{D}_{n})\right)\\
 & = & \mathbb{E}\left(H(\mu,\mathcal{D}_{n})+O(1)\right)\\
 & = & H(\mu,\mathcal{D}_{n})+O(1).
\end{eqnarray*}
The lemma follows.
\end{proof}
In general one expects the entropy to grow under convolution but this
is not always the case. Theorem 2.8 of \cite{Hochman2014} provides
a verifiable condition under which some entropy growth occurs.
\begin{theorem}
\label{thm:entropy-growth-under-convolution}For every $\varepsilon>0$
there exists a $\delta=\delta(\varepsilon)>0$ such that for every
$m>m(\varepsilon,\delta)$ and $n>n(\varepsilon,\delta,m)$, the following
holds: 

Let $\mu,\nu\in\mathcal{P}([0,1))$ and suppose that $\mu$ is
$(1-\varepsilon,\delta,m)$-entropy porous from scale $0$ to $n$.
Then 
\[
\frac{1}{n}H(\nu,\mathcal{D}_{n})>\varepsilon\quad\implies\quad\frac{1}{n}H(\nu*\mu,\mathcal{D}_{n})>\frac{1}{n}H(\mu,\mathcal{D}_{n})+\delta.
\]
More generally, if $\mu,\nu$ are supported on sets of diameter
$2^{-i}$, and if $\mu$ is $(1-\varepsilon,\delta,m)$-entropy porous
from scale $i$ to $i+n$, then 
\[
\frac{1}{n}H(\nu,\mathcal{D}_{i+n})>\varepsilon\quad\implies\quad\frac{1}{n}H(\nu*\mu,\mathcal{D}_{i+n})>\frac{1}{n}H(\mu,\mathcal{D}_{i+n})+\delta.
\]

\end{theorem}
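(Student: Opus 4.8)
The "final statement" here is Theorem~\ref{thm:entropy-growth-under-convolution}, which is quoted from \cite{Hochman2014} and used in this paper as a black box; accordingly the natural "proof" to propose is a sketch of the strategy behind it rather than a self-contained argument. What follows is such a proposal.

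\medskip

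The plan is to deduce the statement from the inverse theorem for entropy growth under convolution in $\mathbb{R}$, which characterizes the measures $\mu,\nu$ for which $\frac{1}{n}H(\nu*\mu,\mathcal{D}_n) \approx \frac{1}{n}H(\mu,\mathcal{D}_n)$: up to a small proportion of scales $0\le i\le n$, the component $\mu_{x,i}$ is close (at resolution $m$) to uniform on its dyadic cell, or the component $\nu_{y,i}$ is close to a single atom. More precisely, one shows: for every $\varepsilon>0$ there is $\delta>0$ so that, for $m,n$ large in the appropriate order, if $\frac{1}{n}H(\nu*\mu,\mathcal{D}_n) < \frac{1}{n}H(\mu,\mathcal{D}_n)+\delta$, then there is a set $\mathcal{I}\subseteq\{0,\dots,n\}$ with $|\mathcal{I}| > (1-\varepsilon)\,n$ such that for each $i\in\mathcal{I}$, with $(\mu_{x,i},\nu_{y,i})$-probability at least $1-\varepsilon$, either $\frac{1}{m}H(\mu_{x,i},\mathcal{D}_{i+m}) > 1-\varepsilon$ or $\frac{1}{m}H(\nu_{y,i},\mathcal{D}_{i+m}) < \varepsilon$. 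Granting this, the theorem is immediate by contraposition: the entropy porosity hypothesis says that for $1-\delta$ of scales and $1-\delta$ of components, $\frac{1}{m}H(\mu_{x,i},\mathcal{D}_{i+m}) \le (1-\varepsilon)+\delta < 1-\varepsilon/2$, so the first alternative essentially never holds; hence the second alternative must hold for almost all scales and components of $\nu$, and then the multiscale formula Lemma~\ref{lem:multiscale-formula-for-entropy} forces $\frac{1}{n}H(\nu,\mathcal{D}_n)$ to be small, contradicting $\frac{1}{n}H(\nu,\mathcal{D}_n)>\varepsilon$. The version for measures supported on a set of diameter $2^{-i}$ follows by rescaling by $S_i$ and invoking \eqref{eq:entropy-under-scaling}, which only perturbs entropies by $O(1)$, hence scale-$n$ entropies by $O(1/n)$, absorbable in $\delta$.

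\medskip

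The substance is therefore the inverse theorem, and here is how I would organize its proof. First, reduce to the statement that entropy does \emph{not} grow only on account of local structure by working scale-by-scale: by Lemma~\ref{lem:multiscale-formula-for-entropy-of-additive-convolution}, if $\frac{1}{n}H(\nu*\mu,\mathcal{D}_n)$ fails to exceed $\frac{1}{n}H(\mu,\mathcal{D}_n)$ by $\delta$, then for all but an $O(\sqrt\delta)$-fraction of scales $i$, the conditional expectation $\mathbb{E}\big(\frac{1}{m}H(\nu_{y,i}*\mu_{x,i},\mathcal{D}_{i+m})\big)$ exceeds $\mathbb{E}\big(\frac{1}{m}H(\mu_{x,i},\mathcal{D}_{i+m})\big)$ by at most $O(\sqrt\delta)$ — i.e.\ there is essentially no entropy growth at resolution $m$ for the typical component pair. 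So it suffices to prove a finitary, single-scale inverse statement: if $\frac{1}{m}H(\nu'*\mu',\mathcal{D}_{m})$ is within $\varepsilon'$ of $\frac{1}{m}H(\mu',\mathcal{D}_{m})$ for probability measures $\mu',\nu'$ on $[0,1)$, then (after discarding a small fraction) $\mu'$ is within $\varepsilon$ of uniform at resolution $m$, or $\nu'$ is within $\varepsilon$ of an atom. Second, prove this single-scale statement by the additive-combinatorics machinery: normalized entropy at scale $m$ plays the role of (the logarithm of, normalized) cardinality of a $2^{-m}$-separated set, convolution of measures corresponds to sumsets, and the failure of entropy to grow is an entropy version of the Pl\"unnecke–Ruzsa/Balog–Szemer\'edi–Gowers regime. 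One extracts from small doubling a large approximately-uniform structure; the key quantitative input is a Freiman-type / Bourgain-Gamburd-style multiscale analysis, organizing the $m$ dyadic scales into blocks on which $\mu'$ is either nearly uniform or nearly atomic and showing that the "full-dimensional" blocks of $\mu'$ are exactly where $\nu'$ is forced to be concentrated if no growth occurs. This is the technical heart and is carried out in detail in \cite[Sections 4--5]{Hochman2014}.

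\medskip

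The main obstacle is precisely this single-scale inverse theorem: passing from "entropy of the sum is not much larger than entropy of one summand" to an explicit structural dichotomy on the summands. The difficulty is that, unlike in the integer setting, one is working with measures at a fixed finite resolution $m$, so "approximately uniform" and "approximately atomic" must be quantified uniformly in $m$, and the bootstrapping from a small fraction of good scales (obtained via Lemmas~\ref{lem:multiscale-formula-for-entropy} and~\ref{lem:multiscale-formula-for-entropy-of-additive-convolution}) to a statement holding at almost every scale requires care with the dependence of the $\delta,m,n$ thresholds — exactly the order of quantifiers recorded in the theorem statement. A secondary but real technical point is handling the discretization errors: the identities \eqref{eq:entropy-under-translation} and \eqref{eq:entropy-under-scaling} hold only up to $O(1)$, so everywhere one passes between scales these accumulate, and one must check they contribute only $O(1/n)$ or $O(m/n)$ to normalized scale-$n$ entropies, which is why the hypotheses are phrased with the gap $\delta$ fixed before $m$ and $n$ are sent to infinity. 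Since all of this is established in \cite{Hochman2014}, in the present paper the theorem is simply invoked.
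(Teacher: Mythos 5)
Your proposal correctly identifies that the paper invokes this theorem as a black box from Hochman (2014) and supplies no proof beyond the remark that the second statement follows from the first by rescaling via $2^{i}$ (using the $O(1)$ stability of entropy under \eqref{eq:entropy-under-scaling}). Your sketch of the underlying inverse theorem and the contrapositive deduction from it is an accurate summary of the source's strategy, so the proposal is consistent with the paper's treatment.
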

The second statement follows from the first by re-scaling by $2^{i}$.

\section{\label{sec:Linearization-and-entropy-growth-for-action}Linearization
and entropy growth }

We now consider $\nu\in\mathcal{P}(G)$ and $\mu\in\mathcal{P}(\mathbb{R})$
and the convolution $\nu\bigdot\mu$ obtained by pushing $\nu\times\mu$
forward through $(\varphi,x)\mapsto\varphi\bigdot x=\varphi(x)$. Our
goal is to extend the results of the last section to this case: namely,
that under some assumptions on $\nu,\mu$ the entropy of $\nu\bigdot\mu$
is substantially larger than that of $\mu$ alone.

It will be convenient to extend the notation and write $\nu\bigdot x$
for the push-forward of $\nu\in\mathcal{P}(G)$ via $\varphi\mapsto\varphi\bigdot x$,
or equivalently, $\nu\bigdot x=\nu\bigdot\delta_{x}$.

\subsection{\label{sub:Linearization}Linearization and entropy}

Let $f:\mathbb{R}^{d_{1}+d_{2}}\rightarrow\mathbb{R}^{d_{3}}$, let
$\nu\in\mathcal{P}(\mathbb{R}^{d_{1}})$, $\mu\in\mathcal{P}(\mathbb{R}^{d_{2}})$,
and $\lambda=f(\nu\times\mu)\in\mathcal{P}(\mathbb{R}^{d_{3}})$.
First suppose that $f$ is affine, so that there exists $y_{0}\in\mathbb{R}^{2}$
and matrices $A,B$ of appropriate dimensions such that 
\begin{eqnarray*}
f(x,y) & = & y_{0}+Ax+By\\
 & = & T_{y_{0}}(Ax+By).
\end{eqnarray*}
It follows that
\[
\lambda=f(\nu\times\mu)=T_{y_{0}}(A\nu*B\mu),
\]
and by (\ref{eq:entropy-under-translation}),
\[
H(\lambda,\mathcal{D}_{n})=H(A\nu*B\mu,\mathcal{D}_{n})+O(1).
\]

Now suppose instead that $f$ is twice continuously differentiable,\footnote{Differentiability would be enough for most purposes, but then the
error term in (\ref{eq:linearization}) would be merely $o(|x-x_{0}|+|y-y_{0}|)$
instead of the quadratic error, and later on we will want the quadratic rate.}
rather than affine, so at every point $z_{0}=(x_{0},y_{0})\in\mathbb{R}^{d_{1}+d_{2}}$
there are matrices $A=A_{z_{0}}$ and $B=B_{z_{0}}$ such that 
\begin{align}
  f(x,y)  \;= & \;\; f(x_{0},y_{0})+A(x-x_{0})+B(y-y_{0}) \nonumber \\
      & \;\;\;+\;\;O(|x-x_{0}|^{2}+|y-y_{0}|^{2}).\label{eq:linearization}
\end{align}
Fix $m$, and suppose further that $r>0$ and that $\nu$ is supported
on an $O(r)$-neighborhood $U$ of $x_{0}$ and $\mu$ is supported
on an $O(r)$-neighborhood $V$ of $y_{0}$. Then, assuming $r$ is
small enough that the error term in (\ref{eq:linearization}) is less
then $2^{\log r-m}$ for all $(x,y)\in U\times V$, by (\ref{eq:entroupy-distortion-bound})
we have 
\begin{eqnarray}
H(f(\nu\times\mu),\mathcal{D}_{-\log r+m}) & = & H(A\nu*B\mu,\mathcal{D}_{-\log r+m})+O(1).\label{eq:entropy-under-linearization}
\end{eqnarray}

The last equation shows that, in order to bound the entropy of the
image of a product measure, we can apply results about convolutions,
provided we control the error term. But the dependence between the
parameters is crucial: We have controlled it for a given $m$ by requiring
that $\nu\times\mu$ be supported close enough to $z_{0}$. In
(\ref{eq:entropy-under-linearization}) \emph{we cannot take $m\rightarrow\infty$},
because as we increase $m$, the supports of the measures may be required
to shrink to a point. 

This issue can be avoided by using multiscale formula for entropy,
though this gives only a lower bound rather than equality. We specialize
at this point to the linear action of the similarity group $G$ on
$\mathbb{R}$, though the same ideas work in greater generality.
Recall that we parametrize $G$ as $\mathbb{R}^{2}$, identifying
$(s,t)$ with $x\mapsto e^sx+t$.
In order to conform with the notation in previous sections, we denote
the coordinates of $G\times\mathbb{R}$ by $(\varphi,x)$. Let
\begin{eqnarray*}
f:G\times\mathbb{R} & \rightarrow & \mathbb{R}\\
(\varphi,x) & \mapsto & \varphi\bigdot x
\end{eqnarray*}
denote the action map, which we think of this as a smooth map defined
on $\mathbb{R}^{2}\times\mathbb{R}$. Note that
by definition, $f(\mu\times\nu)=\mu.\nu$. Also note that
the derivative $A=A_{(\varphi,x)}=\frac{\partial}{\partial\varphi}f(\varphi,x)$
is a $1\times2$ matrix and $B=B_{(\varphi,x)}=\frac{\partial}{\partial x}f(\varphi,x)$
is a $1\times1$ matrix, which we identify simply with a scalar. Given
$(u,v)\in\mathbb{R}^{2}\times\mathbb{R}$ and matrices $A,B$ of these
dimensions we have (recall that $S_{t}(x)=2^{t}x$) 
\[
Au+Bv=S_{\log B}(B^{-1}Au+v).
\]
Therefore, 
\[
A\nu*B\mu=S_{\log B}(B^{-1}A\nu*\mu).
\]

\begin{proposition}
\label{prop:iterated-entropy-formula-for-action}Let $I\times J\subseteq G\times\mathbb{R}$
be compact. Then for every $\nu\in\mathcal{P}(I)$ and $\mu\in\mathcal{P}(J)$,
as $m\rightarrow\infty$ and $n/m\rightarrow\infty$, we have 
\[
\frac{1}{n}H(\nu\bigdot\mu,\mathcal{D}_{n})\geq\mathbb{E}_{0\leq i\leq n}\left(\frac{1}{m}H(B_{(\varphi,x)}^{-1}A_{(\varphi,x)}\nu_{\varphi,i}*\mu_{x,i},\mathcal{D}_{i+m})\right)+o(1).
\]
\end{proposition}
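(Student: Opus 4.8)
The plan is to combine the multiscale lower bound for the action-convolution (Lemma~\ref{lem:multiscale-formula-for-entropy-of-action-convolution}) with the local linearization estimate~(\ref{eq:entropy-under-linearization}), applied component-by-component. First I would invoke Lemma~\ref{lem:multiscale-formula-for-entropy-of-action-convolution}, using that $\supp\nu\subseteq I$ is contained in a fixed compact set so that (as remarked just after that lemma) the scale-shift by $\log\|\varphi_0\|$ can be absorbed into the error term; this gives
\[
\frac{1}{n}H(\nu\bigdot\mu,\mathcal{D}_{n})\;\geq\;\mathbb{E}_{1\leq i\leq n}\left(\frac{1}{m}H(\nu_{\varphi,i}\bigdot\mu_{x,i},\mathcal{D}_{i+m})\right)-O\!\left(\tfrac{1}{m}+\tfrac{m}{n}\right),
\]
where $\nu_{\varphi,i}$ and $\mu_{x,i}$ are independent level-$i$ components. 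The point is that $\nu_{\varphi,i}$ is supported on a dyadic cell of $G$ of side $2^{-i}$, hence on an $O(2^{-i})$-neighborhood of some point $\varphi$, and likewise $\mu_{x,i}$ on an $O(2^{-i})$-neighborhood of some $x$.

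Next, for each fixed $i$ (which I will eventually take large, via $m\to\infty$ and the averaging over $i$), I would apply the linearization bound~(\ref{eq:entropy-under-linearization}) to the smooth map $f(\varphi,x)=\varphi\bigdot x$ on the ball of radius $O(2^{-i})$ around $(\varphi,x)$: since $f$ is twice continuously differentiable on the compact set $I\times J$, the quadratic error in~(\ref{eq:linearization}) is $O(2^{-2i})$, which is below the resolution threshold $2^{-i-m}$ precisely when $2^{-i}\lesssim 2^{-m}$, i.e. when $i\gtrsim m$. For such $i$ this yields
\[
H(\nu_{\varphi,i}\bigdot\mu_{x,i},\mathcal{D}_{i+m})\;=\;H\!\left(A_{(\varphi,x)}\nu_{\varphi,i}*B_{(\varphi,x)}\mu_{x,i},\mathcal{D}_{i+m}\right)+O(1),
\]
and then, using the identity $A\nu*B\mu=S_{\log B}(B^{-1}A\nu*\mu)$ together with the scaling relation~(\ref{eq:entropy-under-scaling}) (noting $B=B_{(\varphi,x)}=\tfrac{\partial}{\partial x}f=e^{s}$ is bounded above and below on the compact $I\times J$, so $\log B=O(1)$ and the scale index $i+m$ only shifts by $O(1)$), I get $H(A\nu_{\varphi,i}*B\mu_{x,i},\mathcal{D}_{i+m})=H(B^{-1}A\nu_{\varphi,i}*\mu_{x,i},\mathcal{D}_{i+m})+O(1)$. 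Dividing by $m$ turns all these $O(1)$ terms into $O(1/m)$.

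Finally I would assemble the pieces: plug the component-wise estimate back into the expectation. The only subtlety is the restriction $i\gtrsim m$ under which the linearization is valid; but the $O(1)$ cells with $i<m$ (well, $i$ up to $O(m)$) contribute at most $O(m/n)$ to the average $\mathbb{E}_{1\leq i\leq n}$ once divided by $n$, since each term $\tfrac{1}{m}H(\cdot,\mathcal{D}_{i+m})$ is itself $O(1)$ (the component lives in a cell of size $2^{-i}$, so $\tfrac1m H(\cdot,\mathcal D_{i+m})\le \tfrac1m\cdot O(m)=O(1)$ by~(\ref{eq:entropy-counting-bound}), or by~(\ref{eq:entorpy-combinatorial-bound})), and there are $O(m)$ of them out of $n$. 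Relabeling $1\leq i\leq n$ as $0\leq i\leq n$ costs only another $O(1/n)$. Collecting, the total error is $O(\tfrac1m+\tfrac mn)$, which tends to $0$ as $m\to\infty$ and $n/m\to\infty$, giving the claimed $o(1)$.

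I expect the main obstacle to be bookkeeping the regime of validity of the linearization: one must be careful that the same threshold $r\lesssim 2^{-m}$ needed for~(\ref{eq:entropy-under-linearization}) to hold (so that the quadratic error $O(r^2)=O(2^{-2i})$ is finer than scale $-\log r + m = i+m$) is compatible with letting $m\to\infty$ afterwards — this is exactly the point flagged in the discussion preceding the proposition, and the resolution is that we do \emph{not} fix the supports and send $m\to\infty$, but rather let the component scale $i$ grow with $m$, discarding the bounded-below-by-$m$ initial range of scales whose contribution is absorbed into the $O(m/n)$ error. A secondary, more routine point is verifying that $A_{(\varphi,x)}$ and $B_{(\varphi,x)}$ are bounded (and $B$ bounded away from $0$) uniformly on $I\times J$, which is immediate from compactness and the explicit form $f(\varphi,x)=e^sx+t$, giving $B=e^s$ and $A=(e^s x,\,1)$.
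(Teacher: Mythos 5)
Your argument is essentially the paper's own proof: Lemma~\ref{lem:multiscale-formula-for-entropy-of-action-convolution} together with the compactness remark that follows it, the linearization estimate~(\ref{eq:entropy-under-linearization}) applied to level-$i$ components supported on $O(2^{-i})$-balls, the identity $A\nu*B\mu=S_{\log B}(B^{-1}A\nu*\mu)$ with $B$ bounded on $I\times J$, and division by $m$ to convert the $O(1)$ errors to $o(1)$. The one point where you are slightly more explicit than the paper is the threshold $i\gtrsim m$ below which linearization is not guaranteed and the resulting $O(m/n)$ loss from discarding those levels --- the paper folds this into the phrase ``as $m,i\to\infty$ (uniformly)'' --- and your bookkeeping there is correct.
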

\begin{proof}
Since $I\times J$ is compact and $f$ is smooth, the error term in
(\ref{eq:linearization}) holds uniformly in $(\varphi_{0},x_{0})\in I\times J$.
Given components $\nu_{\varphi,i}$ and $\mu_{x,i}$ of $\nu$
and $\mu$, respectively, each is supported on a set of diameter
$O(2^{-i})$, so by (\ref{eq:entropy-under-linearization}),
\[
\frac{1}{m}H(f(\nu_{\varphi,i}\times\mu_{x,i}),\mathcal{D}_{i+m})=\frac{1}{m}H(A_{(\varphi,x)}\nu_{\varphi,i}*B_{(\varphi,x)}\mu_{x,i},\mathcal{D}_{i+m})+o(1),
\]
as $m,i\rightarrow\infty$ (uniformly in $(\varphi,x)\in I\times J$). By compactness,
$B_{(\varphi,x)}$ is bounded for $(\varphi,x)\in I\times J$, and
by (\ref{eq:entropy-under-scaling}), changing a measure by a bounded
scaling affects entropy by $O(1)$, which, upon division by $m$,
is $o(1)$. Thus the last equation can be replaced by 
\[
\frac{1}{m}H(f(\nu_{\varphi,i}\times\mu_{x,i}),\mathcal{D}_{i+m})=\frac{1}{m}H(B_{(\varphi,x)}^{-1}A_{(\varphi,x)}\nu_{\varphi,i}*\mu_{x,i},\mathcal{D}_{i+m})+o(1),
\]
as $m,i\rightarrow\infty$. Finally, by Lemma \ref{lem:multiscale-formula-for-entropy-of-action-convolution}
and the remark following it, and the last equation,
\begin{eqnarray*}
\frac{1}{n}H(\nu\bigdot\mu,\mathcal{D}_{n}) & \geq & \mathbb{E}_{0\leq i\leq n}\left(\frac{1}{m}H(\nu_{\varphi,i}\bigdot\mu_{x,i},\mathcal{D}_{i+m})\right)+O(\frac{1}{m}+\frac{m}{n})\\
 & = & \mathbb{E}_{0\leq i\leq n}\left(\frac{1}{m}H(f(\nu_{\varphi,i}\times\mu_{x,i}),\mathcal{D}_{i+m})\right)+O(\frac{1}{m}+\frac{m}{n})\\
& = & \mathbb{E}_{0\leq i\leq n}\left(\frac{1}{m}H(B_{(\varphi,x)}^{-1}A_{(\varphi,x)}\nu_{\varphi,i}*\mu_{x,i},\mathcal{D}_{i+m})+o(1)\right)\\
& & \; +\;O(\frac{1}{m}+\frac{m}{n}),
\end{eqnarray*}
which gives the claim (we can move the error term outside the expectation
because it is uniform).
\end{proof}

\subsection{\label{sub:Entropy-growth}Entropy growth for the action}

We now prove an analogue of Theorem \ref{thm:entropy-growth-under-convolution}
for the action of $G$ on $\mathbb{R}$.
\begin{theorem}
\label{thm:inverse-theorem-for-action}For every $\varepsilon>0$
there exists a $\delta=\delta(\varepsilon)>0$ such that the following
holds: 

Let $\nu\in\mathcal{P}(G)$, $\mu\in\mathcal{P}(\mathbb{R})$
be compactly supported, and suppose that $\mu$ is non-atomic and
$(1-\varepsilon)$-entropy porous. Then for every $n>n(\varepsilon,\delta,\mu)$
\[
\frac{1}{n}H(\nu,\mathcal{D}_{n})>\varepsilon\quad\implies\quad\frac{1}{n}H(\nu\bigdot\mu,\mathcal{D}_{n})>\frac{1}{n}H_{n}(\mu,\mathcal{D}_{n})+\delta.
\]

\end{theorem}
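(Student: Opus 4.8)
The plan is to combine Proposition~\ref{prop:iterated-entropy-formula-for-action} (which linearizes the action convolution) with the Euclidean inverse theorem, Theorem~\ref{thm:entropy-growth-under-convolution}, applied componentwise. First I would normalize: by replacing $\nu$ with a compactly supported measure and using that the support of $\nu$ lies in a fixed compact $I\subseteq G$, I may assume $\nu\in\mathcal{P}(I)$ and $\mu\in\mathcal{P}(J)$ for a fixed compact $I\times J\subseteq G\times\mathbb{R}$, paying only $O(1)$ in entropy via \eqref{eq:entropy-under-bi-lip-maps} and \eqref{eq:entropy-under-scaling}. Proposition~\ref{prop:iterated-entropy-formula-for-action} then reduces the problem to showing that, for a positive-probability set of levels $i$, the linearized convolution $B_{(\varphi,x)}^{-1}A_{(\varphi,x)}\nu_{\varphi,i}*\mu_{x,i}$ has scale-$m$ entropy exceeding that of $\mu_{x,i}$ by a definite amount $\delta'$; averaging this gain over $i$ and feeding it back through both the multiscale formula for $\mu$ (Lemma~\ref{lem:multiscale-formula-for-entropy}) and the proposition yields $\frac{1}{n}H(\nu\bigdot\mu,\mathcal{D}_n)>\frac{1}{n}H(\mu,\mathcal{D}_n)+\delta$ for suitable $\delta=\delta(\varepsilon)$.

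The componentwise step is where the hypotheses get used. Since $\mu$ is $(1-\varepsilon)$-entropy porous, for small $\eta$, large $m$, and large $n$ it is $(1-\varepsilon,\eta^2/2,m)$-entropy porous from scale $0$ to $n$; by Lemma~\ref{lem:bounded-component-entropy-passes-to-components}, for all but an $\eta$-fraction of levels $i$ (and $\mu$-typical $x$) the component $\mu_{x,i}$ is itself $(1-\varepsilon,\eta,m)$-entropy porous from scale $i$ to $i+k$ — this is exactly the hypothesis needed to apply the second (rescaled) form of Theorem~\ref{thm:entropy-growth-under-convolution} at scale $i$. On the other side, since $\frac{1}{n}H(\nu,\mathcal{D}_n)>\varepsilon$, the multiscale formula for $\nu$ forces $\mathbb{E}_{0\le i\le n}\big(\frac{1}{m}H(\nu_{\varphi,i},\mathcal{D}_{i+m})\big)>\varepsilon-o(1)$, so with positive probability (bounded below in terms of $\varepsilon$) a level-$i$ component satisfies $\frac{1}{m}H(\nu_{\varphi,i},\mathcal{D}_{i+m})>\varepsilon/2$, say. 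Using Lemma~\ref{lem:distribution-of-components-of-components} to align the two "good" events on a common positive-probability set of $(i,\varphi,x)$, I can then, for each such triple, invoke Theorem~\ref{thm:entropy-growth-under-convolution} to conclude $\frac{1}{m}H(B^{-1}A\nu_{\varphi,i}*\mu_{x,i},\mathcal{D}_{i+m})>\frac{1}{m}H(\mu_{x,i},\mathcal{D}_{i+m})+\delta'$.

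The main obstacle I anticipate is the behavior of the linear factor $B_{(\varphi,x)}^{-1}A_{(\varphi,x)}$, which is a $1\times 2$ matrix acting on $\nu_{\varphi,i}$, i.e. essentially an orthogonal projection of the two-dimensional measure $\nu_{\varphi,i}$ onto a line before convolving. Theorem~\ref{thm:entropy-growth-under-convolution} requires the projected measure $B^{-1}A\nu_{\varphi,i}$ to still carry entropy $>\varepsilon'$ at scale $i+m$, but a generic component $\nu_{\varphi,i}$ with high entropy could in principle be concentrated in the kernel direction of $A_{(\varphi,x)}$, killing the entropy under projection — this is precisely "case (c)" from the introduction. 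The resolution must use that $\mu$ is non-atomic (hence $x$ ranges over an infinite, in fact positive-measure-in-the-entropy-sense, set of values) together with the transversality of the kernels of $A_{(\varphi,x)}$ as $x$ varies: one shows that the bad event "$\frac{1}{m}H(B_{(\varphi,x)}^{-1}A_{(\varphi,x)}\nu_{\varphi,i},\mathcal{D}_{i+m})$ is small" cannot hold for a large fraction of pairs $(\varphi,x)$ simultaneously, because that would force $\nu_{\varphi,i}$ to concentrate near the intersection of many transverse one-dimensional subspaces of the tangent space, contradicting $\frac{1}{m}H(\nu_{\varphi,i},\mathcal{D}_{i+m})>\varepsilon/2$. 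Concretely this is a Fubini/averaging argument over $x$ combined with the quadratic linearization error in \eqref{eq:linearization}; making the transversality quantitative — uniform over the compact set $I\times J$ and in the scale $m$ — is the technical heart of the proof. Once that is in place, the remaining bookkeeping (choosing $\eta$, then $m$, then $n$; absorbing $o(1)$ and $O(1/m+m/n)$ errors; fixing $\delta$ as a function of $\varepsilon$ alone) is routine.
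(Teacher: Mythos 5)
Your proposal is correct and closely mirrors the paper's argument: linearize via Proposition~\ref{prop:iterated-entropy-formula-for-action}, pass entropy porosity to components via Lemma~\ref{lem:bounded-component-entropy-passes-to-components}, apply Theorem~\ref{thm:entropy-growth-under-convolution} componentwise at a positive-probability set of levels, and handle the degeneracy of $C_{(\varphi,x)}=B_{(\varphi,x)}^{-1}A_{(\varphi,x)}$ using non-atomicity of $\mu$ together with transversality of the kernels $\ker A_{(\varphi,x)}$ as $x$ varies. The ``technical heart'' you flag is exactly what the paper supplies in Lemmas~\ref{lem:separation}, \ref{lem:Lipschitz-constant-away-from-diagonal} and \ref{lem:entropy-of-image}: for a fixed component $\theta=\nu_{\varphi,i}$, the map $z\mapsto(A_{\varphi,x}z,A_{\varphi,x'}z)$ is uniformly bi-Lipschitz whenever $|x-x'|\geq t$, so at most one $t$-ball of $x$'s (hence $\mu$-mass at most $\rho$ by non-atomicity) can yield a collapsed projection of $\theta$ -- precisely your transversality/Fubini argument made quantitative.
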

We remark that $n$ is required to be large relative to $\mu$, but
in fact the only dependence involves the modulus of continuity of
$\mu$ (in the proof the dependence appears in Lemma \ref{lem:entropy-of-image}),
and on a choice of the parameter $m$ in the definition of entropy
porosity for $\mu$.

To begin the proof, fix $\varepsilon>0$. Apply Theorem \ref{thm:entropy-growth-under-convolution}
with parameter $\varepsilon'=\varepsilon/10$, obtaining a corresponding
$\delta'>0$. We will choose $\delta$ later to be small both compared to
$\delta'$ and $\varepsilon$.

Fix parameters $m,k,n\in\mathbb{N}$. All the $o(1)$ error terms
below are to be understood as becoming arbitrarily small if $m$ is
large, $k$ is large enough depending on $m$, and $n$ is large enough
in a manner depending on $m,k$. 

Let us abbreviate
\[
C_{(\varphi,x)}=B_{(\varphi,x)}^{-1}A_{(\varphi,x)},
\]
so $C_{(\varphi,x)}$ is a $2\times1$ matrix, which we identify with
a linear map $\mathbb{R}^{2}\rightarrow\mathbb{R}$. By Proposition
\ref{prop:iterated-entropy-formula-for-action} we have
\begin{eqnarray}
\frac{1}{n}H(\nu\bigdot\mu,\mathcal{D}_{n}) & \geq & \mathbb{E}_{0\leq i\leq n}\left(\frac{1}{k}H(C_{(\varphi,x)}\nu_{\varphi,i}*\mu_{x,i},\mathcal{D}_{i+k})\right)-o(1).\label{eq:9}
\end{eqnarray}
Suppose that for some $c=c(\varepsilon)>0$ it were true that
\begin{equation}
\mathbb{P}_{0\leq i\leq n}\left(\frac{1}{k}H(C_{(\varphi,x)}\nu_{\varphi,i}*\mu_{x,i},\mathcal{D}_{i+k})>\frac{1}{k}H(\mu_{x,i},\mathcal{D}_{i+k})+\delta'\right)>c.\label{eq:entropy-growth-on-average-1}
\end{equation}
Splitting the expectation in (\ref{eq:9}) by conditioning on the
event in (\ref{eq:entropy-growth-on-average-1}) and its complement,
using Lemma \ref{lem:entropy-monotonicity-underconvolution} to
control the expectation on the complement, and using Lemma \ref{lem:multiscale-formula-for-entropy},
we would have 
\begin{eqnarray*}
\frac{1}{n}H(\nu\bigdot\mu,\mathcal{D}_{n}) & \geq & \mathbb{E}_{0\leq i\leq n}\left(\frac{1}{k}H(\mu_{x,i},\mathcal{D}_{i+k})\right)+c\delta'-o(1)\\
 & = & \frac{1}{n}H(\mu,\mathcal{D}_{n})+c\delta'-o(1),
\end{eqnarray*}
as claimed. 

Now, by our choice of $\varepsilon'$ and $\delta'$, equation (\ref{eq:entropy-growth-on-average-1})
will follow if we show that
\begin{equation}
\mathbb{P}_{0\leq i\leq n}\left(\begin{array}{c}
\mu_{x,i}\mbox{ is }(1-\varepsilon',\delta',m)\mbox{-entropy porous at scales }\\
i\mbox{ to }i+k\mbox{, and }\frac{1}{k}H(C_{(\varphi,x)}\nu_{\varphi,i},\mathcal{D}_{i+k})>\varepsilon'
\end{array}\right)>c.\label{eq:7}
\end{equation}
This is the probability of an intersection of two events. The first,
involving $\mu_{x,i}$, can be dealt with using Lemma \ref{lem:bounded-component-entropy-passes-to-components}:
Indeed, by the hypothesis, if $m$ is large enough and $n$ suitably
large, then $\mu$ is $(1-\varepsilon,\delta,m)$-porous, and hence
$(1-\varepsilon',\delta,m)$-porous, at scales $0$ to $n$, so (assuming as we may that  $\delta<(\delta')^{2}/2$) Lemma \ref{lem:bounded-component-entropy-passes-to-components}
implies 
\[
\mathbb{P}_{0\leq i\leq n}\left(\mu_{x,i}\mbox{ is }(1-\varepsilon',\delta',m)\mbox{-entropy porous at scales }i\mbox{ to }i+k\right)=1-o(1).
\]
Thus, in order to prove (\ref{eq:7}), it remains to show that $\frac{1}{k}H(C_{(\varphi,x)}\nu_{\varphi,i},\mathcal{D}_{i+k})>\varepsilon'$
with probability bounded away from $0$, as $(\varphi,x)$ are chosen
according to $\nu\times\mu$ and $0\leq i\leq n$. Observe that
if the expression involved the entropy of $\nu_{\varphi,i}$ instead
of that of $C_{(\varphi,x)}\nu_{\varphi,i}$, we would be done,
because by Lemma \ref{lem:multiscale-formula-for-entropy} and our
hypothesis, 
\[
\mathbb{E}_{0\leq i\leq n}\left(\frac{1}{k}H(\nu_{\varphi,i},\mathcal{D}_{i+k})\right)=\frac{1}{n}H(\nu,\mathcal{D}_{n})-o(1)>\varepsilon-o(1),
\]
from which it follows that
\begin{equation}
\mathbb{P}_{0\leq i\leq n}\left(\frac{1}{k}H(\nu_{\varphi,i},\mathcal{D}_{i+k})>\frac{\varepsilon}{3}\right)>\frac{\varepsilon}{3}-o(1).\label{eq:8}
\end{equation}
The problem is that $C_{(\varphi,x)}$ is a linear map $\mathbb{R}^{2}\rightarrow\mathbb{R}$,
and has a 1-dimensional kernel, and if $\nu_{\varphi,i}$ happens
to be supported on (or close to) a translate of the kernel, then $C_{(\varphi,x)}\nu_{\varphi,i}$
is a Dirac measure (at least approximately), and has entropy (essentially)
equal zero no matter how large the entropy of $\nu_{\varphi,i}$
is.

The way to get around this problem is to note that the kernels of
these transformations are generally transverse to each other, and
intersect at a point; so if $\nu_{\varphi,i}$ has substantial
entropy it cannot be supported on or near $\ker B_{(\varphi,x)}^{-1}A_{(\varphi,x)}$
for too many values of $x$. Consequently, we shall show that conditioned
on $\varphi$ and $i$, with high $\mu$-probability over the choice
of $x$, $B_{(\varphi,x)}^{-1}A_{(\varphi,x)}\nu_{\varphi,i}$
must have at least a constant fraction of the entropy at scale $i+k$
as $\nu_{\varphi,i}$ itself. We prove this in the following sequence
of lemmas.

A map $f$ between metric spaces has bi-Lipschitz constant $c>0$
if $c^{-1}d(x,y)\leq d(f(x),f(y))\leq cd(x,y)$ for every $x,y$.
\begin{lemma}
\label{lem:separation}Let $g_{1},g_{2}:\mathbb{R}^{2}\rightarrow\mathbb{R}$
be such that the map $g(y)=(g_{1}(y),g_{2}(y))$ is bi-Lipschitz with
constant $c$. Then for any $\mu\in\mathcal{P}(\mathbb{R}^{2})$
and any $i$, some $j\in\{1,2\}$ satisfies 
\[
H(g_{j}\mu,\mathcal{D}_{i})>\frac{1}{2}H(\mu,\mathcal{D}_{i})-O(\log c).
\]
\end{lemma}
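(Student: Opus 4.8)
The plan is to relate the entropy of the pair $(g_1\mu, g_2\mu)$ to the entropy of $\mu$ using the bi-Lipschitz property, and then to split the joint entropy into the two coordinate entropies. First I would observe that $g:\mathbb{R}^2\to\mathbb{R}^2$ being bi-Lipschitz with constant $c$ means, by \eqref{eq:entropy-under-bi-lip-maps} (applied in dimension $2$), that $H(g\mu,\mathcal{D}_i) = H(\mu,\mathcal{D}_i) + O(\log c)$. On the other hand, $g\mu$ is a measure on $\mathbb{R}^2 = \mathbb{R}\times\mathbb{R}$, and the level-$i$ dyadic partition $\mathcal{D}_i^2$ of $\mathbb{R}^2$ is the common refinement of the two partitions obtained by pulling back $\mathcal{D}_i$ on $\mathbb{R}$ through the two coordinate projections $\pi_1,\pi_2$. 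Hence by subadditivity of entropy under common refinements (stated in Section \ref{sub:Entropy}),
\[
H(g\mu,\mathcal{D}_i^2) \le H(\pi_1(g\mu),\mathcal{D}_i) + H(\pi_2(g\mu),\mathcal{D}_i) = H(g_1\mu,\mathcal{D}_i) + H(g_2\mu,\mathcal{D}_i),
\]
since $\pi_j(g\mu) = g_j\mu$ by definition of $g$.

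Combining these two facts gives $H(g_1\mu,\mathcal{D}_i) + H(g_2\mu,\mathcal{D}_i) \ge H(\mu,\mathcal{D}_i) - O(\log c)$, and therefore the larger of the two summands is at least $\tfrac12\bigl(H(\mu,\mathcal{D}_i) - O(\log c)\bigr) = \tfrac12 H(\mu,\mathcal{D}_i) - O(\log c)$, which is exactly the claimed bound (the $O(\log c)$ absorbing the factor $1/2$).

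There is essentially no hard part here; the only point requiring a little care is the correct bookkeeping of the $O(\log c)$ error term in \eqref{eq:entropy-under-bi-lip-maps} — I would want to double-check that the hypothesis "$c^{-1}d(x,y)\le d(f(x),f(y))\le cd(x,y)$" is exactly the form under which that estimate was stated (the excerpt's statement of \eqref{eq:entropy-under-bi-lip-maps} has the additional clause $\le c$, i.e. a bounded-distortion requirement valid on bounded sets, so one may need to note that $\mu$ can be taken compactly supported, or rescale, before applying it). Aside from that, the argument is just the two inequalities above chained together, so I expect no genuine obstacle.
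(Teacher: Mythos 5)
Your proof is correct and follows essentially the same route as the paper: apply \eqref{eq:entropy-under-bi-lip-maps} to get $H(g\mu,\mathcal{D}_i)=H(\mu,\mathcal{D}_i)+O(\log c)$, then use $\mathcal{D}_i^2=\pi_1^{-1}\mathcal{D}_i\lor\pi_2^{-1}\mathcal{D}_i$ and subadditivity of entropy under common refinements to split into $H(g_1\mu,\mathcal{D}_i)+H(g_2\mu,\mathcal{D}_i)$, and take the larger of the two terms.
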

\begin{proof}
Since $g$ is bi-Lipschitz, by (\ref{eq:entropy-under-bi-lip-maps}),
\[
H(g\mu,\mathcal{D}_{i})=H(\mu,\mathcal{D}_{i})+O(\log c).
\]
Let $\pi_{j}$ be projection from $\mathbb{R}^{2}$ to the $j$-th
coordinate. Then $\mathcal{D}_{i}^{2}=\pi_{1}^{-1}\mathcal{D}_{i}\lor\pi_{2}^{-1}\mathcal{D}_{i}$,
so 
\begin{eqnarray*}
H(g\mu,\mathcal{D}_{i}) & = & H(g\mu,\pi_{1}^{-1}\mathcal{D}_{i}\lor\pi_{2}^{-1}\mathcal{D}_{i})\\
 & \leq & H(g\mu,\pi_{1}^{-1}\mathcal{D}_{i})+H(g\mu,\pi_{2}^{-1}\mathcal{D}_{i})\\
 & = & H(\pi_{1}g\mu,\mathcal{D}_{i})+H(\pi_{2}g\mu,\mathcal{D}_{i})\\
 & = & H(g_{1}\mu,\mathcal{D}_{i})+H(g_{2}\mu,\mathcal{D}_{i}),
\end{eqnarray*}
where in the last step we used the identity $\pi_{j}\circ g=g_{j}$.
Combining the last two equations gives the lemma.
\end{proof}
For $t>0$ let 
\[
\Sigma_{T}=\{(x,y)\,:\,|x-y|\geq t\}.
\]
Recall the definition of the matrix $A_{\varphi,x}$ preceding Proposition
\ref{prop:iterated-entropy-formula-for-action}.
\begin{lemma}
\label{lem:Lipschitz-constant-away-from-diagonal}Let $\varphi\in G$
and $x\neq y\in\mathbb{R}$. Then the map $g:\mathbb{R}^{2}\rightarrow\mathbb{R}$,
$g(z)=(A_{\varphi,x}z,A_{\varphi,y}z)$, is bi-Lipschitz, and for
$t>0$, for $(x,y)\in\Sigma_{t}\cap(\supp\mu)^{2}$ and $\varphi\in\supp\nu$,
its bi-Lipschitz constant is bounded uniformly by $O_{R}(1+t^{-1})$,
where $R$ is the smallest radius for which $\mu,\nu$ are supported
on the $R$-ball at the origin.\end{lemma}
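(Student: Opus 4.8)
The plan is a direct computation with $2\times 2$ matrices, exploiting the transversality that is already flagged in the surrounding discussion. First I would write $A_{\varphi,x}$ explicitly in the chosen parametrization: if $\varphi$ corresponds to $(s,t)\in\mathbb{R}^{2}$, so that $\varphi\bigdot x=e^{s}x+t$, then $A_{\varphi,x}=\frac{\partial}{\partial\varphi}(e^{s}x+t)=(e^{s}x,\,1)$ as a row vector (and incidentally $B_{\varphi,x}=e^{s}$). Hence $g(z)=(A_{\varphi,x}z,A_{\varphi,y}z)$ is the linear map $\mathbb{R}^{2}\to\mathbb{R}^{2}$ with matrix
\[
M=M_{\varphi,x,y}=\begin{pmatrix}e^{s}x & 1\\ e^{s}y & 1\end{pmatrix},\qquad \det M=e^{s}(x-y).
\]
Since $x\neq y$ and $e^{s}>0$, the determinant is nonzero, so $M$ is invertible; any invertible linear map of a finite-dimensional normed space is bi-Lipschitz, because $\|M^{-1}\|^{-1}|z|\le|Mz|\le\|M\|\,|z|$. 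That gives the qualitative assertion.

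For the quantitative bound I would use that the bi-Lipschitz constant of $g$ may be taken to be $\max\{\|M\|,\|M^{-1}\|\}$ in operator norm. When $\varphi\in\supp\nu$ and $x,y\in\supp\mu$, every quantity is constrained: the $\mathbb{R}^{2}$-coordinate $s$ of $\varphi$ satisfies $|s|\le R$, so $e^{-R}\le e^{s}\le e^{R}$, and $|x|,|y|\le R$; thus each entry of $M$ is bounded in absolute value by $\max\{1,Re^{R}\}=O_{R}(1)$, whence $\|M\|=O_{R}(1)$. By Cramer's rule
\[
M^{-1}=\frac{1}{e^{s}(x-y)}\begin{pmatrix}1 & -1\\ -e^{s}y & e^{s}x\end{pmatrix},
\]
whose adjugate entries are again $O_{R}(1)$, while $|\det M|=e^{s}|x-y|\ge e^{-R}t$ for $(x,y)\in\Sigma_{t}$. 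Therefore $\|M^{-1}\|=O_{R}(t^{-1})=O_{R}(1+t^{-1})$, and combining with $\|M\|=O_{R}(1)$ yields a bi-Lipschitz constant $O_{R}(1+t^{-1})$, uniformly over $\varphi\in\supp\nu$ and $(x,y)\in\Sigma_{t}\cap(\supp\mu)^{2}$, as claimed.

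There is essentially no obstacle here: the whole content is the identity $\det M=e^{s}(x-y)$, which is exactly the quantitative transversality of $\ker A_{\varphi,x}$ and $\ker A_{\varphi,y}$, together with the elementary fact that the bi-Lipschitz constant of an invertible matrix is controlled by its norm and by $1/|\det|$. The only points needing a little care are computing the derivative $A_{\varphi,x}$ correctly for the specific parametrization $(s,t)\mapsto(x\mapsto e^{s}x+t)$, and noting that uniformity of the $O_{R}$-bounds is automatic since $\supp\mu$ and $\supp\nu$ sit inside the fixed ball $B_{R}(0)$ — no separate compactness-and-continuity argument is required because all estimates are written out explicitly.
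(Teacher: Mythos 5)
Your proof is correct and follows essentially the same route as the paper's: compute $A_{\varphi,x}=(e^{s}x,1)$, assemble $M=\bigl(\begin{smallmatrix}e^{s}x&1\\e^{s}y&1\end{smallmatrix}\bigr)$ with $\det M=e^{s}(x-y)\neq0$, and read off the bi-Lipschitz constant from $\|M\|$ and $\|M^{-1}\|$. The only cosmetic difference is that the paper states the constant is $O_{s}(1+|x-y|^{-1})$ and then appeals to compactness of $\Sigma_{t}\cap(\supp\mu)^{2}$ and $\supp\nu$ for uniformity, whereas you write out the explicit entrywise and Cramer's-rule bounds in terms of $R$ and $t$; both yield the same $O_{R}(1+t^{-1})$ conclusion.
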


Note that the first statement follows easily by observing that $\varphi\in G$
is determined by its action on any two points.
\begin{proof}
Suppose that $\varphi$ is represented by $(s,t)$ in coordinates,
so $f(\varphi,x)=\varphi(x)=e^{s}x+t$. A direct calculation yields
$A_{\varphi,x}=(\frac{\partial}{\partial s}f(\varphi,x),\frac{\partial}{\partial t}f(\varphi,x))=(e^{s}x,1)$,
hence the linear map $g$ in question is represented by the matrix
$\left(\begin{array}{cc}
e^{s}x & 1\\
e^{s}y & 1
\end{array}\right)$, which is invertible and has bi-Lipschitz with constant $O_{s}(1+|x-y|^{-1})$.
The second statement is immediate since $\Sigma_{t}\cap(\supp\mu)^{2}$
and $\supp\nu$ are compact. \end{proof}
\begin{lemma}
\label{lem:entropy-of-image}Let $\nu,\mu$ be as in Theorem \ref{thm:inverse-theorem-for-action}.
Let $h>0$, fix $\varphi,i$, and write $\theta=\nu_{\varphi,i}$.
Then, assuming that $\frac{1}{k}H(\theta,\mathcal{D}_{i+k})>h$, 
\[
\mu\left(x\in\mathbb{R}\,:\,\frac{1}{k}H(B_{(\varphi,x)}^{-1}A_{(\varphi,x)}\theta,\mathcal{D}_{i+k})>\frac{1}{3}h\right)=1-o_{h}(1)
\]
as $k\rightarrow\infty$, uniformly in $\varphi\in\supp\nu$ and
$i\in\mathbb{N}$. Furthermore if there are constants $a,\alpha>0$
such that $\mu(B_{r}(x))<ar^{\alpha}$ for all $x$ then the error
term is $o_{h,a,\alpha}(1/k^{d})$ for every $d$.\end{lemma}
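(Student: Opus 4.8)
The plan is to make the transversality remark preceding the lemma quantitative. If $\theta=\nu_{\varphi,i}$ has $\frac1k H(\theta,\mathcal D_{i+k})>h$, then $C_{(\varphi,x)}\theta=B_{(\varphi,x)}^{-1}A_{(\varphi,x)}\theta$ can lose a definite fraction of that entropy only for $x$ lying in an exponentially short interval; a modulus-of-continuity estimate for the non-atomic measure $\mu$ then finishes the job. First I would replace $C_{(\varphi,x)}$ by $A_{(\varphi,x)}$: writing $\varphi$ as $(s,t)$ in coordinates, the proof of Lemma \ref{lem:Lipschitz-constant-away-from-diagonal} records $A_{(\varphi,x)}=(e^{s}x,1)$, while $B_{(\varphi,x)}=\partial f/\partial x=e^{s}$ depends only on $\varphi$ and is bounded above and below for $\varphi\in\supp\nu$. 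Hence $C_{(\varphi,x)}\theta$ and $A_{(\varphi,x)}\theta$ differ by a bounded scaling, so by (\ref{eq:entropy-under-scaling}) and the fact that a bounded shift of scale changes entropy by $O(1)$, we have $\frac1k\bigl|H(C_{(\varphi,x)}\theta,\mathcal D_{i+k})-H(A_{(\varphi,x)}\theta,\mathcal D_{i+k})\bigr|=O(1/k)$, uniformly in $x$. Consequently, once $k$ is large in terms of $h$, the set of $x$ excluded in the lemma is contained in $E:=\{x\in\mathbb R:\frac1k H(A_{(\varphi,x)}\theta,\mathcal D_{i+k})<\frac25 h\}$, and it suffices to bound $\mu(E)$.

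The heart of the argument is a two-point estimate. Fix $x\neq y$ in $E$ and set $t=|x-y|$. By Lemma \ref{lem:Lipschitz-constant-away-from-diagonal}, the linear map $g(z)=(A_{(\varphi,x)}z,A_{(\varphi,y)}z)$ is bi-Lipschitz with constant $c=O_R(1+t^{-1})$, where $R$ bounds the supports of $\mu$ and $\nu$; by Lemma \ref{lem:separation}, some $j\in\{1,2\}$ satisfies $H(g_j\theta,\mathcal D_{i+k})>\frac12 H(\theta,\mathcal D_{i+k})-O(\log c)$. Since $g_1\theta=A_{(\varphi,x)}\theta$ and $g_2\theta=A_{(\varphi,y)}\theta$ both have normalized entropy $<\frac25 h$ at scale $i+k$ while $\frac1k H(\theta,\mathcal D_{i+k})>h$, dividing by $k$ gives $\frac25 h>\frac12 h-O((\log c)/k)$, hence $O((\log c)/k)>\frac1{10}h$, hence $\log c\geq c_1 hk$ for a constant $c_1=c_1(R)>0$. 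As $\log c\leq O_R(1)+\log(1+t^{-1})$, this forces $t\leq 2^{-c_0 hk}$ once $k$ is large in terms of $h$ and $R$, with $c_0=c_0(R)>0$. Thus, for all such $k$, $E$ contains no two points at distance $\geq 2^{-c_0 hk}$, so $E$ is contained in a ball of radius $t_0:=2^{-c_0 hk}$; crucially, all constants here are uniform in $\varphi\in\supp\nu$ and $i\in\mathbb N$, because $g$ is linear with coefficients controlled purely by the compact supports (and the scales $i,i+k$ enter only through $\mathcal D_{i+k}$).

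It remains to deduce that $\mu(E)$ is small. Since $\mu$ is compactly supported and non-atomic, its uniform modulus $\omega_\mu(\delta):=\sup_{x}\mu(B_\delta(x))$ satisfies $\omega_\mu(\delta)\to 0$ as $\delta\to 0$: otherwise some $\varepsilon>0$ would be attained along balls $B_{\delta_n}(x_n)$ with $\delta_n\to 0$, and a convergent subsequence $x_n\to x^{*}$ (available as $\supp\mu$ is compact) would give $\mu(\{x^{*}\})\geq\varepsilon$, contradicting non-atomicity. Therefore $\mu(E)\leq\omega_\mu(t_0)=\omega_\mu(2^{-c_0 hk})\to 0$ as $k\to\infty$, which is the asserted $o_h(1)$; the rate depends on $\mu$ only through $\omega_\mu$, consistent with the remark after Theorem \ref{thm:inverse-theorem-for-action}. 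If in addition $\mu(B_r(x))<ar^{\alpha}$ for all $x$, then $\mu(E)\leq a\,t_0^{\alpha}=a\,2^{-c_0\alpha h k}$, which decays faster than any power $k^{-d}$, giving $o_{h,a,\alpha}(1/k^{d})$.

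I expect the two-point step to be the main obstacle: one must orchestrate Lemmas \ref{lem:separation} and \ref{lem:Lipschitz-constant-away-from-diagonal} so that the bad set $E$ is squeezed into an \emph{exponentially} short interval — the factor $2^{-c_0 hk}$ is exactly what makes the modulus bound effective — and one must check that none of the implied constants secretly depends on $\varphi$ or on $i$, which is the case only because the relevant maps are linear and live over fixed compact sets. The one other point requiring care is the elementary fact that a compactly supported non-atomic measure has a vanishing uniform modulus of continuity, which is precisely what converts the diameter bound into the measure bound.
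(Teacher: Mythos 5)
Your proof is correct and rests on the same core mechanism as the paper's: Lemma~\ref{lem:separation} combined with the bi-Lipschitz bound of Lemma~\ref{lem:Lipschitz-constant-away-from-diagonal}, applied to show that the bad set is concentrated near a single point, after which non-atomicity of $\mu$ finishes. The organization is slightly different and slightly cleaner than the paper's: rather than fixing a measure threshold $\rho$, choosing $t$ with $\mu(B_t(\cdot))<\rho$, and then taking $k$ large relative to $t$ (so that the implicit rate is never made explicit), you run the two-point argument directly and show that the bad set $E$ has diameter at most $2^{-c_0hk}$ once $k$ is large in terms of $h$ and $R$; this explicit exponential bound makes the Frostman refinement immediate, since exponential decay trivially beats every $k^{-d}$, whereas the paper instead sets $\rho_k=k^{-d}$, takes $t_k=O_a(\rho_k^{1/\alpha})$, and verifies $\log c_k/k\to 0$. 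The two arguments prove the same thing; yours gives a cleaner and slightly sharper statement of the rate. One small point to tidy up: Lemma~\ref{lem:Lipschitz-constant-away-from-diagonal} is stated for pairs $(x,y)\in(\supp\mu)^2$, so in the two-point step you should take $x\neq y$ in $E\cap\supp\mu$ rather than in all of $E$ --- harmless, since only $\mu(E)$ is being estimated.
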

\begin{proof}
By compactness $B_{(\varphi,x)}$ is bounded on the support of $\nu\times\mu$,
and scaling by a bounded constant changes entropy by $O(1)$; so,
after dividing by $k$, it changes by $o(1)$ (as $k\rightarrow\infty$).
Thus we may omit the factor $B_{(\varphi,x)}^{-1}$ in the statement.

Let $\rho>0$. Since we have assumed that $\mu$ is non-atomic, we
can fix $t>0$ such that $\mu(B_{t}(x))<\rho$ for all $x$.

Suppose for some $x'\in\supp\mu$ we have 
\[
\frac{1}{k}H(A_{(\varphi,x')}\theta,\mathcal{D}_{i+k})\leq\frac{1}{3}h
\]
(if no such $x'$ exists then we are done). Let $c$ denote the uniform
bound on the bi-Lipschitz constant associated to $t$ in Lemma \ref{lem:Lipschitz-constant-away-from-diagonal}.
By the previous two lemmas, if $(x,x')\in\Sigma_n|{t}\cap(\supp\mu)^{2}$
then necessarily 
\[
\frac{1}{k}H(A_{(\varphi,x)}\theta,\mathcal{D}_{i+k})\geq\frac{1}{2}h-O(\frac{\log c}{k})>\frac{1}{3}h,
\]
assuming $k$ large enough relative to $t$ (and hence $\rho$). This
implies that the event in the statement of the lemma contains $\mathbb{R}\setminus B_{t}(x')$
(up to a nullset). This set has $\mu$-measure at least $1-\rho$
by our choice of $t$. Thus we have shown that given $\rho$, if $k$
is large enough, then 
\begin{equation}
\mu\left(x\in\mathbb{R}\,:\,\frac{1}{k}H(A_{(\varphi,x)}\theta,\mathcal{D}_{i+k})>\frac{1}{3}h\right)>1-\rho.\label{eq:22}
\end{equation}

For the second statement, fix $d$ and $\rho=\rho_{k}=1/k^{d}$. By
assumption, $\mu(B_{r}(x))<ar^{\alpha}$ so in order for $t=t_{k}$
to satisfy $\mu(B_{t}(x))<\rho$ it suffices to take $t=O_{a}(\rho^{1/\alpha})=O_{a}(1/k^{d/\alpha})$.
Then by Lemma \ref{lem:Lipschitz-constant-away-from-diagonal} we
have $c=c_{k}=O(1+t^{-1})=O(k^{d/\alpha})$ and since the error term
$\log c_{k}/k$ in (\ref{eq:22}) tends to zero as $k\rightarrow\infty$,
the analysis above holds and the conclusion of the proposition is
valid with error term $O_{h,a,\alpha}(k^{d})$.
\end{proof}
We return to the proof of Theorem \ref{thm:inverse-theorem-for-action}.
Taking $h=\varepsilon/3$ in the last lemma and combining it with
equation (\ref{eq:8}), we find that for $k$ large enough, with probability
at least $\varepsilon/4$ (and hence probability at least $\varepsilon'$)
over our choice of $0\leq i\leq n$ and of $(\varphi,x)$ (chosen
with respect to $\nu\times\mu$), we will have $\frac{1}{k}H(B_{(\varphi,x)}^{-1}A_{(\varphi,x)}\theta,\mathcal{D}_{i+k})>\varepsilon/9>\varepsilon'$.
This completes the proof.

\subsection{\label{sub:Entropy-dimension}Entropy dimension}

Define the entropy dimension of $\mu\in\mathcal{P}(\mathbb{R})$
to be
\[
\edim\mu=\lim_{n\rightarrow\infty}\frac{1}{n}H(\mu,\mathcal{D}_{n}).
\]
if the limit exists, otherwise define the upper and lower entropy
dimensions $\uedim\mu,\ledim\mu$ by taking a limsup or liminf,
respectively. We also note that if $\mu$ is supported on a set $Y$
then by (\ref{eq:entropy-counting-bound}), $\ubdim Y\geq\uedim\mu$, where $\ubdim Y$ is the upper box
dimension, and a similar relation holds for lower entropy and box
dimensions 
\begin{theorem}
\label{thm:entropy-dimension-growth}For every $\varepsilon>0$ there
exists a $\delta=\delta(\varepsilon)>0$ such that the following holds: 

Let $\nu\in\mathcal{P}(G)$, $\mu\in\mathcal{P}(\mathbb{R})$
be compactly supported, and suppose that $\mu$ is non-atomic and
$(1-\varepsilon)$-entropy porous. Then 
\begin{eqnarray*}
\ledim\nu>\varepsilon & \implies & \ledim\nu\bigdot\mu>\ledim\mu+\delta\\
\ledim\nu>\varepsilon & \implies & \uedim\nu\bigdot\mu>\uedim\mu+\delta\\
\uedim\nu>\varepsilon & \implies & \uedim\nu\bigdot\mu>\ledim\mu+\delta.
\end{eqnarray*}

\end{theorem}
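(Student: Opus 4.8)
The plan is to deduce Theorem~\ref{thm:entropy-dimension-growth} from Theorem~\ref{thm:inverse-theorem-for-action} by a subsequence argument, using the fact that the $\delta$ produced by the latter depends only on $\varepsilon$, not on $n$ or on the measures. Fix $\varepsilon>0$, let $\delta=\delta(\varepsilon)>0$ be the constant from Theorem~\ref{thm:inverse-theorem-for-action}, and recall that $\ledim$, $\uedim$ are the liminf and limsup of $\frac1nH(\cdot,\mathcal D_n)$. The three implications are all of the form: a lower bound on an entropy dimension of $\nu$ forces a $\delta$-improvement of an entropy dimension of $\nu\bigdot\mu$ over the corresponding one of $\mu$. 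In each case I would pick a suitable sequence $n_j\to\infty$ along which the relevant quantity for $\nu\bigdot\mu$ realizes its liminf or limsup, pass to a further subsequence so that $\frac1{n_j}H(\nu,\mathcal D_{n_j})$ and $\frac1{n_j}H(\mu,\mathcal D_{n_j})$ both converge, and then apply Theorem~\ref{thm:inverse-theorem-for-action} at each $n_j$ (which is legitimate once $n_j>n(\varepsilon,\delta,\mu)$).

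For the first implication, suppose $\ledim\nu>\varepsilon$, so $\frac1nH(\nu,\mathcal D_n)>\varepsilon$ for all large $n$. Choose $n_j\to\infty$ with $\frac1{n_j}H(\nu\bigdot\mu,\mathcal D_{n_j})\to\ledim(\nu\bigdot\mu)$ and, after refining, with $\frac1{n_j}H(\mu,\mathcal D_{n_j})\to L$ for some $L\ge\ledim\mu$. For $j$ large, Theorem~\ref{thm:inverse-theorem-for-action} gives $\frac1{n_j}H(\nu\bigdot\mu,\mathcal D_{n_j})>\frac1{n_j}H(\mu,\mathcal D_{n_j})+\delta$; letting $j\to\infty$ yields $\ledim(\nu\bigdot\mu)\ge L+\delta\ge\ledim\mu+\delta$. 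For the second implication, again assume $\ledim\nu>\varepsilon$, choose $n_j$ realizing $\uedim(\nu\bigdot\mu)$, refine so $\frac1{n_j}H(\mu,\mathcal D_{n_j})\to L$; since the $n_j$ are a subsequence, $L\le\uedim\mu$, and in fact one should instead choose $n_j$ along which $\frac1{n_j}H(\mu,\mathcal D_{n_j})\to\uedim\mu$ and simultaneously (passing to a subsequence of that) control $H(\nu\bigdot\mu,\mathcal D_{n_j})$; then $\frac1{n_j}H(\nu\bigdot\mu,\mathcal D_{n_j})>\frac1{n_j}H(\mu,\mathcal D_{n_j})+\delta$ gives, in the limit, $\uedim(\nu\bigdot\mu)\ge\limsup_j\frac1{n_j}H(\nu\bigdot\mu,\mathcal D_{n_j})\ge\uedim\mu+\delta$. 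The third implication is the subtlest: assume only $\uedim\nu>\varepsilon$, so $\frac1nH(\nu,\mathcal D_n)>\varepsilon$ merely along some sequence $n_j\to\infty$; pass to a subsequence along which additionally $\frac1{n_j}H(\mu,\mathcal D_{n_j})\to\ledim\mu$ (possible since $\ledim\mu$ is a subsequential limit, and one can arrange it to coincide with an infinite subset of the $n_j$ by a diagonal/Cantor-set argument, or more carefully by first taking $n_j$ realizing $\ledim\mu$ and noting $\frac1{n_j}H(\nu,\mathcal D_{n_j})$ has a further subsequence bounded below — but that is not automatic, so one genuinely needs the $\uedim\nu>\varepsilon$ hypothesis to hold \emph{along the same scales} where $\mu$ is near $\ledim\mu$). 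Here I would instead argue: take $n_j$ with $\frac1{n_j}H(\nu,\mathcal D_{n_j})>\varepsilon$ and $\frac1{n_j}H(\mu,\mathcal D_{n_j})\to\ell$ for some $\ell\ge\ledim\mu$; then $\uedim(\nu\bigdot\mu)\ge\limsup_j\frac1{n_j}H(\nu\bigdot\mu,\mathcal D_{n_j})\ge\ell+\delta\ge\ledim\mu+\delta$, which is exactly what is claimed.

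The one genuine subtlety — and what I expect to be the main obstacle — is the bookkeeping around which scales $n_j$ one selects, since the three conclusions mix liminf and limsup of three different quantities ($\nu$, $\mu$, and $\nu\bigdot\mu$) and one must be sure the scales witnessing the hypothesis on $\nu$ also witness the desired behavior of $\mu$. The key observation that makes this work is that Theorem~\ref{thm:inverse-theorem-for-action} is a \emph{pointwise-in-$n$} statement (for each sufficiently large $n$ the entropy at scale $n$ jumps by $\delta$), so once the hypothesis $\frac1nH(\nu,\mathcal D_n)>\varepsilon$ holds at scale $n$, we immediately get $\frac1nH(\nu\bigdot\mu,\mathcal D_n)>\frac1nH(\mu,\mathcal D_n)+\delta$ \emph{at that same scale}. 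Thus in each of the three cases one only needs to select a single sequence of scales: for the first two, the scales where $\frac1nH(\nu,\mathcal D_n)>\varepsilon$ is eventually \emph{all} large $n$, so we are free to pick them to realize $\ledim$ or $\uedim$ of the other quantities; for the third, we pick scales in the (infinite) set where $\frac1nH(\nu,\mathcal D_n)>\varepsilon$, pass to a subsequence along which $\frac1nH(\mu,\mathcal D_n)$ converges to some $\ell\ge\ledim\mu$, and read off the bound on $\uedim(\nu\bigdot\mu)$. The remaining steps — verifying $n_j>n(\varepsilon,\delta,\mu)$ eventually, and that $(1-\varepsilon)$-entropy porosity and non-atomicity of $\mu$ are preserved (they are hypotheses carried verbatim from Theorem~\ref{thm:inverse-theorem-for-action}) — are routine.
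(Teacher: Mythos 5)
Your argument is correct and follows the paper's intended route: the paper states the result is ``trivial from Theorem~\ref{thm:inverse-theorem-for-action} upon taking $n\rightarrow\infty$'' and leaves the verification to the reader, and the subsequence bookkeeping you carry out is precisely that verification. Two small points. First, your detour in the third implication (trying to force the scales $n_j$ with $\frac1{n_j}H(\nu,\mathcal D_{n_j})>\varepsilon$ to also realize $\ledim\mu$ exactly, worrying about a ``diagonal/Cantor-set argument'') is unnecessary and, as you yourself observe, not always achievable; the clean argument you land on at the end --- pass to a subsequence of the large-$\nu$-entropy scales along which $\frac1{n_j}H(\mu,\mathcal D_{n_j})\to\ell$, and use $\ell\geq\ledim\mu$ automatically from the definition of $\liminf$ --- is the whole story and the preceding musings should be cut. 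Second, the conclusions are stated with strict inequality, and taking $\liminf$ or $\limsup$ of the pointwise strict inequalities from Theorem~\ref{thm:inverse-theorem-for-action} yields only $\geq$; this gap is harmless (declare the output constant to be $\delta(\varepsilon)/2$) but you should close it explicitly.
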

The proof is trivial from Theorem \ref{thm:inverse-theorem-for-action}
upon taking $n\rightarrow\infty$ and considering the definitions
of the upper and lower entropy dimensions. We leave the verification
to the reader.

Note that the case of convolutions $\nu*\mu$ for $\nu,\mu\in\mathbb{R}$
is contained in this theorem as a special case, since we can lift
$\nu$ to $\nu'\in\mathcal{P}(G)$ by identifying $t\in\mathbb{R}$
with the corresponding translation map $x\mapsto x+t$. Then $\nu*\mu=\nu'\bigdot\mu$,
and $\dim\nu=\dim\nu'$, so we get conditions for entropy-dimension
growth of Euclidean convolutions.

\section{\label{sec:Proof-of-main-Theorem}Proof of Theorem \ref{thm:main-for-measures}}

\subsection{Stationary measures}

Let $\Phi\subseteq\mathcal{S}$ be a compact set with $\dim\Phi>0$
and attractor $X$. Proving Theorem \ref{thm:main} requires us to
find suitable measures on $\Phi$ and $X$ to work with. For $\Phi$
we can take any measure $\nu$ of positive dimension, which exists
by Frostman's lemma (see e.g. \cite{Mattila1995}). There then exists a unique measure $\mu$ on
$\mathbb{R}$, called the $\nu$-stationary measure, satisfying 
\begin{equation}
\mu=\nu\bigdot\mu.\label{eq:Hutchison-measures}
\end{equation}
The existence and uniqueness of $\mu$ is proved by showing that $\tau\mapsto\nu.\tau$
is a contraction on $\mathcal{P}(\mathbb{R})$ when endowed with a
suitable metric. This is again the same argument as the one establishing
the existence of self-similar measures, and this is not surprising,
since self-similar measures are special cases of stationary ones:
when $\nu=\sum_{\varphi\in\Phi}p_{\varphi}\cdot\delta_{\varphi}$
is finitely supported, the relation (\ref{eq:Hutchison-measures})
becomes 
\begin{equation}
\mu=\sum_{\varphi\in\Phi}p_{\varphi}\cdot\varphi\mu,\label{eq:self-sim-measure}
\end{equation}
which is the definition of a self similar measure (as usual, $\varphi\mu=\mu\circ\varphi^{-1}$
is the push-forward of $\mu$ by $\varphi)$. We note that if a stationary
measure is not a single atom, then it is continuous (has no atoms).
The proof is standard and we omit it.

Recall the definition of entropy dimension from Section \ref{sub:Entropy-dimension}.
We show below (Proposition \ref{prop:entropy-dimension-exists}) that
if $\mu$ satisfies (\ref{eq:Hutchison-measures}) then its entropy
dimension exists.\footnote{One can also show that $\mu$ is exact-dimensional, but we do not
need this fact here.} We also show that it is $\edim\mu$-entropy porous (Proposition \ref{prop:mu-has-essentially-bounded-component-entropy}).
Then Theorem \ref{thm:entropy-dimension-growth} has the following
consequence:
\begin{theorem}
\label{thm:main-for-measures}Let $\mu\in\mathcal{P}(\mathbb{R})$
be a $\nu$-stationary measure for a compactly supported $\nu\in\mathcal{P}(\mathcal{S})$.
If $\uedim\nu>0$ then either $\mu$ is a Dirac measure, or $\edim\mu=1$.\end{theorem}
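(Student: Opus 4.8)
The plan is to derive Theorem~\ref{thm:main-for-measures} from Theorem~\ref{thm:entropy-dimension-growth}, using the two facts cited just before the statement: that the entropy dimension of $\mu$ exists (Proposition~\ref{prop:entropy-dimension-exists}), and that $\mu$ is $(\edim\mu)$-entropy porous (Proposition~\ref{prop:mu-has-essentially-bounded-component-entropy}). Assume $\mu$ is not a Dirac measure; then, as noted in Section~\ref{sec:Proof-of-main-Theorem}, $\mu$ is non-atomic. Write $\alpha=\edim\mu$, which exists by the cited proposition, and suppose for contradiction that $\alpha<1$.

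The key step is to feed $\mu$ and $\nu$ into the third implication of Theorem~\ref{thm:entropy-dimension-growth}. Choose $\varepsilon>0$ small enough that $\varepsilon<1-\alpha$; this is the choice that forces the entropy-porosity hypothesis of the theorem to be usable, since $\mu$ is $\alpha$-entropy porous and $\alpha<1-\varepsilon$, hence $\mu$ is also $(1-\varepsilon)$-entropy porous (being $h$-entropy porous for $h=\alpha$ certainly implies the defining inequality with $h$ replaced by any larger value $\le 1-\varepsilon$, as the bound $\frac{1}{m}H(\mu_{x,i},\mathcal D_{i+m})\le h+\delta$ only weakens). We also need $\varepsilon<\uedim\nu$; since by hypothesis $\uedim\nu>0$, shrinking $\varepsilon$ if necessary arranges both constraints simultaneously. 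Now Theorem~\ref{thm:entropy-dimension-growth} yields $\delta=\delta(\varepsilon)>0$ with
\[
\uedim\nu>\varepsilon\quad\Longrightarrow\quad \uedim\nu\bigdot\mu>\ledim\mu+\delta.
\]
Applying this, and using $\ledim\mu=\uedim\mu=\edim\mu=\alpha$ (the entropy dimension exists), we get $\uedim\nu\bigdot\mu>\alpha+\delta$.

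To close the loop, I invoke the stationarity relation $\mu=\nu\bigdot\mu$. This gives $\uedim(\nu\bigdot\mu)=\uedim\mu=\alpha$, which contradicts $\uedim\nu\bigdot\mu>\alpha+\delta$ for $\delta>0$. Hence the assumption $\alpha<1$ is untenable, so $\edim\mu=1$, as claimed. The only subtlety I anticipate is making sure the hypotheses of Theorem~\ref{thm:entropy-dimension-growth} are all met: compact support of $\mu$ and $\nu$ (inherited from $\Phi\subseteq\mathcal S$ compact and $\nu\in\mathcal P(\mathcal S)$ compactly supported, plus $\supp\mu\subseteq X$ compact), non-atomicity of $\mu$ (from $\mu$ not being a single atom, via the standard dichotomy recalled in Section~\ref{sec:Proof-of-main-Theorem}), and $(1-\varepsilon)$-entropy porosity of $\mu$ (from Proposition~\ref{prop:mu-has-essentially-bounded-component-entropy} together with the choice $\varepsilon<1-\edim\mu$). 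The main obstacle is thus not any new estimate but the bookkeeping of which of the three implications to use and verifying that the parameter $\varepsilon$ can be chosen to satisfy all the competing smallness requirements; everything substantive has already been done in Theorems~\ref{thm:inverse-theorem-for-action} and~\ref{thm:entropy-dimension-growth} and in the two propositions on stationary measures.
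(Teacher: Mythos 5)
Your proof is correct and takes essentially the same route as the paper: both derive a contradiction from $\alpha<1$ by choosing $\varepsilon<\min\{1-\alpha,\uedim\nu\}$, invoking the entropy-porosity and non-atomicity of $\mu$, and applying the entropy-growth theorem in combination with stationarity $\nu\bigdot\mu=\mu$. The only cosmetic difference is that you route through Theorem~\ref{thm:entropy-dimension-growth} (third implication) while the paper invokes Theorem~\ref{thm:inverse-theorem-for-action} directly; since the former is stated as an immediate consequence of the latter, these are the same argument.
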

\begin{proof}
Suppose that $\mu\in\mathcal{P}(\mathbb{R})$ is not a Dirac measure.
Write $\alpha=\edim\mu$ and $\beta=\uedim\nu$. We assume that $\alpha<1$
and $\beta>0$, and wish to derive a contradiction. Set $\varepsilon=\frac{1}{2}\min\{\beta,1-\alpha\}>0$
and let $\delta=\delta(\varepsilon)$ be as in Theorem \ref{thm:inverse-theorem-for-action}.
Then $\mu$ is $(1-\varepsilon)$-entropy porous and continuous, so
by Theorem \ref{thm:inverse-theorem-for-action}, $\ledim\nu\bigdot\mu > \edim\mu+\delta$,
which is impossible.
\end{proof}
To complete the proof of Theorem \ref{thm:main} we must show that
$\edim\mu=1$ implies $\dim X=1$. This is simple: we have already
noted that $\edim\mu=1$ implies that $\bdim X=1$, and finally, this
implies $\dim X=1$ because $X$ has equal box and Hausdorff dimensions.
This last property is proven the same manner as for self-similar sets,
see e.g. \cite[Theorem 4]{Falconer1989}.

It remains for us to show that $\edim\mu$ exists and that it is entropy
porous. We do this in the couple of sections.

\subsection{\label{sub:Cylinder-decomposition-and-entropy-dimension}Cylinder
decomposition of stationary measures and entropy dimension}

Let  $\Phi\subseteq\mathcal{S}$
be compact, let
$r_{0}=\min_{\varphi\in\Phi}\left\Vert \varphi\right\Vert $ and
given $n\in\mathbb{N}$ let $\Phi_{n}$ denote the set \[
\Phi_n \;=\; \left\lbrace (\varphi_1,\ldots,\varphi_k)\in\bigcup_{\ell=1}^\infty \Phi^\ell\,:\,r_{0}2^{-n} \leq\left\Vert \varphi_1\ldots \varphi_k\right\Vert <2^{-n}\right\rbrace
\]

Suppose that $\Phi=\{\varphi_{1},\ldots,\varphi_{m}\}$ is finite, so also $\Phi_n$ is finite, and that $\mu=\sum_{i=1}^m p_{i}\cdot\varphi_{i}\mu$ is a self-similar measure. For $\varphi_{i_{1}},\ldots,\varphi_{i_{k}}\in\Phi$ write $\varphi_{i_{1}\ldots i_{k}}=\varphi_{i_{1}}\varphi_{i_{2}}\ldots\varphi_{i_{k}}$
and $(p_i)_{i=1}^m$ write  $p_{i_{1}\ldots i_{k}}=p_{i_{1}}p_{i_{2}}\ldots p_{i_{k}}$. Then one can iterate the definition of $\mu$ to get
\begin{equation}
\mu=\sum_{(\varphi_{i_{1}}\ldots,\varphi_{i_{k}})\in\Phi_{n}}p_{i_{1}\ldots i_{k}}\cdot\varphi_{i_{1}\ldots i_{k}}\mu.\label{eq:discrete-cylinder-decomposition}
\end{equation}
This ``decomposes'' $\mu$ into finitely many images of itself,
each by a map which contracts by roughly $2^{-n}$.

Now let $\Phi\subseteq\mathcal{S}$ be a general compact set, $\nu\in\mathcal{P}(\mathcal{S})$ a compactly supported, and 
$\mu$ a $\nu$-stationary, $\nu\bigdot\mu=\mu$. We want to have a
similar representation of $\mu$, but now instead of a sum we will
have an integral, the family $\Phi_{n}$ generally being uncountable,
and a suitable measure replacing the weights $p_{i_{1}\ldots i_{k}}$
in the sum. The way to do this is to consider the Markov chain obtained
by repeatedly applying to $\mu$ a random map, chosen according to
$\nu$. Indeed the relation $\mu=\nu\bigdot\mu$ just means that, if
$\varphi$ denotes a random similarity chosen according to $\nu$,
then
\[
\mu=\mathbb{E}(\varphi\mu).
\]
Thus let $(\varphi_{i})_{i=1}^{\infty}$ be an independent sequence
of similarities with common distribution $\nu$ and consider the measure-valued
random process 
\[
\mu_{n}=\varphi_{1}\varphi_{2}\ldots\varphi_{n}\mu.
\]
This is a martingale with respect to the filtration $\mathcal{F}_{n}=\theta(\varphi_{1},\ldots,\varphi_{n})$,
since, writing $\Omega$ for the sample space of the process and $\mu_{n}^{\omega}$
to indicate the dependence on $\omega\in\Omega$, 
\begin{eqnarray*}
\mathbb{E}(\mu_{n+1}|\mathcal{F}_{n})(\omega) & = & \mathbb{E}(\varphi_{1}(\omega)\cdot\ldots\cdot\varphi_{n}(\omega)\cdot\varphi_{n+1}\mu)\\
 & = & (\varphi_{1}(\omega)\cdot\ldots\cdot\varphi_{n}(\omega))\mathbb{E}(\varphi_{n+1}\mu)\\
 & = & (\varphi_{1}(\omega)\cdot\ldots\cdot\varphi_{n}(\omega))\mu\\
 & = & \mu_{n}^{\omega}
\end{eqnarray*}
(in the second line we use the easy fact that integrating measures
commutes with pushing them forward). 

Recall that a random variable $\tau$ is a stopping time for $(\mathcal{F}_{n})$
if the event $\{\tau\leq k\}$ belongs to $\mathcal{F}_{k}$ for all
$k\in\mathbb{N}$. Given a bounded stopping time, Doob's optional
stopping theorem \cite[Theorem 7.12]{Kallenberg2002} asserts that\footnote{To derive this from the sampling theorem for real-valued random variables,
note that we need to show that $\int f\,d\mathbb{E}(\mu_{\tau})=\int fd\mu$
for all bounded functions $f$, and this follows since $\xi_{n}=\int fd\mu_{n}$
is easily seen to be a martingale for $(\mathcal{F}_{n})$, and by
Fubini $\int f\,d\mathbb{E}(\mu_{\tau})=\mathbb{E}(\int fd\mu_{\tau})=\mathbb{E}(\xi_{\tau})=\mathbb{E}(\xi_{0})=\int fd\mu$,
where we used the real-valued optional stopping theorem in the second
to last equality. } 
\[
\mathbb{E}(\mu_{\tau})=\mathbb{E}(\mu_{0})=\mu.
\]

We apply this to the stopping time 
\begin{equation}
\tau_{n}=\min\{k\in\mathbb{N}\,:\,\left\Vert \varphi_{1}\ldots\varphi_{k}\right\Vert <2^{-n}\}.\label{eq:tau-n}
\end{equation}
Since $\supp\nu$ is compact, there exist $0<r_{0}<r_{1}<1$ such
that $r_{0}\leq\left\Vert \varphi\right\Vert \leq r_{1}$ for all
$\varphi\in\Phi$, which implies that $2^{-n}r_{0}\leq\left\Vert \varphi_{1}\ldots\varphi_{\tau_{n}}\right\Vert <2^{-n}$,
and also that $\tau_{n}\leq n/\log(1/r_{1})$, so $\tau_{n}$ is bounded.
Therefore the identity 
\[
\mu=\mathbb{E}(\mu_{\tau})=\mathbb{E}(\varphi_{1}\ldots\varphi_{\tau}\mu)
\]
is the desired analog of (\ref{eq:discrete-cylinder-decomposition}).
\begin{proposition}
\label{prop:entropy-dimension-exists}$\edim\mu=\lim_{n\rightarrow\infty}\frac{1}{n}H(\mu,\mathcal{D}_{n})$
exists.\end{proposition}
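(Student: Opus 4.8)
The plan is to establish that the sequence $a_n = \frac{1}{n}H(\mu,\mathcal{D}_n)$ converges by showing it is, up to vanishing errors, sub- and super-additive, and then invoking Fekete's lemma. The crucial tool is the cylinder decomposition $\mu = \mathbb{E}(\varphi_1\ldots\varphi_{\tau_n}\mu)$ established just above, which writes $\mu$ as an average of images of itself under maps contracting by roughly $2^{-n}$. Concretely, I would first fix $n$ and iterate: since $\mu = \mathbb{E}(\varphi_1\ldots\varphi_{\tau_n}\mu)$, and each $\varphi_1\ldots\varphi_{\tau_n}$ contracts by a factor in $[2^{-n}r_0, 2^{-n})$, I can re-apply the decomposition ``inside'' each piece at scale $m$, obtaining $\mu = \mathbb{E}(\psi\mu)$ where $\psi$ ranges over a family of maps with contraction ratio roughly $2^{-(n+m)}$. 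This will be the analog, for stationary measures, of the semigroup relation $\Phi_{n+m}$ being obtained from $\Phi_n$ and $\Phi_m$.

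Next I would translate this into an entropy estimate. Using concavity of entropy (to bound $H(\mathbb{E}(\psi\mu),\mathcal{D}_k)$ from below by $\mathbb{E}(H(\psi\mu,\mathcal{D}_k))$) together with the almost-convexity bound (to get a matching upper bound up to an additive term controlled by the entropy of the ``address'' distribution of $\psi$), and combining with the scaling relation (\ref{eq:entorpy-under-similarity}) which says that applying a similarity of contraction $\approx 2^{-n}$ shifts the relevant scale by $n$, I expect to obtain an inequality of the shape
\[
H(\mu,\mathcal{D}_{n+m}) = \mathbb{E}\left(H(\psi\mu,\mathcal{D}_{n+m})\right) + O(1) \cdot(\text{address entropy}),
\]
and since $H(\psi\mu,\mathcal{D}_{n+m}) = H(\mu,\mathcal{D}_m) + O(1)$ for each fixed realization of $\psi$ (which contracts by $\approx 2^{-n}$), this gives $H(\mu,\mathcal{D}_{n+m}) \approx H(\mu,\mathcal{D}_m) + (\text{stopping-time contribution})$ up to errors that are $O(1)$ per level. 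Summing the per-level errors over a length-$n$ range yields an $o(n)$ total error, which is exactly what Fekete's lemma tolerates. I would also need the easy one-sided bound $H(\mu,\mathcal{D}_{n+m}) \le H(\mu,\mathcal{D}_n) + H(\mu,\mathcal{D}_m) + O(1)$ type estimate (or its reverse) to pin down both directions; one of these should follow from the refinement identity $H(\mu,\mathcal{D}_{n+m}) = H(\mu,\mathcal{D}_{n+m}\mid\mathcal{D}_n) + H(\mu,\mathcal{D}_n)$ together with the combinatorial bounds (\ref{eq:entorpy-combinatorial-bound})--(\ref{eq:conditional-entropy-combinatorial-bound}) applied to the components.

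The main obstacle I anticipate is bookkeeping the \emph{stopping-time fluctuation}: the maps $\varphi_1\ldots\varphi_{\tau_n}$ do not contract by exactly $2^{-n}$ but only by a factor in $[2^{-n}r_0, 2^{-n})$, and when iterated the discrepancies could in principle accumulate. The point will be that the bounded-contraction assumption ($r_0 \le \|\varphi\| \le r_1$ for all $\varphi \in \Phi$) forces each iteration to introduce only a bounded multiplicative error in scale, hence only an $O(1)$ additive error in entropy per stage, so over a scale-$n$ window the accumulated error is $O(1)$ per level and contributes $o(n)$ after normalization — harmless for the existence of the limit. A secondary technical point is controlling the entropy of the address distribution of the random stopping-time word; but since $\tau_n$ is bounded by $n/\log(1/r_1)$ and $\nu$ is a fixed compactly supported measure, this entropy is $O(n)$, which when divided by $n$ is $O(1)$ — again acceptable for Fekete, and in fact one checks it contributes the same amount at scales $n$, $m$, $n+m$ so it cancels in the sub/super-additivity comparison. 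Once the almost-additivity $|a_{n+m} - a_n - a_m| \cdot (n+m) = o(n+m)$ is in hand (in an appropriately uniform sense), the existence of $\lim a_n$ follows from the standard subadditivity argument, completing the proof.
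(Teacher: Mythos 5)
Your core approach --- stopping-time cylinder decomposition $\mu = \mathbb{E}(\varphi_1\ldots\varphi_{\tau_n}\mu)$, concavity of conditional entropy, the scaling identity (\ref{eq:entorpy-under-similarity}) to replace $H(\mu_{\tau_n},\mathcal{D}_{n+m})$ by $H(\mu,\mathcal{D}_m)+O(1)$, and a Fekete-type argument --- is exactly the paper's approach. But you overcomplicate it in two ways, and one of them opens a genuine gap.

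First, the unnecessary complication: you propose to iterate the decomposition twice (first stop at scale $n$, then again at scale $m$ inside each piece). One application of the optional stopping theorem at scale $n$ suffices. The paper writes $H(\mu,\mathcal{D}_{m+n}) = H(\mu,\mathcal{D}_n) + H(\mathbb{E}(\mu_{\tau_n}),\mathcal{D}_{m+n}\mid\mathcal{D}_n)$, applies concavity to pull the expectation outside, uses that $\mu_{\tau_n}$ is supported on a set of diameter $2^{-n+O(1)}$ (so conditioning on $\mathcal{D}_n$ costs $O(1)$ and the inner entropy is $H(\mu,\mathcal{D}_m)+O(1)$ by the similarity identity), and obtains $a_{m+n} \geq a_m + a_n + O(1)$ where $a_n = H(\mu,\mathcal{D}_n)$ is the \emph{unnormalized} entropy. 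That is the whole estimate.

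Second, and this is the gap: you insist on also proving a matching upper bound (sub-additivity) via the \emph{almost-convexity} inequality, with an error term you call the ``address entropy.'' That error is not controllable. The almost-convexity bound $H(\mathbb{E}(\mu^\omega),\mathcal{E}) \leq \mathbb{E}(H(\mu^\omega,\mathcal{E})) + H(\text{index distribution})$ requires the index to range over a countable (preferably finite) set; here the random stopping-time word $\varphi_1\ldots\varphi_{\tau_n}$ is drawn from a typically continuous distribution over word space, so the index-entropy term is not even finite without some ad hoc discretization, and even in the finitely-supported case it is of order $n$, not $O(1)$. Your remedy --- that this contribution ``cancels'' across scales $n$, $m$, $n+m$ --- is asserted without any mechanism; nothing in the setup forces the address entropy to be additive in $n$ to within $O(1)$. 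The important observation you are missing is that \emph{no upper bound is needed}: since $0 \leq a_n \leq n$ trivially, the super-additive-up-to-$O(1)$ inequality alone implies convergence of $a_n/n$ (e.g., apply Fekete to $b_n = a_n - \sqrt{n}$, which is genuinely super-additive for $m,n$ large, as the paper's footnote indicates, or just adapt the Fekete argument directly). So the sub-additive half of your plan should simply be dropped, which also removes the need to iterate the decomposition or to track accumulated scale fluctuations.
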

\begin{proof}
For any $m,n$, by (\ref{eq:entorpy-under-similarity}) and by the
fact that $\varphi_{1}\ldots\varphi_{\tau}$ contracts by $2^{-(n+O(1))}$,
we see that $\mu_{\tau_{n}}$ is supported on a set of diameter $2^{-n+O(1)}$.
Therefore 
\[
H(\mu_{\tau_{n}},\mathcal{D}_{n+m})=H(\mu,\mathcal{D}_{m})+O(1).
\]
and for the same reason, by (\ref{eq:conditional-entropy-combinatorial-bound}),
\[
H(\mu_{\tau_{n}},\mathcal{D}_{n+m}|D_{n})=H(\mu_{\tau_{n}},\mathcal{D}_{n+m})+O(1).
\]

Write $a_{n}=H(\mu,\mathcal{D}_{n})$. Then by concavity of conditional
entropy and the discussion above, 
\begin{eqnarray*}
a_{m+n} & = & H(\mu,\mathcal{D}_{m+n})\\
 & = & H(\mu,\mathcal{D}_{n})+H(\mu,\mathcal{D}_{m+n}|\mathcal{D}_{n})\\
 & = & H(\mu,\mathcal{D}_{n})+H(\mathbb{E}(\mu_{\tau_{n}}),\mathcal{D}_{m+n}|\mathcal{D}_{n})\\
 & \geq & H(\mu,\mathcal{D}_{n})+\mathbb{E}\left(H(\mu_{\tau_{n}},\mathcal{D}_{m+n}|\mathcal{D}_{n})\right)\\
 & = & H(\mu,\mathcal{D}_{n})+\mathbb{E}\left(H(\mu,\mathcal{D}_{m})+O(1)\right)\\
 & = & a_{m}+a_{n}+O(1).
\end{eqnarray*}
It follows that up to an $O(1)$ error $(a_{n})$ is super-additive,
so $\lim_{n\rightarrow\infty}\frac{1}{n}a_{n}$ exists, as desired.\footnote{One way to see this is by adapting the proof of the classical Fekete
lemma. Alternatively consider $b_{n}=a_{n}-\sqrt{n}$, which after
dividing by $n$ has the same asymptotics as $a_{n}$ , but satisfies
$b_{m+n}\geq b_{m}+b_{n}$ for all $m,n$ large enough, so that Fekete's
lemma applies to it. }
\end{proof}

\subsection{\label{sub:Entropy-porosity-of-stationary-measures}Entropy porosity
of stationary measures}

Returning to our stationary measures, our next goal is to show that
they are entropy-porous. The argument is essentially the same as in
\cite[Section 5.1]{Hochman2014}, with some additional minor complications
due to continuity of $\nu$.

Let $\mu=\nu\bigdot\mu$ be a stationary measure for a compactly supported
$\nu\in\mathcal{P}(\mathcal{S})$, and assume $\mu$ is not a Dirac
measure. By a change of coordinates $x\mapsto2^{-N}(x+k)$ for suitable
choice of $N,k\in\mathbb{N}$, we may assume that $\mu$ is supported
on $[0,1/2)$. Write 
\[
\alpha=\edim\mu.
\]
Our goal is to prove the following:
\begin{proposition}
\label{prop:mu-has-essentially-bounded-component-entropy} For every
$\varepsilon>0$ and $m>m(\varepsilon)$, for all large enough $n$,
\[
\mathbb{P}_{0\leq i\leq n}\left(|\frac{1}{m}H(\mu_{x,i},\mathcal{D}_{i+m})-\alpha|<\varepsilon\right)>1-\varepsilon.
\]
In particular, $\mu$ is $\alpha$-entropy porous, and satisfies the
conclusion of Lemma \ref{lem:bounded-component-entropy-passes-to-components}.
\end{proposition}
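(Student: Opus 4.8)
The plan is to prove the ``concentration'' estimate
$\mathbb{P}_{0\leq i\leq n}(|\frac1m H(\mu_{x,i},\mathcal{D}_{i+m})-\alpha|<\varepsilon)>1-\varepsilon$,
from which $\alpha$-entropy porosity (and hence the applicability of Lemma \ref{lem:bounded-component-entropy-passes-to-components}) follows at once: the inequality $\frac1m H(\mu_{x,i},\mathcal{D}_{i+m})\leq\alpha+\varepsilon$ holding with probability $>1-\varepsilon$ is exactly the $(\alpha,\varepsilon,m)$-entropy porosity condition, and the matching lower-tail bound is a useful bonus. The key point is that the \emph{average} of $\frac1m H(\mu_{x,i},\mathcal{D}_{i+m})$ over $0\leq i\leq n$ is, by Lemma \ref{lem:multiscale-formula-for-entropy}, equal to $\frac1n H(\mu,\mathcal{D}_n)+O(m/n)$, which tends to $\edim\mu=\alpha$ by Proposition \ref{prop:entropy-dimension-exists}. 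So the expectation of the quantity in question is $\alpha+o(1)$, and the whole content of the proposition is that this quantity \emph{concentrates} around its mean rather than fluctuating wildly.

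\emph{Key steps.} First I would fix $\varepsilon>0$, choose $m$ large (to be specified), and work with $n\to\infty$. By Lemma \ref{lem:multiscale-formula-for-entropy} and Proposition \ref{prop:entropy-dimension-exists},
\[
\mathbb{E}_{0\leq i\leq n}\Bigl(\tfrac1m H(\mu_{x,i},\mathcal{D}_{i+m})\Bigr)\;=\;\tfrac1n H(\mu,\mathcal{D}_n)+O(\tfrac mn)\;=\;\alpha+o(1).
\]
Since $\frac1m H(\mu_{x,i},\mathcal{D}_{i+m})\in[0,1]$ (we arranged $\mu$ supported on $[0,1/2)$, so components are supported in dyadic cells and the entropy at $m$ finer scales is at most $m$; in fact $\leq 1+O(1/m)$), an upper bound on the mean together with a matching \emph{lower} bound on ``most'' components will force concentration. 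The crux is therefore to show that $\frac1m H(\mu_{x,i},\mathcal{D}_{i+m})\geq\alpha-\varepsilon/2$ for all but an $\varepsilon/2$-fraction of $(i,x)$. Here I would use the cylinder/martingale decomposition from Section \ref{sub:Cylinder-decomposition-and-entropy-dimension}: $\mu=\mathbb{E}(\varphi_1\cdots\varphi_{\tau_j}\mu)$, where $\varphi_1\cdots\varphi_{\tau_j}$ contracts by $2^{-j+O(1)}$. Running the multiscale formula Lemma \ref{lem:multiscale-formula-for-entropy} \emph{downward} — i.e. expressing $\frac{1}{n}H(\mu,\mathcal{D}_n)$ both as the average of $\frac1m H(\mu_{x,i},\mathcal{D}_{i+m})$ over $0\le i\le n$ and, via the self-similar representation, identifying the component measures at level $i$ with (scaled copies of) $\mu$ itself up to bounded distortion — shows that a typical component $\mu_{x,i}$ ``looks like'' $\varphi\mu$ for some contraction $\varphi$ of norm $\approx 2^{-i}$, and such a measure has $\frac1m H(\cdot,\mathcal{D}_{i+m})\approx\frac1m H(\mu,\mathcal{D}_m)\to\alpha$ by \eqref{eq:entorpy-under-similarity}. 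Making this precise is where the ``minor complications due to continuity of $\nu$'' enter: a component of $\mu$ is not literally $\varphi\mu$ but an average of such over a small ball of similarities, so I would use the entropy-porosity-type bookkeeping (Lemma \ref{lem:distribution-of-components-of-components} to pass between components of $\mu$ and components of its cylinder pieces, and concavity/almost-convexity of entropy) to show that at a positive-density set of scales the component is within $\varepsilon$, in the appropriate entropy sense, of a rescaled copy of $\mu$.

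\emph{Main obstacle.} The hard part will be the lower bound $\frac1m H(\mu_{x,i},\mathcal{D}_{i+m})\gtrsim\alpha$ for typical $(i,x)$ — equivalently, ruling out that the multiscale average $\alpha$ is achieved by having many components of near-maximal entropy compensating for many of near-zero entropy. This does not follow formally from Lemma \ref{lem:multiscale-formula-for-entropy} alone, which only controls the mean; one genuinely needs the self-similar structure to say that \emph{every} (not just average) component is, up to the usual errors and up to a density-$(1-\varepsilon)$ set of scales, a bounded-distortion image of $\mu$, so its normalized $m$-scale entropy is pinned near $\frac1m H(\mu,\mathcal{D}_m)=\alpha+o(1)$. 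I expect the argument to mirror \cite[Section 5.1]{Hochman2014} closely: the superadditivity computation in Proposition \ref{prop:entropy-dimension-exists}, combined with the fact that $H(\mu,\mathcal{D}_{n+m}\mid\mathcal{D}_n)=\mathbb{E}(H(\mu,\mathcal{D}_m))+O(1)$ there, already says the average conditional entropy over a window equals $\alpha m+O(1)$; upgrading ``average over scales'' to ``for most scales and most $x$'' is then a soft argument using that each individual term is in $[0,m+O(1)]$ and that the partial averages $\frac1n H(\mu,\mathcal{D}_n)$ converge (so no long stretch of scales can have anomalously small or large component entropy without the running average detecting it). Once both tails are controlled, the two-sided estimate in the proposition, and thus $\alpha$-entropy porosity, follows; and since the hypotheses of Lemma \ref{lem:bounded-component-entropy-passes-to-components} are precisely $(h,\delta^2/2,m)$-entropy porosity with $h=\alpha$, its conclusion is immediate.
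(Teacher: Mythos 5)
Your overall architecture matches the paper's: you correctly reduce the two-sided estimate to the one-sided lower bound $\mathbb{P}_{0\leq i\leq n}\bigl(\frac1m H(\mu_{x,i},\mathcal{D}_{i+m})>\alpha-\varepsilon\bigr)>1-\varepsilon$, note that the upper tail then follows from the multiscale average bound (Lemma \ref{lem:multiscale-formula-for-entropy} plus Proposition \ref{prop:entropy-dimension-exists}) via a Markov-type argument, and correctly recognize that the lower bound must come from the cylinder/stopping-time decomposition $\mu=\mathbb{E}(\varphi_1\cdots\varphi_{\tau_i}\mu)$. However, the core of your argument for the lower bound has a genuine gap, and the obstacle you identify is not the right one.

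You describe the difficulty as ``a component of $\mu$ is not literally $\varphi\mu$ but an average of such over a small ball of similarities.'' That is not the problem: the cylinder decomposition averages over the \emph{full} distribution of stopped products, not a small ball, and each piece $\varphi_1\cdots\varphi_{\tau_i}\mu$ is an honest rescaled copy of $\mu$ of diameter $\approx 2^{-i}$. The real obstruction is that these pieces do not respect the dyadic partition: a piece can straddle the boundary of a cell $D\in\mathcal{D}_i$, and the component $\mu_D=\mu_{x,i}$ is then a convex combination of \emph{truncations} $(\varphi_1\cdots\varphi_{\tau_i}\mu)|_D$, not of full rescaled copies of $\mu$. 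Nothing forces a truncated piece to have entropy close to $\alpha m$. The paper's proof handles exactly this: non-atomicity of $\mu$ gives a $\rho$ with $\mu(B_\rho(x))<\delta$ for all $x$, which after pushing through the stopping-time decomposition shows $\mu(V_i)<\delta$ where $V_i$ is the $2^{-i}\rho r_0$-neighborhood of $\mathbb{Z}/2^i$; hence for most cells $D$, most cylinder mass restricted to $D$ comes from pieces \emph{entirely contained} in $D$ (the event $\mathcal{A}_D$ has conditional weight $>1-\sqrt\delta$), and concavity then pins $\frac1m H(\mu_D,\mathcal{D}_{i+m})\geq(1-\sqrt\delta)\bigl(\frac1m H(\mu,\mathcal{D}_m)+O(\frac1m)\bigr)$. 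None of this is in your sketch.

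The fallback ``soft argument'' you offer — that individual terms lie in $[0,1+O(1/m)]$ and partial averages converge, so no long stretch of scales can be anomalous — does not give pointwise concentration. At any single scale $i$, the average over cells $D\in\mathcal{D}_i$ of $H(\mu_D,\mathcal{D}_{i+m})$ could equal $\alpha m$ while many cells have near-zero entropy and many others near-maximal; the running average would not detect this. The cell-by-cell lower bound genuinely requires the boundary argument just described, and without it the proposal does not close.
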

To prove this we need only prove that for every $\varepsilon>0$,
$m>m(\varepsilon)$ and all $n$,
\begin{equation}
\mathbb{P}_{0\leq i\leq n}\left(\frac{1}{m}H(\mu_{x,i},\mathcal{D}_{i+m})>\alpha-\varepsilon\right)>1-\varepsilon.\label{eq:5}
\end{equation}
Indeed, by Lemma \ref{lem:multiscale-formula-for-entropy} and the
fact that $\frac{1}{n}H(\mu,\mathcal{D}_{n})\rightarrow\alpha$, for
large enough $n$, 
\begin{equation}
\mathbb{E}_{0\leq i\leq n}(\frac{1}{m}H(\mu_{x,i},\mathcal{D}_{i+m}))\leq\alpha+\varepsilon.\label{eq:4}
\end{equation}
This is an average of a non-negative quantity which, by (\ref{eq:5}),
with probability $1-\varepsilon$ is not more than $2\varepsilon$
less than its mean, so 
\[
\mathbb{P}_{0\leq i\leq n}\left(\frac{1}{m}H(\mu_{x,i},\mathcal{D}_{i+m})>\alpha+\sqrt{2\varepsilon}\right)<\sqrt{2\varepsilon}.
\]
Starting from $\varepsilon^{2}/8$ instead of $\varepsilon$, and
combining with (\ref{eq:5}), this proves the proposition.

We turn to the proof of (\ref{eq:5}). Let $\delta>0$ be a parameter
to be determined later. Since $\mu$ is not a Dirac measure it is
continuous (has no atoms), so there is a $\rho>0$ such that $\mu(B_{\rho}(x))<\delta$
for all $x\in\mathbb{R}$ (here and throughout, balls are open). We
can assume that $\rho<\frac{1}{4}$. 

Let $\varphi_{1},\varphi_{2},\ldots$ be an i.i.d. sequence with marginal
$\nu$, defined on some sample space $\Omega$. Let $\tau_{i}$ be
the stopping time defined in (\ref{eq:tau-n}). 

Denote $r_{0}=\inf\{\left\Vert \varphi\right\Vert \,:\,\varphi\in\supp\nu\}$.
Fix $i$ and let $V_{i}\subseteq\mathbb{R}$ denote the set of points
$x$ whose distance from $\mathbb{Z}/2^{i}$ is less than $2^{-i}\rho r_{0}$,
that is, $V_{i}=\bigcup_{k\in\mathbb{Z}}B_{2^{-i}\rho r_{0}}(k/2^{i})$. 
\begin{lemma}
$\mu(V_{i})<\delta$. \end{lemma}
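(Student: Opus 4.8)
The plan is to exploit the fact that $\mu$ reproduces itself, up to a contraction of roughly $2^{-i}$, under the action of a random cylinder map. Concretely, as in the proof of Proposition \ref{prop:entropy-dimension-exists}, I would apply Doob's optional stopping theorem to the stopping time $\tau_{i}$ of (\ref{eq:tau-n}); writing $\psi=\varphi_{1}\cdots\varphi_{\tau_{i}}$ for the resulting random similarity, one has $2^{-i}r_{0}\leq\left\Vert \psi\right\Vert <2^{-i}$ and $\mu=\mathbb{E}(\psi\mu)$, hence $\mu(V_{i})=\mathbb{E}\bigl(\psi\mu(V_{i})\bigr)$. It therefore suffices to prove the deterministic statement $\psi\mu(V_{i})<\delta$ for every realization of $\psi$.

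So fix a realization of $\psi$. Since $\mu$ is supported on $[0,1/2)$, the measure $\psi\mu$ is supported on the interval $J=\psi([0,1/2))$, whose length is $\left\Vert \psi\right\Vert /2<2^{-i}/2$. On the other hand, distinct balls $B_{2^{-i}\rho r_{0}}(k/2^{i})$ and $B_{2^{-i}\rho r_{0}}(k'/2^{i})$ making up $V_{i}$ are separated by a gap of length at least $2^{-i}(1-2\rho r_{0})>2^{-i}/2$ (using $\rho<1/4$ and $r_{0}<1$). Since $|J|<2^{-i}/2$ is smaller than this gap and $J$ is connected, $J$ can meet at most one ball of $V_{i}$; call its center $k_{0}/2^{i}$ (if $J$ meets no such ball, then $\psi\mu(V_{i})=0<\delta$ and we are done). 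Thus $\psi\mu(V_{i})\leq\psi\mu\bigl(B_{2^{-i}\rho r_{0}}(k_{0}/2^{i})\bigr)$.

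Finally, pull this ball back under $\psi$. Since $\psi$ contracts by $\left\Vert \psi\right\Vert \geq2^{-i}r_{0}$, its inverse expands by $\left\Vert \psi\right\Vert ^{-1}\leq2^{i}/r_{0}$, so $\psi^{-1}\bigl(B_{2^{-i}\rho r_{0}}(k_{0}/2^{i})\bigr)$ is a ball of radius $2^{-i}\rho r_{0}/\left\Vert \psi\right\Vert \leq\rho$, centered at $x_{0}=\psi^{-1}(k_{0}/2^{i})$. Hence, by the defining property of $\rho$,
\[
\psi\mu\bigl(B_{2^{-i}\rho r_{0}}(k_{0}/2^{i})\bigr)=\mu\Bigl(\psi^{-1}B_{2^{-i}\rho r_{0}}(k_{0}/2^{i})\Bigr)\leq\mu\bigl(B_{\rho}(x_{0})\bigr)<\delta.
\]
Combining the last two inequalities gives $\psi\mu(V_{i})<\delta$ for every realization of $\psi$, and taking expectations yields $\mu(V_{i})=\mathbb{E}\bigl(\psi\mu(V_{i})\bigr)<\delta$, as claimed.

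I do not expect any genuine difficulty here; the only point demanding care is the bookkeeping of the constant $r_{0}$, which enters on two different sides — once as the lower bound $\left\Vert \psi\right\Vert \geq2^{-i}r_{0}$ forced by the stopping rule, and once as the factor $\rho r_{0}$ shrinking the radius of the balls comprising $V_{i}$. These two occurrences are arranged so that a single cylinder of $\mu$, rescaled back to unit size, lands inside a $\rho$-ball — precisely the scale at which the non-atomicity estimate $\mu(B_{\rho}(x))<\delta$ is available.
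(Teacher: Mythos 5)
Your proposal is correct and is essentially the paper's own argument: both reduce via optional stopping to showing $(\varphi_1\cdots\varphi_{\tau_i}\mu)(V_i)<\delta$ a.s., and both hinge on the two scales of $r_0$ cancelling so that a single ball of $V_i$, pulled back by the cylinder map, lands inside a $\rho$-ball. The only cosmetic difference is that you count balls of $V_i$ met by the image $\psi(\supp\mu)$ and then pull back the one surviving ball, whereas the paper pulls back all of $V_i$ at once to a union of $\rho$-balls with unit spacing and observes at most one can meet $\supp\mu$; these are the same computation read in opposite directions.
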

\begin{proof}
Since $\mu=\mathbb{E}(\varphi_{1}\ldots\varphi_{\tau_{i}}\mu)$, it
is enough to show that $(\varphi_{1}\ldots\varphi_{\tau_{i}}\mu)(V_{i})<\delta$
a.s. over the choice of the maps. Writing $r_{i}=\left\Vert \varphi_{1}\ldots\varphi_{\tau_{i}}\right\Vert $,
for some $t_{i}\in\mathbb{R}$ we have 
\begin{eqnarray*}
(\varphi_{1}\ldots\varphi_{\tau_{i}}\mu)(V_{i}) & = & \mu((\varphi_{1}\ldots\varphi_{\tau_{i}})^{-1}V_{i})\\
 & = & \mu(\frac{1}{r_{i}}V_{i}+t_{i}).
\end{eqnarray*}
But by definition of $\tau_{i}$ we have $2^{-i}r_{0}<r_{i}\leq2^{-i}$,
so 
\begin{eqnarray*}
\frac{1}{r_{i}}V_{i}+t_{i} & = & \bigcup_{k\in\mathbb{Z}}B_{2^{-i}\rho r_{0}/r_{i}}(k/(r_{i}2^{i}))+t_{i}\\
 & \subseteq & \bigcup_{k\in\mathbb{Z}}B_{\rho}(t_{i}+k/(r_{i}2^{i})).
\end{eqnarray*}
On the other hand, $\{t_{i}+k/(r_{i}2^{i})\}_{k\in\mathbb{Z}}$ is
a periodic sequence with gap size at least $1$, and since $\rho<1/4$
and $\mu$ is supported on a set of diameter $1/2$, at most one of
the balls $B_{\rho}(t_{i}+k/(r2_{i}^{i}))$ intersects the support
of $\mu$. The $\mu$-mass of this ball is less than $\delta$ by
our choice of $\rho$, and the claim follows.\end{proof}
\begin{lemma}
\label{lem:most-cells-mostly-covered-by-cyls}$\mu(x\,:\,\mu(\mathcal{D}_{i}(x)\cap V_{i})<\sqrt{\delta}\mu(\mathcal{D}_{i}(x)))>1-\sqrt{\delta}$.\end{lemma}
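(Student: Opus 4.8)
The statement is a routine Markov/Chebyshev-type deduction from the previous lemma, which gives $\mu(V_i)<\delta$. The plan is to decompose $\mu(V_i)$ over the dyadic cells of level $i$ and observe that only few cells can carry a disproportionately large share of $V_i$. Concretely, write
\[
\mu(V_i)=\sum_{I\in\mathcal{D}_i}\mu(I\cap V_i)=\sum_{I\in\mathcal{D}_i}\mu(I)\cdot\frac{\mu(I\cap V_i)}{\mu(I)},
\]
where the sum is over cells with $\mu(I)>0$. This exhibits $\mu(V_i)$ as the $\mu$-average of the quantity $g(x)=\mu(\mathcal{D}_i(x)\cap V_i)/\mu(\mathcal{D}_i(x))$, i.e. $\int g\,d\mu=\mu(V_i)<\delta$. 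Since $g\geq 0$, Markov's inequality applied at level $\sqrt{\delta}$ gives $\mu(x:g(x)\geq\sqrt{\delta})\leq\mu(V_i)/\sqrt{\delta}<\delta/\sqrt{\delta}=\sqrt{\delta}$, and hence $\mu(x:g(x)<\sqrt{\delta})>1-\sqrt{\delta}$, which is exactly the claim.

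The only point requiring a word of care is the first identity: for $\mu$-a.e.\ $x$ the cell $\mathcal{D}_i(x)$ has positive $\mu$-mass, so $g(x)$ is well defined $\mu$-a.e., and $g$ is constant on each such cell, so $\int g\,d\mu=\sum_{I}\mu(I)\cdot(\mu(I\cap V_i)/\mu(I))=\sum_I\mu(I\cap V_i)=\mu(V_i)$ since $V_i$ is a disjoint union over the cells of its traces on them. There is essentially no obstacle here; the content of the lemma is entirely carried by the preceding lemma's bound $\mu(V_i)<\delta$, and this step is just packaging it via Markov's inequality.
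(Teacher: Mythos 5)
Your proof is correct and is exactly the "elementary" Markov-inequality argument the paper has in mind (the paper's proof of this lemma is simply the remark "Elementary, using $\mu(V_i)<\delta$"). The identification of $\mu(V_i)$ as the $\mu$-integral of $g(x)=\mu(\mathcal{D}_i(x)\cap V_i)/\mu(\mathcal{D}_i(x))$, followed by Markov at threshold $\sqrt{\delta}$, is the intended packaging.
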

\begin{proof}
Elementary, using $\mu(V_{i})<\delta$.
\end{proof}
Let $\ell\in\mathbb{N}$ be large enough that the diameter of $\supp\mu$
is less than $2^{\ell}\rho r_{0}$. Assume that $D\in\mathcal{D}_{i}$
and $\mu(D)>0$. Then $\mu=\mathbb{E}(\varphi_{1}\ldots\varphi_{\tau_{i+\ell}}\mu)$
implies
\[
\mu|_{D}=\mathbb{E}\left((\varphi_{1}\ldots\varphi_{\tau_{i+\ell}}\mu)|_{D}\right).
\]
Let $\mathcal{A}_{D}$ denote the event that $(\varphi_{1}\ldots\varphi_{\tau_{i+\ell}}\mu)(D)=1$
and $\mathcal{B}_{D}$ the event that $0<(\varphi_{1}\ldots\varphi_{\tau_{i+\ell}}\mu)(D)<1$.
Then we have
\begin{align}
  \mu|_{D}\;\;= \;&\mathbb{P}(\mathcal{A}_{D})\cdot\mathbb{E}\left((\varphi_{1}\ldots\varphi_{\tau_{i+\ell}}\mu)|_{D}\biggl|\mathcal{A}_{D}\right) \nonumber\\
   &\;+\mathbb{P}(\mathcal{B}_{D})\cdot\mathbb{E}\left((\varphi_{1}\ldots\varphi_{\tau_{i+\ell}}\mu)|_{D}\biggl|\mathcal{B}_{D}\right)
\end{align}
(the missing term, where the expectation is conditioned on the complement of   $\mathcal{A}_{D}\cup\mathcal{B}_{D}$,
is zero). Dividing the equation by $\mu(D)$, and dividing and multiplying
each integrand by $(\varphi_{1}\ldots\varphi_{\tau_{i+\ell}}\mu)(D)$
and using the fact that this is $1$ on $\mathcal{A}_{D}$, we obtain
\begin{align}
\mu_{D}  = &\; \frac{\mathbb{P}(\mathcal{A}_{D})}{\mu(D)}\cdot\mathbb{E}\left((\varphi_{1}\ldots\varphi_{\tau_{i+\ell}}\mu)_{D}\left|\mathcal{A}_{D}\right.\right)+\nonumber \\
 &  \quad+\;\frac{\mathbb{P}(\mathcal{B}_{D})}{\mu(D)}\cdot\mathbb{E}\left((\varphi_{1}\ldots\varphi_{\tau_{i+\ell}}\mu)(D)\cdot(\varphi_{1}\ldots\varphi_{\tau_{i+\ell}}\mu)_{D}\left|\mathcal{B}_{D}\right.\right).\label{eq:2}
\end{align}
Evaluating this measure-valued equation on $D$ shows that 
\begin{equation}
\frac{\mathbb{P}(\mathcal{A}_{D})}{\mu(D)}+\frac{\mathbb{P}(\mathcal{B}_{D})}{\mu(D)}\cdot\mathbb{E}\left((\varphi_{1}\ldots\varphi_{\tau_{i+\ell}}\mu)(D)\left|\mathcal{B}_{D}\right.\right)=1.\label{eq:3}
\end{equation}

\begin{lemma}
If $D\in\mathcal{D}_{i}$ and $\mu(D\cap V_{i})<\sqrt{\delta}\mu(D)$
then $\mathbb{P}(\mathcal{A}_{D})/\mu(D)>1-\sqrt{\delta}$.\end{lemma}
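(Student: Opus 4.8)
The plan is to exploit the cylinder decomposition $\mu=\mathbb{E}(\varphi_{1}\ldots\varphi_{\tau_{i+\ell}}\mu)$ and to show that, on the event $\mathcal{B}_{D}$, the random measure $\varphi_{1}\ldots\varphi_{\tau_{i+\ell}}\mu$ is supported entirely inside $V_{i}$; the conclusion then falls out of identity (\ref{eq:3}).

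First I would record the scale estimate behind this. By the choice of $\ell$, the support of $\mu$ has diameter less than $2^{\ell}\rho r_{0}$, and by definition of $\tau_{i+\ell}$ the contraction satisfies $\left\Vert\varphi_{1}\ldots\varphi_{\tau_{i+\ell}}\right\Vert<2^{-(i+\ell)}$; hence $\varphi_{1}\ldots\varphi_{\tau_{i+\ell}}\mu$ is supported on a set $S$ with $\mathrm{diam}\,S<2^{-i}\rho r_{0}$. Since $2^{-i}\rho r_{0}<2^{-i}$, the set $S$ can meet at most two adjacent atoms of $\mathcal{D}_{i}$. Now on $\mathcal{B}_{D}$ we have $0<(\varphi_{1}\ldots\varphi_{\tau_{i+\ell}}\mu)(D)<1$; this measure is non-atomic, being an affine image of the continuous measure $\mu$, so mass strictly less than $1$ on $D$ forces $S$ to contain a point outside $\overline{D}$, while positive mass forces $S$ to meet $D$. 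Therefore $S$ straddles an endpoint $k/2^{i}\in\mathbb{Z}/2^{i}$ of $D$, and since $\mathrm{diam}\,S<2^{-i}\rho r_{0}$ every point of $S$ lies within $2^{-i}\rho r_{0}$ of $k/2^{i}$, i.e. $S\subseteq B_{2^{-i}\rho r_{0}}(k/2^{i})\subseteq V_{i}$. In particular $(\varphi_{1}\ldots\varphi_{\tau_{i+\ell}}\mu)(D\cap V_{i})=(\varphi_{1}\ldots\varphi_{\tau_{i+\ell}}\mu)(D)$ on $\mathcal{B}_{D}$.

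With this in hand I would integrate: evaluating $\mu=\mathbb{E}(\varphi_{1}\ldots\varphi_{\tau_{i+\ell}}\mu)$ on $D\cap V_{i}$, discarding the non-negative contribution of $\mathcal{B}_{D}^{c}$, and using the previous paragraph gives
\[
\mu(D\cap V_{i})\;\geq\;\mathbb{P}(\mathcal{B}_{D})\cdot\mathbb{E}\bigl((\varphi_{1}\ldots\varphi_{\tau_{i+\ell}}\mu)(D)\bigm|\mathcal{B}_{D}\bigr)\;=\;\mu(D)-\mathbb{P}(\mathcal{A}_{D}),
\]
where the last equality is (\ref{eq:3}). Rearranging yields $\mathbb{P}(\mathcal{A}_{D})/\mu(D)\geq 1-\mu(D\cap V_{i})/\mu(D)$, and the hypothesis $\mu(D\cap V_{i})<\sqrt{\delta}\,\mu(D)$ then gives $\mathbb{P}(\mathcal{A}_{D})/\mu(D)>1-\sqrt{\delta}$. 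The one place that needs care is the geometric claim of the second paragraph — that on $\mathcal{B}_{D}$ the support is pushed into $V_{i}$ — which is exactly where the choices of $\ell$, of $\rho$, and of the radius $2^{-i}\rho r_{0}$ defining $V_{i}$ are calibrated to fit together; everything after that is bookkeeping with the martingale identity and (\ref{eq:3}).
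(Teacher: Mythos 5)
Your proof is correct and follows essentially the same route as the paper's: you establish the key geometric fact that on $\mathcal{B}_{D}$ the pushforward $\varphi_{1}\ldots\varphi_{\tau_{i+\ell}}\mu$ is supported inside $V_{i}$ (via the same diameter estimate calibrated by the choice of $\ell$), and then extract the bound from the martingale decomposition combined with identity (\ref{eq:3}). The only superfluous step is the appeal to non-atomicity of the pushforward: to conclude that the support straddles a point of $\partial D\subseteq\mathbb{Z}/2^{i}$, it suffices that the support meets both $D$ and its complement, which follows directly from $0<(\varphi_{1}\ldots\varphi_{\tau_{i+\ell}}\mu)(D)<1$ without any continuity hypothesis. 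The paper's own presentation evaluates equation (\ref{eq:2}) on $V_{i}$ and compares with $\mu_{D}(V_{i})<\sqrt{\delta}$, while you integrate the raw decomposition over $D\cap V_{i}$ and then divide by $\mu(D)$; these are the same computation.
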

\begin{proof}
Suppose that $0<(\varphi_{1}\ldots\varphi_{\tau_{i+\ell}}\mu)(D)<1$.
Then $\varphi_{1}\ldots\varphi_{\tau_{i+\ell}}\mu$ gives positive
mass to both $D$ and $\mathbb{R}\setminus D$. On the other hand
the diameter of this measure is at most $2^{-(i+\ell)}$ times the
diameter of $\supp\mu$, which by choice of $\ell$ is at most $\rho2^{-i}$,
so $\varphi_{1}\ldots\varphi_{\tau_{i+\ell}}\mu$ must be supported
within $\rho2^{-i}$ of $\partial D$, and hence it is supported on
$V_{i}$. We have found that on the event $\mathcal{B}_{D}$, if $(\varphi_{1}\ldots\varphi_{\tau_{i+\ell}}\mu)(D)>0$
then $(\varphi_{1}\ldots\varphi_{\tau_{i+\ell}}\mu)(V_{i})=1$, and
therefore also $(\varphi_{1}\ldots\varphi_{\tau_{i+\ell}}\mu)_{D}(V_{i})=1$.
Consequently, by our hypothesis and (\ref{eq:2}),
\begin{eqnarray*}
\sqrt{\delta} & > & \mu_{D}(V_{i})\\
 & \geq & \frac{\mathbb{P}(\mathcal{B}_{D})}{\mu(D)}\cdot\mathbb{E}\left((\varphi_{1}\ldots\varphi_{\tau_{i+\ell}}\mu)(D)\cdot(\varphi_{1}\ldots\varphi_{\tau_{i+\ell}}\mu)_{D}(V_{i})\left|\mathcal{B}_{D}\right.\right)\\
 & = & \frac{\mathbb{P}(\mathcal{B}_{D})}{\mu(D)}\cdot\mathbb{E}\left((\varphi_{1}\ldots\varphi_{\tau_{i+\ell}}\mu)(D)\left|\mathcal{B}_{D}\right.\right).
\end{eqnarray*}
The claim follows using (\ref{eq:3}).
\end{proof}
We now prove (\ref{eq:5}), proving Proposition \ref{prop:mu-has-essentially-bounded-component-entropy}.
Let $\varepsilon>0$, and continue with the previous notation, eventually
taking $\delta$ small relative to $\varepsilon$, and $m$ large
relative to $\varepsilon,\delta$ (and hence relative to $\rho$ and
$\ell$, since they are determined by $\delta$).

Suppose that $D\in\mathcal{D}_{i}$ and $\mu(D\cap V_{i})<\sqrt{\delta}\mu(D)$.
By (\ref{eq:2}) we can write 
\[
\mu_{D}=\frac{\mathbb{P}(\mathcal{A}_{D})}{\mu(D)}\mathbb{E}((\varphi_{1}\ldots\varphi_{\tau_{i+\ell}}\mu)_{D}|\mathcal{A}_{D})+(1-\frac{\mathbb{P}(\mathcal{A}_{D})}{\mu(D)})\nu
\]
for some probability measure $\nu$. By concavity of entropy and
the last lemma, 
\begin{eqnarray*}
\frac{1}{m}H(\mu_{D},\mathcal{D}_{i+m}) & \geq & \frac{\mathbb{P}(\mathcal{A}_{D})}{\mu(D)}\cdot\frac{1}{m}H\left(\mathbb{E}((\varphi_{1}\ldots\varphi_{\tau_{i+\ell}}\mu)_{D}|\mathcal{A}_{D}),\mathcal{D}_{i+m}\right)\\
 & \geq & (1-\sqrt{\delta})\mathbb{E}\left(\frac{1}{m}H((\varphi_{1}\ldots\varphi_{\tau_{i+\ell}}\mu)_{D},\mathcal{D}_{i+m})\left|\mathcal{A}_{D}\right.\right).
\end{eqnarray*}
Conditioned on the event $\mathcal{A}_{D}$ we have $(\varphi_{1}\ldots\varphi_{\tau_{i+\ell}}\mu)_{D}=\varphi_{1}\ldots\varphi_{\tau_{i+\ell}}\mu$,
and since $\varphi_{1}\ldots\varphi_{\tau_{i+\ell}}$ contracts by
at most $2^{-(i+\ell)}r_{0}$, we have
\begin{eqnarray*}
\frac{1}{m}H(\varphi_{1}\ldots\varphi_{\tau_{i+\ell}}\mu,\mathcal{D}_{i+m}) & = & \frac{1}{m}H(\mu,\mathcal{D}_{m})+O_{r_{0},\ell}(\frac{1}{m}).
\end{eqnarray*}
Combined with the previous inequality we obtain
\[
\frac{1}{m}H(\mu_{D},\mathcal{D}_{i+m})\geq\frac{(1-\sqrt{\delta})}{m}\cdot H(\mu,\mathcal{D}_{m})+O_{r_{0},\ell}(\frac{1}{m})\geq\alpha-\varepsilon,
\]
assuming $\delta$ is small and $m$ large.

The analysis above holds for $D\in\mathcal{D}_{i}$ such that $\mu(D\cap V_{i})<\sqrt{\delta}\mu(D)$.
By Lemma \ref{lem:most-cells-mostly-covered-by-cyls}, and assuming
as we may that $\delta<\varepsilon^{2}$ and $m$ is large enough,
this implies the proposition.

\section{\label{sec:Growth-of-Hausdorff-dim}Growth of Hausdorff dimension
under convolution}

So far we have analyzed the growth of entropy at fixed small scales,
which in the limit leads to results for entropy dimension. We now
turn the growth of the Hausdorff dimension of measures. Technically,
involves replacing the ``global'' distribution of components $\mathbb{P}_{n}^{\eta}$,
in which $\eta_{x,i}$ is selected by randomizing both $x$ and $i$,
with ``pointwise'' distributions of components, where $x$ is fixed
and we average only over the scales. This requires us to modify some
of the definitions and slightly strengthen the hypotheses. It also
calls for some additional analysis, based to a large extent on the
local entropy averages method.

\subsection{\label{sub:Hausdorff-dimension-and-pointwise-dim}Hausdorff and pointwise
dimension }

To start off, recall that the (lower) Hausdorff dimension of a measure
$\eta\in\mathcal{P}(\mathbb{R})$ is given by 
\[
\ldim\eta=\inf\{\dim E\,:\,\eta(E)>0\},
\]
as $E$ ranges over Borel sets. Unlike entropy dimension, which averages
the behavior of a measure over space, Hausdorff dimension is determined
by the pointwise behavior of a measure. Indeed, define the (lower,
dyadic) pointwise dimension of $\eta$ at $x$ to be 
\[
\ldim(\eta,x)=\liminf_{n\rightarrow\infty}-\frac{\log\eta(\mathcal{D}_{n}(x))}{n}.
\]
(one may take the limit along integer or continuous parameter $n$).
Then
\[
\ldim\eta=\essinf_{x\sim\eta}d(\eta,x).
\]
It is elementary that if $n_{i}\rightarrow\infty$ and $n_{i+1}/n_{i}\rightarrow1$
then in the definition of $\ldim(\eta,x)$ we can take the limit along
$n_{i}$. For reasons which will become apparent later we will want
to take advantage of this freedom.

We mention a basic stability property of the local dimension:
\begin{lemma}
\label{lem:hausdorff-dim-stable-under-conditioning}If $\eta\ll\theta$
are probability measures on $\mathbb{R}$ then $\ldim(\eta,x)=\ldim(\theta,x)$
for $\eta$-a.e. $x$.
\end{lemma}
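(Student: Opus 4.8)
The plan is to use the fact that the Radon--Nikodym derivative $f = \frac{d\eta}{d\theta}$ is finite and positive $\eta$-a.e., and to show that this forces the ratio $\eta(\mathcal{D}_n(x))/\theta(\mathcal{D}_n(x))$ to be controlled (neither too large nor too small) along a full-measure set, which then lets the $\liminf$ in the definition of $\ldim(\cdot,x)$ be transferred between the two measures. The point is that $-\log\eta(\mathcal{D}_n(x))/n$ and $-\log\theta(\mathcal{D}_n(x))/n$ differ by $\frac{1}{n}\log\big(\theta(\mathcal{D}_n(x))/\eta(\mathcal{D}_n(x))\big)$, and I want to show this correction term tends to $0$ for $\eta$-a.e.\ $x$.

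First I would record the one ``easy'' inequality: since $\eta \ll \theta$, if $\theta(\mathcal{D}_n(x)) > 0$ then on $\mathcal{D}_n(x)$ we have $\eta(\mathcal{D}_n(x)) = \int_{\mathcal{D}_n(x)} f\,d\theta$. By the Lebesgue differentiation theorem (martingale/dyadic version), for $\theta$-a.e.\ $x$ the averages $\frac{1}{\theta(\mathcal{D}_n(x))}\int_{\mathcal{D}_n(x)} f\,d\theta$ converge to $f(x)$; hence for $\eta$-a.e.\ $x$ (note $\eta \ll \theta$, so a $\theta$-full set is $\eta$-full), we get $\eta(\mathcal{D}_n(x))/\theta(\mathcal{D}_n(x)) \to f(x)$, and $f(x) \in (0,\infty)$ for $\eta$-a.e.\ $x$. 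Taking logarithms and dividing by $n$, the correction term $\frac{1}{n}\log\big(\eta(\mathcal{D}_n(x))/\theta(\mathcal{D}_n(x))\big) \to 0$, since $\log f(x)$ is a finite constant. This immediately gives $\ldim(\eta,x) = \ldim(\theta,x)$ for $\eta$-a.e.\ $x$, provided $\theta(\mathcal{D}_n(x)) > 0$ for all $n$, which holds $\eta$-a.e.\ because it holds $\theta$-a.e.

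The main technical point to be careful about is the application of the dyadic differentiation theorem: one needs that for $\theta$-a.e.\ $x$, the conditional expectations $\mathbb{E}_\theta(f \mid \mathcal{D}_n)(x) \to f(x)$, which is the standard martingale convergence / Lebesgue density statement for the filtration generated by the dyadic partitions, valid for any finite Borel measure $\theta$ on $\mathbb{R}$ and any $f \in L^1(\theta)$. I would cite this as a standard fact (e.g.\ from any text on measure theory or martingales) rather than prove it. The only remaining nuisance is bookkeeping around the null set where $f = 0$ or $f = \infty$: both have $\eta$-measure zero, the first because $\eta(\{f=0\}) = \int_{\{f=0\}} f\,d\theta = 0$ and the second because $f \in L^1(\theta)$ forces $\theta(\{f = \infty\}) = 0$ hence $\eta(\{f=\infty\}) = 0$. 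On the complement of these null sets together with the differentiation-theorem exceptional set, the argument above applies verbatim, and since the definition of $\ldim(\eta,x)$ uses $\liminf$, adding a correction term that converges to $0$ does not change it. I do not expect any serious obstacle here; this is a routine Radon--Nikodym-plus-martingale argument.
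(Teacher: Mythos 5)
Your proposal is correct and takes essentially the same approach as the paper: the paper's own justification is precisely that the martingale convergence theorem gives $\eta(\mathcal{D}_n(x))/\theta(\mathcal{D}_n(x)) \to \frac{d\eta}{d\theta}(x) \in (0,\infty)$ for $\eta$-a.e.\ $x$, from which the equality of local dimensions follows. You have simply spelled out the bookkeeping (the null sets where $f=0$ or $f=\infty$) that the paper leaves implicit.
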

This is a consequence of the martingale convergence theorem, according
to which for $\eta,\theta$ as in the lemma, $\frac{\eta(\mathcal{D}_{n}(x))}{\theta(\mathcal{D}_{n}(x))}\rightarrow\frac{d\eta}{d\theta}(x)\in(0,\infty)$
at $\eta$-a.e. point $x$.

\subsection{\label{sub:Local-entropy-averages}Local entropy averages}

The connection of pointwise dimension and entropy is via the so-called
local entropy averages method, introduced in \cite{HochmanShmerkin2012}.
This can be regarded as a pointwise analog of Lemmas \ref{lem:multiscale-formula-for-entropy}
and \ref{lem:multiscale-formula-for-entropy-of-additive-convolution}.
We give a version of the lemma along a sparse sequence of scales,
specifically, of power growth. Let $[\cdot]$ denote the integer value
function. 
\begin{lemma}
\label{lem:local-entropy-averages}Let $\tau>0$ and let $n_{i}=[i^{1+\tau}]$.
Then for any $\eta\in\mathcal{P}(\mathbb{R}^{d})$ and $\eta$-a.e.
$x$,
\begin{equation}
\ldim(\eta,x)\geq\liminf_{k\rightarrow\infty}\frac{1}{k}\sum_{i=0}^{k-1}\frac{1}{n_{i+1}-n_{i}}H(\eta_{x,n_{i}},\mathcal{D}_{n_{i+1}})-\tau,\label{eq:local-entropy-averages}
\end{equation}
and if $\theta\in\mathcal{P}(\mathcal{S})$ and $\eta\in\mathcal{P}(\mathbb{R})$,
then for $\theta\times\eta$-a.e. $(\varphi,x)$ and $y=\varphi(x)$,
\begin{equation}
\ldim(\theta\bigdot\eta,y)\geq\liminf_{k\rightarrow\infty}\frac{1}{k}\sum_{i=0}^{k-1}\frac{1}{n_{i+1}-n_{i}}H(\theta_{\varphi,n_{i}}\bigdot\eta_{x,n_{i}},\mathcal{D}_{n_{i+1}})-\tau.\label{eq:local-entropy-averages-convolution}
\end{equation}
\end{lemma}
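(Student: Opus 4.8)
The plan is to obtain both inequalities from a single martingale argument, the first being the model case and the second requiring one extra comparison step. Throughout, since $n_{i+1}/n_i\to1$, the remark preceding the lemma lets me compute the lower pointwise dimensions along the sparse sequence $(n_i)$ rather than along all integers.

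\emph{First inequality.} On the probability space $(\mathbb R^{d},\eta)$ set $\mathcal F_i=\sigma(\mathcal D_{n_i})$, and for $\eta$-a.e.\ $x$ put $X_i=-\log\bigl(\eta(\mathcal D_{n_{i+1}}(x))/\eta(\mathcal D_{n_i}(x))\bigr)\ge0$; starting the telescoping at $n_0=0$ (where $-\log\eta(\mathcal D_0(x))$ is finite a.e.\ and washes out after division by $n_k$) we get $\ldim(\eta,x)=\liminf_k\frac1{n_k}\sum_{i<k}X_i$. The key identity is $\mathbb E(X_i\mid\mathcal F_i)=H(\eta_{x,n_i},\mathcal D_{n_{i+1}})$, since conditionally on $\mathcal D_{n_i}(x)$ the measure governing $x$ is $\eta_{x,n_i}$. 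It remains to show $\frac1{n_k}\sum_{i<k}\bigl(X_i-\mathbb E(X_i\mid\mathcal F_i)\bigr)\to0$ a.s. Since the $X_i$ are unbounded I would first truncate: counting sub-cells gives $\mathbb P(X_i>t\mid\mathcal F_i)\le 2^{d(n_{i+1}-n_i)}2^{-t}$, so $\mathbb P(X_i>3d(n_{i+1}-n_i))$ and $\mathbb E\bigl((X_i-3d(n_{i+1}-n_i))^{+}\bigr)$ are summable (as $n_{i+1}-n_i\sim(1+\tau)i^{\tau}\to\infty$), and Borel--Cantelli lets one replace $X_i$ by $X_i'=\min(X_i,3d(n_{i+1}-n_i))$ at no cost in the limit, for $X_i$ and for its conditional expectation. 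Then $X_i'-\mathbb E(X_i'\mid\mathcal F_i)$ is a martingale difference of size $O(i^{\tau})$ and $\sum_i(n_{i+1}-n_i)^2/n_i^2=\sum_iO(i^{-2})<\infty$, so $\sum_i\bigl(X_i'-\mathbb E(X_i'\mid\mathcal F_i)\bigr)/n_i$ converges a.s.\ by $L^{2}$ martingale convergence, and Kronecker's lemma yields the desired Ces\`aro convergence. This gives $\ldim(\eta,x)=\liminf_k\frac1{n_k}\sum_{i<k}H(\eta_{x,n_i},\mathcal D_{n_{i+1}})$ a.e.; passing from this average (weights $(n_{i+1}-n_i)/n_k$) to the uniform average $\frac1k\sum_{i<k}\frac1{n_{i+1}-n_i}H(\eta_{x,n_i},\mathcal D_{n_{i+1}})$ is a routine estimate, the $\ell^{1}$-distance between the two weight vectors tending to a limit $<2\tau$ by a Riemann-sum computation, which together with $0\le\frac1{n_{i+1}-n_i}H(\eta_{x,n_i},\mathcal D_{n_{i+1}})\le d$ costs at most $\tau$ in the liminf.

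\emph{Second inequality.} Write $\lambda=\theta\bigdot\eta=f(\theta\times\eta)$ with $f(\varphi,x)=\varphi(x)$, and run the same scheme on $(G\times\mathbb R,\theta\times\eta)$ with $\mathcal F_i=\sigma(\mathcal D_{n_i}^{G}\times\mathcal D_{n_i})$; for generic $(\varphi,x)$ put $y=f(\varphi,x)$. Because $f$ is Lipschitz on $\supp\theta\times\supp\eta$, it carries each level-$n$ source cell into a bounded union of level-$n$ cells of $\mathbb R$, so the numbers $-\log\lambda(\mathcal D_{n_i}(y))$ agree, up to an $O(1)$ error per scale, with quantities exactly measurable for $(\mathcal F_i)$; such $O(1)$-per-scale errors disappear in $\frac1{n_k}\sum_{i<k}$ since $k/n_k\to0$. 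With $W_i=-\log\bigl(\lambda(\mathcal D_{n_{i+1}}(y))/\lambda(\mathcal D_{n_i}(y))\bigr)$, arguing exactly as above reduces matters to $\liminf_k\frac1{n_k}\sum_{i<k}\mathbb E(W_i\mid\mathcal F_i)$, where the conditional expectation integrates a functional of $\lambda$ against the component-convolution $\theta_{\varphi,n_i}\bigdot\eta_{x,n_i}$, which is the conditional law of $y$ given $\mathcal F_i$. The heart of the matter is the comparison $\mathbb E(W_i\mid\mathcal F_i)\ge H(\theta_{\varphi,n_i}\bigdot\eta_{x,n_i},\mathcal D_{n_{i+1}})-O(1)$ (in the relevant averaged sense): the entropy of the target sub-cell conditioned on the target cell is at least its entropy conditioned on the source cell, minus $O(1)$, because up to $O(1)$ the target cell is a function of the source cell while the target sub-cell determines the target cell, so ``conditioning reduces entropy'' applies in the favorable direction. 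Granting this, the proof closes as before, and the same rearrangement costs at most $\tau$.

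I expect this last comparison to be the main obstacle: it is precisely where the structure of an image measure enters, and the delicate point is to upgrade an inequality between \emph{averaged} conditional entropies into the \emph{pathwise} Ces\`aro statement, so that the scales at which $\lambda$ happens to be locally concentrated near $y$ do not accumulate a loss -- this is the content of the local-entropy-averages method for pushforwards from \cite{HochmanShmerkin2012}. The remaining ingredients (the truncation, the $L^{2}$/Kronecker step, the weight-vector computation) are routine, but the bookkeeping of the $O(1)$ and $o(1)$ errors needs to be carried out honestly.
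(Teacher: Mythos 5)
Your proof of the first inequality (\ref{eq:local-entropy-averages}) is correct and takes essentially the same route as the paper: decompose $-\log\eta(\mathcal D_{n_k}(x))$ into increments, recognize the conditional expectation of the increment given $\mathcal D_{n_i}$ as $H(\eta_{x,n_i},\mathcal D_{n_{i+1}})$, kill the fluctuations by a martingale strong law, and convert the $(n_{i+1}-n_i)/n_k$-weighted average to the uniform $1/k$-average at a cost of $\tau$. Where the paper disposes of the unboundedness of the increments by a footnoted variance computation and a bespoke one-sided LLN for martingale differences, you truncate at $3d(n_{i+1}-n_i)$, invoke Borel--Cantelli, and then use $L^2$-martingale convergence plus Kronecker; this is a legitimate and arguably more self-contained variant. (Both you and the paper are a bit cavalier about the factor $d$ in the final rearrangement step — with entropies bounded by $d$ rather than $1$ the loss is really $O(d\tau)$ — but this is immaterial since the lemma is applied only with $d=1$.)

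For the second inequality (\ref{eq:local-entropy-averages-convolution}) the paper gives no proof at all, deferring entirely to the projection argument in \cite{HochmanShmerkin2012}; your sketch is a plausible reconstruction, and you correctly flag that this is where the real work is. Two remarks on the sketch. First, you worry that the ``main obstacle'' is upgrading the comparison $\mathbb E(W_i\mid\mathcal F_i)\geq H(\theta_{\varphi,n_i}\bigdot\eta_{x,n_i},\mathcal D_{n_{i+1}})-O(1)$ from an averaged to a pathwise statement; in fact this comparison \emph{is} pathwise and correct — it is the log-sum (Gibbs) inequality applied to $\kappa=\theta_{\varphi,n_i}\bigdot\eta_{x,n_i}$ versus the conditional cell-weights of $\lambda=\theta\bigdot\eta$, with the $O(1)$ coming from the $O(1)$ bound on how many level-$n_i$ cells the Lipschitz image of a source cell can hit. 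The genuine gap is elsewhere: you assert that $-\log\lambda(\mathcal D_{n_i}(y))$ agrees to within $O(1)$ with an $\mathcal F_i$-measurable quantity, and hence that $W_i$ is $(\mathcal F_{i+1})$-adapted up to $O(1)$. That would require adjacent dyadic cells of $\lambda$ to have comparable mass, which is false for a general measure, so the martingale is not set up correctly as written. The standard fix (and, I believe, what \cite{HochmanShmerkin2012} does) is to replace $\mathcal D_{n_i}(y)$ by an $\mathcal F_i$-measurable dyadic interval $I_i\supseteq \mathcal D_{n_i}(y)$ of comparable length determined by the source cell; since $\lambda(I_i)\geq\lambda(\mathcal D_{n_i}(y))$, the one-sided inequality $\ldim(\lambda,y)\geq\liminf_k(-\log\lambda(I_k))/n_k$ holds, and now the increments $-\log(\lambda(I_{i+1})/\lambda(I_i))$ are genuinely $(\mathcal F_i)$-adapted, so the martingale machinery from the first part applies. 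Your truncation step also needs care in this setting: the conditional tail of $W_i$ given $\mathcal F_i$ carries a factor $1/(\theta\times\eta)$(source cell), which is unbounded, whereas the tail conditioned on $\mathcal D_{n_i}(y)$ is uniformly controlled; working with the $I_i$-based increments resolves this too. None of this undermines the overall shape of your argument, but these bookkeeping points — which sigma-algebra the increments are adapted to, and which conditional tail bound is available — need to be pinned down before the argument closes.
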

\begin{proof}
We start with the first statement. Clearly $2^{-n_{i}}=2^{-n_{i-1}(1+o(1))}$,
so $\ldim(\eta,x)=-\liminf\frac{1}{n_{k}}\log\mu(\mathcal{D}_{n_{k}}(x))$.
Set $w_{k,i}=(n_{i}-n_{i-1})/n_{k}$, so $(w_{k,1},\ldots,w_{k,k})$
is a probability vector. From 
\[
\eta(\mathcal{D}_{n_{k}}(x))=\sum_{i=0}^{k}\log\frac{\mu(\mathcal{D}_{n_{i}}(x))}{\mu(\mathcal{D}_{n_{i-1}}(x))}
\]
we find that 
\begin{eqnarray*}
\ldim(\eta,x) & = & -\liminf_{k\rightarrow\infty}\frac{1}{n_{k}}\sum_{i=1}^{k}\log\frac{\mu(\mathcal{D}_{n_{i}}(x))}{\mu(\mathcal{D}_{n_{i-1}}(x))}\\
 & = & \liminf_{k\rightarrow\infty}\sum_{i=1}^{k}w_{k,i}\cdot\left(-\frac{1}{n_{i}-n_{i-1}}\log\frac{\mu(\mathcal{D}_{n_{i}}(x))}{\mu(\mathcal{D}_{n_{i-1}}(x))}\right)
\end{eqnarray*}
By a variation on the law of large numbers for one-sided bounded uncorrelated
$L^{2}$ random variables\footnote{Here is a proof sketch: Let $(X_{i})$ be a martingale with $\mathbb{E}X_{i}=0$,
$\mathbb{E}(X_{i}^{2})\leq a$, and $X_{i}\geq-b$ for some constants
$a,b>0$. Let $w_{k,i}$ be as before, write $S_{k}=\sum_{i=1}^{k}w_{k,i}X_{i}$.
We claim that $\liminf_{k}S_{k}\geq0$ a.s. Consider first the subsequence
$S_{k^{2}}$. Using $w_{k,i}=(1+\tau+o(1))k^{-(1+\tau)}i^{\tau}$
and $\mathbb{E}(X_{i}X_{j})=0$ for $i\neq j$, we have 
\[
\mathbb{E}((S_{k^{2}})^{2})=\sum_{i=1}^{k^{2}}w_{k^{2},i}^{2}\mathbb{E}(X_{i}^{2})=O(k^{-4(1+\tau)}\sum_{i=1}^{k^{2}}i^{2\tau})=O(k^{-2})
\]
Hence by Markov's inequality $\sum\mathbb{P}(S_{k^{2}}>\varepsilon)<\infty$,
and by Borel-Cantelli, $S_{k^{2}}\rightarrow0$ a.s. We now interpolate:
for $k^{2}\leq\ell<(k+1)^{2}$ and using $w_{\ell,i}=(\frac{\ell}{k^{2}})^{1+\tau}w_{k^{2},i}$
and $X_{i}\geq-b$ we have 
\begin{eqnarray*}
S_{\ell} & = & \sum_{i=1}^{k^{2}}w_{\ell,i}X_{i}+\sum_{i=k^{2}+1}^{\ell}w_{\ell,i}X_{i}\\
 & \geq & (\frac{\ell}{k^{2}})^{1+\tau}S_{k^{2}}-\sum_{i=k^{2}}^{\ell}w_{\ell,i}b\\
 & = & (1+o_{\tau}(1))S_{k^{2}}-o_{b,\tau}(1),
\end{eqnarray*}
from which the claim follows.} shows that $\eta$-a.e. $x$ satisfies 
\[
\lim_{k\rightarrow\infty}\frac{1}{k}\sum_{i=1}^{k}\left(-\frac{1}{n_{i}-n_{i-1}}H(\eta,\mathcal{D}_{n_{i}}|\mathcal{D}_{n_{i}})-\frac{1}{n_{i}-n_{i-1}}\log\frac{\mu(\mathcal{D}_{n_{i}}(x))}{\mu(\mathcal{D}_{n_{i-1}}(x))}\right)\geq0,
\]
so
\[
\ldim(\eta,x)\geq\liminf_{k\rightarrow\infty}\sum_{i=1}^{k}w_{k,i}\cdot\frac{1}{n_{i}-n_{i-1}}H(\eta,\mathcal{D}_{n_{i}}|\mathcal{D}_{n_{i}}).
\]
Finally, writing $a_{i}=H(\eta,\mathcal{D}_{n_{i}}|\mathcal{D}_{n_{i-1}})/(n_{i}-n_{i-1})$,
the proof is completed by showing that $\sum_{i=1}^{k}w_{k,i}a_{i}=\frac{1}{k}\sum_{i=1}^{k}a_{i}-\tau-o(1)$
as $k\rightarrow\infty$. Indeed, let 
\begin{eqnarray*}
E_{k} & = & \{(i,j)\in\mathbb{Z}^{2}\,:\,1\leq i\leq k\,,\,1\leq j\leq(1+\tau)k^{\tau}\}\\
F_{k} & = & \{(i,j)\in\mathbb{Z}^{2}\,:\,1\leq i\leq k,,,1\leq j\leq i^{1+\tau}-(i-1)^{1+\tau}\}\\
 & \subseteq & E_{k}
\end{eqnarray*}
Evidently, 
\begin{eqnarray*}
\frac{1}{k}\sum_{i=1}^{k}a_{i} & = & \frac{1}{|E_{k}|}\sum_{(i,j)\in E_{k}}a_{i}\\
\sum_{i=1}^{k}w_{k,i}a_{i} & = & \frac{1}{|F_{k}|}\sum_{(i,j)\in F_{k}}a_{i}
\end{eqnarray*}
An elementary calculatoin also shows that $|E_{k}|/|F_{k}|=1+\tau+o(1)$.
This, together with $|a_{i}|\leq1$, implies that 
\begin{eqnarray*}
\sum_{i=1}^{k}w_{k,i}a_{i} & = & \frac{1}{|F_{k}|}\sum_{(i,j)\in E_{k}}a_{i}-\frac{1}{|F_{k}|}\sum_{(i,j)\in E_{k}\setminus F_{k}}a_{i}-o(1)\\
 & \geq & \frac{|F_{k}|}{|E_{k}|}\cdot\frac{1}{|E_{k}|}\sum_{(i,j)\in E_{k}}a_{i}-\frac{1}{|F_{k}|}(|E_{k}|-|F_{k}|)-o(1)\\
 & = & (1+\tau)\frac{1}{k}\sum_{i=1}^{k}a_{i}-(1+\tau-1)-o(1).\\
 & \geq & \frac{1}{k}\sum_{i=1}^{k}a_{i}-\tau-o(1).
\end{eqnarray*}
as desired.

The second part of the lemma is a similar adaptation of the local
entropy averages lemma to the action setting, similar to the projection
case in \cite{HochmanShmerkin2012}. We omit the details.
\end{proof}
We need a variant for convolutions in the action setting, which may
be regarded as a pointwise analog of Lemma \ref{lem:multiscale-formula-for-entropy-of-action-convolution}.
To control the error term in the linearization, we use the fact that
 $n_{i}=[i^{1+\tau}]$ satisfies $n_{i+1}-n_{i}\rightarrow\infty$. 
\begin{lemma}
\label{lem:linearized-local-entropy-averages}Let $\tau>0$ and let
$n_{i}=[i^{1+\tau}]$. Then for any $\theta\in\mathcal{P}(G)$ and
$\eta\in\mathcal{P}(\mathbb{R})$, any $\varphi\in\supp\theta$ and
$x\in\supp\eta$, and writing $(A,B)=(A_{\varphi,x},B_{\varphi,x})$
for the derivative of the action map at $(\varphi,x)$, and $y=\varphi(x)$,
we have 
\begin{equation}
\ldim(\eta.\theta,y)\geq\liminf_{k\rightarrow\infty}\frac{1}{k}\sum_{i=0}^{k-1}\frac{1}{n_{i+1}-n_{i}}H(B^{-1}A\theta_{\varphi,n_{i}}*\eta_{x,n_{i}},\mathcal{D}_{n_{i+1}})-\tau.\label{eq:11}
\end{equation}
\end{lemma}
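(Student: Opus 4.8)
The plan is to deduce this from the action form of the local entropy averages lemma --- equation~(\ref{eq:local-entropy-averages-convolution}) of Lemma~\ref{lem:local-entropy-averages} --- by replacing, scale by scale, each action convolution $\theta_{\varphi,n_i}\bigdot\eta_{x,n_i}$ by its linearization, in exact analogy with the passage from Lemma~\ref{lem:multiscale-formula-for-entropy-of-action-convolution} to Proposition~\ref{prop:iterated-entropy-formula-for-action}. First I would restrict $\theta,\eta$ to compact subsets of their supports (or, for a fixed a.e.\ $(\varphi,x)$, localize to a fixed compact neighborhood of it): this discards an arbitrarily small amount of mass, only enlarges the right-hand liminf, and makes the quadratic remainder in~(\ref{eq:linearization}) uniform and $B_{\varphi,x}$ bounded above and away from $0$. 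By Lemma~\ref{lem:local-entropy-averages}, for $\theta\times\eta$-a.e.\ $(\varphi,x)$ and $y=\varphi(x)$,
\[
\ldim(\theta\bigdot\eta,y)\;\geq\;\liminf_{k\to\infty}\frac{1}{k}\sum_{i=0}^{k-1}\frac{1}{n_{i+1}-n_i}\,H(\theta_{\varphi,n_i}\bigdot\eta_{x,n_i},\mathcal{D}_{n_{i+1}})\;-\;\tau ,
\]
so the statement reduces to showing that, for each such $(\varphi,x)$ and all large $i$,
\[
H(\theta_{\varphi,n_i}\bigdot\eta_{x,n_i},\mathcal{D}_{n_{i+1}})\;=\;H\big(B^{-1}A\,\theta_{\varphi,n_i}*\eta_{x,n_i},\mathcal{D}_{n_{i+1}}\big)\;+\;O(1),
\]
with the constant uniform over the (compact) supports. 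Granting this, corresponding normalized summands on the two sides differ by $O\big(1/(n_{i+1}-n_i)\big)=o(1)$, these errors wash out under the Ces\`{a}ro average since $n_{i+1}-n_i\to\infty$, the two liminfs agree, and~(\ref{eq:11}) follows.

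For the per-scale identity I would invoke the linearization estimate~(\ref{eq:entropy-under-linearization}). Here $\theta_{\varphi,n_i}$ is carried by the cell of $\mathcal{D}_{n_i}^G$ through $\varphi$ and $\eta_{x,n_i}$ by the cell of $\mathcal{D}_{n_i}$ through $x$, each of diameter $O(2^{-n_i})$, and $(A,B)=(A_{\varphi,x},B_{\varphi,x})$ is the derivative of the action map at the common base point $(\varphi,x)$. The quadratic error in~(\ref{eq:linearization}) on this box is $O(2^{-2n_i})$, which is finer than the measurement scale $2^{-n_{i+1}}$ precisely when $n_{i+1}\leq 2n_i$; since $n_i=[i^{1+\tau}]$ gives $n_{i+1}/n_i\to1$, this holds for all large $i$, and then~(\ref{eq:entropy-under-linearization}) (which already absorbs the translation part via~(\ref{eq:entropy-under-translation})) yields $H(\theta_{\varphi,n_i}\bigdot\eta_{x,n_i},\mathcal{D}_{n_{i+1}})=H(A\,\theta_{\varphi,n_i}*B\,\eta_{x,n_i},\mathcal{D}_{n_{i+1}})+O(1)$. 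Finally, using the identity $A\nu*B\mu=S_{\log B}(B^{-1}A\nu*\mu)$ together with~(\ref{eq:entropy-under-scaling}) and the boundedness of $\log B$ on the compact supports, the last entropy equals $H(B^{-1}A\,\theta_{\varphi,n_i}*\eta_{x,n_i},\mathcal{D}_{n_{i+1}})+O(1)$, as required; all the $O(1)$'s are uniform by compactness.

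The only point calling for care --- and it is just the scale-matching already encountered in Proposition~\ref{prop:iterated-entropy-formula-for-action} --- is that the linearization must be applied at a resolution at which its quadratic error is negligible, which forces $n_{i+1}\leq 2n_i$ eventually, while at the same time one needs $n_{i+1}-n_i\to\infty$ so that the resulting $O(1)$ losses disappear after normalization by $n_{i+1}-n_i$ and averaging. Both properties are built into $n_i=[i^{1+\tau}]$, so I expect no genuine obstacle, only the bookkeeping above. (As in Lemma~\ref{lem:local-entropy-averages}, the conclusion is to be read as holding for $\theta\times\eta$-a.e.\ $(\varphi,x)$, since the components $\theta_{\varphi,n_i},\eta_{x,n_i}$ are defined only almost everywhere.)
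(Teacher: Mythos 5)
Your proof is correct and follows essentially the same route as the paper: start from the action form of the local entropy averages lemma (\ref{eq:local-entropy-averages-convolution}), linearize each convolution $\theta_{\varphi,n_i}\bigdot\eta_{x,n_i}$ at the base point $(\varphi,x)$ via (\ref{eq:entropy-under-linearization}), use the scaling identity to move $B$ to the other side, and observe that $n_{i+1}/n_i\to1$ makes the quadratic linearization error negligible at scale $n_{i+1}$ while $n_{i+1}-n_i\to\infty$ washes out the resulting $O(1)$ losses. The added remarks about restricting to compact neighborhoods and reading the conclusion as a.e.\ are reasonable clarifications but don't change the substance.
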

\begin{proof}
This is a combination of (\ref{eq:local-entropy-averages-convolution})
and the linearization argument of Section \ref{sub:Linearization},
which, essentially, allows us to replace the term $H(\theta_{\varphi,n_{i}}\bigdot\eta_{x,n_{i}},\mathcal{D}_{n_{i+1}})$
in (\ref{eq:local-entropy-averages-convolution}) with $H(B^{-1}A\theta_{\varphi,n_{i}}*\eta_{x,n_{i}},\mathcal{D}_{n_{i+1}})$.
In more detail, let $\theta'=\theta_{\varphi,n}$ and $\eta'=\eta_{x,n}$
. The supports of $\theta',\eta'$ are of diameter $O(2^{-n})$, making
the error term in \ref{sub:Linearization} of order $O(2^{-2n})$.
Then, as explained in the paragraph following (\ref{eq:linearization}),
if $m\ll n$ we will have $\frac{1}{m}H(\theta'\bigdot\eta',\mathcal{D}_{n+m})=\frac{1}{m}H(A\theta'*B\eta',\mathcal{D}_{n+m})+O(\frac{1}{m})$.
Taking $n=n_{i}$ and $m=n_{i+1}-n_{i}$, and using $n_{i+1}/n_{i}\rightarrow1$,
we obtain the bound (\ref{eq:11}), where we have moved $B$ from
one side of the convolution to the other by the same argument as before. 
\end{proof}

\subsection{\label{sub:Pushing-entropy-from-G-to-R-and-pointwise-porosity}Pushing
entropy from $G$ to $\mathbb{R}$ and pointwise porosity}

Next, we need a pointwise version of Lemma \ref{lem:entropy-of-image},
which says that large entropy of a component $\theta_{g,i}$ of $\theta\in\mathcal{P}(G)$
translates to large entropy of most push-forwards $\theta_{g,i}\bigdot x$:
\begin{lemma}
\label{lem:pushing-entropy-from-G-to-R-pointwise}Let $(n_{i})$ be
an increasing integer sequence satisfying $\sum_{i=1}^{\infty}(n_{i+1}-n_{i})^{-d}<\infty$
for some $d>0$. Suppose that $\theta\in\mathcal{P}(G)$ and $\eta\in\mathcal{P}(\mathbb{R})$
are compactly supported and further that $\eta(B_{r}(x))\leq ar^{\alpha}$
for some $a,\alpha>0$. Then for $\theta\times\eta$-a.e. $(\varphi,x)$,
we have
\begin{multline*}
\liminf_{k\rightarrow\infty}\frac{1}{k}\sum_{i=0}^{k}\frac{1}{n_{i+1}-n_{i}}H(\theta_{\varphi,n_{i}}\bigdot x,\mathcal{D}_{n_{i+1}})\\
\geq\;\frac{1}{3}\liminf_{k\rightarrow\infty}\frac{1}{k}\sum_{i=0}^{k}\frac{1}{n_{i+1}-n_{i}}H(\theta_{\varphi,n_{i}},\mathcal{D}_{n_{i+1}}^{G}).
\end{multline*}
\end{lemma}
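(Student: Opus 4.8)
The statement is the pointwise, Borel--Cantelli counterpart of Lemma~\ref{lem:entropy-of-image}, and the plan is to prove it along exactly those lines, with the ``positive-proportion'' conclusion replaced by an ``eventually, for a.e.\ point'' conclusion. The geometric input I would use is that a similarity is determined, quantitatively, by its action on any two points: for $x\neq x'$ in the compact set $\supp\eta$, the map $g_{x,x'}\colon G\to\mathbb{R}^2$, $\psi\mapsto(\psi(x),\psi(x'))$, restricted to the compact set $\supp\theta$, is bi-Lipschitz with constant $O_{R}(1+|x-x'|^{-1})$, where $R$ bounds the supports; this is the direct group-level analogue of Lemma~\ref{lem:Lipschitz-constant-away-from-diagonal}, verified by the same Jacobian computation (in coordinates $\psi=(s,u)$ the Jacobian of $g_{x,x'}$ has determinant $e^{s}(x-x')\neq0$). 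Feeding $g_{x,x'}$ and $\mu=\theta_{\varphi,n_i}$ into Lemma~\ref{lem:separation} at scale $n_{i+1}$ (its proof only uses bi-Lipschitz behaviour on $\supp\mu$, so the local bound is enough), and noting that the two coordinate push-forwards of $g_{x,x'}\theta_{\varphi,n_i}$ are $\theta_{\varphi,n_i}\bigdot x$ and $\theta_{\varphi,n_i}\bigdot x'$, one obtains, for all $\varphi,i$ and all $x\neq x'$ in $\supp\eta$,
\begin{multline*}
\max\bigl\{H(\theta_{\varphi,n_i}\bigdot x,\mathcal{D}_{n_{i+1}}),\;H(\theta_{\varphi,n_i}\bigdot x',\mathcal{D}_{n_{i+1}})\bigr\}\\
\geq\;\tfrac12H(\theta_{\varphi,n_i},\mathcal{D}^{G}_{n_{i+1}})-C_{0}\log(1+|x-x'|^{-1}),
\end{multline*}
with $C_0$ depending only on the supports.

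Next I would quantify. Write $m_i=n_{i+1}-n_i$ and $h_{\varphi,i}=H(\theta_{\varphi,n_i},\mathcal{D}^{G}_{n_{i+1}})$, and set
\begin{gather*}
t_i=m_i^{-d/\alpha},\qquad \varepsilon_i=\frac{2C_{0}\log(1+t_i^{-1})}{m_i},\\
\mathrm{Bad}_i(\varphi)=\Bigl\{x\in\supp\eta:\ \tfrac{1}{m_i}H(\theta_{\varphi,n_i}\bigdot x,\mathcal{D}_{n_{i+1}})<\tfrac13\cdot\tfrac{1}{m_i}h_{\varphi,i}-\varepsilon_i\Bigr\}.
\end{gather*}
If $\mathrm{Bad}_i(\varphi)\neq\emptyset$, fix $x'\in\mathrm{Bad}_i(\varphi)$. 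For any $x\in\supp\eta$ with $|x-x'|\geq t_i$, the displayed inequality, together with $C_0\log(1+t_i^{-1})=\tfrac12m_i\varepsilon_i$ and the elementary fact $\tfrac13h-m_i\varepsilon_i<\tfrac12h-\tfrac12m_i\varepsilon_i$ for every $h\geq0$, forces the maximum to be attained at $x$, so $H(\theta_{\varphi,n_i}\bigdot x,\mathcal{D}_{n_{i+1}})\geq\tfrac12h_{\varphi,i}-\tfrac12m_i\varepsilon_i\geq\tfrac13h_{\varphi,i}-m_i\varepsilon_i$, i.e.\ $x\notin\mathrm{Bad}_i(\varphi)$. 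Hence $\mathrm{Bad}_i(\varphi)\subseteq B_{t_i}(x')$, and the Frostman hypothesis gives $\eta(\mathrm{Bad}_i(\varphi))\leq a t_i^{\alpha}=a m_i^{-d}$, uniformly in $\varphi$. By Fubini, $(\theta\times\eta)\{(\varphi,x):x\in\mathrm{Bad}_i(\varphi)\}\leq a m_i^{-d}$, and since $\sum_i m_i^{-d}<\infty$, Borel--Cantelli yields: for $(\theta\times\eta)$-a.e.\ $(\varphi,x)$ there is $i_0=i_0(\varphi,x)$ with $x\notin\mathrm{Bad}_i(\varphi)$ for all $i\geq i_0$.

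Finally I would pass to the averages. Since $\sum_i m_i^{-d}<\infty$ forces $m_i\to\infty$, we get $\varepsilon_i=O(\log m_i/m_i)\to0$, hence $\tfrac1k\sum_{i\leq k}\varepsilon_i\to0$. For $i\geq i_0$ one has $\tfrac1{m_i}H(\theta_{\varphi,n_i}\bigdot x,\mathcal{D}_{n_{i+1}})\geq\tfrac13\cdot\tfrac1{m_i}h_{\varphi,i}-\varepsilon_i$; for the finitely many $i<i_0$, both $\tfrac1{m_i}H(\theta_{\varphi,n_i}\bigdot x,\mathcal{D}_{n_{i+1}})$ and $\tfrac1{m_i}h_{\varphi,i}$ are $O(1)$, because $\theta_{\varphi,n_i}$ and $\theta_{\varphi,n_i}\bigdot x$ are supported on sets of diameter $O(2^{-n_i})$, so~(\ref{eq:entorpy-combinatorial-bound}) bounds their entropies at scale $n_{i+1}$ by $O(m_i)$; thus those terms contribute $O(i_0)/k$ to the partial average. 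Summing over $i=0,\dots,k$, dividing by $k$, and taking $\liminf_{k\to\infty}$ (using $\liminf(a_k-b_k)\geq\liminf a_k-\limsup b_k$) gives precisely the asserted inequality.

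I expect the crux to be not any individual step but the calibration in the second paragraph: one must keep the exceptional ball $B_{t_i}(x')$ small enough that $\sum_i\eta(B_{t_i}(x'))<\infty$ along the sequence $(n_i)$, yet large enough that the forced entropy defect $\varepsilon_i\sim\log(1/t_i)/m_i$ still tends to $0$ --- and the hypothesis $\sum_i(n_{i+1}-n_i)^{-d}<\infty$ together with the Frostman exponent $\alpha$ is exactly what makes the choice $t_i=m_i^{-d/\alpha}$ succeed. The only other point needing care is the routine observation that Lemma~\ref{lem:separation} applies with a map bi-Lipschitz merely on $\supp\theta$.
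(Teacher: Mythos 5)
Your proof is correct and follows the same approach as the paper: calibrate the radius $t_i=m_i^{-d/\alpha}$ so that the Frostman bound on $\eta$ makes the exceptional sets summable, then apply Borel--Cantelli and Fubini. The only cosmetic difference is that you unpack the content of Lemma~\ref{lem:entropy-of-image} (via Lemmas~\ref{lem:separation} and~\ref{lem:Lipschitz-constant-away-from-diagonal}, applied directly to the exact evaluation map $\psi\mapsto(\psi(x),\psi(x'))$), whereas the paper simply invokes Lemma~\ref{lem:entropy-of-image} as a black box inside the same Borel--Cantelli argument.
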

\begin{proof}
Given $\varphi\in\supp\theta$, for each $i$, let 
\[
A_{i}=\left\{ x\in\mathbb{R}\::\,\frac{1}{n_{i+1}-n_{i}}H(\theta_{\varphi,n_{i}}\bigdot x,\mathcal{D}_{n_{i+1}})\leq\frac{1}{3}\cdot\frac{1}{n_{i+1}-n_{i}}H(\theta_{\varphi,n_{i}},\mathcal{D}_{n_{i+1}}^{G})\right\} .
\]
By Lemma (\ref{lem:entropy-of-image}), for every $d>0$ we have $\eta(A_{i})=O(1/(n_{i+1}-n_{i})^{d})$.
Therefore by the assumption on $(n_{i})$, there is a choice of $d$
so that $\sum\eta(A_{i})<\infty$. By Borel-Cantelli, $\eta$-a.e.
$x$ belongs to finitely many $A_{i}$, and for such $x$ the desired
conclusion holds for the given $\varphi$. By Fubini, the conclusion
holds for $\theta\times\eta$-a.e. pair $(\varphi,x)$
\end{proof}
Finally, we need a notion of porosity at a point, in which, instead
of describing the typical behavior of components over the whole measure,
relates only to components containing a fixed point $x$ (i.e. the
components $\eta_{x,i}$) and require that on average they exhibit
porosity. We again do this relative to a subsequence of scales. For
an integer sequence $n_{i}\rightarrow\infty$, we say that $\eta$
is $(h,\delta,m)$-entropy porous along $(n_{i})$ at $x\in\supp\eta$
if 
\begin{equation}
\liminf_{k\rightarrow\infty}\frac{1}{k}\sum_{i=0}^{k}1_{\{\eta_{x,n_{i}}\mbox{ is }(h,\delta,m)\mbox{-entropy porous from scale }n_{i}\mbox{ to }n_{i+1}\}}>1-\delta.\label{eq:21}
\end{equation}
We say that $\eta$ is $h$-entropy porous along $(n_{i})$ at $x$
if for every $\delta>0$ and $m$ it is $(h,\delta,m)$-entropy porous
along $(n_{i})$ at $x$. 
\begin{lemma}
\label{lem:pointwise-porosity-passes-to-ac-measures}Let $(n_{i})$
be a sequence such that $n_{i+1}/n_{i}\rightarrow1$ and $n_{i+1}-n_{i}\rightarrow\infty$,
and suppose that $\eta$ is $(h,\delta,m)$-entropy porous along $(n_{i})$
at $\eta$-a.e. $x$. if $\eta'\ll\eta$ then $\eta'$ is also $(h,\delta,m)$-entropy
porous along $(n_{i})$ at $\eta'$-a.e. $x$. \end{lemma}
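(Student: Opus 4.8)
The plan is to transfer the porosity of $\eta$ to $\eta'$ by martingale convergence applied to the Radon--Nikodym density along the dyadic filtration, in the spirit of the proof of Lemma~\ref{lem:hausdorff-dim-stable-under-conditioning}. Write $f=d\eta'/d\eta$. For $\eta$-a.e.\ $x$ the ratios $f_n(x)=\eta'(\mathcal{D}_n(x))/\eta(\mathcal{D}_n(x))$ converge to $f(x)$, which lies in $(0,\infty)$ for $\eta'$-a.e.\ $x$; moreover $\eta$-a.e.\ $x$ is a martingale Lebesgue point of $f$, i.e.\ $\eta(\mathcal{D}_n(x))^{-1}\int_{\mathcal{D}_n(x)}|f-f(x)|\,d\eta\to 0$. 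Fix $x$ in the set of full $\eta'$-measure on which all of these hold and on which $\eta$ is $(h,\delta,m)$-entropy porous along $(n_i)$.

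This yields two comparison facts. Since $d\eta'_{x,n}/d\eta_{x,n}=f/f_n(x)$ on $\mathcal{D}_n(x)$ and $\int f\,d\eta_{x,n}=f_n(x)$, the Lebesgue-point property at $x$ together with $f_n(x)\to f(x)>0$ gives $\|\eta'_{x,n}-\eta_{x,n}\|_{\mathrm{TV}}\to 0$; running the same argument at $\eta$-a.e.\ point $z$ (which includes $\eta'$-a.e.\ $z$, hence every point sampled from the components occurring below) gives $\|\eta'_{z,j}-\eta_{z,j}\|_{\mathrm{TV}}\to 0$ as $j\to\infty$. Combining the second fact with the uniform continuity of Shannon entropy on the simplex of probability vectors supported on at most $2^m$ atoms (the number of level-$(j{+}m)$ cells inside a level-$j$ cell of $\mathbb{R}$), we get $|\tfrac1m H(\eta'_{z,j},\mathcal{D}_{j+m})-\tfrac1m H(\eta_{z,j},\mathcal{D}_{j+m})|\to 0$ as $j\to\infty$ for $\eta$-a.e.\ $z$.

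Now recall that for $j\ge n_i$ one has $(\eta_{x,n_i})_{z,j}=\eta_{z,j}$, so ``$\eta_{x,n_i}$ is $(h,\delta,m)$-entropy porous from scale $n_i$ to $n_{i+1}$'' means that, sampling $z$ from $\eta_{x,n_i}$ and $j$ uniformly in $[n_i,n_{i+1}]$, the probability that $\tfrac1m H(\eta_{z,j},\mathcal{D}_{j+m})\le h+\delta$ exceeds $1-\delta$; the event for $\eta'$ is the same with $\eta_{x,n_i},\eta_{z,j}$ replaced by $\eta'_{x,n_i},\eta'_{z,j}$. The two comparison facts let one bound the difference of the corresponding probabilities by the sampling error $\|\eta'_{x,n_i}-\eta_{x,n_i}\|_{\mathrm{TV}}$ (which $\to0$ as $i\to\infty$) plus a window-averaged term $\tfrac{1}{n_{i+1}-n_i+1}\sum_{j=n_i}^{n_{i+1}}\eta'_{x,n_i}(A_j)$, where $A_j$ is a ``bad'' set of $z$ — those at which the two level-$j$ component entropies disagree by more than a fixed small amount. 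Because $n_{i+1}-n_i\to\infty$ and $\eta'(A_j)\to 0$, one has $\int\big(\tfrac{1}{n_{i+1}-n_i+1}\sum_{j=n_i}^{n_{i+1}}\eta'_{x,n_i}(A_j)\big)\,d\eta'(x)=\tfrac{1}{n_{i+1}-n_i+1}\sum_{j=n_i}^{n_{i+1}}\eta'(A_j)\to 0$, so after averaging over $i$ and invoking the strict slack in $\liminf_k\tfrac1k\#\{i\le k:\ \eta_{x,n_i}\text{ is }(h,\delta,m)\text{-entropy porous}\}>1-\delta$ one concludes that $\eta'$ is $(h,\delta,m)$-entropy porous along $(n_i)$ at $\eta'$-a.e.\ $x$.

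The delicate point — and the main obstacle — is precisely this last estimate: entropy porosity is a strict inequality against the sharp threshold $h+\delta$, so one must ensure that the $o(1)$ total-variation and entropy errors produced in passing from $\eta$ to $\eta'$ do not accumulate, either over the window $[n_i,n_{i+1}]$ or over a positive-density set of indices $i$. This is where the hypotheses $n_{i+1}/n_i\to 1$ and $n_{i+1}-n_i\to\infty$ enter, and where one exploits the slack in the strict inequalities (for the applications it also suffices to argue with $h$-entropy porosity, for which an arbitrarily small loss in $\delta$ is harmless).
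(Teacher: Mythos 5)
Your proof is essentially the paper's: the paper gives only the one-line pointer that by martingale convergence $\left\Vert\eta'_{x,i}-\eta_{x,i}\right\Vert\to0$ for $\eta'$-a.e.\ $x$ and leaves the details to the reader, and you correctly flesh out exactly that derivation via the Radon--Nikodym martingale and the Lebesgue-point property of the density. The caveat you raise at the end is well taken and worth keeping in mind: because of the sharp thresholds $h+\delta$ and $1-\delta$ (and the strict inequalities in the definition), the fixed-parameter $(h,\delta,m)$-version need not transfer with literally the same $\delta$; what unambiguously transfers is the quantified $h$-entropy porosity (for all $\delta>0$ and $m$), and that is precisely the form used in Theorem~\ref{thm:hausdorff-dimension-growth}, so the small loss in $\delta$ is harmless. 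The paper does not comment on this, but your reading of where the slack comes from matches the intended use.
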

\begin{proof}
This follows from the fact that by the martingale convergence theorem,
$\eta'_{x,i},\eta_{x,i}$ are asymptotic in total variation (that
is, $\left\Vert \eta'_{x,i}-\eta_{x,i}\right\Vert \rightarrow0$)
for $\eta'$-a.e. $x$. The details are left to the reader.
\end{proof}

\subsection{\label{sub:hausdorff-dimension-growth}Entropy growth of Hausdorff
dimension under convolution}

We can now state the main result of this section, an analog of Theorem
(\ref{thm:inverse-theorem-for-action}) for Hausdorff dimension.
\begin{theorem}
\label{thm:hausdorff-dimension-growth}For every $\varepsilon>0$
there exists a $\delta'=\delta'(\varepsilon)>0$ such that the following
holds. 

Let $\eta\in\mathcal{P}(\mathbb{R})$ be compactly supported with
$\ldim\eta>0$, and for every $\tau>0$ and $n_{i}=[i^{1+\tau}]$
suppose that $\eta$ is $(1-\varepsilon)$-entropy porous along $(n_{i})$
at $\eta$-a.e. $x$. Then for any $\theta\in\mathcal{P}(G)$, 
\[
\ldim\theta>\varepsilon\quad\implies\quad\ldim\theta\bigdot\eta>\ldim\eta+\delta.
\]
\end{theorem}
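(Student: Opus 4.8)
The plan is to run the same argument as in the proof of Theorem~\ref{thm:inverse-theorem-for-action}, but at the level of pointwise (local) entropy averages rather than global component distributions, using the linearized local entropy averages lemma (Lemma~\ref{lem:linearized-local-entropy-averages}) in place of Proposition~\ref{prop:iterated-entropy-formula-for-action}. Fix $\varepsilon>0$, apply Theorem~\ref{thm:entropy-growth-under-convolution} with parameter $\varepsilon'=\varepsilon/10$ to obtain $\delta'$, and choose $\delta=\delta(\varepsilon)$ small relative to $\delta'$ and $\varepsilon$. Fix $\tau>0$ small (it will contribute a $-\tau$ loss which we absorb into $\delta$) and set $n_i=[i^{1+\tau}]$; write $m_i=n_{i+1}-n_i\to\infty$. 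Starting from Lemma~\ref{lem:linearized-local-entropy-averages}, for $\theta\times\eta$-a.e.\ $(\varphi,x)$ with $y=\varphi(x)$ we have
\[
\ldim(\theta\bigdot\eta,y)\;\geq\;\liminf_{k\to\infty}\frac{1}{k}\sum_{i=0}^{k-1}\frac{1}{m_i}H\bigl(B^{-1}A\,\theta_{\varphi,n_i}*\eta_{x,n_i},\mathcal{D}_{n_{i+1}}\bigr)-\tau,
\]
where $(A,B)=(A_{\varphi,x},B_{\varphi,x})$, and the goal is to show this liminf exceeds $\ldim\eta+\delta+\tau$ for a.e.\ $(\varphi,x)$.

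The heart of the matter is to show that, along a positive-density set of indices $i$, the summand exceeds $\frac{1}{m_i}H(\eta_{x,n_i},\mathcal{D}_{n_{i+1}})+\delta'$. For this I would check that for $\theta\times\eta$-a.e.\ $(\varphi,x)$, along a density-$\geq c(\varepsilon)$ set of $i$, the following three pointwise conditions hold simultaneously: (i) $\eta_{x,n_i}$ is $(1-\varepsilon',\delta',m)$-entropy porous from scale $n_i$ to $n_{i+1}$; (ii) $\frac{1}{m_i}H(\theta_{\varphi,n_i},\mathcal{D}^G_{n_{i+1}})>\varepsilon$; (iii) $\frac{1}{m_i}H(B^{-1}A\,\theta_{\varphi,n_i},\mathcal{D}_{n_{i+1}})>\varepsilon'$. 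Condition~(i) holds along a density-$(1-\delta')$ set of $i$ by the hypothesis that $\eta$ is $(1-\varepsilon)$-entropy porous along $(n_i)$ (hence $(1-\varepsilon')$-entropy porous) combined with a pointwise analog of Lemma~\ref{lem:bounded-component-entropy-passes-to-components} for the subsequence $(n_i)$, noting $n_{i+1}/n_i\to1$ and $m_i\to\infty$; this can be proved by the same Borel--Cantelli/law-of-large-numbers device used in Lemma~\ref{lem:local-entropy-averages}, or cited in the spirit of Lemma~\ref{lem:pointwise-porosity-passes-to-ac-measures}. Condition~(ii) holds along a density-$\geq\varepsilon/3$ set of $i$ because, by Lemma~\ref{lem:multiscale-formula-for-entropy} applied along $(n_i)$ and the hypothesis $\ldim\theta>\varepsilon$, the Cesàro average of $\frac{1}{m_i}H(\theta_{\varphi,n_i},\mathcal{D}^G_{n_{i+1}})$ is $\geq\ldim\theta>\varepsilon$ for $\theta$-a.e.\ $\varphi$ (a pointwise multiscale formula for entropy, provable again by the law-of-large-numbers argument). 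Condition~(iii) follows from (ii) via Lemma~\ref{lem:pushing-entropy-from-G-to-R-pointwise}, whose two-coordinate transversality input (Lemmas~\ref{lem:separation} and~\ref{lem:Lipschitz-constant-away-from-diagonal}, together with the polynomial-decay error term of Lemma~\ref{lem:entropy-of-image}) is exactly what makes the Borel--Cantelli step go through — here we crucially use $\ldim\eta>0$, which by Frostman gives an auxiliary measure $\eta'\ll\eta$ with $\eta'(B_r(x))\leq a r^\alpha$, to which Lemma~\ref{lem:pushing-entropy-from-G-to-R-pointwise} applies, and then Lemma~\ref{lem:hausdorff-dim-stable-under-conditioning} and Lemma~\ref{lem:pointwise-porosity-passes-to-ac-measures} transfer everything back to $\eta$.

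On the density-$\geq c$ set where (i) and (iii) hold, Theorem~\ref{thm:entropy-growth-under-convolution} (in its rescaled ``diameter $2^{-n_i}$'' form, applied to $B^{-1}A\,\theta_{\varphi,n_i}$ and $\eta_{x,n_i}$) yields
\[
\frac{1}{m_i}H\bigl(B^{-1}A\,\theta_{\varphi,n_i}*\eta_{x,n_i},\mathcal{D}_{n_{i+1}}\bigr)\;>\;\frac{1}{m_i}H(\eta_{x,n_i},\mathcal{D}_{n_{i+1}})+\delta',
\]
while on the remaining indices Lemma~\ref{lem:entropy-monotonicity-underconvolution} gives the trivial bound $\frac{1}{m_i}H(B^{-1}A\,\theta_{\varphi,n_i}*\eta_{x,n_i},\mathcal{D}_{n_{i+1}})\geq\frac{1}{m_i}H(\eta_{x,n_i},\mathcal{D}_{n_{i+1}})-O(1/m_i)$. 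Summing, dividing by $k$, and using the pointwise multiscale identity $\liminf_k\frac1k\sum_i\frac{1}{m_i}H(\eta_{x,n_i},\mathcal{D}_{n_{i+1}})\geq\ldim\eta$ (valid for $\eta$-a.e.\ $x$; note we only need a lower bound, and an appeal to Lemma~\ref{lem:local-entropy-averages} gives $\ldim(\eta,x)\geq\liminf_k\frac1k\sum\cdots-\tau$, so the two match up to $\tau$), we get
\[
\ldim(\theta\bigdot\eta,y)\;\geq\;\ldim\eta+c\delta'-\tau-O(\tau),
\]
which, choosing $\tau$ small and setting $\delta=\tfrac12 c\delta'$, is $>\ldim\eta+\delta$ for $\theta\times\eta$-a.e.\ $(\varphi,x)$; hence $\ldim\theta\bigdot\eta=\essinf_y\ldim(\theta\bigdot\eta,y)>\ldim\eta+\delta$. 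The main obstacle is the bookkeeping needed to run the ``positive-density set of good indices'' argument \emph{pointwise} along the sparse sequence $(n_i)$: one must establish pointwise versions of the multiscale entropy formula and of Lemma~\ref{lem:bounded-component-entropy-passes-to-components} along $(n_i)$, control the intersection of the three density-$\geq c$ events, and verify that all the error terms ($O(1/m_i)$ from discretization, $O(\log c_k/k)$-type terms from the transversality in Lemma~\ref{lem:entropy-of-image}, and the $\tau$-losses) are summable or negligible in Cesàro average — this is where the choices $n_i=[i^{1+\tau}]$ and the polynomial-rate error bound in Lemma~\ref{lem:entropy-of-image} are essential.
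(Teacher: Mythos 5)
Your proposal follows essentially the same route as the paper's proof: apply the linearized local entropy averages lemma, reduce to the case where $\eta$ satisfies a uniform Frostman-type decay, use Borel--Cantelli via the polynomial error term of Lemma~\ref{lem:entropy-of-image} (packaged as Lemma~\ref{lem:pushing-entropy-from-G-to-R-pointwise}) to get a positive-density set of scales with large transversal entropy, and close by applying the entropy-growth theorem on the good indices and summing. The only cosmetic slips are that the reduction to the uniform-decay sub-measure $\eta'\ll\eta$ comes from Egorov's theorem applied to the a.e.\ positivity of local dimension (not Frostman's lemma, which produces a measure on a set rather than one absolutely continuous with respect to $\eta$), and that no pointwise analog of Lemma~\ref{lem:bounded-component-entropy-passes-to-components} is needed since the theorem's hypothesis already asserts pointwise entropy porosity at $\eta$-a.e.\ $x$ along $(n_i)$.
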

\begin{proof}
Fix $\varepsilon>0$, $\theta,\eta$. Let $\delta=\delta(\varepsilon/6)$
be as in Theorem \ref{thm:inverse-theorem-for-action}, and also choose
$m,n$ large enough for that theorem to hold. Write $\alpha=\ldim\eta$
so we are assuming $\alpha>0$. 

Fix $0<\tau<1$ and $n_{i}=[i^{1+\tau}]$. We shall show that $\ldim\theta\bigdot\eta>\ldim\eta+\delta\varepsilon/12-\tau$,
which is enough, since $\tau$ is arbitrary.

First, we claim that we can assume without loss of generality that
there is an $a>0$ and $\beta>0$ such that $\eta(B_{r}(x))\leq ar^{\beta}$
at every $x$. Indeed, given $0<\beta<\alpha$, by Egorov's theorem
we can find disjoint sets $A_{i}$ whose union supports $\eta$, and
such that $\eta(A_{i}\cap B_{r}(x))\leq ar^{\beta}$ for each $i$.
Then $\theta\bigdot\eta=\sum\theta\bigdot(\eta|_{A_{i}})$ and by Lemmas
\ref{lem:hausdorff-dim-stable-under-conditioning} and \ref{lem:pointwise-porosity-passes-to-ac-measures},
it suffices to analyze a single $\eta|_{A_{i}}$, which puts us in
the desired situation.

Let $(\varphi,x)\in G\times\mathbb{R}$ be $\theta\times\eta$-typical
and set $y=\varphi(x)$, which is a $\theta\bigdot\eta$-typical point.
By the local entropy averages lemma (Lemma \ref{lem:local-entropy-averages}),
it suffices for us to show that 
\begin{equation}
\liminf_{k\rightarrow\infty}\frac{1}{k}\sum_{i=1}^{k}\frac{1}{n_{i+1}-n_{i}}H((B^{-1}A\theta_{\varphi,i}\bigdot x)*\eta_{x,n_{i}},\mathcal{D}_{n_{i+1}})\geq\alpha+\frac{\delta\varepsilon}{12}.\label{eq:20}
\end{equation}
For this we shall analyze the behavior of the terms in the average
and show that they are large for a large fraction of $i=1,\ldots,k$,
for all large enough $k$.

For the components $A^{-1}B\theta_{\varphi,i}$, we know that 
\begin{eqnarray*}
\liminf_{k\rightarrow\infty}\frac{1}{k}\sum_{i=1}^{k}\frac{1}{n_{i+1}-n_{i}}H(\theta_{\varphi,n_{i}},\mathcal{D}_{n_{i+1}}^{G}) & = & d(\theta,\varphi)\geq\varepsilon,
\end{eqnarray*}
Because $(\varphi,x)$ is $\theta\times\eta$-typical, by Lemma \ref{lem:pushing-entropy-from-G-to-R-pointwise},
\[
\liminf_{k\rightarrow\infty}\frac{1}{k}\sum_{i=1}^{k}\frac{1}{n_{i+1}-n_{i}}H(\theta_{\varphi,n_{i}}\bigdot x,\mathcal{D}_{n_{i+1}})\geq\frac{\varepsilon}{3},
\]
which, since $B^{-1}A$ is bi-Lipschitz and $n_{i+1}-n_{i}\rightarrow\infty$,
this implies 
\[
\liminf_{k\rightarrow\infty}\frac{1}{k}\sum_{i=1}^{k}\frac{1}{n_{i+1}-n_{i}}H(B^{-1}A\theta_{\varphi,n_{i}}\bigdot x,\mathcal{D}_{n_{i+1}})\geq\frac{\varepsilon}{3}.
\]
Writing
\[
I_{k}=\left\{ 1\leq i\leq k\,:\,H(B^{-1}A\theta_{\varphi,n_{i}}\bigdot x,\mathcal{D}_{n_{i+1}})\geq\frac{\varepsilon}{6}\right\} ,
\]
this give us 
\begin{equation}
\liminf_{k\rightarrow\infty}\frac{1}{k}|I_{k}|\geq\frac{\varepsilon}{6}.\label{eq:25}
\end{equation}

For the components $\eta_{x,i}$, we also know that
\begin{eqnarray*}
\liminf_{k\rightarrow\infty}\frac{1}{k}\sum_{i=1}^{k}\frac{1}{n_{i+1}-n_{i}}H(\eta_{x,n_{i}},\mathcal{D}_{n_{i+1}}) & = & d(\eta,x)\;\geq\;\alpha.
\end{eqnarray*}
Also, fixing a $0<\gamma<\varepsilon/12$ and some $m$, write
\[
J_{k}=\left\{ 1\leq i\leq k\,:\,\eta_{x,n_{i}}\mbox{ is not }(1-\varepsilon,\gamma,m)\mbox{-entrpoy porous from scale }n_{i}\mbox{ to }n_{i+1}\right\} .
\]
Then by assumption 
\begin{equation}
\limsup_{k\rightarrow\infty}\frac{1}{k}|J_{k}|<\gamma.\label{eq:26}
\end{equation}
For $i\in I_{k}\setminus J_{k}$, we can apply Theorem \ref{thm:inverse-theorem-for-action}
and conclude that 
\[
\frac{1}{n_{i+1}-n_{i}}H((B^{-1}A\theta_{\varphi,i}\bigdot x)*\eta_{x,n_{i}},\mathcal{D}_{n_{i+1}})\geq\frac{1}{n_{i+1}-n_{i}}H(\eta_{x,n_{i}},\mathcal{D}_{n_{i+1}})+\delta.
\]
Finally, by (\ref{eq:25}) and (\ref{eq:26}), for $k$ large enough,
$\frac{1}{k}|I_{k}\setminus J_{k}|\geq\varepsilon/12$, and so by
the last inequality we can estimate (\ref{eq:20}) by
\begin{multline*}
\liminf_{k\rightarrow\infty}\frac{1}{k}\sum_{i=1}^{k}\frac{1}{n_{i+1}-n_{i}}H((B^{-1}A\theta_{\varphi,i}\bigdot x)*\eta_{x,n_{i}},\mathcal{D}_{n_{i+1}})\\
\begin{aligned}\geq & \liminf_{k\rightarrow\infty}\left(\frac{1}{k}\sum_{i\in I_{k}}^{k}\frac{1}{n_{i+1}-n_{i}}H(\eta_{x,n_{i}},\mathcal{D}_{n_{i+1}})+\delta\cdot\frac{1}{k}|I_{k}\setminus J_{k}|\right)\\
\geq & \ldim(\eta,x)+\delta\cdot\frac{\varepsilon}{12}
\end{aligned}
\end{multline*}
and we are done. 
\end{proof}

\subsection{\label{sub:Proof-of-porousity-theorem}Proof of Theorem \ref{thm:convolution-porous-sets}}

Let $X\subseteq\mathbb{R}$ be a compact $c$-porous set and $\Phi\subseteq G$
compact with $\dim\Phi>c$. We now prove that $\dim\Phi\bigdot X>\dim X+\delta$
for some $\delta=\delta(c)>0$. 

First, choose $\nu\in\mathcal{P}(\Phi)$ with $\ldim\nu>c$, which,
since $\dim\Phi>c$, exists by Frostman's lemma. 

Second, note that by porosity of $X$, any $\mu\in\mathcal{P}(X)$
is $(1-c')$-entropy porous for some $c'$ depending only on $c$,
and furthermore $\mu$ is $(1-c')$-entropy porous at every $x\in\supp\mu$
(along any sequence of scales).\footnote{To see this note that for $m$ such that $2^{-m}<c/2$, any dyadic
interval of length $2^{-i}$ contains a dyadic interval of length
$2^{-(i+m)}$ disjoint from $X$. Therefore, for any component any
$\mu_{x,i}$ of $\mu\in\mathcal{P}(X)$, we have $H(\mu_{x,i},\mathcal{D}_{i+m})\leq\log(2^{m}-1)/m<1$.
The porosity statements follow from this. } Let $\delta>0$ be the parameter $\delta'$ supplied by in Theorem
\ref{thm:hausdorff-dimension-growth} for $\varepsilon=\min\{c',c\}$
(so $\delta$ depends only on $c$), and use Frostman's lemma again
to find $\mu\in\mathcal{P}(X)$ with $\ldim\mu>\dim X-\delta/2$ . 

Now, by Theorem \ref{thm:hausdorff-dimension-growth},
\[
\ldim\nu\bigdot\mu>\ldim\mu+\delta>\dim X+\frac{\delta}{2}.
\]
Since, $\nu\bigdot\mu$ is supported on $\Phi\bigdot X$, so we get $\dim\Phi\bigdot X>\dim X+\delta/2$,
as desired.

\section{\label{sec:Conjecture-implies-conjecture}Conjecture \ref{conj:overlaps}
implies Conjecture \ref{conj:main}}

\subsection{A Tits-like alternative for semigroups}

In this section we prove Theorem \ref{thm:implication-btwn-conjectures},
which asserts that Conjecture \ref{conj:overlaps} implies Conjecture \ref{conj:main}.
The main idea is use largeness of $\Phi$ to show that $\Phi$, or
some power of it, contains an infinite free set (i.e. a set freely
generating a semigroup). The largeness we require is expressed both
in terms of the cardinality of $\Phi$ and its algebraic properties;
specifically, we require that it not be contained in too small a subgroup
of $G$. Recall that a group is said to be virtually abelian if it
contains a finite-index abelian subgroup. It is not too hard to show
that every virtually abelian subgroups of $G$ is contained either
in the isometry group, or in the stabilizer group of some point (this
can be derived from Lemma \ref{lem:1-param-subgoup-containing-g}
below). With these assumptions we will prove:
\begin{proposition}
\label{prop:large-free-sets}Suppose that $\Phi\subseteq G$ is uncountable
and is not contained in a virtually abelian subgroup. Then there exists
a $k\in\mathbb{N}$ such that $\Phi^{k}=\{\varphi_{1}\circ\ldots\circ\varphi_{k}\,:\,\varphi_{i}\in\Phi\}$ contains an infinite
free set.
\end{proposition}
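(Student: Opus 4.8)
The plan is to locate, inside a suitable power $\Phi^{k}$, an uncountable family of affine maps sharing a single multiplier $\rho$ with $|\rho|\neq 1$ and having uncountably many distinct translation parts, and then to extract an infinite free subset from it by an independence argument on the translation parts. I would organize the proof in three stages. \emph{Stage 1 (algebraic reduction).} By the remark preceding the statement, the hypothesis means exactly that $\Phi$ is contained neither in the isometry group of $\mathbb{R}$ nor in any point stabilizer. Conjugating $\Phi$ (which changes neither the hypothesis nor, later, the freeness of the family we build, and by conjugating back transports the free set into $\Phi^{k}$), I would use these two facts together with uncountability to produce an integer $k_{0}$ and an uncountable family
\[
\mathcal{F}=\{\,x\mapsto\rho\,x+t_{\omega}\ :\ \omega\in\Omega\,\}\subseteq\Phi^{k_{0}},
\]
with $\rho$ fixed, $|\rho|\neq 1$, and $\{t_{\omega}\}_{\omega\in\Omega}$ uncountable. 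The key observation is that if the multiplier map $\varphi\mapsto\left\Vert \varphi\right\Vert $ on $\Phi$ has an uncountable fibre $\{\left\Vert \varphi\right\Vert =\sigma^{*}\}$, then composing the members of that fibre with a high power $g^{N}$ of a fixed non-isometry $g\in\Phi$ — and, when $\sigma^{*}=1$, also with one fixed element of $\Phi$ moving the common fixed point of $g$ — yields such an $\mathcal{F}$ inside a fixed power; when instead all multiplier fibres are countable, $\Phi$ realizes uncountably many multipliers and one argues by a joint inductive selection in both coordinates, which is the technical core discussed at the end. Note that passing from $\mathcal{F}$ to $\{\psi^{-1}\}$ is not permitted, since inverses need not lie in a power of $\Phi$; so both $|\rho|<1$ and $|\rho|>1$ must be carried, and Stage 3 treats them uniformly.

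\emph{Stage 2 (arithmetic of the multiplier).} Let $\mathcal{R}$ be the countable set of real roots of nonzero polynomials with coefficients in $\{-1,0,1\}$; such a polynomial has leading coefficient $\pm1$, so its roots are algebraic integers, and a geometric-series estimate shows all of its roots have modulus $<2$. A nontrivial relation between words in the maps $x\mapsto\rho x+s$ reduces to an identity $\sum_{j}\rho^{\,j-1}(s_{\omega_{j}}-s_{\omega'_{j}})=0$, and this is usable precisely when $\rho\notin\mathcal{R}$. One can always arrange this after passing to a further power: since $\Phi^{k_{0}\ell}=(\Phi^{k_{0}})^{\ell}$, the $\ell$-fold composition of members of $\mathcal{F}$ gives an uncountable family $\{\,x\mapsto\rho^{\ell}x+s:s\in T'\,\}\subseteq\Phi^{k_{0}\ell}$ with $T'\subseteq\mathbb{R}$ uncountable, and $\rho^{\ell}\notin\mathcal{R}$ for some $\ell\ge1$. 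Indeed, if $\rho$ is transcendental, or algebraic but not an algebraic integer, it already lies outside $\mathcal{R}$; and if $\rho$ is an algebraic integer with $|\rho|\neq1$, then $\rho$ is not a root of unity, so by Kronecker's theorem some Galois conjugate of $\rho$ has modulus $>1$, whence $\rho^{\ell}$ has a conjugate of modulus $\ge2$ for all large $\ell$ and therefore $\rho^{\ell}\notin\mathcal{R}$. Fix such an $\ell$ and write $\sigma=\rho^{\ell}$.

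\emph{Stage 3 (constructing the free set).} The field $\mathbb{Q}(\sigma)$ is countable, hence so is the $\mathbb{Q}(\sigma)$-linear span of any finite subset of $\mathbb{R}$, so one may choose inductively $s_{1},s_{2},\ldots\in T'$ linearly independent over $\mathbb{Q}(\sigma)$. Put $\psi_{i}(x)=\sigma x+s_{i}\in\Phi^{k_{0}\ell}$. Composing, $\psi_{i_{1}}\cdots\psi_{i_{m}}$ is the map $x\mapsto\sigma^{m}x+\sum_{j=1}^{m}\sigma^{\,j-1}s_{i_{j}}$; since $|\sigma|\neq1$ the multiplier $\sigma^{m}$ records the word length, so if two words agree they have the same length $m$ and $\sum_{j}\sigma^{\,j-1}(s_{i_{j}}-s_{i'_{j}})=0$. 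Collecting the coefficient of each value $p$ of the indices and invoking $\mathbb{Q}(\sigma)$-independence of the $s_{p}$ yields $\sum_{j:\,i_{j}=p}\sigma^{\,j-1}=\sum_{j:\,i'_{j}=p}\sigma^{\,j-1}$ for every $p$; the difference of the two sides is a polynomial in $\sigma$ with coefficients in $\{-1,0,1\}$, and since $\sigma\notin\mathcal{R}$ it must be the zero polynomial, i.e. $\{j:i_{j}=p\}=\{j:i'_{j}=p\}$ for all $p$, which forces $i_{j}=i'_{j}$ for all $j$. Thus $\{\psi_{i}\}_{i\ge1}$ is an infinite free set contained in $\Phi^{k_{0}\ell}$, and $k=k_{0}\ell$ works.

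I expect the real obstacle to be Stage 1: turning the two algebraic hypotheses together with mere uncountability into a single uncountable common-multiplier family inside one fixed power of $\Phi$. The awkward situations are (i) when an uncountable sub-family of $\Phi$ is trapped in a common point stabilizer, so its translation parts are constant and must first be spread out by composing with a fixed element of $\Phi$ that moves that fixed point, and (ii) when $\Phi$ realizes uncountably many multipliers with every multiplier fibre countable, where the clean common-multiplier normal form is unavailable; there one must instead build the free set directly, choosing the $\psi_{i}$ so that their multipliers and translation parts are jointly algebraically independent over $\mathbb{Q}$ (so that word length and word identity can still be read off from the composite), and the crux is to make this selection possible — in particular, to rule out that $\mathcal{F}$ is confined to a proper algebraic curve in the multiplier–translation plane.
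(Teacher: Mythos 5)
Your Stages 2--3 are essentially correct and closely parallel what the paper does in its Proposition~\ref{prop:free-set-in-monocontracting-family} (constant-multiplier case): pass to a power so that the contraction ratio is outside the set of roots of $\{-1,0,1\}$-polynomials, then pick translation parts inductively outside a countable field. That part is sound.

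The gap is in Stage~1, and you have in fact put your finger on it yourself. When every multiplier fibre of $\Phi$ is countable, you want to ``rule out that $\mathcal{F}$ is confined to a proper algebraic curve in the multiplier--translation plane.'' But this \emph{cannot} be ruled out: it is a genuine case. Take $\Phi$ to be an uncountable subset of a proper coset $\varphi_{0}H$ of a point-stabilizer group $H\leq G^{+}$. In the $(s,t)$-parametrization this is an uncountable subset of a line $t=\alpha s+\beta$ with non-constant $s$-coordinate, so every multiplier fibre is a singleton, and the translation part is an affine function of the multiplier --- joint algebraic independence is impossible. This situation satisfies the hypotheses of the proposition (a proper coset of a stabilizer is not contained in any virtually abelian subgroup), so your argument must handle it, and as written it does not. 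There is also no obvious way to escape into a common-multiplier family by composing: $\Phi^{2}$ lands in an open region, not in a vertical line, and extracting an uncountable constant-multiplier subfamily of $\Phi^{2}$ would require that $S\cap\sigma S^{-1}$ be uncountable for some $\sigma$, which need not hold for an arbitrary uncountable $S\subseteq(0,\infty)$.

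The paper's proof treats this case separately and with different tools. It introduces a class $\mathcal{C}$ of curves in $G^{+}$ closed under left and right $G^{+}$-translations, shows (Proposition~\ref{prop:solution-of-alternating-relation} and Corollary~\ref{cor:reltaions-with-free-parameters}) that the set of $\gamma$ satisfying any nontrivial relation over a free set is a finite union of members of $\mathcal{C}$, and then uses a cardinality argument together with the near-normality Lemma~\ref{lem:1-param-subgoup-containing-g} ($\varphi H\varphi^{-1}\cap H=\{\mathrm{id}\}$ for stabilizer groups) to show that the pairwise-disjoint family of curves $\{\varphi(\varphi_{0}H)\}_{\varphi\in\Phi}$ cannot all be absorbed by the countably many relation curves; this is how Proposition~\ref{prop:free-set-in-coset} builds the free set in $\Phi^{2}$. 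A further reduction (the unnamed orientation-preserving Proposition) is needed to get from ``$\Phi$ not in a $1$-parameter group'' down to the two coset cases, again using the curve machinery rather than independence of coordinates. Your proposal supplies no substitute for this part, and it is the part that carries most of the weight.

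One smaller remark: in your Case~(A) with $|\sigma^{*}|\neq1$, the uncountable fibre is itself already a common-multiplier family inside $\Phi^{1}$; composing with $g^{N}$ is only needed when $\sigma^{*}=\pm1$, and the parenthetical about ``moving the common fixed point of $g$'' does not apply there (translations have no fixed point). This does not affect the substance, but it suggests the case analysis in Stage~1 needs tightening even where it can be made to work.
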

The fact that all generators lie in the same power $\Phi^{k}$ is
important (it is much simpler to show that $\bigcup_{k=1}^\infty\Phi^k$ contains an infinite free set). Related (and much deeper) statements exist in the context
of the classical Tits alternative, see e.g. \cite{BreuillardGelander2008},
but they do not seem to give what we need here.

Assuming this proposition, we can prove the implication between the
conjectures:
\begin{proof}[Proof of Thoerem \ref{thm:implication-btwn-conjectures}] Fix a compact
uncountable $\Phi\subseteq\mathcal{S}$ whose attractor $X$ is not
a single point. Using compactness of $\Phi$ we can find $0<r_{0}<1$
such that $\left\Vert \varphi\right\Vert \geq r_{0}$ for all $\varphi\in\Phi$.
Now, $\Phi$ is not contained in the $G$-stabilizer of a point $x_{0}$
(since otherwise we would have $X=\{x_{0}\}$ contrary to assumption),
nor in the isometry group (since $\Phi$ consists of contractions),
so $\Phi$ is not contained in a virtually abelian subgroup. By Proposition
\ref{prop:large-free-sets} there exists a $k$ such that $\Phi^k$
contains an infinite free set. In particular for $\ell=\left\lceil 1/r_{0}^{2k}\right\rceil $
there is a free subset $\Phi_{0}\subseteq\Phi^{ k}$ of size $\ell$.
Since $\left\Vert \varphi_{i}\right\Vert \geq r_{0}^{2k}$ for all
$\varphi\in\Phi^{ k}$, for any $s\leq1$ we have 
\[
\sum_{\varphi\in\Phi_{0}}\left\Vert \varphi\right\Vert ^{s}\geq\sum_{\varphi\in\Phi_{0}}r_{0}^{2ks}\geq\ell r_{0}^{2ks}>\ell r_{0}^{2k}\geq1,
\]
showing that $s(\Phi_{0})\geq1$. By Conjecture \ref{conj:overlaps},
the attractor $X_{0}$ of $\Phi_{0}$ satisfies $\dim X_{0}=\min\{1,s(\Phi_{0})\}=1$.
Since $X_{0}\subseteq X$ we have $\dim X=1$, giving conjecture \ref{conj:main}.
\end{proof}
We present the proof of the proposition, which is elementary but not
short, over the next few sections.

Throughout, we parametrize $G$ as a subset of $\mathbb{R}^{2}$,
identifying $\varphi(x)=sx+t$ with $(s,t)\in\mathbb{R}^{2}$. This
parametrization differs from that used in previous sections but it
simplifies some of the algebraic considerations.

\subsection{Subgroups of $G$}

For most of the proof we work in the group $G^{+}$ of orientation-preserving
similarities of $\mathbb{R}$. In parameter space, this is the subset
$(0,\infty)\times\mathbb{R}$. 

A one-parameter subgroup of $G^{+}$ is the image of a continuous
injective homomorphism $\mathbb{R}\rightarrow G$. There are two types
of examples: First, the group of translations $x\mapsto x+t$ for
$t\in\mathbb{R}$; and second, for each $x_{0}$, the $G^{+}$-stabilizer
of $x_{0}$, consisting of maps $x\mapsto s(x-x_{0})+x_{0}$, $s>0$.

Observe that a similarity has no fixed point if and only if it is
a non-trivial translation, and if it is not a translation, then the
fixed point is unique (this is just because the equation $sx+t=x$
has no solution if $s=1$ and $t\neq0$, and precisely one solution
if $s\neq1$). Thus every non-trivial element of $G^{+}$ belongs
either to the translation group, or to a stabilizer group, but not
both. Also, by uniqueness of the fixed point, the stabilizer groups
of different points can intersect only in the identity. This shows
that the translation and stabilizer groups cover all of $G^{+}$ but
any two meet only at the identity.
\begin{lemma}
\label{lem:normality}\label{lem:1-param-subgoup-containing-g}If
$H\leq G^{+}$ is a 1-parameter subgroup then it is either the translation
group or a stabilizer group, and in the latter case, $\varphi H\varphi^{-1}\cap H=\{\id\}$
for all $\varphi\in G^{+}\setminus H$.\end{lemma}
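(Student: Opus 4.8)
The plan is to exploit the structural trichotomy established in the paragraph preceding the statement: every non-trivial element of $G^+$ is either a (non-trivial) translation or has a unique fixed point, and these classes are disjoint. First I would handle the identification of $H$. A one-parameter subgroup $H$ is the image of a continuous injective homomorphism $\gamma:\mathbb{R}\to G^+$. Consider the fixed-point behavior of the elements $\gamma(t)$. If some $\gamma(t_0)$ is a non-trivial translation, I would argue that $H$ is the full translation group: the translation group is abelian and $\gamma$ maps into it on a subgroup, and since translations $x\mapsto x+t$ form a one-parameter group isomorphic to $\mathbb{R}$, an injective continuous homomorphism whose image meets it non-trivially must have image contained in it (the centralizer of a non-trivial translation in $G^+$ is exactly the translation group, since $s(x+t)+u = (sx+u)+t$ forces $s=1$), hence $H$ equals the translation group by a dimension/connectedness count. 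Otherwise every non-identity element of $H$ has a unique fixed point; I would show all these fixed points coincide. If $\gamma(t_1)$ fixes $x_1$ and $\gamma(t_2)$ fixes $x_2$ with $x_1\neq x_2$, then since $H$ is abelian, $\gamma(t_1)$ permutes the fixed-point set of $\gamma(t_2)$, which is $\{x_2\}$, so $\gamma(t_1)$ fixes $x_2$ as well, contradicting uniqueness of its fixed point $x_1$. Hence $H$ is contained in a single stabilizer group $\mathrm{Stab}(x_0)$, and again by a connectedness/one-parameter count it is all of it.

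For the second assertion, suppose $H = \mathrm{Stab}(x_0)$ and let $\varphi\in G^+\setminus H$, so $\varphi(x_0)\neq x_0$. Then $\varphi H\varphi^{-1}$ is precisely the stabilizer group of the point $\varphi(x_0)$: indeed $\psi\in H$ fixes $x_0$ iff $\varphi\psi\varphi^{-1}$ fixes $\varphi(x_0)$. Since $\varphi(x_0)\neq x_0$, the two stabilizer groups $\mathrm{Stab}(x_0)$ and $\mathrm{Stab}(\varphi(x_0))$ are stabilizers of distinct points, and by the observation recalled above, distinct stabilizer groups intersect only in the identity. Therefore $\varphi H\varphi^{-1}\cap H=\{\id\}$.

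The only mildly delicate point — and the one I would write most carefully — is ruling out "mixed" one-parameter subgroups, i.e. showing a one-parameter $H$ cannot contain both a non-trivial translation and an element with a fixed point. This follows because such $H$ is abelian: a non-trivial translation and an element with a unique fixed point do not commute (conjugating a translation by a non-isometry changes its translation length, as the computation $s(x+t) + u$ versus the conjugate shows), so they cannot lie in a common abelian subgroup. Once this is in place, everything else is the bookkeeping of fixed points already set up in the excerpt, so I do not anticipate a genuine obstacle; the argument is entirely elementary.
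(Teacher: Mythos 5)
Your proof is correct and follows essentially the same route as the paper: in both, abelian-ness of $H$ is used to force conjugation to fix the relevant point (so all non-identity elements share a fixed point), and the second claim follows by observing that $\varphi H\varphi^{-1}$ is the stabilizer of $\varphi(x_0)\neq x_0$. The only cosmetic difference is that you separate out the case where $H$ contains a non-trivial translation and dispatch it with a centralizer computation, whereas the paper folds this into the single fixed-point argument by starting from the assumption that $H$ is not contained in the translation group.
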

\begin{proof}
Let $H\leq G^{+}$ be a 1-parameter subgroup not contained in the
translation group. Then there is some $\psi\in H$ with a fixed point
$y_{0}$. If $\varphi\in H$ then $\varphi\psi\varphi^{-1}$ fixes
$\varphi(y_{0})$, but since $H$ is abelian, $\varphi\psi\varphi^{-1}=\psi$,
so it fixes $y_{0}$. By uniqueness of the fixed point we have $\varphi(y_{0})=y_{0}$,
so $\varphi$ belongs to the stabilizer group $H'$ of $y_{0}$. Since
$\varphi\in H$ was arbitrary this shows that $H\leq H'$, and since
$H'$ is isomorphic to $\mathbb{R}$ it has no nontrivial closed subgroups,
so $H=H'$. This proves the first statement.

For the second statement, let $H$ be the stabilizer of $y_{0}$,
and $\varphi\in G^{+}\setminus H$, so by definition $\varphi(y_{0})\neq y_{0}$.
Given any $\id\neq\psi\in H$, the unique fixed point of $\varphi\psi\varphi^{-1}$
is $\varphi(y_{0})\neq y_{0}$, which shows $\varphi\psi\varphi^{-1}\notin H$.
Since $\psi\in H$ was arbitrary, this implies $\varphi H\varphi^{-1}\cap H=\{\id\}$.\end{proof}
\begin{lemma}
\label{lem:parametrization-of-subgroups}Every 1-parameter subgroup
of $G^{+}$ is given in parameter space by the intersection of a line
with $(0,\infty)\times\mathbb{R}$.\end{lemma}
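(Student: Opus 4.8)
The plan is to invoke Lemma \ref{lem:1-param-subgoup-containing-g}, which already classifies a 1-parameter subgroup $H\leq G^{+}$ as either the translation group or the stabilizer group of some point $y_{0}\in\mathbb{R}$. So it suffices to write down each of these two families explicitly in the $(s,t)$-parametrization $\varphi(x)=sx+t\leftrightarrow(s,t)$ and check that each is the trace on $(0,\infty)\times\mathbb{R}$ of a line.

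First I would treat the translation group: its elements are the maps $x\mapsto x+t$, which in parameters are the pairs $(1,t)$, $t\in\mathbb{R}$. This is exactly the intersection of the vertical line $\{s=1\}$ with $(0,\infty)\times\mathbb{R}$. Next I would treat the stabilizer of a point $y_{0}$. Writing a general element as $x\mapsto s(x-y_{0})+y_{0}=sx+(1-s)y_{0}$, its parameters are $(s,(1-s)y_{0})$ with $s>0$; eliminating $s$ gives the linear relation $t=(1-s)y_{0}$, i.e. $t+y_{0}s=y_{0}$. Hence this subgroup is contained in the intersection of that line with $(0,\infty)\times\mathbb{R}$, and conversely every point $(s,(1-s)y_{0})$ with $s>0$ visibly lies in the stabilizer, so the two sets coincide.

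There is essentially no obstacle here; the only point requiring a word is that the \emph{entire} connected line (and not a proper subset of it) is realized. For the stabilizer this is because $s\mapsto(x\mapsto s(x-y_{0})+y_{0})$ is a bijection from $(0,\infty)$ onto the stabilizer, and for translations because $t\mapsto(x\mapsto x+t)$ is a bijection from $\mathbb{R}$ onto the translation group. Combining the two cases with the classification from Lemma \ref{lem:1-param-subgoup-containing-g} gives the claim.
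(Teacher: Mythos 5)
Your proof is correct and follows essentially the same route as the paper's: it cites Lemma \ref{lem:1-param-subgoup-containing-g} to reduce to the translation group and the point stabilizers, and then writes the linear equations $s=1$ and $sy_{0}+t=y_{0}$ in parameter space. The paper's version is just more terse; the extra remark about the full line being realized is a harmless elaboration.
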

\begin{proof}
Writing $\varphi(x)=sx+t$ for a general element of $G^{+}$, the
translation group is given by the equation $s=1$, and the stabilizer
of $x_{0}$ by the equation $sx_{0}+t=x_{0}$ (and $s>0$). These
are the only 1-parameter groups by the previous lemma.
\end{proof}

\subsection{A class of curves and their stabilizers}

Let $\mathcal{C}$ denote the collection of subsets $\Gamma\subseteq G^{+}$
which are either singletons, lines (i.e. in coordinates they are determined
by a linear equation), or in coordinates have the form $\{(s,p(s)/q(s))\,:\,s>0\,,\,q(s)\neq0\}$
for some real polynomials $p,q$. An easy computation shows that $\mathcal{C}$
is closed under the action of $G^{+}$ by pre- and post-composition.
It is also easy to check that if $\Gamma_{1},\Gamma_{2}\in\mathcal{C}$,
then either $\Gamma_{1}\cap\Gamma_{2}=\Gamma_{1}=\Gamma_{2}$ or else
$\Gamma_{1}\cap\Gamma_{2}$ is finite.

By Lemma \ref{lem:parametrization-of-subgroups}, every 1-parameter
subgroup of $G^{+}$ is in $\mathcal{C}$. Given $\Gamma\in\mathcal{C}$,
set 
\[
G_{\Gamma}=\{g\in G^{+}\,:\,\Gamma g\subseteq\Gamma\}.
\]

\begin{lemma}
\label{lem:Gamma-stabilizer-is-in-C}If $\Gamma\in\mathcal{C}$ then
either $G_{\Gamma}=\{\id\}$ or $G_{\Gamma}\in\mathcal{C}$ is a 1-parameter
group and $\Gamma=\gamma G_{\Gamma}$ is a coset.\end{lemma}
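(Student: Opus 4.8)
The plan is to reduce to the case $\id\in\Gamma$ and then exploit the fact, already established for members of $\mathcal{C}$, that two of them with infinite intersection must coincide.

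First I would record that $G_{\Gamma}$ is a sub-monoid of $G^{+}$: it contains $\id$, and if $\Gamma g_{1}\subseteq\Gamma$ and $\Gamma g_{2}\subseteq\Gamma$ then $\Gamma g_{1}g_{2}\subseteq\Gamma g_{2}\subseteq\Gamma$. Next, fixing some $\gamma_{0}\in\Gamma$, I would replace $\Gamma$ by $\Delta=\gamma_{0}^{-1}\Gamma$, which again lies in $\mathcal{C}$ since $\mathcal{C}$ is invariant under left translations, and which contains $\id$. One checks directly that $G_{\Delta}=G_{\Gamma}$, since $\Delta g\subseteq\Delta\iff\gamma_{0}^{-1}\Gamma g\subseteq\gamma_{0}^{-1}\Gamma\iff\Gamma g\subseteq\Gamma$; and establishing the conclusion for $\Delta$ yields it for $\Gamma$ with $\gamma=\gamma_{0}$. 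So assume $\id\in\Gamma$; applying the defining relation $\Gamma g\subseteq\Gamma$ to the point $\id$ gives $G_{\Gamma}\subseteq\Gamma$.

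Now suppose $G_{\Gamma}\neq\{\id\}$ and pick $g\in G_{\Gamma}\setminus\{\id\}$. Since $G^{+}$ is torsion-free (a nontrivial similarity has no finite order), the powers $g^{n}$, $n\geq0$, are pairwise distinct; and each lies in $\Gamma$, by induction using $g\in\Gamma$ and $\Gamma g\subseteq\Gamma$. Let $H$ be the unique $1$-parameter subgroup of $G^{+}$ containing $g$ — the translation group if $g$ is a translation, and otherwise the stabilizer of its unique fixed point — which belongs to $\mathcal{C}$ by Lemma \ref{lem:parametrization-of-subgroups}. Then $\Gamma\cap H$ contains the infinite set $\{g^{n}:n\geq0\}$, so by the dichotomy for members of $\mathcal{C}$ we must have $\Gamma=H$. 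Thus $\Gamma$ is itself a $1$-parameter subgroup; moreover $G_{\Gamma}=\Gamma$, since $G_{\Gamma}\subseteq\Gamma=H$ while every $h\in H$ satisfies $\Gamma h=Hh=H=\Gamma$, hence $h\in G_{\Gamma}$. Undoing the normalization, the original $\Gamma$ equals $\gamma_{0}G_{\Gamma}$, a coset of the $1$-parameter group $G_{\Gamma}\in\mathcal{C}$, as claimed.

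The steps are all short; the only points requiring a moment's care are the verification that $G_{\Gamma}$ is unchanged under the left-translation normalization, and the observation that $G^{+}$ is torsion-free, which is what makes $\{g^{n}\}$ genuinely infinite so that the intersection dichotomy applies. I do not expect a serious obstacle here: the content lies entirely in combining the two given properties of $\mathcal{C}$ (invariance under translations, and the infinite-intersection dichotomy) with the classification of $1$-parameter subgroups of $G^{+}$.
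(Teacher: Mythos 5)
Your proof is correct and rests on the same core idea as the paper's: the forward orbit of a nontrivial $g\in G_\Gamma$ produces an infinite subset of $\Gamma$ lying on the $1$-parameter subgroup $H$ through $g$, and the intersection dichotomy for $\mathcal{C}$ then forces $\Gamma$ to be (a coset of) $H$. Your normalization to $\id\in\Gamma$ is a mild streamlining — it lets you read off $G_\Gamma\subseteq\Gamma$ at once and avoids the paper's auxiliary step of comparing two elements $g,g'\in G_\Gamma$ to pin down $H$ — but it is not a genuinely different route.
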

\begin{proof}
Suppose $\id\neq g\in G_{\Gamma}$ and let $H\leq G^{+}$ be the 1-parameter
subgroup containing $g$, so $H\in\mathcal{C}$. Fix $\gamma\in\Gamma$,
so that $\gamma g^{n}\in\Gamma$ for all $n\in\mathbb{N}$ and all
these elements are distinct. Hence $\{\gamma g^{n}\}\subseteq\gamma H\cap\Gamma$,
so $\gamma H\cap\Gamma$ is infinite. Since $\gamma H,\Gamma\in\mathcal{C}$,
we conclude that $\gamma H\cap\Gamma=\Gamma=\gamma H$.

Finally, if $\id\neq g'\in G_{\Gamma}$ and $H'$ is the 1-parameter
group containing $g'$, then by the same argument, $\Gamma=\gamma H'$.
Thus $\gamma H=\gamma H'$, so $H=H'$ and in particular $g'\in H$.
Since $g'\in G_{\Gamma}$ was arbitrary we conclude that $G_{\Gamma}=H$,
and $\Gamma=\gamma G_{\Gamma}$. \end{proof}
\begin{corollary}
If $\Gamma\in\mathcal{C}$ then $G_{\Gamma}=\{g\in G^{+}\,:\,\Gamma g=\Gamma\}$.
\end{corollary}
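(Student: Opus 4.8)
The plan is to deduce this almost immediately from Lemma~\ref{lem:Gamma-stabilizer-is-in-C}. The inclusion $\{g\in G^{+}\,:\,\Gamma g=\Gamma\}\subseteq G_{\Gamma}$ is trivial from the definition of $G_{\Gamma}$, so the whole content is the reverse inclusion: every $g$ with $\Gamma g\subseteq\Gamma$ in fact satisfies $\Gamma g=\Gamma$.

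First I would record that $G_{\Gamma}$ is a submonoid of $G^{+}$: it contains $\id$ since $\Gamma\id=\Gamma\subseteq\Gamma$, and it is closed under composition since $\Gamma(gh)=(\Gamma g)h\subseteq\Gamma h\subseteq\Gamma$ whenever $g,h\in G_{\Gamma}$. A priori nothing forces it to be closed under inverses — this is exactly the gap the corollary has to close. But Lemma~\ref{lem:Gamma-stabilizer-is-in-C} tells us that $G_{\Gamma}$ is either $\{\id\}$ or a $1$-parameter subgroup of $G^{+}$; in both cases it is a genuine subgroup of $G^{+}$, hence closed under inverses.

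Now take any $g\in G_{\Gamma}$. By the previous step $g^{-1}\in G_{\Gamma}$ as well, so $\Gamma g^{-1}\subseteq\Gamma$; composing with $g$ on the right gives $\Gamma\subseteq\Gamma g$. Together with $\Gamma g\subseteq\Gamma$ this yields $\Gamma g=\Gamma$, as desired. There is no real obstacle here beyond invoking the lemma; the only point worth flagging is the one noted above, namely that the containment-defined set $G_{\Gamma}$ is not obviously inverse-closed, and Lemma~\ref{lem:Gamma-stabilizer-is-in-C} is precisely what remedies this.
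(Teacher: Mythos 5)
Your argument is correct and is exactly the intended one: the corollary is stated in the paper without proof precisely because it is an immediate consequence of Lemma~\ref{lem:Gamma-stabilizer-is-in-C}, which shows $G_\Gamma$ is a genuine subgroup (either trivial or a $1$-parameter group), so the inverse-closure you need comes for free and the rest follows by composing with $g^{-1}$.
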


\subsection{\label{sub:Relations}Relations}

A word $w(z_{1},\ldots,z_{n})$ over the letters $z_{1},\ldots,z_{n}$
is a finite formal product of the letters, $z_{i_{1}}z_{i_{2}}\ldots z_{i_{N}}$
in which all variables appear. For a sequence of elements $\varphi_{1},\ldots,\varphi_{n}\in G^{+}$
we write $w(\varphi_{1},\ldots,\varphi_{n})=\varphi_{i_{1}}\varphi_{i_{2}}\ldots\varphi_{i_{n}}$
for the group element obtained by substituting $\varphi_{i}$ for
$z_{i}$ in the formal product. We say that $\Phi_{0}\subseteq G^{+}$
is free if $w(\varphi_{1},\ldots,\varphi_{m})=w'(\psi_{1},\ldots,\psi_{n})$
and $\varphi_{i},\psi_{i}\in\Phi_{0}$ implies $w=w'$ (this implies
that the semigroup generated by $\Phi_{0}$ is free, not necessarily
the group; for groups, we would need to allow inverses and consider
reduced words).

Given words $w,w'$ and $\varphi_{i},\psi_{i}\in\Phi_{0}$, we are
interested in describing the set of $\gamma\in G^{+}$ which satisfy
the relation $w(\varphi_{0},\ldots,\varphi_{m},\gamma)=w'(\psi_{0},\ldots,\psi_{n},\gamma)$.
If such an equality holds for some $w\neq w'$ we say that $\gamma$
satisfies a relation over $\Phi_{0}$. We begin by considering words
in a certain canonical form.
\begin{proposition}
\label{prop:solution-of-alternating-relation}Let $\{\varphi_{2i}\}_{i=0}^{m}$
and $\{\psi_{2j}\}_{j=0}^{n}$ be sequences of elements of $G^{+}$
and let $\Gamma$ denote the set of all $\gamma\in G^{+}$ satisfying
\begin{equation}
\varphi_{0}\gamma\varphi_{2}\gamma\varphi_{4}\ldots\varphi_{2m-2}\gamma\varphi_{2m}=\psi_{0}\gamma\psi_{2}\gamma\psi_{4}\ldots\psi_{2n-2}\gamma\psi_{2n}.\label{eq:relation}
\end{equation}
Then either $\Gamma$ is empty, or it is a finite union of elements
of $\mathcal{C}$, or $\Gamma=G^{+}$; and the latter occurs if and
only if $m=n$ and $\varphi_{i}=\psi_{i}$ for all $i=0,\ldots,n$.\end{proposition}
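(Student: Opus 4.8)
The plan is to treat the relation \eqref{eq:relation} as an identity between two functions of the single variable $\gamma = (s,t)$, and show that after clearing denominators it becomes a polynomial identity in $(s,t)$, so that $\Gamma$ is either all of $G^+$ or the (finite) zero set of a nontrivial polynomial system, i.e. a finite union of curves from $\mathcal{C}$. First I would work in the parametrization $\gamma(x) = sx+t$ and represent each element of $G^+$ by the $2\times2$ matrix $\begin{pmatrix} s & t \\ 0 & 1\end{pmatrix}$, so that composition becomes matrix multiplication. Then the left-hand side of \eqref{eq:relation}, as a function of $\gamma$, is obtained by multiplying the fixed matrices $M(\varphi_{2i})$ and $m+1$ copies of the matrix $N(\gamma) = \begin{pmatrix} s & t \\ 0 & 1\end{pmatrix}$ in the prescribed order; the entries of the resulting matrix are polynomials in $(s,t)$, each of total degree at most $m+1$ in the pair, and in fact of degree at most $m+1$ in $s$ and at most $m$ in $t$ (only the top row is nonconstant). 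Likewise the right-hand side gives a matrix with polynomial entries, with degrees governed by $n$. The relation \eqref{eq:relation} holds for a given $\gamma$ iff the two matrices agree, i.e. iff the $(1,1)$ and $(1,2)$ entries agree (the bottom row is always $(0,1)$ on both sides). So $\Gamma$ is the common zero set of two polynomials $P(s,t), Q(s,t)$ in two variables, intersected with the half-plane $s>0$.

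Next I would dispose of the trivial and degenerate cases. If both $P$ and $Q$ vanish identically, then the two matrix products agree as polynomial matrices; evaluating at $\gamma = \mathrm{id}$ and comparing leading behavior (e.g. the coefficient of the highest power of $s$, which on the left is $\prod s(\varphi_{2i})$ times $s^{m+1}$ and on the right is $\prod s(\psi_{2j})$ times $s^{n+1}$) forces $m=n$; and then an induction on $m$ (peeling off the outermost factors $\varphi_0,\psi_0$ by looking at the constant-in-$\gamma$ part, or substituting $\gamma\to\mathrm{id}$ and differentiating) forces $\varphi_{2i}=\psi_{2i}$ for all $i$. This is the ``$\Gamma = G^+$'' case. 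Otherwise at least one of $P,Q$ is a nonzero polynomial, say $P\not\equiv 0$. If $P$ has a factor depending only on $s$, that contributes finitely many vertical lines $s=s_0$ (which lie in $\mathcal C$, being lines); on the complement, for each fixed admissible $s$ the polynomial $P(s,\cdot)$ in $t$ is either identically zero (finitely many such $s$, by considering $P$ as a polynomial in $t$ with coefficients in $\mathbb{R}[s]$ — its leading coefficient is a nonzero polynomial in $s$ with finitely many roots) or has finitely many roots $t$. In the ``finitely many $s$'' case we again get finitely many vertical lines. Outside those, $\Gamma$ is contained in $\{(s,t): P(s,t) = 0,\ s>0\}$ where for each $s$ there are boundedly many $t$; I would argue this locus is a finite union of graphs of algebraic functions, and then bring in $Q$: along each such branch, $Q$ restricts to a function of $s$ which is either identically zero (giving a whole branch, which I must show lies in $\mathcal C$) or has finitely many zeros (giving finitely many points, which are singletons in $\mathcal C$).

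The main obstacle — and the step that needs the most care — is showing that a branch of $\{P=0\}$ on which $Q$ also vanishes identically is actually a member of $\mathcal{C}$, i.e. is cut out by a linear equation or has the form $\{(s,p(s)/q(s))\}$ with $p,q$ polynomials, rather than being merely an irreducible real algebraic curve. The right tool is to observe that $\gcd(P,Q)$ in $\mathbb{R}[s][t]$ — or rather the part of the common zero set that is a curve — is governed by the resultant and by the fact that $P$ and $Q$ each have $t$-degree exactly one lower than their $s$-``weight'' in a precise sense coming from the matrix structure (each extra factor $N(\gamma)$ multiplies the top-left entry by a degree-one-in-$t$ term but with $s$-coefficient, so $P$ is, up to the fixed matrices, ``linear in $t$ after fixing $s$'' in the appropriate filtration). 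Concretely I would show that $P(s,t)$ and $Q(s,t)$ have degree at most $1$ in $t$ — this is the key structural fact: in the product $M_0 N(\gamma) M_2 N(\gamma)\cdots$, the variable $t$ appears in the top-left entry only through the bottom-left entries of the matrices to its right, which are always $0$, so in fact each $N(\gamma)$ contributes to the top-left entry only via its own top-left entry $s$; $t$ enters the top-left entry not at all, and enters the top-right entry linearly. Hence $P$ depends only on $s$ and $Q$ is affine in $t$: $Q(s,t) = a(s) t + b(s)$. Then $\Gamma \cap \{a(s)\neq 0\} \subseteq \{P(s)=0\} \cap \{t = -b(s)/a(s)\}$, which is a finite union of pieces of curves of the exact form $\{(s, -b(s)/a(s))\}$ defining $\mathcal C$ (intersected with the finitely many vertical lines $P(s)=0$, hence finitely many points), and $\Gamma\cap\{a(s)=0\}$ sits inside finitely many vertical lines. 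Once this structural observation is nailed down, every case produces either $G^+$, or a finite union of lines, singletons, and curves $\{(s,p(s)/q(s))\}$ — exactly a finite union of elements of $\mathcal{C}$ — and the ``iff'' for $\Gamma = G^+$ is the leading-coefficient/induction argument above.
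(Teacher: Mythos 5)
Your proposal is correct and follows essentially the same route as the paper: representing each affine map by the matrix $\begin{pmatrix}s&t\\0&1\end{pmatrix}$ and reading off the two constraints from the $(1,1)$ and $(1,2)$ entries is exactly the paper's explicit computation of the coefficient of $x$ and the constant term, and the decisive structural observation --- that the $(1,1)$ equation depends only on $s$ while the $(1,2)$ equation is affine in $t$, i.e.\ of the form $e(s)t+f(s)=0$ --- coincides with the paper's equations. Your early estimate that the entries have degree up to $m$ in $t$ and the non-symmetric ``say $P\not\equiv 0$'' step are only presentational slips that you correct yourself once the true $t$-degree is identified, so the argument as finally assembled matches the paper's case analysis.
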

\begin{proof}
Suppose that $\gamma(x)=sx+t$ is a solution and set $\varphi_{2i+1}=\psi_{2i+1}=\gamma$,
so that the assumption is that $\varphi_{0}\varphi_{1}\ldots\varphi_{2m}=\psi_{0}\psi_{1}\ldots\cdot\psi_{2n}$.
Write $\varphi_{i}(x)=a_{i}x+b_{i}$ and $\psi_{i}(x)=c_{i}x+d_{i}$,
in particular $a_{2i+1}=c_{2i+1}=s$ and $b_{2i+1}=d_{2i+1}=t$. We
compute the product explicitly:
\begin{eqnarray}
\varphi_{0}\varphi_{1}\ldots\varphi_{2m}(x) & = & b_{2m}+a_{2m}(b_{2m-1}+a_{2m-1}(b_{2m-1}+a_{2m-2}(\ldots(x))))\nonumber \\
 & = & (\prod_{i=0}^{2m}a_{i})x+\sum_{i=0}^{2m}b_{i}\cdot(\prod_{j=i+1}^{2m}a_{i})\nonumber \\
 & = & s^{m}(\prod_{i=0}^{m}a_{2i})x+\label{eq:composition}\\
 &  & \quad+\left(t\left(\sum_{i=1}^{m}s^{m-i}\prod_{\ell=i}^{m}a_{2\ell}\right)+\right.\nonumber \\
 &  & \quad\quad\quad+\left.\sum_{i=0}^{m}\left(b_{2i}\cdot s^{m-i}\cdot\prod_{\ell=i+1}^{m}a_{2\ell}\right)\right),\nonumber 
\end{eqnarray}
where in the last equality we simply separated out the term containing
$x$, the terms containing $t$, and the rest. The corresponding formula
holds for $\psi_{0}\ldots\psi_{2n}$. 

Thus, in order for $\varphi_{0}\ldots\varphi_{2m}=\psi_{0}\ldots\psi_{2n}$,
we must have agreement between the coefficient of $x$ and the constant
term in each product. The first of these conditions translates to
\begin{equation}
s^{m}(\prod_{i=0}^{m}a_{2i})=s^{n}(\prod_{i=0}^{n}c_{2i}).\label{eq:x-coefficient}
\end{equation}
There is either a unique positive solution $s$, or, if $m=n$ and
$\prod_{i=0}^{n}a_{2i}=\prod_{i=0}^{n}c_{2i}$, every $s$ is a solution. 

The equality of the constant terms (those not involving $x$) yields
an equation $e(s)t+f(s)=0$ in which the coefficients $e(s),f(s)$
are given by 
\begin{eqnarray*}
e(s) & = & \sum_{i=1}^{m}\left(s^{m-i}\prod_{\ell=i}^{m}a_{2\ell}\right)-\sum_{i=1}^{n}\left(s^{n-i}\prod_{\ell=i}^{n}c_{2\ell}\right)\\
f(s) & = & \sum_{i=0}^{m}\left(b_{2i}\cdot s^{m-i}\cdot\prod_{\ell=i+1}^{m}a_{2\ell}\right)-\sum_{i=0}^{n}\left(d_{2i}\cdot s^{n-i}\cdot\prod_{\ell=i+1}^{n}c_{2\ell}\right),
\end{eqnarray*}
which are polynomial in $s$. 

If every $s$ solves (\ref{eq:x-coefficient}), we distinguish two
cases.
\begin{description}
\item [{Case~1}] $e(s)$ is not identically zero. Then for every $s$
such that $e(s)\neq0$, the equation $e(s)t+f(s)=0$ has the unique
solution $t=-f(s)/e(s)$, and we have found the curve $(s,-f(s)/e(s))$
in solution space. There may also be finitely many values of $s$
for which $e(s)=0$. For such $s$, if $f(s)\neq0$ there is no solution,
while if $f(s)=0$ any $t$ is a solution, and we have found a line
in solution space. 
\item [{Case~2}] $e(s)$ is identically zero. Since $a_{i},c_{i}\neq0$
this can happen only if $m=n$. Then by comparing coefficients we
find by induction $a_{2i}=c_{2i}$ for all $i=1,\ldots,n$. Since
we are assuming that (\ref{eq:x-coefficient}) is a trivial equation,
$\prod_{i=0}^{n}a_{2i}=\prod_{i=0}^{n}c_{2i}$, and the corresponding
terms are equal and non-zero for $i\geq1$, they are equal also for
$i=0$, and we find that $a_{2i}=c_{2i}$ for all $i=0,\ldots,n$.
Next, if $b_{2i}=d_{2i}$ for all $i$, then we would have $\varphi_{2i}=\psi_{2i}$
for all $i$, and the solution space is all of $G^{+}$. Otherwise
there is an $i$ with $b_{2i}\neq d_{2i}$, and this, together with
$a_{2i}=c_{2i}$ for all $i$, implies that $f(s)$ is not the zero
polynomial. Recalling that $e(s)=0$ for all $s$, our equation has
become $0t+f(s)=0$, which can be solved only when $f(s)=0$. This
occurs for finitely many values of $s$, and when it does, any $t$
solves the equation, giving a line in solution space. 
\end{description}
On the other hand, suppose (\ref{eq:x-coefficient}) has a unique
solution $s_{0}$. Then the solution set $\Gamma$ of the original
relation consists of those $(s_{0},t)\in(0,\infty)\times\mathbb{R}$
for which $t$ satisfies $e(s_{0})t+f(s_{0})=0$. This equation either
has no solutions, one solution $t_{0}$ (in which case $\Gamma=\{(s_{0},t_{0})\}$,
or else every $t$ solves it, in which case $\Gamma$ is the line
$s=s_{0}$.

Examining the result in each of the cases, we find we have proved
the proposition.\end{proof}
\begin{corollary}
\label{cor:reltaions-with-free-parameters}Let $w(z_{0},\ldots,z_{m},z)$
and $w'(z_{0},\ldots,z_{n},z)$ be words, let $\Phi_{0}\subseteq G^{+}$
be a free set, and let $\varphi_{0},\ldots,\varphi_{m},\psi_{0},\ldots,\psi_{n}\in\Phi_{0}$.
Let $\Gamma$ be the set of $\gamma\in G^{+}$ such that $w(\varphi_{0}\ldots\varphi_{m},\gamma)=w(\psi_{0},\ldots,\psi_{n},\gamma)$.
Then either $\Gamma=G^{+}$, in which case $m=n$ and $w=w'$, or
$\Gamma$ is a finite union of elements of $\mathcal{C}$.\end{corollary}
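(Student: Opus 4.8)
The plan is to reduce the general relation $w(\varphi_0,\ldots,\varphi_m,\gamma)=w'(\psi_0,\ldots,\psi_n,\gamma)$ to the alternating normal form treated in Proposition~\ref{prop:solution-of-alternating-relation}, and then read off the two alternatives. First I would write $w=u_0\,z\,u_1\,z\cdots z\,u_p$, where $p\ge 1$ is the number of occurrences of the distinguished letter $z$ in $w$ and each $u_i$ is a (possibly empty) word in $z_0,\ldots,z_m$; since every $z_j$ occurs in $w$, the $u_i$ collectively involve all of them. Setting $g_i=u_i(\varphi_0,\ldots,\varphi_m)\in G^+$ (with $g_i=\id$ when $u_i$ is empty), the left-hand side becomes $g_0\,\gamma\,g_1\,\gamma\cdots\gamma\,g_p$, and similarly $w'(\psi_0,\ldots,\psi_n,\gamma)=h_0\,\gamma\,h_1\cdots\gamma\,h_q$ with $h_i=u'_i(\psi_0,\ldots,\psi_n)$ for the analogous decomposition $w'=u'_0\,z\,u'_1\cdots z\,u'_q$. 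Thus $\Gamma$ is exactly the solution set of an alternating relation of the type in Proposition~\ref{prop:solution-of-alternating-relation}, so it is empty, a finite union of members of $\mathcal{C}$, or all of $G^+$; and $\Gamma=G^+$ precisely when $p=q$ and $g_i=h_i$ for every $i$.

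It remains to show that the last case forces $m=n$ and $w=w'$, and this is where freeness of $\Phi_0$ enters. A nonempty product of elements of $\Phi_0$ is never the identity --- otherwise appending one further generator would give an equality between $\Phi_0$-words of different lengths, contradicting freeness --- so $g_i=\id$ exactly when $u_i$ is empty, and likewise for $h_i$ and $u'_i$; hence $u_i$ is empty iff $u'_i$ is. When both are nonempty, $g_i=h_i$ together with freeness of $\Phi_0$ forces the two expressions of $g_i$ and $h_i$ as products of generators to agree letter by letter. Concatenating over $i=0,\ldots,p=q$ with the $\gamma$'s in matching positions shows that $w(\varphi_\bullet,z)$ and $w'(\psi_\bullet,z)$ are the same string in the generators and $z$; in particular they use equally many generators, so $m=n$, and $w=w'$.

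There is no real obstacle here: all of the substantive work is contained in Proposition~\ref{prop:solution-of-alternating-relation}, and the rest is bookkeeping. The only place requiring a little care is the identification step above, where one must remember that the freeness hypothesis is a statement about products of generators and translate the resulting identity of generator-strings back into an identity of the formal words $w$ and $w'$.
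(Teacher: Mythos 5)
Your proof is correct and follows essentially the same route as the paper: normalize $w$ and $w'$ into the alternating form $g_0\gamma g_1\gamma\cdots\gamma g_p$ by grouping the letters between occurrences of $z$ (allowing $g_i=\id$), invoke Proposition~\ref{prop:solution-of-alternating-relation}, and then use freeness of $\Phi_0$ to unwind the case $\Gamma=G^+$. The one place where you are slightly more careful than the paper's own write-up is in noting explicitly that a nonempty product of elements of $\Phi_0$ is never the identity, so that the inserted identities in the normal form can be matched unambiguously with the empty blocks of $w$ and $w'$; the paper merely asserts each $\varphi'_i=\psi'_i$ "decomposes uniquely," leaving that observation implicit.
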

\begin{proof}
By multiplying together consecutive occurrences of the $\varphi_{i},\psi_{i}$,
breaking occurrences of $\gamma^{k}$ into $\gamma\id\gamma\id\ldots\id\gamma$,
and inserting if necessary the identity at the beginning and end of
the product, $\Gamma$ becomes the set of $\gamma$ satisfying a relation
\[
\varphi'_{0}\gamma\varphi'_{2}\gamma\ldots\varphi'_{2(m'-1)}\gamma\varphi'_{2m'}=\psi'_{0}\gamma\psi'_{2}\gamma\ldots\psi'_{2(n'-1)}\gamma\psi'_{2n'}
\]
with each $\varphi'_{i},\psi'_{i}$ either the identity or a product
of the original $\varphi_{i},\psi_{i}$. By the proposition, either
$\Gamma$ is a finite union of elements of $\mathcal{C}$ or $\Gamma=G^{+}$,
in which case $m'=n'$ and $\varphi'_{i}=\psi'_{i}$. In the latter
case, because $\Phi_{0}$ is free, this means that each $\varphi'_{i}=\psi'_{i}$
decomposes uniquely as a product of the original $\varphi_{i},\psi_{i}$,
and we conclude that $m=n$ and $\varphi_{i}=\psi_{i}$ as claimed.
\end{proof}

\subsection{Proof of Proposition \ref{prop:large-free-sets}: Cosets of the translation
group}

We first prove a special case of Proposition \ref{prop:large-free-sets}
in which $\Phi\subseteq G$ is contained in a coset of the translation
group, or equivalently, there is some common $a\neq0$ such that all
$\varphi\in\Phi$ are of the form $x\mapsto ax+b$ for some $b$. 

If $\varphi_{1},\ldots,\varphi_{m},\psi_{1},\ldots,\psi_{n}\in\Phi$
satisfy $\varphi_{1}\ldots\varphi_{m}=\psi_{1}\ldots\psi_{n}$, then,
writing $\varphi_{i}(x)=ax+b_{i}$ and $\psi_{j}(x)=ax+d_{j}$, by
a similar calculation to the one in (\ref{eq:composition}) we have
\begin{equation}
a^{m}+\sum_{i=1}^{m}a^{m-i}b_{i}=a^{n}+\sum_{i=1}^{n}a^{n-i}d_{i}.\label{eq:relation-constant-contraction}
\end{equation}
Let $E=1\cup\{b_{i}\}\cup\{d_{i}\}$ and note that the union may not
be disjoint. Grouping together the coefficients of each $e\in E$
in the last equation, we obtain an equation of the form
\[
\sum_{e\in E}p_{e}(a)\cdot e=0,
\]
where $p_{e}(\cdot)$ is a polynomial with coefficients $\pm1$ and
$0$. If $a$ is not the root of any polynomial of this kind, this
implies each $b_{i}$ is in the field generated by the other $b_{j},d_{j}$
and $a$. Thus we can produce an infinite free set in $\Phi$ simply
by choosing the $i$-th map $x\mapsto ax+b_{i}$ in such a way that
$b_{i}\in\mathbb{R}\setminus\mathbb{Q}(a,b_{1},\ldots,b_{i-1})$,
which we can do because $\Phi$ is uncountable.

However, when $a$ is the root of a polynomial with coefficients $\pm1,0$,
this argument and its conclusion fail. For example, consider either
of the roots $a=(1\pm\sqrt{5})/2$ of the equation $x^{2}-x-1=0$,
and the words 
\begin{eqnarray*}
w(z_{1},z_{2}) & = & z_{1}z_{2}z_{2}\\
w'(z_{1},z_{2}) & = & z_{2}z_{1}z_{1}.
\end{eqnarray*}
Then for any $\varphi(x)=ax+b$ and $\psi(x)=ax+d$, the relation
$w(\varphi,\psi)=w'(\psi,\varphi)$ is equivalent to 
\[
b+ad+a^{2}d+a^{3}=d+ab+a^{2}b+a^{3}.
\]
Rearranging we get 
\[
b(a^{2}-a-1)=d(a^{2}-a-1).
\]
Since $a^{2}-a-1=0$, every $b,d$ satisfy this, so $\{\varphi,\psi\}$
is not free for any choice of $b,d$. 

We can avoid this problem by taking finitely many powers. 
\begin{proposition}
\label{prop:free-set-in-monocontracting-family}If $\Phi\subseteq G$
is uncountable and contained in a non-trivial coset of the translation
group, then there exists a $k\in\mathbb{N}$ such that $\Phi^{k}$
contains an infinite free subset.\end{proposition}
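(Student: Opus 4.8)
The plan is to reduce to the situation analyzed in Corollary \ref{cor:reltaions-with-free-parameters} by working inside a single power $\Phi^k$. Fix the common contraction ratio $a\neq 0$, so every $\varphi\in\Phi$ has the form $x\mapsto ax+b_\varphi$. First I would produce a finite free set $\Phi_0=\{\varphi_0,\ldots,\varphi_N\}\subseteq\Phi$ for some suitable $N$ (this is possible by the uncountability argument already sketched: the set of $b$-values that satisfy \emph{some} relation of fixed combinatorial shape over a fixed finite set of maps is contained in a countable union of algebraic sets, hence is not all of $\mathbb{R}$, so we can greedily pick $\varphi_0,\varphi_1,\ldots$ each avoiding the relations imposed by the previous ones; note that for a \emph{finite} free set there is no issue with the bad roots of $x^2-x-1$ etc., since freeness of a finite set is a countable list of conditions each excluding a proper algebraic subset of $\mathbb{R}$). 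Then, for a fixed length $k$ to be chosen, consider the family
\[
\Psi=\{\gamma\circ\varphi_{i_1}\circ\cdots\circ\varphi_{i_{k-1}}\,:\,\gamma\in\Phi\,,\,\varphi_{i_j}\in\Phi_0\}\subseteq\Phi^k.
\]
Each element of $\Psi$ is determined by the ``free'' parameter $\gamma\in\Phi$ together with a word of length $k-1$ over $\Phi_0$, and again there are uncountably many choices of $\gamma$.

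The key step is to choose the $\gamma$'s one at a time so as to avoid relations. Suppose $\gamma_1,\ldots,\gamma_{j-1}\in\Phi$ have already been chosen. By Corollary \ref{cor:reltaions-with-free-parameters}, for each pair of words $w,w'$ and each assignment of already-chosen parameters $\gamma_1,\ldots,\gamma_{j-1}$ and maps from $\Phi_0$ to the non-$\gamma_j$ slots, the set of $\gamma_j\in G^+$ satisfying that particular relation is either all of $G^+$ (which, by the freeness of $\Phi_0$, forces the relation to be trivial) or a finite union of elements of $\mathcal{C}$, i.e.\ a one-dimensional algebraic set in parameter space. There are only countably many such relations (finitely many words of each length, finitely many slots, and only countably many previously-chosen parameters), so the union of all these bad curves is a countable union of one-dimensional sets. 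Since $\Phi$ is uncountable and contained in the line $\{s=a\}$, which is itself one-dimensional, I need one more observation: $\Phi$ is not contained in any countable union of elements of $\mathcal{C}$ \emph{other than lines}, because each such $\Gamma\in\mathcal{C}$ meets the line $\{s=a\}$ in finitely many points; and $\Phi$ cannot be contained in the line $\{s=a\}$ intersected with any single further condition unless that condition is vacuous on the line. Concretely: the bad curves $\Gamma$ either contain the whole line $\{s=a\}$ — which by the dichotomy in Corollary \ref{cor:reltaions-with-free-parameters} would mean $\Gamma=G^+$ and the relation is trivial — or meet $\{s=a\}$ in finitely many points. Hence the union of the \emph{nontrivial} bad sets meets $\{s=a\}$ in only countably many points, so we may pick $\gamma_j\in\Phi$ avoiding all of them. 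Iterating, we obtain an infinite sequence $\gamma_1,\gamma_2,\ldots\in\Phi$ such that the set $\{\gamma_j\circ\varphi_{i_1}\circ\cdots\circ\varphi_{i_{k-1}}\}$ is free in $\Phi^k$ — here we must also absorb, into $\Phi_0$, any combinatorial relations among the fixed maps, and we need $k$ large enough that the $\gamma_j$ truly appears as a free variable in the relations; taking $k=2$ and absorbing everything else into $\Phi_0$ suffices, so that each element of $\Psi$ is of the form $\gamma\circ\varphi_i$ with $\gamma$ free.

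The main obstacle is the bookkeeping in the previous paragraph: one must verify that \emph{every} potential relation $w(\ldots)=w'(\ldots)$ among finitely many elements of $\Psi$, after substituting the definitions and collapsing consecutive fixed-maps, genuinely reduces to a relation of the type handled by Corollary \ref{cor:reltaions-with-free-parameters} in the variable $\gamma_j$ of largest index appearing — and that when that corollary returns ``$\Gamma=G^+$'' the resulting triviality, combined with freeness of $\Phi_0$ and the fact that the $\gamma$'s were chosen distinct and generic, forces the original words $w,w'$ to coincide. This is an induction on the number of distinct $\gamma$-parameters in the relation: isolate the one, say $\gamma_j$, with the largest index; all other parameters are ``constants'' relative to it; the corollary then says either $\gamma_j$ was a bad choice (excluded by construction) or the relation is independent of $\gamma_j$, in which case it is a relation among elements of $\Psi$ using strictly fewer $\gamma$-parameters, and we recurse; the base case (no $\gamma$-parameters) is exactly the freeness of $\Phi_0$. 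Once this is established, $\Phi^k$ with $k=2$ contains the infinite free set $\Psi=\{\gamma_j\circ\varphi_0\}_{j\geq 1}$ (or any fixed slot), completing the proof.
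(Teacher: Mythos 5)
Your proof has a genuine gap at the very first step, and the same gap reappears later in a second form; in both cases it is the same algebraic obstruction, and it is precisely the one the paper's example with $x^{2}-x-1$ was meant to flag.

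You assert that a finite free set $\Phi_{0}\subseteq\Phi$ can always be produced, ``since freeness of a finite set is a countable list of conditions each excluding a proper algebraic subset of $\mathbb{R}$.'' This is false. When the common contraction $a$ is a root of a suitable polynomial with coefficients in $\{-1,0,1\}$, a single one of those conditions excludes \emph{all} of parameter space, not a proper subset. Concretely, suppose $a^{2}+a-1=0$. For any $\varphi_{1}(x)=ax+b_{1}$ and $\varphi_{2}(x)=ax+b_{2}$ one computes
\[
\varphi_{1}\varphi_{2}\varphi_{2}(x)-\varphi_{2}\varphi_{1}\varphi_{1}(x)=(a^{2}+a-1)(b_{2}-b_{1})=0,
\]
so the relation $\varphi_{1}\varphi_{2}\varphi_{2}=\varphi_{2}\varphi_{1}\varphi_{1}$ holds identically, for \emph{every} choice of $b_{1},b_{2}$. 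Hence no two-element subset of this coset is free, and $\Phi_{0}$ does not exist. The same obstruction invalidates the later step where you claim that a bad set $\Gamma$ which ``contains the whole line $\{s=a\}$'' would, ``by the dichotomy in Corollary \ref{cor:reltaions-with-free-parameters}, mean $\Gamma=G^{+}$.'' That is not what the corollary says: $\mathcal{C}$ contains lines, and $\Gamma$ can perfectly well be the line $\{s=a\}$ (or a curve containing it) without being all of $G^{+}$; in the example above, the set of $\gamma$ with $\varphi_{0}\gamma\gamma=\gamma\varphi_{0}\varphi_{0}$ contains the entire line $\{s=a\}$ but is a proper subset of $G^{+}$. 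So your inductive selection of $\gamma_{j}$'s inside $\Phi\subseteq\{s=a\}$ can get stuck: the countable union of bad sets can cover the whole line.

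This is exactly why the paper passes to a power $\Phi^{k}$ \emph{before} trying to build any free subset: inside a coset of the translation group, all nonzero roots of $\pm1,0$-coefficient polynomials lie in $(\tfrac12,2)$, so by choosing $k$ so that $|a^{k}|\notin(\tfrac12,2)$ the common contraction of $\Phi^{k}$ avoids every such root, and then a direct, field-theoretic greedy argument (picking $b_{i}\notin\mathbb{Q}(a^{k},b_{1},\ldots,b_{i-1})$) produces an infinite free set in $\Phi^{k}$. No curves, no appeal to Corollary \ref{cor:reltaions-with-free-parameters}, and no need for an auxiliary free seed $\Phi_{0}$. If you want to salvage your scheme, you would have to insert the ``pass to a power so that the contraction escapes $(\tfrac12,2)$'' step at the very beginning, at which point you are essentially doing the paper's proof and the additional machinery is superfluous.
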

\begin{proof}
Suppose $a\in\mathbb{R}$ and all elements of $\Phi$ are of the form
$x\mapsto ax+b$ for some $b$. All non-zero roots of a polynomial
with coefficients $\pm1,0$ have modulus in the range $(\frac{1}{2},2)$,
so if $|a|\geq2$ or $|a|\leq\frac{1}{2}$ we can use the construction
discussed above to obtain an infinite free subset of $\Phi$. Otherwise
set $k=\left\lceil |\log_{2}|a||\right\rceil $ and note that every
$\varphi\in\Phi^{k}$ is of the form $x\mapsto a^{k}x+b$ for some
$b$, and $|a^{k}|\notin(\frac{1}{2},2)$ by choice of $k$, so by
the same argument we can find a free subset of $\Phi^{k}$.
\end{proof}

\subsection{Proof of Proposition \ref{prop:large-free-sets}: Other cosets}
\begin{proposition}
\label{prop:free-set-in-coset}If $\Phi\subseteq G^{+}$ is uncountable
and contained in a non-trivial coset $\varphi_{0}H$ of a 1-parameter
subgroup $H$ other than the translation group. Then $\Phi^{2}$ contains
an infinite free subset.\end{proposition}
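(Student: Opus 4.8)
The plan is to normalize the coset by a conjugation and then exhibit an explicit infinite free subset of $\Phi^{2}$ using algebraic independence. Since $H$ is a $1$-parameter subgroup that is not the translation group, Lemma \ref{lem:1-param-subgoup-containing-g} shows it is the $G^{+}$-stabilizer of some point $y_{0}$. Conjugation by the translation $x\mapsto x-y_{0}$ is an automorphism of $G^{+}$ that carries free sets to free sets, commutes with the operation $\Phi\mapsto\Phi^{2}$, and preserves the hypothesis of the proposition while moving $y_{0}$ to $0$; so I may assume $H=\{x\mapsto sx:s>0\}$. Then every element of the coset $\varphi_{0}H$ has the form $x\mapsto cx+b$, with $b$ the (common) translation part of $\varphi_{0}$ and $c$ ranging over $(0,\infty)$, and $b\neq0$ because the coset is nontrivial. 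Thus $\Phi=\{\psi_{c}:c\in C\}$ with $\psi_{c}(x)=cx+b$, where $C\subseteq(0,\infty)$ is uncountable and $b\neq0$.

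Since $C$ is uncountable it contains a countably infinite subset $\{c_{i},c_{i}':i\geq1\}$ algebraically independent over $\mathbb{Q}$: I would build it greedily, using at each stage that the algebraic closure in $\mathbb{R}$ of the field generated over $\mathbb{Q}$ by finitely many reals is countable, while $C$ is not. Set $\gamma_{i}=\psi_{c_{i}}\circ\psi_{c_{i}'}\in\Phi^{2}$; the claim is that $\{\gamma_{i}:i\geq1\}$ is free. The verification rests on the explicit composition formula
\[
\psi_{e_{1}}\circ\cdots\circ\psi_{e_{p}}:\ x\longmapsto\Bigl(\prod_{t=1}^{p}e_{t}\Bigr)x+b\sum_{i=1}^{p}\ \prod_{t=1}^{i-1}e_{t},
\]
valid for $e_{1},\dots,e_{p}\in C$ (empty products being $1$).

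Given a putative relation $\gamma_{i_{1}}\cdots\gamma_{i_{r}}=\gamma_{i_{1}'}\cdots\gamma_{i_{r'}'}$, I would unfold both sides into products of the $\psi$'s, with letter sequences $(e_{1},e_{2},e_{3},\dots)=(c_{i_{1}},c_{i_{1}}',c_{i_{2}},\dots)$ and $(f_{1},f_{2},\dots)$ read off analogously. Equating the two affine maps gives $\prod_{t}e_{t}=\prod_{t}f_{t}$ from the linear coefficients and, after dividing by $b\neq0$, $\sum_{k=0}^{2r-1}e_{1}\cdots e_{k}=\sum_{k=0}^{2r'-1}f_{1}\cdots f_{k}$ from the translation parts. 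By algebraic independence of the $c_{i},c_{i}'$, each of these is a polynomial identity in the corresponding formal variables. The degree-$k$ homogeneous component of each side of the translation identity is the \emph{single} monomial $e_{1}\cdots e_{k}$ (resp.\ $f_{1}\cdots f_{k}$), so comparing top degrees forces $r=r'$, and then successive division in the polynomial ring yields $e_{k}=f_{k}$ for $k\le2r-1$, while the linear-coefficient identity yields $e_{2r}=f_{2r}$. Since $e_{k}=f_{k}$ as formal variables for every $k$, we get $i_{s}=i_{s}'$ for all $s$, i.e.\ the two $\gamma$-words are identical; hence no nontrivial relation holds and $\{\gamma_{i}\}$ is free.

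I expect the only delicate point to be the degree-by-degree bookkeeping just sketched — seeing that the translation part of a length-$p$ product has degree-$k$ component equal to a single monomial, which is what allows "equal polynomials'' to be downgraded to "equal monomials,'' and then integral-domain cancellation to "equal letters.'' Everything else — the conjugation reduction and the greedy construction of the algebraically independent set — is routine. I note in passing that the same computation applied directly to the $\psi_{c_{i}}$ already produces an infinite free subset of $\Phi$ itself; since only the weaker conclusion about $\Phi^{2}$ is needed for Proposition \ref{prop:large-free-sets}, that is what I would record.
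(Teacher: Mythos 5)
Your argument is correct, and it takes a genuinely different route from the paper's. The paper works abstractly within its framework of $\mathcal{C}$-curves and word relations: it invokes Corollary~\ref{cor:reltaions-with-free-parameters} to see that, for a fixed finite free set $\Delta$, the set of $\gamma$ satisfying some relation with $\Delta$ is a countable union of $\mathcal{C}$-curves $\Gamma_1,\Gamma_2,\dots$; it then uses Lemma~\ref{lem:normality} to show the translates $\{\varphi F\}_{\varphi\in\Phi}$ of the coset $F=\varphi_0H$ are pairwise disjoint, so a cardinality count yields some $\varphi$ with $\varphi F\cap\Gamma_i$ finite for every $i$, and hence a suitable $\gamma\in\varphi\Phi\subseteq\Phi^2$. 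By contrast, you normalize $H$ by conjugating its fixed point to $0$, read off the explicit form $\psi_c(x)=cx+b$ with a common nonzero $b$, and then observe that the intercept of a length-$p$ product is $b\sum_{k=0}^{p-1}e_1\cdots e_k$, in which distinct degrees appear exactly once; algebraic independence of the chosen parameters then makes cancellation of relations a matter of comparing graded pieces of monic monomials. Your degree-by-degree bookkeeping is sound (each graded piece is a single monomial, so equality forces $r=r'$, then $e_k=f_k$ by induction plus the slope identity), and the greedy transcendence-degree construction of the parameters is routine. Notably, as you point out, your coordinate computation already shows that $\Phi$ \emph{itself} contains an infinite free set in this coset case, a stronger conclusion than the paper records; the paper's abstract argument cannot see this directly because it never fixes coordinates on the coset, and it resorts to $\Phi^2$ precisely because the ambient $\mathcal{C}$-curve containing $\Phi$ might a priori coincide with one of the bad curves $\Gamma_i$. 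Both proofs are valid; yours is more elementary and self-contained, while the paper's reuses machinery that is needed elsewhere in Section~\ref{sec:Conjecture-implies-conjecture} anyway.
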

\begin{proof}
Write $F=\varphi_{0}H$; we first claim that the collection $\{\varphi F\}_{\varphi\in\Phi}$
is pairwise disjoint. If not, then $\varphi_{1}f_{1}=\varphi_{2}f_{2}\neq\emptyset$
for some distinct pair $\varphi_{1},\varphi_{2}\in\Phi$ and some
$f_{1},f_{2}\in F$. Since $\Phi\subseteq\varphi_{0}H=F$ we can write
$\varphi_{i}=\varphi_{0}h_{i}$ for some distinct $h_{1},h_{2}\in H$,
and $f_{i}=\varphi_{0}\overline{h}_{i}$ for some $\overline{h}_{1},\overline{h}_{2}\in H$.
Thus 
\begin{equation}
\varphi_{0}h_{1}\varphi_{0}\overline{h}_{1}=\varphi_{0}h_{2}\varphi_{0}\overline{h}_{2}.\label{eq:1}
\end{equation}
Since $h_{1}\neq h_{2}$ we conclude that $\overline{h}_{1}\neq\overline{h}_{2}$.
But rearranging (\ref{eq:1}) gives $\varphi_{0}^{-1}h_{2}^{-1}h_{1}\varphi_{0}=\overline{h}_{2}\overline{h}_{1}^{-1}\neq\id$,
showing that $\varphi_{0}^{-1}H\varphi_{0}\cap H\neq\{\id\}$. By
Lemma \ref{lem:normality} this can occur only if $H$ is the translation
group, contrary to our assumption.

It suffices for us to show that given a finite free subset $\Delta\subseteq G^{+}$
we can find $\gamma\in\Phi^{2}$ such that $\Delta\cup\{\gamma\}$
is free, since we can then build an infinite free set by induction.
Fix $\Delta$. By Corollary \ref{cor:reltaions-with-free-parameters},
the set of all $\gamma\in G^{+}$ such that $\Delta\cup\{\gamma\}$
is not free is a countable union of sets $\Gamma_{1},\Gamma_{2},\ldots\in\mathcal{C}$,
so $\Delta\cup\{\gamma\}$ is free for any $\gamma\in\Phi^{2}\setminus\bigcup_{i=1}^{\infty}\Gamma_{i}$.
Thus, our goal is to show that $\Phi^{2}\setminus\bigcup_{i=1}^{\infty}\Gamma_{i}\neq\emptyset$.
Now, if $\varphi F\cap\Gamma_{i}$ is infinite for some $\varphi$
and $i$, then $\varphi F=\Gamma_{i}$ (because both sets are in $\mathcal{C}$),
hence, since $\{\varphi F\}_{\varphi\in\Phi}$ is pairwise disjoint,
for each $i$ there is at most one $\varphi\in\Phi$ such that $\varphi F\cap\Gamma_{i}$
is infinite. Therefore, since $\Phi$ is uncountable, there must be
some $\varphi\in\Phi$ such that $\varphi F\cap\Gamma_{i}$ is finite
for all $i$. Since $\Phi\subseteq F$ also $\varphi\Phi\cap\Gamma_{i}$
is finite, and since $\Phi$ is uncountable, $\varphi\Phi\setminus\bigcup_{i=1}^{\infty}\Gamma_{i}\neq\emptyset$,
as desired.
\end{proof}

\subsection{Proof of Proposition \ref{prop:large-free-sets}: Orientation-preserving
case}
\begin{proposition}
If $\Phi\subseteq G^{+}$ is uncountable and is not contained in a
1-parameter subgroup then either $\Phi$ or $\Phi^2$ contains an infinite free subset.\end{proposition}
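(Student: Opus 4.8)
The plan is to split according to whether $\Phi$ can be covered by countably many members of $\mathcal{C}$, reducing in either case to Propositions~\ref{prop:free-set-in-monocontracting-family} and~\ref{prop:free-set-in-coset} (which handle uncountable subsets of cosets of $1$-parameter subgroups) or to a direct induction powered by Corollary~\ref{cor:reltaions-with-free-parameters}. I will use two facts following from Lemma~\ref{lem:parametrization-of-subgroups} and the earlier discussion of the class $\mathcal{C}$: first, that in parameter space every line is a coset $\varphi_0 H$ of a $1$-parameter subgroup $H$ --- vertical lines being cosets of the translation group, the others cosets of point-stabilizers --- with the $1$-parameter subgroups being exactly the lines through the identity; and second, that $\mathcal{C}$ is invariant under left and right multiplication in $G^{+}$ and that $G^{+}$ is torsion-free, so $\{\gamma\}$ is free for every $\gamma\ne\id$.

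\textbf{Case 1: $\Phi$ is not covered by countably many members of $\mathcal{C}$.} Here I would build an infinite free subset of $\Phi$ directly, by induction: start with any $\gamma_1\in\Phi\setminus\{\id\}$, and given a finite free $\Delta_k\subseteq\Phi$, use Corollary~\ref{cor:reltaions-with-free-parameters} (exactly as in the proof of Proposition~\ref{prop:free-set-in-coset}) to see that the set of $\gamma\in G^{+}$ for which $\Delta_k\cup\{\gamma\}$ fails to be free is a countable union of members of $\mathcal{C}$; by the case hypothesis this does not contain $\Phi$, so some $\gamma_{k+1}\in\Phi$ avoids it. This argument applies verbatim with $\Phi$ replaced by $\Phi^2$, or by any subset of $G^{+}$ not covered by countably many members of $\mathcal{C}$ --- which is what I will need in Case~2.

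\textbf{Case 2: $\Phi\subseteq\bigcup_{i\ge1}\Gamma_i$ with $\Gamma_i\in\mathcal{C}$.} Since $\Phi$ is uncountable, some $\Psi:=\Phi\cap\Gamma$ (for a suitable $\Gamma=\Gamma_i$) is uncountable, and then $\Gamma$ is a line or a genuine rational curve. If $\Gamma$ is a line, it is a coset $\varphi_0 H$ by the first fact above; if moreover $\varphi_0\in H$, so $\Gamma=H$ is a $1$-parameter subgroup, then I would use the hypothesis $\Phi\not\subseteq H$ to pick $g\in\Phi\setminus H$ and pass to $g\Psi\subseteq\Phi^2$, which lies in the non-trivial coset $gH$. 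In every line subcase we thus get an uncountable subset of a non-trivial coset of a $1$-parameter subgroup, so Proposition~\ref{prop:free-set-in-monocontracting-family} (translation group) or Proposition~\ref{prop:free-set-in-coset} (point-stabilizer) yields an infinite free subset of a bounded power of $\Phi$. If instead $\Gamma$ is a genuine rational curve, I would show that $\Phi^2$ is \emph{not} covered by countably many members of $\mathcal{C}$, and then invoke Case~1 for $\Phi^2$. For this, note first that the left stabilizer $\{g\in G^{+}:g\Gamma=\Gamma\}$ is trivial: a nontrivial element $g$ of it would force infinitely many distinct points $g^n\gamma_0$ (for a fixed $\gamma_0\in\Gamma$) into $\Gamma$, all lying on the right coset $H\gamma_0$ of the $1$-parameter subgroup $H$ through $g$, which is a line; since members of $\mathcal{C}$ with infinite intersection coincide, $\Gamma=H\gamma_0$ would be a line. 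Consequently $\varphi\mapsto\varphi\Gamma$ is injective on $\Psi$; and if $\Phi^2\subseteq\bigcup_j\Delta_j$ with $\Delta_j\in\mathcal{C}$, then for each $\varphi\in\Psi$ the uncountable set $\varphi\Psi\subseteq\varphi\Gamma\cap\Phi^2$ must meet some $\Delta_{j(\varphi)}$ uncountably, which forces $\Delta_{j(\varphi)}=\varphi\Gamma$; distinctness of the curves $\varphi\Gamma$ then embeds the uncountable set $\Psi$ into a countable index set, a contradiction.

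The step I expect to be the main obstacle is the genuine-curve subcase of Case~2 --- specifically, the realization that replacing $\Phi$ by $\Phi^2$ kills the ``covered by countably many curves'' obstruction (this is where triviality of the left stabilizer of a genuine curve does the work), reducing everything to the clean induction of Case~1. The rest should be routine given the earlier lemmas, apart from keeping track of which power of $\Phi$ the free set lands in: $\Phi$ in Case~1 and $\Phi^2$ in the genuine-curve subcase, and in the line subcase $\Phi^2$ for point-stabilizer cosets but only $\Phi^k$ for some $k$ (as in Proposition~\ref{prop:free-set-in-monocontracting-family}) when $\Psi$ sits in a coset of the translation group.
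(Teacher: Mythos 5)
Your proof is correct and, at bottom, runs on the same engine as the paper's: translate an uncountable piece of $\Phi$ sitting inside a fixed $\Gamma\in\mathcal C$, and either the translates fall outside every countable $\mathcal C$-cover (so the one-step free-set extension goes through, as in your Case~1), or a pigeonhole forces two translates of $\Gamma$ to coincide, giving a non-trivial stabilizer element. The main organisational difference is that you make the line/genuine-curve dichotomy explicit and invoke \emph{left}-stabilizer triviality of a genuine curve to conclude that $\Phi^{2}$ escapes any countable $\mathcal C$-cover, then fall back to Case~1; the paper never branches on line vs.\ curve — instead it observes that if the one-step extension already fails for $\Phi_{0}\varphi$ for \emph{every} $\varphi\in\Phi_{0}$, then the resulting pigeonhole yields $\Gamma_{0}\varphi=\Gamma_{0}\psi$ with $\varphi\neq\psi$, i.e.\ a non-trivial element of the \emph{right} stabilizer $G_{\Gamma_{0}}$, and Lemma~\ref{lem:Gamma-stabilizer-is-in-C} then forces $\Gamma_{0}$ to be a coset automatically, so the genuine-curve case never arises. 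Your Case~2b is essentially the contrapositive of that step, done on the left. Both are fine; the paper's route is a little more economical since it never needs to name the curve case separately, while yours isolates cleanly \emph{why} squaring repairs the obstruction, which is a useful thing to see.

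On the bookkeeping of the exponent: you are right to flag it, but your final tally slightly undercounts. In the line subcase with $\Gamma=H$, after premultiplying by $g\in\Phi\setminus H$ you feed $g\Psi\subseteq\Phi^{2}$ into Propositions~\ref{prop:free-set-in-monocontracting-family} and \ref{prop:free-set-in-coset}, which return free subsets of $(g\Psi)^{2}\subseteq\Phi^{4}$ (point-stabilizer coset) or $(g\Psi)^{k}\subseteq\Phi^{2k}$ (translation-group coset); so the conclusion actually is ``$\Phi^{k}$ for some $k$'', not ``$\Phi$ or $\Phi^{2}$'' verbatim. The paper's own proof has the same feature (it applies those propositions to $\psi\Phi_{0}\subseteq\Phi^{2}$). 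This is harmless for the final application in Proposition~\ref{prop:large-free-sets}, which only asks for \emph{some} power of $\Phi$, but worth recording explicitly so that the exponent doesn't silently get lost.
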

\begin{proof}
  It suffices to show that $\Phi\cup\Phi^2$ contains an infinite free subset.  
To do this it suffices to show that, given a finite free set
$\Delta\subseteq G^{+}$, there is a $\gamma\in\Phi\cup\Phi^2$ such
that $\Delta\cup\{\gamma\}$ is free. Fix $\Delta$, and define $\Gamma_{0},\Gamma_{1},\Gamma_{2},\ldots\in\mathcal{C}$
as in the proof of the previous proposition, so we must show that
there is a $k$ with $\Phi\cup\Phi^2\not\subseteq\bigcup_{i=0}^{\infty}\Gamma_{i}$. 

If $\Phi\cap\Gamma_{i}$ is countable for all $i$ we are done, since
$\Phi$ is uncountable. So suppose one of the intersections is uncountable;
without loss of generality it is $\Phi\cap\Gamma_{0}$, and write
$\Phi_{0}=\Phi\cap\Gamma_{0}$.

If for some $\varphi\in\Phi_{0}$ we have $\Phi_{0}\varphi\not\subseteq\bigcup\Gamma_{i}$
then we are done, so assume the contrary. Then for each $\varphi\in\Phi$
we have $\Phi\varphi\subseteq\bigcup_{i=0}^{\infty}\Gamma_{i}$ and
by another cardinality argument there is some $i=i(\varphi)$ such
that $\Phi_{0}\varphi\cap\Gamma_{i}$ is uncountable, and since $\Phi_{0}\varphi\cap\Gamma_{i}\subseteq\Gamma_{0}\varphi\cap\Gamma_{i}$
also $\Gamma_{0}\varphi\cap\Gamma_{i}$ is uncountable. Since both
sets are in $\mathcal{C}$ we conclude that $\Gamma_{0}\varphi=\Gamma_{i(\varphi)}$. 

Since $\Phi_{0}$ is uncountable, there must be distinct $\varphi,\psi\in\Phi_{0}$
such that $i(\varphi)=i(\psi)$, i.e. $\Gamma_{0}\varphi=\Gamma_{0}\psi$,
or equivalently, $\Gamma_{0}=\Gamma_{0}\psi\varphi^{-1}$. Thus $\psi\varphi^{-1}$
is a non-trivial element of $G_{\Gamma_{0}}$, so by Lemma \ref{lem:Gamma-stabilizer-is-in-C},
$\Gamma_{0}$ is a coset $\varphi_{0}H$ of the 1-parameter group
$H=G_{\Gamma_{0}}$. 

If $\varphi_{0}\notin H$ (i.e. $\Gamma_{0}\neq H$) we are done,
since we have $\Phi_{0}\subseteq\varphi_{0}H$ and we can apply Proposition
\ref{prop:free-set-in-monocontracting-family} or \ref{prop:free-set-in-coset}
to $\Phi_{0}$. 

Otherwise $\varphi_{0}\in H$ and $\Phi_{0}\subseteq H$. But by hypothesis
$\Phi\not\subseteq H$, and we can choose $\psi\in\Phi\setminus H$.
Then $\psi\Phi_{0}\subseteq\psi H$, and we can again apply Proposition
\ref{prop:free-set-in-monocontracting-family} or \ref{prop:free-set-in-coset}
to $\psi\Phi_{0}$. Since $\psi\Phi_{0}\subseteq\Phi^{2}$ this gives
the claim.
\end{proof}

\subsection{Proof of Proposition \ref{prop:large-free-sets}: general case}

Suppose that $\Phi\subseteq G$ is uncountable and not virtually abelian.
We shall show that $\Phi^{2}\cap G^{+}$ is uncountable and not contained
in a 1-parameter group. This is enough, since we can then apply the
results of the previous section to $\Phi^{2}\cap G^{+}$.

Observe that the group $H$ of all $\varphi\in G$ fixing a given
$x_{0}\in\mathbb{R}$ is virtually abelian, since $H\cap G^{+}$ is
abelian and $H\cap G^{+}$ is the kernel of the homomorphism $H\rightarrow\{\pm1\}$
mapping $\varphi(x)=$$ax+b$ to $\sgn a$, implying that $H\cap G^{+}$
has index two in $H$. Similarly, the isometry group of $\mathbb{R}$
contains the group of translations as an index two subgroup, so it
is also virtually abelian. In particular, we conclude that $\Phi$
is neither contained in the isometry group, nor in the stabilizer
in $G$ of any $x_{0}\in\mathbb{R}$.

Since either $\Phi\cap G^{+}$ or $\Phi\cap(G\setminus G^{+})$ must
be uncountable, and the square of each of these sets is contained
in both $\Phi^{2}$ and $G^{+}$, certainly $\Phi^{2}\cap G^{+}$
is uncountable. Let $\id\neq\varphi\in\Phi^{2}\cap G^{+}$.

Suppose that $\varphi$ is a translation. Since $\Phi$ is not contained
in the isometry group there is a $\psi\in\Phi$ which is not an isometry.
But then $\psi^{2}\in\Phi^{2}$ also is not an isometry, and $\psi^{2}\in G^{+}$.
Thus $\Phi^{2}\cap G^{+}$ contains both translations and nontrivial
elements that are not translations, so $\Phi^{2}\cap G^{+}$ is not
contained in a 1-parameter group.

Otherwise, since $\varphi$ is not a translation or the identity,
it fixes some point $x_{0}$. Since $\Phi$ is not contained in the
stabilizer group of $x_{0}$, there is some $\psi\in\Phi$ that does
not fix $x_{0}$. Then $\psi^{2}$ also does not fix $x_{0}$ (for
either $\psi$ fixes another point, and $\psi^{2}$ does as well,
or else $\psi$ was already a translation without fixed points, and
then $\psi^{2}$ is too). Also, $\psi^{2}\in G^{+}$. Thus $\Phi^{2}\cap G^{+}$
cannot be contained in a 1-parameter subgroup.

This completes the proof of Proposition \ref{prop:large-free-sets}.

\section{\label{sec:Extensions-and-open-problems}One more variation}

We conclude with a variation on Conjecture \ref{conj:main} in the
non-linear setting, where we as yet are unable to prove even the analog
of Theorem \ref{thm:main}:
\begin{problem}
\label{prob:non-linear}Let $\Phi\subseteq C^{\omega}([0,1])$ be
a compact set of contracting real-analytic maps of $[0,1]$. Let $X$
denote the attractor of $\Phi$. If $\dim\Phi>0$ and $X$ is not
a singleton, is $\dim X=1$?
\end{problem}
This question is not well posed because $C^{\omega}$ does not carry
a canonical metric through which to define the condition dim$\Phi>0$.
One can easily imagine suitable definitions, though, for example we
could ask the dimension to be positive in the $C^{1}$-metric, or
as a subset of $L^{2}$. Note that formulating the problem for $C^{\alpha}$,
$1\leq\alpha\leq\infty$, poses some difficulty, since any proper
compact subset $X\subseteq[0,1]$ admits a positive dimension set
of $C^{\alpha}$-maps preserving it - namely maps which are the identity
on $X$ and act only in its complement. Working with analytic maps
eliminates this problem and is in any case the most likely case to
be true. 

Certain aspects of our proof carry over to this setting, in particular,
one can linearize the action, and to some extent obtain an analog
of Proposition \ref{prop:iterated-entropy-formula-for-action}. One
technical difficulty here is the absence of a dyadic-like partition
of the ambient vector space; one can introduce refining partitions
which at each stage consist of cells comparable to a ball, but each
cell will split into countably many sub-cells at each stage. This
means that the iterated entropy formulas from Section \ref{sub:Multiscale-formulas-for-entropy}
become rather useless, because, while formally correct, all the entropy
of components could be concentrated at a negligible fraction of levels.
It may be possible to overcome this by looking for a partition of
$\Phi$ rather than the whole space, but this does not completely
solve the problem.

Another crucial issue is that, even when a suitable partition of $\Phi$
can be found, the analogue of Lemma \ref{lem:Lipschitz-constant-away-from-diagonal}
(and consequently Lemma \ref{lem:entropy-of-image}) may be false.
That lemma was based on the fact that the map $f\mapsto(f(x_{1}),\ldots,f(x_{k}))$
determines $f$ when $x_{i}$ are well-separated points and $k$ is
large enough. This occurs when $\Phi$ is contained in a finite-dimensional
parameter space, but in general it will fail. The only remedy we know
of at present is to make some finite-dimensionality assumptions which
are rarely satisfies. In fact the only nontrivial application of these
ideas at present appears in \cite{HochmanSolomyak2016}, where this
strategy was applied to stationary measures on the projective line
under the (projective) action of $SL_{2}(\mathbb{R})$. In general
Problem \ref{prob:non-linear} remains open.

\bibliographystyle{plain}

\bigskip
\footnotesize

\noindent Einstein Institute of Mathematics\\
Edmond J. Safra Campus (Givat Ram)\\
The Hebrew University of Jerusalem\\
Jerusalem, 9190401\\
Israel

\medskip
\noindent Email: \texttt{mhochman@math.huji.ac.il}

\end{document}